\newtheorem{proposition}{Proposition}
\newtheorem{theorem}{Theorem}
\newtheorem{lemma}{Lemma}
\newtheorem{corollary}{Corollary}
\theoremstyle{remark}
\newtheorem{remark}{Remark}
\numberwithin{proposition}{section}
\numberwithin{lemma}{section}
\numberwithin{equation}{section}
\begin{document}

\title{Invariance of the Gibbs measure for the periodic quartic gKdV}
\author{Geordie Richards}
\address{Institute for Mathematics and its Applications, Minneapolis, MN, USA}
\email{geordie@ima.umn.edu}
\subjclass[2000]{35Q53, 35R60}
\keywords{gKdV, Gibbs measure, nonlinear smoothing, a.s. global well-posedness}
\thanks{This is part of the author's Ph.D. thesis recently completed at the University of Toronto advised by J. Colliander and T. Oh.  The author also thanks J. Quastel and N. Tzvetkov for suggestions related to this work.}
\maketitle

\begin{abstract}
We prove invariance of the Gibbs measure for the (gauge transformed) periodic quartic gKdV.  The Gibbs measure is supported on $H^{s}(\mathbb{T})$ for $s<\frac{1}{2}$, and the quartic gKdV is analytically ill-posed in this range.  In order to consider the flow in the support of the Gibbs measure, we combine a probabilistic argument and the second iteration and construct local-in-time solutions to the (gauge transformed) quartic gKdV almost surely in the support of the Gibbs measure.  Then, we use Bourgain's idea to extend these local solutions to global solutions, and prove the invariance of the Gibbs measure under the flow.  Finally, Inverting the gauge, we construct almost sure global solutions to the (ungauged) quartic gKdV below $H^{\frac{1}{2}}(\mathbb{T})$.
\end{abstract}

\tableofcontents

\section{Introduction}

In this paper, we consider the periodic quartic generalized Korteweg-de Vries (gKdV) equation
\begin{align}
\left\{
\begin{array}{ll} \partial_{t}u + \partial_{x}^{3}u = \frac{1}{4}\partial_{x}(u^{4})  , \ \
 x\in \mathbb{T}, t\in\mathbb{R},
\\
u(x,0) = u_{0}(x),
\end{array} \right.
\label{Eqn:gKdV}
\end{align}
where $u$ is a real-valued function on $\mathbb{T}\times \mathbb{R}$ with $\mathbb{T}=[0,2\pi)$ and the mean of $u_0$ is zero.  From conservation of the mean, it follows that the solution $u(t)$ of \eqref{Eqn:gKdV} (if it exists) has spatial mean zero for all $t\in\mathbb{R}$.  Throughout this paper, we assume that the spatial mean $\hat{u}(0,t)$ is always zero for all $t\in \mathbb{R}$.

The system \eqref{Eqn:gKdV} is a special case of the gKdV equation
\begin{align}
\left\{
\begin{array}{ll} \partial_{t}u + \partial_{x}^{3}u = \frac{1}{p}\partial_{x}(u^{p})  , \ \
x\in \mathbb{T}, t\in\mathbb{R},  p\geq2 \  \text{integer},
\\
u(x,0) = u_{0}(x).
\end{array} \right.
\label{Eqn:gKdV-p}
\end{align}
The KdV (\eqref{Eqn:gKdV-p} with $p=2$) is a canonical model for dispersive waves in physics.  This equation has a rich history and the related literature is extensive.  The modified KdV (mKdV, $p=3$) has also appeared in physics, and it is closely related to KdV through the Miura transform.  Higher power gKdV equations ($p\geq 4$) have been studied mainly by mathematicians; there is interest in exploring the balance of a stronger nonlinearity with dispersion.

\smallskip

The system \eqref{Eqn:gKdV-p} has a conserved (if it is finite) Hamiltonian given by
\begin{align*}
H(u):= \frac{1}{2}\int_{\mathbb{T}} u_{x}^{2}dx +
\frac{1}{p(p+1)}\int_{\mathbb{T}} u^{p+1}dx.
\end{align*}
Then \eqref{Eqn:gKdV-p} can be reformulated as
\begin{align}
\partial_{t} u =\partial_{x}\frac{\partial H}{\partial u},
\label{Eqn:HAM}
\end{align}
where $\frac{\partial H}{\partial u}$ is the Fr\'{e}chet derivative with respect to the $L^2(\mathbb{T})$-inner product\footnote{This is at least formally correct, for the rigorous definition of gKdV as a Hamiltonian system with Poisson structure $J=\frac{\partial}{\partial x}$ on the Sobolev space  $H^{-\frac{1}{2}}_{0}(\mathbb{T})$ (mean-zero functions) equipped with a compatible symplectic form, see \cite{KUK}.}.  This Hamiltonian structure leads to a natural question: is the Gibbs measure $``d\mu = e^{-H(u)}du"$ invariant under the flow of \eqref{Eqn:gKdV-p}?


The Gibbs measure $\mu$ for \eqref{Eqn:gKdV-p}, first constructed in Lebowitz-Rose-Speer \cite{LRS}, is supported on $H^{\frac{1}{2}-}(\mathbb{T}) = \bigcap_{s<\frac{1}{2}}H^{s}(\mathbb{T})$ (for $p\leq 5$ only, with appropriate restrictions).  To ask the question of its invariance under the flow, one needs to prove that the evolution of \eqref{Eqn:gKdV-p} is well-defined (globally-in-time) for initial data in the support of $\mu$.  For this step, it suffices to prove global well-posedness of \eqref{Eqn:gKdV-p} in $H^{s}(\mathbb{T})$ for some $s<\frac{1}{2}$.

\smallskip

Let us recall some well-posedness results for \eqref{Eqn:gKdV} and \eqref{Eqn:gKdV-p}.  In \cite{B1}, Bourgain introduced a weighted space-time Sobolev space $X^{s,b}$ whose norm is given by
\begin{align}
\|u\|_{X^{s,b}(\mathbb{T}\times\mathbb{R})}=\|\langle n\rangle^{s}\langle \tau-n^{3}\rangle^{b}\hat{u}(n,\tau)\|_{L^{2}_{n,\tau}(\mathbb{Z}\times\mathbb{R})}.
\label{Eqn:intro-1}
\end{align}
He used a fixed point argument to prove local well-posedness (LWP) of KdV (\eqref{Eqn:gKdV-p} with $p=2$) in $L^2(\mathbb{T})$, and automatically obtained global well-posedness (GWP) by conservation of the $L^2(\mathbb{T})$-norm.

The study of well-posedness for the periodic quartic gKdV \eqref{Eqn:gKdV} was also initiated in \cite{B1}; a fixed point argument was used to establish LWP in $H^{s}(\mathbb{T})$, for $s>\frac{3}{2}$.  This was improved to LWP in $H^{s}(\mathbb{T})$ for $s\geq 1$ by Staffilani \cite{S1}, and then to $s\geq \frac{1}{2}$ by Colliander-Keel-Staffilani-Takaoka-Tao \cite{CKSTT1}.  In \cite{CKSTT1}, they also proved analytic ill-posedness of \eqref{Eqn:gKdV} below $H^{\frac{1}{2}}(\mathbb{T})$.  That is, the data-to-solution map for \eqref{Eqn:gKdV} is not analytic in $H^{s}(\mathbb{T})$ for $s<\frac{1}{2}$.  In fact, it is not $C^{4}$ (see also \cite{B2}).


\smallskip

Bourgain \cite{B3} rigorously proved the invariance of the Gibbs measure for KdV and mKdV, but to the knowledge of the author, this problem remains open for \eqref{Eqn:gKdV-p} with $p=4$ and $p=5$.  For KdV and mKdV, he used a deterministic fixed point argument to establish well-posedness in the support of the Gibbs measure.  Recall that the evolution of KdV is well-defined for all $u_{0}\in L^{2}(\mathbb{T})$ \cite{B1}, so it is certainly well-defined in $H^{\frac{1}{2}-}(\mathbb{T})$ (globally-in-time). 
For mKdV, he proved LWP below $H^{\frac{1}{2}}(\mathbb{T})$ (in a modified Besov-type space), but he could not use conservation of the $L^2(\mathbb{T})$-norm to extend solutions globally-in-time.

The main new idea implemented in \cite{B3} was to use the invariance of the Gibbs measure under the flow of the finite-dimensional system of ODEs obtained by the projecting mKdV\footnote{In \cite{B3}, Bourgain also proved the invariance of the Gibbs measure for periodic nonlinear Schr\"{o}dinger equations, but we will focus on \eqref{Eqn:gKdV-p} in this discussion.} to the first $N>0$ modes of the trigonometric basis (and an approximation argument) as a \textit{substitute for a conservation law}, extending the local solutions of mKdV to global solutions (almost surely in the support of the Gibbs measure), and subsequently proving the invariance of the (infinite-dimensional) Gibbs measure $\mu$ under the flow.  In this way, Bourgain showed that an invariant Gibbs for a Hamiltonian PDE provides two benefits: (i) dynamical information about the flow (for example, an evolutionary PDE equipped with an invariant measure can be regarded as an infinite-dimensional dynamical system, and it follows from the Poincar\'{e} recurrence theorem that almost all points of the phase space are stable according to Poisson) and (ii) a tool for extending local solutions to global solutions at low regularities in space (in the support of the Gibbs measure).

\smallskip

We are interested in proving the invariance of the Gibbs measure under the flow of \eqref{Eqn:gKdV}.  Following the strategy developed in \cite{B3}, the crucial ingredient is local well-posedness (and good approximation to the finite-dimensional ODEs) in the support of the Gibbs measure.  Unfortunately, the $C^4$-failure of the data-to-solution map below $H^{\frac{1}{2}}(\mathbb{T})$ \cite{CKSTT1} indicates that one \textit{cannot use the contraction mapping principle} to establish LWP of \eqref{Eqn:gKdV} in $H^{s}(\mathbb{T})$ for $s<\frac{1}{2}$, as this necessitates analyticity of the data-to-solution map.  However, to establish local-in-time dynamics for \eqref{Eqn:gKdV} in the support of the Gibbs measure, it suffices to prove something weaker: that \eqref{Eqn:gKdV} is locally well-posed \textit{almost surely} with randomized initial data given by
\begin{align}
u_{0,\omega}(x)=\sum_{n\in\mathbb{Z}\setminus\{0\}}
\frac{g_{n}(\omega)}{|n|}e^{inx},
\label{Eqn:initialdata}
\end{align}
where $\{g_{n}\}_{n=1}^{\infty}$ is a sequence of complex-valued Gaussian random variables of mean 0 and variance 1 on a probability space $(\Omega,\mathcal{F},\mathbb{P})$, and $g_{-n}=\overline{g_{n}}$ (in order for $u_{0,\omega}$ to be real-valued).  The expression \eqref{Eqn:initialdata} represents a typical element in the support of the Gaussian part of the Gibbs measure, also known as the Wiener measure (see \eqref{Eqn:wiener} below).

\smallskip

The analysis of well-posedness for \eqref{Eqn:gKdV} is simplified by a gauge transformation.  This transformation preserves the initial data, and it is invertible.  A function $u$ satisfies \eqref{Eqn:gKdV} if and only if its gauge transformation $v:=\mathcal{G}(u)$ (see \eqref{Eqn:gaugetransform-a} below) satisfies
\begin{align}
\left\{
\begin{array}{ll} \partial_{t}v + \partial_{x}^{3}v = \frac{1}{4}\mathbb{P}(v^{3})\partial_{x}v  , \ \
 x\in \mathbb{T}, t\in\mathbb{R},
\\
v(x,0) = u_{0}(x),
\end{array} \right.
\label{Eqn:gKdV-zeromean}
\end{align}
where $\mathbb{P}(u)=u-\frac{1}{2\pi}\int_{\mathbb{T}}udx$ is the projection to functions with mean zero.  The analysis of well-posedness for \eqref{Eqn:gKdV-zeromean} is simpler than for \eqref{Eqn:gKdV}, but the data-to-solution map still fails to be $C^4$ below $H^{\frac{1}{2}}(\mathbb{T})$ \cite{CKSTT1}.

\smallskip

To properly state our results, we need one more definition. Let $\Phi^N(t)$ denote the flow map of the finite-dimensional system of ODEs obtained by projecting \eqref{Eqn:gKdV-zeromean} to the first $N>0$ modes of the trigonometric basis:
\begin{align}
\left\{
\begin{array}{ll} \partial_{t}u^{N} + \partial_{x}^{3}u^{N} = \mathbb{P}_N(\mathbb{P}((u^{N})^{3})\partial_{x}u^{N})  , \ \
 x\in \mathbb{T}, t\in\mathbb{R},
\\
u^{N}(x,0) = \mathbb{P}_{N}(u_{0}(x)),  \ \ u_{0} \ \text{mean zero.}
\end{array} \right.
\label{Eqn:gKdV-ZMN}
\end{align}
Here $\mathbb{P}_{N}$ denotes Dirichlet projection to $E_{N}=\text{span}\{\sin(nx),\cos(nx):1\leq n\leq N\}$.

\smallskip

In this paper, we exhibit \textit{nonlinear smoothing when the initial data are randomized} (according to \eqref{Eqn:initialdata}), and use this to prove that \eqref{Eqn:gKdV-zeromean} is locally well-posed almost surely in $H^{\frac{1}{2}-}(\mathbb{T})$.  Indeed, the first theorem we obtain is almost sure local well-posedness of \eqref{Eqn:gKdV-zeromean} with initial data given by \eqref{Eqn:initialdata}, and a good approximation to the dynamics of \eqref{Eqn:gKdV-ZMN}.
In the statement below, $S(t):=e^{it\partial_{x}^{3}}$ is the evolution operator for the linear part of gKdV.

\begin{theorem}[Almost sure local well-posedness]
The gauge-transformed periodic quartic gKdV \eqref{Eqn:gKdV-zeromean} is locally well-posed almost surely with randomized data $u_{0,\omega}$ (given by \eqref{Eqn:initialdata}).  More precisely, for all $0<\delta_{1}<\delta$, with $\delta$ sufficiently small, there exists $0<\beta<\delta-\delta_1$, and $c>0$ such that for each $0<T\ll 1$, there is a set $\Omega_{T}\in\mathcal{F}$ with the following properties:
\begin{enumerate}[(i)]
\item The complemental measure of $\Omega_{T}$ is small.  More precisely, we have $$P(\Omega_{T}^{c})= \rho\circ u_{0}(\Omega_{T}^{c})<e^{-\frac{c}{T^{\beta}}},$$ where $\rho$ is the Wiener measure (see \eqref{Eqn:wiener} below), and the initial data (given by \eqref{Eqn:initialdata}) is viewed as a map $u_{0}:\Omega\rightarrow H^{1/2-}(\mathbb{T})$.

\item For each $\omega\in\Omega_{T}$ there exists a solution $u$ to \eqref{Eqn:gKdV-zeromean} with data $u_{0,\omega}$ satisfying
\begin{align*}
u \in S(t)u_{0,\omega} + C([0,T];H^{1/2+\delta}(\mathbb{T}))\subset
C([0,T];H^{1/2-}(\mathbb{T})).
\end{align*}

\item This solution is unique in
$\displaystyle
\{S(t)u_{0,\omega} + B_{K}\}$, for some $K>0$,
where $B_{K}$ denotes a ball of radius $K$ in the space $X^{\frac{1}{2}+\delta,\frac{1}{2}-\delta}_{T}$.

\item The solution $u$ depends continuously on the initial data, in the sense that, for each $\omega\in\Omega_{T}$, the solution map $$\Phi:
    \Big\{u_{0,\omega} + \{\|\cdot\|_{H^{\frac{1}{2}+\delta}}\leq R\}\Big\}\rightarrow  \Big\{S(t)u_{0,\omega} + \{\|\cdot\|_{C([0,T];H^{\frac{1}{2}+\delta})} \leq \tilde{R}\}
    \Big\}$$ is well-defined and Lipschitz, for some fixed $R,\tilde{R}\sim 1$.

\item The solution $u$ is well-approximated by the solution of \eqref{Eqn:gKdV-ZMN}.  More precisely,
    \begin{align}
    \|u-S(t)u_{0,\omega}
    -(\Phi^{N}(t)-S(t))\mathbb{P}_{N}u_{0,\omega}
    \|_{C([0,T];H^{\frac{1}{2}+\delta_1})}
    \lesssim {N}^{-\beta}.
    \label{Eqn:finapp}
    \end{align}

\end{enumerate}
\label{Thm:LWP}
\end{theorem}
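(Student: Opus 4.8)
The plan is to decompose the solution as $u = S(t)u_{0,\omega} + w$, where the ``nonlinear part'' $w$ is to be found in a space of higher regularity, namely $w \in X_T^{1/2+\delta,1/2-\delta}$, and then run a fixed point argument for $w$ in that space. Writing the Duhamel formula for \eqref{Eqn:gKdV-zeromean} and substituting $u = S(t)u_{0,\omega} + w$, one obtains
\begin{align*}
w(t) = -\tfrac14\int_0^t S(t-t')\,\mathbb{P}\big((S(t')u_{0,\omega}+w)^3\big)\,\partial_x\big(S(t')u_{0,\omega}+w\big)\,dt'.
\end{align*}
Expanding the cubic times the derivative, the terms split according to how many factors are linear evolutions of the random data versus factors of $w$. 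The purely random terms — in particular the worst one, $\mathbb{P}\big((S(t')u_{0,\omega})^3\big)\,\partial_x S(t')u_{0,\omega}$, which is \emph{not} in $X^{1/2+\delta,\cdot}$ deterministically — are where the probabilistic input enters: I would perform a second iteration (Bourgain's trick) on these resonant contributions, using the explicit Gaussian structure of \eqref{Eqn:initialdata}, dyadic decomposition in frequency, and large-deviation / Wiener-chaos estimates to show that, outside an exceptional set $\Omega_T^c$ of measure $< e^{-c/T^\beta}$, this term gains the needed $1/2+\delta$ regularity with a small power of $T$ in front. The remaining mixed terms (at least one factor of $w$) and the purely-$w$ quartic term are estimated deterministically by the standard bilinear/multilinear $X^{s,b}$ estimates for the KdV-type resonance function, which is where $s = 1/2+\delta$ sits comfortably above the deterministic threshold $1/2$; these come with a positive power of $T$ and so are contractive for $T$ small. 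This yields (i), (ii), (iii) by the contraction mapping principle applied to $w$, and (iv) — Lipschitz dependence on $u_{0,\omega}$ perturbed by an $H^{1/2+\delta}$ ball — follows from the same fixed point since the map is built to be a contraction uniformly over such perturbations (the random part $S(t)u_{0,\omega}$ being frozen, only the smooth perturbation varies, so no low-regularity continuity is needed).

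For part (v), the approximation statement, I would run the \emph{same} decomposition for the finite-dimensional flow: write $\Phi^N(t)\mathbb{P}_N u_{0,\omega} = S(t)\mathbb{P}_N u_{0,\omega} + w^N$, where $w^N$ solves the Duhamel equation for \eqref{Eqn:gKdV-ZMN} with the extra projection $\mathbb{P}_N$ in front of the nonlinearity. Then \eqref{Eqn:finapp} amounts to bounding $\|w - w^N\|_{C([0,T];H^{1/2+\delta_1})}$ by $N^{-\beta}$. Subtracting the two Duhamel equations, the difference $w - w^N$ satisfies a perturbed fixed point equation whose forcing terms are of two types: (a) the commutator-type errors $(\mathrm{Id} - \mathbb{P}_N)(\cdots)$ and terms where $u_{0,\omega}$ has been replaced by $\mathbb{P}_N u_{0,\omega}$, each of which carries a tail $\sum_{|n|>N}|n|^{-2}\lesssim N^{-1}$ from the random data and hence a power $N^{-\beta}$ after interpolating the $1/2+\delta$ bound down to $1/2+\delta_1$; and (b) the ``difference of the nonlinearities'' terms, which are multilinear in $w - w^N$ and small-in-$T$, hence absorbed by the contraction. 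Here it is essential that the exceptional set $\Omega_T$ from part (i) also controls the relevant $N$-truncated random quantities uniformly in $N$ — so in the probabilistic step one proves the large-deviation bounds for the full random series \emph{and} its truncations simultaneously, with the same $\Omega_T$.

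The main obstacle I expect is the probabilistic bilinear (second-iteration) estimate on the fully-random resonant term $\mathbb{P}((S(t)u_{0,\omega})^3)\partial_x S(t)u_{0,\omega}$. Deterministically this term loses a full derivative that the dispersion of the cubic resonance function does not recover below $H^{1/2}$ — this is precisely the source of the $C^4$ ill-posedness — so one must substitute the data \emph{once more} into Duhamel, exposing an additional oscillatory integral in $t'$ whose stationary phase / resonance analysis, combined with the independence and Gaussianity of the $g_n$, produces the smoothing. The bookkeeping is delicate: one has to isolate the genuinely resonant frequency configurations (where the phase $n^3 - n_1^3 - n_2^3 - n_3^3 - n_4^3$ degenerates), handle them via the second iteration and moment bounds on Gaussian quadrilinear forms (hypercontractivity of the Ornstein–Uhlenbeck semigroup to convert $L^p(\Omega)$ bounds into the $e^{-c/T^\beta}$ tail), and dispatch the non-resonant part by the usual $X^{s,b}$ machinery. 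Matching the gain $\beta$ against the loss $\delta - \delta_1$ needed for (v), and against the power of $T$ needed for the contraction, pins down the admissible range of parameters asserted in the statement.
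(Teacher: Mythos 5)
Your plan has a genuine gap at its foundation: you propose a contraction mapping argument for the nonlinear part $w$ in $X^{1/2+\delta,1/2-\delta}_T$, but the paper explicitly explains why no such fixed-point argument closes. If one takes temporal regularity $b=1/2$ (so that the multilinear estimates in the region $|\sigma|\gtrsim|n_{\max}|^2$ are available), then the square of the $Z^{s,1/2}_T$-norm of the first Picard iterate evaluated on the random data has infinite expectation, so one cannot stay in a fixed ball with exponential probability. If instead one lowers to $b<1/2$ as you propose, the deterministic quadrilinear estimates fail in the regions $A_1,\ldots,A_4$ where some $|\sigma_k|\gtrsim|n_{\max}|^2$; your remark that ``$s=1/2+\delta$ sits comfortably above the deterministic threshold'' ignores that this threshold is stated for $b=1/2$, and drops once $b<1/2$. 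The paper's resolution is not a contraction argument at all: the solution $u$ is obtained as the $X^{1/2-\delta,1/2-\delta}_T$-limit of smooth solutions $u^N$ evolving from frequency-truncated data $\mathbb{P}_N u_{0,\omega}$ (which exist globally by the deterministic theory of \cite{CKSTT1}), and the a~priori bounds \eqref{Eqn:LWP-16}--\eqref{Eqn:LWP-17} and the Cauchy estimates \eqref{Eqn:LWP-1a}--\eqref{Eqn:LWP-1b} are established by a continuity argument. This matters structurally: the second iteration consists of substituting the Duhamel identity $u^N = S(t)u_{0,\omega}^N + \mathcal D(u^N)$ into the $k$-th slot of $\mathcal N_k$, a manipulation that is only legitimate because $u^N$ is an exact solution of its Duhamel equation -- it is not available for the iterates $v_k$ of a Picard scheme, which do not satisfy any equation until the scheme has converged. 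Furthermore, the delicate cancellation in the sub-region $A_{1,c}$ (see Remark \ref{Rem:cancel} and the discussion around \eqref{Eqn:NLdef2}) requires all input factors to be equal, which is again incompatible with a naive Picard iteration.

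You also have the division of labour between probability and the second iteration backwards. In the paper's partition $A=A_{-1}\cup A_0\cup\cdots\cup A_4$ of \eqref{Eqn:NL-part}, the phase-degenerate region $A_{-1}$ (where $|n^3-n_1^3-\cdots-n_4^3|\ll|n_{\max}|^2$, the source of the $C^4$ ill-posedness you correctly identify) is handled by \emph{direct probabilistic quadrilinear estimates} (Proposition \ref{Prop:NL-neg1-local}) using Gaussian hypercontractivity (Lemma \ref{Lemma:prob2}), large deviations (Lemma \ref{Lemma:prob3}), and the matrix-norm bound (Lemma \ref{Lemma:matrixnorm}) -- with \emph{no} second iteration. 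The second iteration is applied only in the regions $A_1,\ldots,A_4$, where some $|\sigma_k|\gtrsim|n_{\max}|^2$ is \emph{large}, and the reason is not phase degeneracy but that $b<1/2$ fails there (it is needed in $A_1$ together with probabilistic input, and in $A_2,A_3,A_4$ purely deterministically). Your plan to ``handle the genuinely resonant configurations via the second iteration'' inverts this, and would miss the actual mechanism (counting solutions of the degenerate quadratic in $n_1$, as in \eqref{Eqn:quad}, combined with independence of the $g_n$'s) that gives the gain in $A_{-1}$. The remainder of your sketch for part (v) -- truncating the random series and the flow simultaneously and running a difference estimate -- is qualitatively in the right direction and aligns with what the paper does in \eqref{Eqn:conv-est1}--\eqref{Eqn:conv-est3}, but it of course inherits the fixed-point framework issue above.
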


\noindent For the definition of the $X^{s,b}_{T}$ space, see Section \ref{Sec:lin} below.

Following the method developed in \cite{B3}, we use the invariance of finite-dimensional Gibbs measures under the flow of \eqref{Eqn:gKdV-ZMN} and an approximation argument, to extend the local solutions of \eqref{Eqn:gKdV-zeromean} (obtained from Theorem \ref{Thm:LWP}) to global solutions, almost surely, and to prove the invariance of the Gibbs measure under the flow.
\begin{theorem}[Invariance of the Gibbs measure]
The gauge-transformed periodic quartic gKdV \eqref{Eqn:gKdV-zeromean} is globally well-posed almost surely with randomized data $u_{0,\omega}$ (given by \eqref{Eqn:initialdata}).  More precisely, for $\delta_{2}>0$ sufficiently small, it holds that given any $T>0$, for almost every $\omega\in\Omega$, there is a (unique) solution $u$ to \eqref{Eqn:gKdV-zeromean} with data $u_{0,\omega}$ (given by \eqref{Eqn:initialdata}) satisfying
\begin{align*}
u \in S(t)u_{0,\omega} + C([0,T];H^{1/2+\delta_2}(\mathbb{T}))\subset C([0,T];H^{1/2-}(\mathbb{T})).
\end{align*}
Furthermore, the Gibbs measure $\mu$ (given by \eqref{Eqn:gibbs} below) is invariant under the flow.
\label{Thm:GWP}
\end{theorem}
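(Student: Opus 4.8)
The plan is to follow the scheme of Bourgain \cite{B3}: the invariance of the finite-dimensional Gibbs measures $\mu_N$ under the truncated flow $\Phi^N$ will serve as a substitute for a conservation law, so that the local solutions produced by Theorem \ref{Thm:LWP} can be glued into global ones on a set of nearly full measure, and then the invariance of the infinite-dimensional $\mu$ will follow by letting $N\to\infty$. The first ingredient, which is established earlier when the Gibbs measure \eqref{Eqn:gibbs} is constructed (and is a direct computation, cf.\ \cite{B3}), is the invariance of $\mu_N$ — the measure with density $\mathbf{1}_{\{\|\cdot\|_{L^2}\le B\}}\,e^{-\frac{1}{20}\int(\cdot)^5}$ against the finite-dimensional Wiener measure $\rho_N$ — under $\Phi^N(t)$: it comes from Liouville's theorem (the truncated vector field in \eqref{Eqn:gKdV-ZMN} is divergence-free on $E_N$) together with conservation, under \eqref{Eqn:gKdV-ZMN}, of the truncated energy and of $\|u^N\|_{L^2}$ (the latter by an integration by parts, which also makes the truncated ODEs globally well-posed). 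I will also use the convergence $\mu_N\to\mu$ (with densities converging in $L^p(\rho)$ for every $p<\infty$) and the uniform bound $\|d\mu_N/d\rho_N\|_{L^p(\rho_N)}\le C_p$, both part of that construction.

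The heart of the proof is the construction, for each $T>0$ and $\varepsilon>0$, of a set $\Omega_{T,N}$ with $\mu_N(\Omega_{T,N}^{c})<\varepsilon$ \emph{uniformly in $N$}, on which the truncated solution exists on $[0,T]$ and the nonlinear part $w^{N}(t):=\Phi^{N}(t)\mathbb{P}_{N}u_{0}-S(t)\mathbb{P}_{N}u_{0}$ satisfies $\sup_{[0,T]}\|w^{N}(t)\|_{H^{1/2+\delta}}\le R=R(T,\varepsilon)$. Fix a local time $T_{0}\ll1$, set $M=\lceil T/T_{0}\rceil$, and iterate Theorem \ref{Thm:LWP} (applied verbatim to the truncated equations \eqref{Eqn:gKdV-ZMN}, uniformly in $N$) on the intervals $[(j-1)T_{0},jT_{0}]$. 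Restarting at time $jT_{0}$ is possible thanks to Theorem \ref{Thm:LWP}(iv): since $S(t)$ preserves $\rho_{N}$, the rough part $S(jT_{0})\mathbb{P}_{N}u_{0}$ is again Gaussian, so off a set of measure $<\varepsilon/(2M)$ it lands in a good set to which the Lipschitz local solution map applies on a fixed-radius $H^{1/2+\delta}$-ball, and the accumulated smooth correction $w^{N}(jT_{0})$ stays inside that ball because each step increment $\|w^{N}(jT_{0})\|_{H^{1/2+\delta}}-\|w^{N}((j-1)T_{0})\|_{H^{1/2+\delta}}$ is $\lesssim T_{0}^{\theta}$ for some $\theta>0$ (from the nonlinear smoothing estimate together with the probabilistic control of the free-evolution norms) and hence summable over the $M$ steps once $T_{0}$ is small. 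Invariance of $\mu_{N}$ is what makes the \emph{measure} bound uniform in $N$: the exceptional set at step $j$ is pulled back by $\Phi^{N}((j-1)T_{0})$, under which $\mu_{N}$ is preserved, its $\rho_{N}$-measure is controlled by Gaussian tails (using $S$-invariance of $\rho_{N}$) and by Theorem \ref{Thm:LWP}(i), and it is transferred to $\mu_{N}$ via $\|d\mu_{N}/d\rho_{N}\|_{L^{p}}\le C_{p}$; summing over $j\le M$ and choosing $T_{0}$ so small that $M\,e^{-c/T_{0}^{\beta}}<\varepsilon/2$ gives $\mu_{N}(\Omega_{T,N}^{c})<\varepsilon$ uniformly in $N$.

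With these uniform bounds in hand, the approximation estimate \eqref{Eqn:finapp} — iterated across $[0,T]$, the $O(T/T_{0})$ accumulated error still tending to $0$ as $N\to\infty$ for fixed $T,T_{0}$ — shows that on $\Omega_{T,N}$ the genuine solution of \eqref{Eqn:gKdV-zeromean} coincides on $[0,T]$ with $\lim_{N}\Phi^{N}(t)\mathbb{P}_{N}u_{0,\omega}$, lies in $S(t)u_{0,\omega}+C([0,T];H^{1/2+\delta_{2}})$, and obeys a norm bound uniform on $\Omega_{T,N}$. Passing to the limit $N\to\infty$ (using $\mu_{N}\to\mu$, after enclosing the $\Omega_{T,N}$ in slightly larger open sets to deal with non-openness) produces $\Sigma_{T,\varepsilon}\subset\Omega$ with $\mu(\Sigma_{T,\varepsilon}^{c})<\varepsilon$ on which the solution is defined on $[0,T]$; intersecting over $\varepsilon=1/k$ and $T\in\mathbb{N}$ yields a set of full $\mu$-measure on which the solution is global, with uniqueness inherited from Theorem \ref{Thm:LWP}(iii) on each subinterval (and, since $d\mu/d\rho>0$ $\rho$-a.e.\ on $\{\|v\|_{L^{2}}\le B\}$, this also furnishes global solutions for $\rho$-a.e.\ datum in that set). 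For the invariance of $\mu$, fix $t\in\mathbb{R}$ and a set $A$ in a convergence-determining class; invariance of $\mu_{N}$ under $\Phi^{N}$, the measure bound above, and the fact that \eqref{Eqn:finapp} forces $\Phi^{N}(t)\to\Phi(t)$ off sets of uniformly small $\mu_{N}$-measure give
\begin{align*}
\mu(\Phi(t)^{-1}A)&=\lim_{N\to\infty}\mu_{N}(\Phi(t)^{-1}A)=\lim_{N\to\infty}\mu_{N}(\Phi^{N}(t)^{-1}A)+O(\varepsilon)\\
&=\lim_{N\to\infty}\mu_{N}(A)+O(\varepsilon)=\mu(A)+O(\varepsilon),
\end{align*}
and $\varepsilon\to0$ yields $\mu(\Phi(t)^{-1}A)=\mu(A)$.

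I expect the main obstacle to be the construction of the second paragraph: making the local theory of Theorem \ref{Thm:LWP} \emph{iterable across $[0,T]$ with all constants and exceptional probabilities uniform in $N$}. The two delicate points are (a) keeping the accumulated smooth part $w^{N}(jT_{0})$ small enough for the Lipschitz local map of Theorem \ref{Thm:LWP}(iv) to reapply at every restart, i.e.\ balancing the per-interval gain $T_{0}^{\theta}$ against the number $T/T_{0}$ of iterations and the probabilistic threshold $R$; and (b) keeping each measure estimate independent of $N$, which hinges on the $S(t)$-invariance of $\rho_{N}$, the uniform $L^{p}$-bound on $d\mu_{N}/d\rho_{N}$, and above all the $\Phi^{N}$-invariance of $\mu_{N}$, the only device that survives the union bound over the $O(T/T_{0})$ subintervals once the solution has been flowed forward.
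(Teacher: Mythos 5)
Your overall scheme is the right one and matches Section 5 of the paper: invariance of the finite-dimensional Gibbs measures $\mu_N$ under $\Phi^N$ (Proposition \ref{Prop:trunc-inv}) as a conservation-law substitute, a uniform-in-$N$ measure bound on good data sets obtained by pulling back by $\Phi^N(jT_0)$ and transferring from $\rho_N$ to $\mu_N$ via the $L^p$-bound on the density, and a passage to the limit followed by an invariance argument (the paper uses bounded continuous cylinder functions $F$ rather than sets $A$ from a convergence-determining class; that is only a cosmetic difference, though the function version avoids having to discuss $\mu(\partial A)=0$).

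There is, however, a genuine gap in your restart mechanism — specifically the claim that the accumulated smooth part $w^{N}(jT_{0})=\Phi^{N}(jT_{0})\mathbb{P}_{N}u_{0}-S(jT_{0})\mathbb{P}_{N}u_{0}$ stays inside the \emph{fixed-radius} $H^{1/2+\delta}$-ball to which Theorem~\ref{Thm:LWP}(iv) applies, "because each step increment is $\lesssim T_{0}^{\theta}$ and hence summable once $T_{0}$ is small." In this paper the exponent $\theta$ from the nonlinear estimates is small (it is $O(\delta)$, coming from Lemma~\ref{Lemma:gainpowerofT}; see Proposition~\ref{Prop:nonlin2} and the constraints $\tilde\alpha<\theta-\tfrac{7\beta}{2}$ in the proof of Theorem~\ref{Thm:LWP}), so in particular $\theta<1$. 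Summing $M\sim T/T_{0}$ increments of size $T_{0}^{\theta}$ gives $\sim T\,T_{0}^{\theta-1}$, which \emph{increases} as $T_{0}\downarrow 0$ and increases with $T$; it does not fit into a fixed ball, and the window of admissible $T_{0}$ (small enough for the local theory but large enough for this accumulation bound) empties as $T$ grows. The issue traces to your choice of reference point at each restart: you keep $S(jT_{0})\mathbb{P}_{N}u_{0}$ as the Gaussian center and then must carry the growing perturbation $w^N(jT_0)$. The paper instead changes the center at each restart to the full truncated state $\Phi^{N}(jT_{0})\mathbb{P}_{N}u_{0}$ — which by $\Phi^N$-invariance of $\mu_N$ lands in the good set with high $\mu_N$-probability, so the smooth perturbation is \emph{reset to zero} — and what it propagates across intervals is not $\|w^N(jT_0)\|$ but the smallness of the difference of two truncated flows, $\|[\Phi^{M}(t)\mathbb{P}_{M}-\Phi^{N}(t)\mathbb{P}_{N}-S(t)(\mathbb{P}_{M}-\mathbb{P}_{N})]u_{0}\|_{H^{1/2+\delta_1}}\lesssim C^{T/T_0}N^{-\varepsilon}$ (the iterated version of \eqref{Eqn:finapp}, see \eqref{Eqn:ext-9}--\eqref{Eqn:ext-10} in Proposition~\ref{Prop:global}). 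The $C^{T/T_0}$ growth does not matter because $N\to\infty$ is taken at fixed $T$. Replacing your step (a) by this comparison is exactly the fix your scheme needs.
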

\noindent By inverting the Gauge transformation, we obtain the following corollary.
\begin{corollary}[Almost sure global well-posedness]
The periodic quartic gKdV \eqref{Eqn:gKdV} is globally well-posed almost surely in $H^{1/2-}(\mathbb{T})$.  More precisely, given any $T>0$, for almost every $\omega\in\Omega$, there exists a solution $u$ to \eqref{Eqn:gKdV} for $t\in[0,T]$ with randomized data $u_{0,\omega}$ (given by \eqref{Eqn:initialdata}).
\label{Cor:GWP}
\end{corollary}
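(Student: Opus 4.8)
The plan is to push Theorem~\ref{Thm:GWP} forward through the inverse of the gauge transformation $\mathcal{G}$ (see \eqref{Eqn:gaugetransform-a}). Fix $T>0$. By Theorem~\ref{Thm:GWP}, for almost every $\omega\in\Omega$ there is a global solution $v=v_{\omega}$ of \eqref{Eqn:gKdV-zeromean} with data $u_{0,\omega}$ satisfying
\begin{align*}
v\in S(t)u_{0,\omega}+C([0,T];H^{1/2+\delta_{2}}(\mathbb{T}))\subset C([0,T];H^{1/2-}(\mathbb{T})).
\end{align*}
Since $\mathcal{G}$ fixes the initial data, is invertible, and intertwines \eqref{Eqn:gKdV} with \eqref{Eqn:gKdV-zeromean}, the natural candidate is $u:=\mathcal{G}^{-1}(v)$, and it remains only to check: (a) $\mathcal{G}^{-1}$ is well-defined on our $v$; (b) $u(\cdot,0)=u_{0,\omega}$; (c) $u$ solves \eqref{Eqn:gKdV} in the same sense as in Theorem~\ref{Thm:GWP}; (d) $u\in C([0,T];H^{1/2-}(\mathbb{T}))$.

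Recall that $\mathcal{G}$ is a time-dependent spatial translation: $\mathcal{G}(u)(x,t)=u(x+\nu_{u}(t),t)$, where $\nu_{u}\in C^{1}$ with $\nu_{u}(0)=0$ is a fixed multiple of the antiderivative of $t\mapsto\int_{\mathbb{T}}u(x,t)^{3}\,dx$ --- precisely what is needed to replace $v^{3}\partial_{x}v$ by $\mathbb{P}(v^{3})\partial_{x}v$. Because this spatial integral is translation invariant, $\nu_{\mathcal{G}(u)}=\nu_{u}$, so $\mathcal{G}^{-1}(v)(x,t)=v(x-\nu_{v}(t),t)$, with the same functional now applied to $v$. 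This is legitimate and yields $\nu_{v}\in C^{1}([0,T])$: in one dimension $H^{1/2-\varepsilon}(\mathbb{T})\hookrightarrow L^{1/\varepsilon}(\mathbb{T})$, so $v(t)\in H^{1/2-}(\mathbb{T})$ gives $v(t)\in L^{q}(\mathbb{T})$ for every finite $q$; hence $\int_{\mathbb{T}}v(x,t)^{3}\,dx$ is finite, and it is continuous in $t$ since $w\mapsto\int_{\mathbb{T}}w^{3}$ is continuous on $L^{3}(\mathbb{T})$ while $v\in C([0,T];H^{1/2-})$. This settles (a).

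For (b), $\nu_{v}(0)=0$ gives $u(x,0)=v(x,0)=u_{0,\omega}(x)$. For (d), write $v(t)=S(t)u_{0,\omega}+w(t)$ with $w\in C([0,T];H^{1/2+\delta_{2}})$; then $u(x,t)=\big(S(t)u_{0,\omega}\big)(x-\nu_{v}(t))+w(t,x-\nu_{v}(t))$, and since a spatial translation is an isometry of every $H^{s}(\mathbb{T})$ depending strongly continuously on the shift and $\nu_{v}\in C([0,T])$, it follows that $u\in C([0,T];H^{1/2-}(\mathbb{T}))$, with the analogous splitting into a (shifted) rough linear part and an $H^{1/2+\delta_{2}}$ correction. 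For (c), writing $\mathbb{P}(v^{3})\partial_{x}v=\tfrac14\partial_{x}(v^{4})-\tfrac{1}{2\pi}\big(\int_{\mathbb{T}}v^{3}\,dx\big)\partial_{x}v$, the nonlinearity of \eqref{Eqn:gKdV-zeromean} is a continuous $H^{-1}(\mathbb{T})$-valued function of $t$ (using $v(t)^{4}\in L^{2}$), and the same holds for $\nu_{v}'(t)\partial_{x}v$; hence the chain rule for the translation $x\mapsto x-\nu_{v}(t)$, together with the commutation of $\partial_{x}^{3}$ with translations, converts \eqref{Eqn:gKdV-zeromean} for $v$ into \eqref{Eqn:gKdV} for $u$ as an identity in $\mathcal{D}'$. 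Equivalently, one may gauge-invert the smooth finite-dimensional approximations $\Phi^{N}(t)\mathbb{P}_{N}u_{0,\omega}$ supplied by Theorem~\ref{Thm:LWP}(v) and pass to the limit. This proves the corollary.

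The only step with genuine content is (c): identifying the precise class in which \eqref{Eqn:gKdV} is to be interpreted and confirming that the (formally trivial) gauge identity survives at regularity $H^{1/2-}$, which is handled either by the negative-regularity distributional formulation above or by approximation from the finite-dimensional flows. The inversion of the gauge itself is a continuous spatial translation and is entirely soft; in particular $(\mathcal{G}^{-1})_{*}\mu$ is invariant under the flow of \eqref{Eqn:gKdV}, although the corollary claims only the existence of these almost sure global solutions.
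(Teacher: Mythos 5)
Your proof is correct and takes essentially the same approach as the paper, which asserts the corollary simply ``by inverting the gauge transformation'' and leaves the verification implicit; you supply the sound details — well-definedness of the shift via Sobolev embedding, the translation isometry of $H^{s}$, and the distributional chain rule (or, equivalently, passage to the limit along the finite-dimensional approximants).

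One caution on the closing aside: the claim that $(\mathcal{G}^{-1})_{*}\mu$ is invariant under the flow of \eqref{Eqn:gKdV} is not justified, and in fact conflicts with the paper's own discussion. Since the gauge preserves initial data, $\mathcal{G}^{-1}$ is the identity at $t=0$, so $(\mathcal{G}^{-1})_{*}\mu = \mu$, and the assertion would amount to Gibbs measure invariance for the ungauged quartic gKdV — precisely what Remark 3 of the paper poses as an open question. The obstacle is that the translation amount $\nu_{v}(t)$ is data-dependent, so the time-$t$ gauge map is not a fixed spatial translation and does not automatically preserve $\mu$; the paper obtains only a time-dependent family of measures $\nu_{t}$ with $\Psi(t)^{*}\nu_{t}=\mu$, which is weaker than $\Psi(t)^{*}\mu=\mu$. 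This aside is inessential to your proof of the corollary, which asserts only existence, but it should not be stated as a consequence.
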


\begin{remark}
In terms of global theory, GWP of \eqref{Eqn:gKdV} in $H^{s}(\mathbb{T})$ for $s>\frac{5}{6}$ was established in \cite{CKSTT1} using the $I$-method.  This is mentioned to emphasize that, to the knowledge of the author, Corollary \ref{Cor:GWP} is the first result to provide global-in-time solutions to \eqref{Eqn:gKdV} below $H^{\frac{5}{6}}(\mathbb{T})$.  We further note that these solutions evolve from data at a spatial regularity where even local theory is unavailable at present (below $H^{\frac{1}{2}}(\mathbb{T})$).
\end{remark}
\begin{remark}
The solution of \eqref{Eqn:gKdV-zeromean} produced by Theorem \eqref{Thm:GWP} is unique in a mild sense only.  For the technical statement, see Remark \ref{Rem:un1} in Section \ref{Sec:GWP}.  For the solution of \eqref{Eqn:gKdV} produced by Corollary \ref{Cor:GWP}, we have an existence result only.
\end{remark}
\begin{remark}
By composing with a modified and time-dependent gauge transformation $\widetilde{\mathcal{G}}=\widetilde{\mathcal{G}}_{t}$, we can obtain a time-dependent measure $\nu_{t} := \mu \circ \widetilde{\mathcal{G}}_{t}$, supported on $ H^{s}(\mathbb{T})$ for $s<\frac{1}{2}$, which (due to Theorem \ref{Thm:GWP}) satisfies $\Psi(t)^{*}\nu_{t} = \mu$ for each $t\geq 0$, where $\Psi(t)$ is the evolution operator for \eqref{Eqn:gKdV} (well-defined in the support of the Gibbs measure by Corollary \ref{Cor:GWP}).  This leads to a natural question for future investigation: how is the time-dependent measure $\nu_{t}=\mu \circ \widetilde{\mathcal{G}}_{t}$ related to the Gibbs measure $\mu$?  Do we in fact have invariance of the Gibbs measure for the ungauged quartic gKdV \eqref{Eqn:gKdV}?  This type of issue was recently explored for the periodic derivative NLS \cite{NRSS}.
\end{remark}





\noindent For the remainder of the introduction we provide more background on this problem, then outline the methods involved and the challenges confronted in the proofs of Theorem \ref{Thm:LWP} and Theorem \ref{Thm:GWP}.


\subsection{Background}

Lebowitz-Rose-Speer \cite{LRS} initiated the study of invariant Gibbs measures for Hamiltonian PDEs.  They constructed the Gibbs measure as a weighted Wiener measure.  Recall that the Wiener measure\footnote{This is the mean zero Wiener measure, but we restrict attention to measures, data, and solutions with spatial mean zero throughout this paper, and will often omit the pre-fix ``mean zero".}, $\rho$, is the probability measure supported on $\bigcap_{s<\frac{1}{2}}H^{s}(\mathbb{T})$ with density
\begin{align}
d\rho = Z_{0}^{-1}e^{-\frac{1}{2}\int u_{x}^{2}dx}\prod_{x\in \mathbb{T}}du(x), \ \ u \ \text{mean zero}.
\label{Eqn:wiener}
\end{align}
This is a purely formal expression, but it provides intuition.  We can in fact define $\rho$ as the weak limit of a sequence of finite-dimensional Wiener measures $\rho_{N}$.  Each $\rho_{N}$ is the probability measure on $\mathbb{C}^{N}$ (the space of Fourier coefficients) with density
\begin{align}
d\rho_{N} = Z_{N}^{-1}
e^{-\frac{1}{2}\sum_{0<n\leq N}|n|^{2}|\widehat{u}_{n}|^{2}}
\prod_{0< n\leq N}d\widehat{u}_{n}, \ \ \ ,
\label{Eqn:wiener2}
\end{align}
pushed forward to $E_{N}=\text{span}\{\sin(nx),\cos(nx):0<|n|\leq N\}$ by the map
\begin{align}
\{\widehat{u}_{n}\}_{0< n \leq N}\longmapsto
\sum_{0<|n|\leq N}\widehat{u}_{n}e^{inx},
\ \ \ \text{with}\ \hat{u}_{-n}= \overline{\hat{u}_{n}},
\label{Eqn:pushfor}
\end{align}
and then by extension to $\bigcap_{s<\frac{1}{2}}H^{s}(\mathbb{T})$.
In the expressions \eqref{Eqn:wiener} and \eqref{Eqn:wiener2}, $Z_{0}$ and $Z_N$ are normalizing constants, and $\prod_{0< n \leq N}d\widehat{u}_{n}$ denotes the Lebesgue measure on $\mathbb{C}^{N}$.  We can interpret $\rho_{N}$ and $\rho$ as the probability measures on $\bigcap_{s<\frac{1}{2}}H^{s}(\mathbb{T})$ induced by the maps
$\omega \mapsto \sum_{0<|n|\leq N}\frac{g_{n}(\omega)}{|n|}$ and $\omega \mapsto \sum_{n\in \mathbb{Z}\setminus{0}}\frac{g_{n}(\omega)}{|n|}$, respectively, where $\{g_{n}\}_{n=1}^{\infty}$ is a sequence of complex-valued Gaussian random variables of mean 0 and variance 1 on a probability space $(\Omega,\mathcal{F},\mathbb{P})$, and $g_{-n}=\overline{g_{n}}$.

In \cite{LRS}, it was shown that the \textit{Gibbs measure}\footnote{The Gibbs measure was constructed for NLS in \cite{LRS}, but the same method applies to gKdV, see \cite{B3}.}, given by
\begin{align}
d\mu &= {Z_{0}}^{-1}\chi_{\{\|u\|_{2}\leq B\}}e^{-H(u)}\prod_{x\in \mathbb{T}}du(x) \notag \\
&= {Z_{0}}^{-1}\chi_{\{\|u\|_{2}\leq B\}}e^{-\frac{1}{2}\int_{\mathbb{T}} u_{x}^{2}dx -
\frac{1}{p(p+1)}\int_{\mathbb{T}} u^{p+1}dx}\prod_{x\in \mathbb{T}}du(x) \notag \\
&:= \chi_{\{\|u\|_{2}\leq B\}}e^{-\frac{1}{p(p+1)}\int_{\mathbb{T}}u^{p+1}dx}d\rho,
\label{Eqn:gibbs}
\end{align}
is a measure (for $p\leq 5$ only, and with restrictions on $B$ for $p=5$) that is absolutely continuous with respect to the Wiener measure $\rho$.  That is, Lebowitz-Rose-Speer provided a definition for the Gibbs measure $\mu$ for \eqref{Eqn:gKdV-p}.

\smallskip

Let us briefly explain why Hamiltonian structure (e.g. the reformulation \eqref{Eqn:HAM} of \eqref{Eqn:gKdV}) provokes the question of Gibbs measure invariance.  Consider the finite-dimensional Hamiltonian system given by
\begin{align}
\left\{
\begin{array}{ll} \partial_{t}u^{N} + \partial_{x}^{3}u^{N} = \mathbb{P}_{N}(\frac{1}{p}\partial_{x}((u^{N})^{p}))  , \ \
x\in \mathbb{T}, t\in\mathbb{R},
\\
u^{N}(0,x) = \mathbb{P}_{N}(u_{0}(x)),  \ \ u_{0} \ \text{mean zero.}
\end{array} \right.
\label{Eqn:gKdV-N}
\end{align}
where $\mathbb{P}_{N}$ denotes Dirichlet projection to $E_{N}$.
The flow of \eqref{Eqn:gKdV-N} leaves the following quantities invariant:
\begin{enumerate}[(i)]
\item The Hamiltonian
    $H(u^{N})$.

\item The $L^{2}$-norm
    $\|u^N\|_{L^{2}}=(\sum_{ 0< |n| \leq N}|\widehat{u}_{n}|^{2})^{\frac{1}{2}}$.

\item The Lebesgue measure $\prod_{0< n \leq N}d\widehat{u}_{n}$ on $\mathbb{C}^{N}$ pushed forward to $E_{N}$ by the map \eqref{Eqn:pushfor} (by Liouville's Theorem).
\end{enumerate}
The system \eqref{Eqn:gKdV-N} therefore preserves the finite-dimensional Gibbs measure $\mu_{N}$, defined as the push-forward to $E_{N}$ of the measure on $\mathbb{C}^{N}$ with density
\begin{align}
d\mu_{N}=\frac{1}{Z_{N}}\chi_{\{\sum_{0< |n| \leq N}|\widehat{u}_{n}|^{2}\leq B\}}e^{-H(\sum_{0<|n|\leq N}\hat{u}_{n}e^{inx})}
\prod_{0< n \leq N}d\widehat{u}_{n}, \ \ \text{with}\ \hat{u}_{-n}=\overline{\hat{u}_{n}}.
\label{Eqn:intro-6}
\end{align}
Taking $N\rightarrow \infty$, comparing \eqref{Eqn:wiener}, \eqref{Eqn:gibbs} and \eqref{Eqn:intro-6}, we can interpret the Gibbs measure $\mu$ as the limit of the truncated Gibbs measures $\mu_{N}$ (for a rigorous formulation of these convergence results, see Section \ref{Sec:GWP}).  By comparison with finite-dimensions, the Gibbs measure \eqref{Eqn:gibbs} is formally invariant under the flow of \eqref{Eqn:gKdV}.

\subsection{Nonlinear smoothing for the second iteration}

As discussed above, we can construct the Gibbs measure for the quartic gKdV \eqref{Eqn:gKdV} as in \cite{LRS,B3}.  However, the Gibbs measure is supported below $H^{\frac{1}{2}}(\mathbb{T})$, and local well-posedness of the quartic gKdV (both gauged and ungauged) cannot be established in $H^{s}(\mathbb{T})$ for $s<\frac{1}{2}$ by applying the contraction principle to an equivalent integral equation; the data-to-solution map is not $C^4$ \cite{CKSTT1}.

The same issue was confronted by Bourgain in \cite{B4}.  He considered the Wick-ordered cubic NLS on $\mathbb{T}^{2}$ with randomized data given by
\begin{align}
\tilde{u}_{0,\omega}(x)
=\sum_{n\in\mathbb{Z}^{2}}\frac{g_{n}(\omega)}{\sqrt{1+|n|^2}}e^{in\cdot x},
\label{Eqn:initialdata2}
\end{align}
where $\{g_{n}\}_{n\in\mathbb{Z}^{2}}$ is a collection of complex-valued independent and identically distributed Gaussian random variables of mean 0 and variance 1 on a probability space $(\Omega,\mathcal{F},\mathbb{P})$.  The data \eqref{Eqn:initialdata2} represents a typical element in the support of the 2-dimensional Wiener measure (the Gaussian part of the Gibbs measure for 2-dimensional Wick-ordered cubic NLS).

Bourgain quantified the nonlinear smoothing effect by proving that, with high probability, the nonlinear part of the solution to the Wick-ordered cubic NLS lies in a smoother space - $C([0,T];H^{s}(\mathbb{T}^{2}))$ for some $s>0$ - than the linear evolution of the randomized data, which, in contrast, almost surely stays below $L^{2}(\mathbb{T}^{2})$ for all time.  More precisely, for all $T>0$ sufficiently small, he constructed a set $\Omega_{T}\subset \Omega$ corresponding to ``good" randomized data $\tilde{u}_{0,\omega}$, such that $\Omega_{T}$ is exponentially likely as a function of $T\downarrow 0$, and such that for each $\omega\in\Omega_{T}$, he could prove local existence and uniqueness of the solution to the Wick-ordered cubic NLS with data $\tilde{u}_{0,\omega}$ for $t\in[0,T]$ by performing a contraction argument in the space $\{e^{it\Delta}\tilde{u}_{0,\omega} +  B\}$, where $B$ is a ball in $Z^{s,\frac{1}{2}}_{T}\subset C([0,T];H^{s}(\mathbb{T}^{2}))$ for some $s>0$ (for the definition of the function space $Z^{s,\frac{1}{2}}_{T}$, consult section \ref{Sec:lin}).  By taking an appropriate union over sets of this type (with $T\downarrow 0$), he obtained local well-posedness almost surely for the Wick-ordered cubic NLS below $L^{2}(\mathbb{T}^{2})$.  For other works that have used nonlinear smoothing to establish local dynamics in the support of measures on phase space, see (for example) Burq-Tzvetkov \cite{BT1,BT2,BT3}, Oh \cite{O5}, Colliander-Oh \cite{COH}, and Nahmod-Pavlovi\'{c}-Staffilani \cite{NPS}.

\smallskip

For \eqref{Eqn:gKdV-zeromean}, we consider randomized initial data of the form \eqref{Eqn:initialdata}.  We could not follow the method of \cite{B4} directly, and perform a contraction argument for \eqref{Eqn:gKdV-zeromean} (with exponential likelihood) in $\{S(t)u_{0,\omega} + B_{R}\}$, where $B_{R}$ is a ball of radius $R$ in the Banach space $Z^{s,\frac{1}{2}}_{T}$.  In particular, we found that the square of the $Z^{s,\frac{1}{2}}_{T}$-norm of the first Picard iterate for \eqref{Eqn:gKdV-zeromean} has infinite expectation.  This is due to the temporal regularity $b=\frac{1}{2}$ of the $Z^{s,\frac{1}{2}}_{T}$ space.

To avoid this obstruction, we establish estimates on the nonlinear part of the Duhamel formulation for the gauge-transformed quartic gKdV \eqref{Eqn:gKdV-zeromean} in $X^{s,b}_{T}$, with $s>\frac{1}{2}$ and $b<\frac{1}{2}$.  Unfortunately, with $b<\frac{1}{2}$, we cannot perform (with exponential likelihood) a contraction argument for \eqref{Eqn:gKdV-zeromean} in $\{S(t)u_{0,\omega} + B_{R}\}$, where $B_{R}$ is a ball of radius $R$ in the Banach space $X^{s,b}_{T}$.  Indeed, by taking $b<\frac{1}{2}$, there are regions of frequency space which (produce contributions that) make the nonlinear estimates required for a contraction argument impossible.

This is resolved by establishing a priori estimates on
the \textit{second iteration} of the Duhamel formulation of \eqref{Eqn:gKdV-zeromean}.  More precisely, the local-in-time solution $u$ to \eqref{Eqn:gKdV-zeromean} will be constructed as the limit in $X^{\frac{1}{2}-\delta,\frac{1}{2}-\delta}_{T}$ (with $0<\delta\ll 1$) of a sequence of smooth solutions $u^{N}$ evolving from frequency truncated data
$u^{N}_{0,\omega}=\mathbb{P}_{N}(u_{0,\omega})$.
Each $u^{N}$ will satisfy the Duhamel formulation
\begin{align}
u^{N}(t)= S(t)u^{N}_{0,\omega}
+ \int_{0}^{t}S(t-s)
\mathcal{N}(u^{N}(s))ds,
\label{Eqn:intro-4b}
\end{align}
where $\mathcal{N}(u)=u_{x}\Big(u^{3}-\frac{1}{2\pi}\int_{\mathbb{T}} u^{3}dx\Big)$ is the gauge-transformed nonlinearity.  We will estimate $\|u^N\|_{X^{\frac{1}{2}-\delta,\frac{1}{2}-\delta}_{T}}, \|u^N-u^M\|_{X^{\frac{1}{2}-\delta,\frac{1}{2}-\delta}_{T}}$ in order to establish convergence to a solution $u$ as $N\rightarrow \infty$.  Moreover, we will establish convergence of the nonlinear part of the smooth solutions $u^N$ in $X^{\frac{1}{2}+\delta,\frac{1}{2}-\delta}_{T}$ (notice the increase in spatial regularity from $\frac{1}{2}-\delta$ to $\frac{1}{2}+\delta$ due to nonlinear smoothing).  During the nonlinear estimates, in the troublesome regions of frequency space (created by taking $b<\frac{1}{2}$) we will substitute \eqref{Eqn:intro-4b} into an appropriately chosen factor of the nonlinearity.

This will resolve the technical obstruction (due to $b<\frac{1}{2}$), but by considering a second iteration of \eqref{Eqn:intro-4b} into just one of the factors, the multilinearity in our analysis expands from quartic to heptic.  Indeed, we will establish probabilistic heptilinear estimates on the second iteration of \eqref{Eqn:intro-4b}.  This is the trade-off involved in proving estimates on the second iteration with $b<\frac{1}{2}$: \textit{we can take $b<\frac{1}{2}$ at the cost of conducting a higher order multilinear analysis}.

This approach (using $b<\frac{1}{2}$ and the second iteration) was pioneered by Bourgain \cite{B2} in the analysis of KdV with measures as initial data.  The argument was adapted to the setting of randomized initial data by Oh \cite{O5}, who proved invariance of the white noise measure for the periodic KdV (see also \cite{QV}).  Our approach is similar, but we consider the quartic gKdV and the Gibbs measure (as opposed to the KdV and white noise).

The primary source of difficulty for well-posedness of quartic gKdV (including the $C^{4}$-failure in \cite{CKSTT1}) is the existence of distinct frequencies $n,n_{1},\cdots,n_{4}\in\mathbb{Z}$, such that $n = n_{1} + \cdots + n_{4}$, $|n|\sim|n_{1}|\sim N\gg 0$, but such that $|n^{3}-n_{1}^{3}-\cdots - n_{4}^{3}|\ll N^{2}$.  This does not occur for KdV and mKdV, which have dispersion relations with cubic factorizations, and it makes the nonlinear analysis for quartic gKdV more labor intensive.  Indeed, the regions of frequency space where these conditions are satisfied required us to use $b<\frac{1}{2}$ (and thus to consider a second iteration of \eqref{Eqn:intro-4b}).  Furthermore, it is in these regions of frequency space where our nonlinear estimates will rely most heavily on certain probabilistic lemmata (specifically involving hyper-contractivity properties of the Ornstein-Uhlenbeck semigroup, see Lemmas \ref{Lemma:prob1}-\ref{Lemma:prob2} in section \ref{Sec:NLproof}) and estimates involving a matrix norm (see Lemma \ref{Lemma:matrixnorm}).

\subsection{Global-in-time solutions}

To establish Theorem \ref{Thm:GWP}, we follow the scheme of \cite{B3}.
Using the invariance of the (finite-dimensional) Gibbs measure under the flow of the system \eqref{Eqn:gKdV-ZMN}, we extend the local solutions of \eqref{Eqn:gKdV-zeromean} (from Theorem \ref{Thm:LWP}) to global solutions, almost surely, and prove the invariance of the Gibbs measure under the flow.

To show that the system \eqref{Eqn:gKdV-ZMN} preserves the (finite-dimensional) Gibbs measure $\mu_{N}$ \eqref{Eqn:intro-6}, we need to prove that the $L^{2}$-norm of $u^N$, the Hamiltonian $H(u^{N})$ and the Lebesgue measure on phase space are all invariant under the flow.  The invariance of the Lebesgue measure is trivial (by Liousville's Theorem) for a Hamiltonian system, but the Hamiltonian formulation of \eqref{Eqn:gKdV} is disrupted by the gauge transformation (and the same is true in finite dimensions).  Instead, we will verify the invariance of the Lebesgue measure under the flow of \eqref{Eqn:gKdV-ZMN} directly (see Proposition \ref{Prop:trunc-inv} in section \ref{Sec:GWP}).  In particular, we will write \eqref{Eqn:gKdV-ZMN} as a system of ODEs in the space of Fourier coefficients, and verify that this system is driven by a divergence-free vector field.

\subsection{The gauge transformation}

Following the standard reductions of \cite{S1}, \cite{CKSTT1}, we apply the gauge transformation
\begin{align}
v(x,t)=\mathcal{G}(u(x,t)):= u\bigg(x-\bigg(\int_{0}^{t}\int_{\mathbb{T}}u^{3}(x',t')dx'dt'\bigg),t\bigg).
\label{Eqn:gaugetransform-a}
\end{align}
This transformation preserves the initial data $u_{0}$.  Also, it is invertible:
\begin{align}
u(x,t)=\mathcal{G}^{-1}(v(x,t))=v\bigg(x+\bigg(\int_{0}^{t}\int_{\mathbb{T}}v^{3}(x',t')dx'dt'\bigg),t\bigg).
\label{Eqn:gaugetransform}
\end{align}
\noindent
Note that \eqref{Eqn:gaugetransform} is well-defined for $v\in X^{1/2-,1/2-}$ by the embedding $X^{1/2-,1/2-} \subset L^{3}_{x,t}$.  Then $u$ solves \eqref{Eqn:gKdV} if and only if $v$ solves \eqref{Eqn:gKdV-zeromean}.  Since $v^{3}v_{x}=\partial_{x}(v^{4})$ and $v_{x}$ both have mean zero, so does $\mathbb{P}(v^{3})v_{x}$.  We can rewrite \eqref{Eqn:gKdV-zeromean} as
\begin{align}
\left\{
\begin{array}{ll} \partial_{t}v + \partial_{x}^{3}v = \mathbb{P}(\mathbb{P}(v^{3})v_{x})  , \ \
t\geq 0, x\in \mathbb{T}
\\
v(x,0) = u_{0}(x).
\end{array} \right.
\label{Eqn:gKdV-zeromean2}
\end{align}


In addition, note that since $\int_{\mathbb{T}}v^{2}v_{x}=\frac{1}{3}
\int_{\mathbb{T}}(v^{3})_{x}=0$, and $\int_{\mathbb{T}}v v_{x}=\frac{1}{2}
\int_{\mathbb{T}}(v^{2})_{x}=0$, we can subtract
$3\mathbb{P}(v)\int_{\mathbb{T}}v^{2}v_{x} + 3\mathbb{P}(v^{2})\int_{\mathbb{T}}v v_{x}$ from the right hand side of \eqref{Eqn:gKdV-zeromean2}, with no effect, and rewrite \eqref{Eqn:gKdV-zeromean} as
\begin{align}
\left\{
\begin{array}{ll} \partial_{t}v + \partial_{x}^{3}v = \mathbb{P}(\mathbb{P}(v^{3})v_{x})
- 3\mathbb{P}(v)\int_{\mathbb{T}}v^{2}v_{x} - 3\mathbb{P}(v^{2})\int_{\mathbb{T}}v v_{x}  , \ \
t\in\mathbb{R}, x\in \mathbb{T}
\\
v(x,0) = u_{0}(x).
\end{array} \right.
\label{Eqn:gKdV-zeromean3}
\end{align}
The reformulation \eqref{Eqn:gKdV-zeromean3} of \eqref{Eqn:gKdV-zeromean}
will be needed during the proof of crucial nonlinear estimates.
The equations \eqref{Eqn:gKdV} and \eqref{Eqn:gKdV-zeromean} leave the same Hamiltonian
\begin{align}
H(u)= \frac{1}{2}\int u_{x}^{2}dx +  \frac{1}{20}\int u^{5}dx = \frac{1}{2}\int v_{x}^{2}dx +  \frac{1}{20}\int v^{5}dx =:H(v),
\end{align}
invariant under the flow.

\smallskip

The remainder of this paper is organized as follows.
In Section \ref{Sec:lin} we present the basic linear estimates related to the  propagator $S(t) := e^{- \partial_{x}^{3} t }$ of the linear part of gKdV.  In Section \ref{Sec:NLest} we present the nonlinear estimates to be used in the proof of local well-posedness (Theorem \ref{Thm:LWP}).  In Section \ref{Sec:LWP} we will prove Theorem \ref{Thm:LWP}.  Section \ref{Sec:GWP} contains the proof of Theorem \ref{Thm:GWP}.  It is divided into parts: 5.1 construction of the Gibbs measure, 5.2. invariance of the Gibbs measure for the projection of \eqref{Eqn:gKdV-zeromean} to the first $N>0$ modes of the trigonometric system, 5.3. global well-posedness (almost surely) for \eqref{Eqn:gKdV-zeromean}, and 5.4. invariance of the Gibbs measure for \eqref{Eqn:gKdV-zeromean}.
Section \ref{Sec:NLproof-main} is devoted to the proof of the crucial nonlinear estimates.  The proofs of various lemmata that will be used throughout this paper, as well as an expanded discussion of the nonlinear smoothing effect, can be found in the Ph.D. thesis of the author \cite{R-Th}.


\section{Linear estimates}
\label{Sec:lin}

In \cite{B1}, Bourgain introduced a weighted space-time Sobolev space $X^{s,b}$ whose norm is
given by
\begin{align*}
\|u\|_{X^{s,b}}=\|\langle n\rangle^{s}\langle \tau-n^{3}\rangle^{b}\hat{u}(n,\tau)\|_{l^{2}_{n}L^{2}_{\tau}}.
\end{align*}
Since the $X^{s, \frac{1}{2}}$ norm fails to control $L^\infty_t H^s_x$ norm, a smaller space
$Z^{s, b}(\mathbb{T} \times \mathbb{R})$ was also introduced, whose norm is given by
\begin{equation} \label{Zsb}
\| u\|_{Z^{s, b}(\mathbb{T} \times \mathbb{R})}
:= \| u\|_{X^{s, b}(\mathbb{T} \times \mathbb{R})}
+ \| u\|_{Y^{s, b-\frac{1}{2}}(\mathbb{T} \times \mathbb{R})},
\end{equation}
\noindent
where $\langle\, \cdot\, \rangle = 1 + |\cdot|$ and
$\| u\|_{Y^{s, b}(\mathbb{T} \times \mathbb{R})}
= \|\langle n\rangle ^s \langle\tau - n^3\rangle^b \hat{u}(n, \tau)\|_{l^2_n L^1_\tau(\mathbb{Z} \times\mathbb{R})}.$
One also defines the local-in-time version
$Z^{s, b}_{ T }$  on $\mathbb{T} \times [0, T]$,
by
\[ \|u\|_{Z^{s, b}_{T}} =  \inf{ \big\{ \|\tilde{u} \|_{Z^{s, b}(\mathbb{T} \times \mathbb{R})}: {\tilde{u}|_{[0, T]} = u}\big\}}.\]
\noindent
The local-in-time versions of other function spaces are defined analogously.

In this section we present the basic linear estimates related to gKdV.  Let $S(t) := e^{- \partial_{x}^{3} t }$ and $T\leq 1$ in the following.
We first state the homogeneous and nonhomogeneous linear estimates.  See \cite{B1, KPV1} for details.

\begin{lemma}
For any $s \in \mathbb{R}$ and $b < \frac{1}{2}$, we have
$\|  S(t) u_0\|_{X^{s, b}_{T}} \lesssim T^{\frac{1}{2}-b}\|u_0\|_{H^s}$.
\label{Lemma:lin1}
\end{lemma}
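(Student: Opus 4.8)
The plan is to prove the homogeneous linear estimate $\|S(t)u_0\|_{X^{s,b}_T}\lesssim T^{\frac{1}{2}-b}\|u_0\|_{H^s}$ by reducing it, via a standard cutoff argument, to a one-dimensional estimate in the temporal variable. First I would fix a smooth bump function $\eta\in C_c^\infty(\mathbb{R})$ with $\eta\equiv 1$ on $[-1,1]$ and $\mathrm{supp}\,\eta\subset[-2,2]$, and set $\eta_T(t):=\eta(t/T)$. Since $\eta_T(t)S(t)u_0$ is an extension of $S(t)u_0|_{[0,T]}$ (using $T\le 1$), by definition of the local-in-time norm it suffices to bound $\|\eta_T(t)S(t)u_0\|_{X^{s,b}(\mathbb{T}\times\mathbb{R})}$.

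Next I would compute the space-time Fourier transform: $\widehat{\eta_T S(\cdot)u_0}(n,\tau)=\widehat{u_0}(n)\,\widehat{\eta_T}(\tau-n^3)$, because $S(t)u_0$ has Fourier coefficient $e^{itn^3}\widehat{u_0}(n)$ in $x$, and multiplying by $\eta_T(t)$ and taking the Fourier transform in $t$ translates the frequency by $n^3$. Plugging into the definition of the $X^{s,b}$ norm and using $\widehat{\eta_T}(\tau)=T\,\widehat{\eta}(T\tau)$, the sum over $n$ factors out and one is left with
\begin{align*}
\|\eta_T S(\cdot)u_0\|_{X^{s,b}}^2
= \sum_n \langle n\rangle^{2s}|\widehat{u_0}(n)|^2
\int_{\mathbb{R}}\langle \sigma\rangle^{2b}\,T^2|\widehat{\eta}(T\sigma)|^2\,d\sigma,
\end{align*}
after the substitution $\sigma=\tau-n^3$. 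So the whole estimate reduces to showing $\int_{\mathbb{R}}\langle\sigma\rangle^{2b}T^2|\widehat{\eta}(T\sigma)|^2\,d\sigma\lesssim T^{1-2b}$ for $0<T\le 1$ and $b<\frac12$.

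For that scalar integral I would substitute $\lambda=T\sigma$, turning it into $T\int_{\mathbb{R}}\langle \lambda/T\rangle^{2b}|\widehat{\eta}(\lambda)|^2\,d\lambda$, and then split the region of integration into $|\lambda|\le T$ and $|\lambda|>T$. On $|\lambda|\le T$ one has $\langle\lambda/T\rangle\sim 1$, contributing $O(T)\le O(T^{1-2b})$ since $b<\frac12$; on $|\lambda|>T$ one has $\langle\lambda/T\rangle\lesssim |\lambda|/T$, so the integrand is bounded by $(|\lambda|/T)^{2b}|\widehat{\eta}(\lambda)|^2$, giving $T\cdot T^{-2b}\int_{\mathbb{R}}|\lambda|^{2b}|\widehat{\eta}(\lambda)|^2\,d\lambda = T^{1-2b}\cdot C_\eta$, where $C_\eta<\infty$ because $\widehat{\eta}$ is Schwartz. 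The only place where real care is needed is the requirement $b<\frac12$, which is precisely what makes the $|\lambda|\le T$ piece subordinate to $T^{1-2b}$ rather than dominating it; this is the sole (and mild) obstacle, and it is exactly why the hypothesis $b<\frac12$ appears in the statement. Summing over $n$ with the factor $\langle n\rangle^{2s}|\widehat{u_0}(n)|^2$ then yields $\|u_0\|_{H^s}^2$ and completes the proof.
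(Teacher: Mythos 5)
Your overall strategy is the standard one (and the one the references \cite{B1,KPV1} that the paper points to also use): truncate smoothly at scale $T$, compute the space--time Fourier transform, factor out the $n$-sum via $\widehat{\eta_T}(\tau)=T\widehat{\eta}(T\tau)$, and reduce to a scalar integral. The paper itself gives no proof, so there is nothing to contrast against except the literature, and at that level your reduction is exactly right.

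There is, however, a genuine logical slip at the final step. You write that on $|\lambda|\le T$ the piece contributes $O(T)\le O(T^{1-2b})$ ``since $b<\tfrac12$.'' But for $0<T\le 1$ the inequality $T\lesssim T^{1-2b}$ is equivalent to $1\ge 1-2b$, i.e.\ to $b\ge 0$; the hypothesis $b<\tfrac12$ is irrelevant to this comparison. (Even with the sharper count $\int_{|\lambda|\le T}|\widehat{\eta}|^2\lesssim T$, which gives $O(T^2)$ for this piece, you still only get $T^2\lesssim T^{1-2b}$ under $b\ge -\tfrac12$.) So your claim that this step is ``exactly why the hypothesis $b<\tfrac12$ appears'' is mistaken: what that step secretly uses is a \emph{lower} bound on $b$, which the lemma as stated does not make explicit but which is consistent with every use of this lemma in the paper ($b=\tfrac12-\delta>0$), and with the more carefully phrased Lemma \ref{Lemma:gainpowerofT} (``$0<b<\tfrac12$''). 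For $b<0$ the bound $T^{1/2-b}$ is actually stronger than what the cutoff computation yields (which is $T^{1/2}$), so either the proof or the claimed exponent must fail there. To make your argument airtight, either restrict to $b\ge 0$ and cite that explicitly at this step, or note that the $|\lambda|\le T$ piece is already $O(T^2)$ and then state the constraint $b>-\tfrac12$ coming both from this piece and from the convergence at $\lambda=0$ of $\int|\lambda|^{2b}|\widehat{\eta}(\lambda)|^2\,d\lambda$.

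Aside from this one misattributed justification, the computation and conclusion are correct for the range of $b$ actually used in the paper.
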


\begin{lemma}
For any $s,b \in \mathbb{R}$, we have
$\| \eta_{T}(t)S(t) u_0\|_{X^{s, b}} \leq C(T)\|u_0\|_{H^s}$.
\label{Lemma:lin1b}
\end{lemma}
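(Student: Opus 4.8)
The statement in question is Lemma \ref{Lemma:lin1b}: for any $s,b\in\mathbb{R}$, $\|\eta_T(t)S(t)u_0\|_{X^{s,b}}\leq C(T)\|u_0\|_{H^s}$, where $\eta_T$ is a smooth time cutoff localizing to $[-T,T]$ (or a rescaled bump).

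\textbf{Proof plan.} The plan is to compute the space-time Fourier transform of $\eta_T(t)S(t)u_0$ explicitly and reduce the $X^{s,b}$-norm to a one-dimensional weighted $L^2_\tau$ estimate on $\widehat{\eta_T}$. First I would note that $S(t)u_0$ has Fourier coefficients $\widehat{u_0}(n)e^{itn^3}$ in $x$, so that the full space-time Fourier transform of $\eta_T(t)S(t)u_0$ is $\widehat{u_0}(n)\,\widehat{\eta_T}(\tau-n^3)$. Plugging this into the definition \eqref{Eqn:intro-1} of the $X^{s,b}$-norm gives
\begin{align*}
\|\eta_T(t)S(t)u_0\|_{X^{s,b}}^2
= \sum_{n}\langle n\rangle^{2s}|\widehat{u_0}(n)|^2
\int_{\mathbb{R}}\langle\tau-n^3\rangle^{2b}\,|\widehat{\eta_T}(\tau-n^3)|^2\,d\tau.
\end{align*}
The inner integral is, after the change of variables $\tau\mapsto\tau+n^3$, simply $\int_{\mathbb{R}}\langle\tau\rangle^{2b}|\widehat{\eta_T}(\tau)|^2\,d\tau$, which is independent of $n$. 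Hence the whole expression factors as $\big(\int_{\mathbb{R}}\langle\tau\rangle^{2b}|\widehat{\eta_T}(\tau)|^2\,d\tau\big)\cdot\sum_n\langle n\rangle^{2s}|\widehat{u_0}(n)|^2 = C(T)^2\|u_0\|_{H^s}^2$, where $C(T)^2 := \int_{\mathbb{R}}\langle\tau\rangle^{2b}|\widehat{\eta_T}(\tau)|^2\,d\tau$.

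\textbf{The one remaining point} is to check that $C(T)$ is finite, i.e.\ that $\widehat{\eta_T}\in H^b(\mathbb{R})$ for every real $b$. This is immediate because $\eta_T$ is a Schwartz function (a smooth compactly supported bump), so $\widehat{\eta_T}$ is Schwartz and in particular $\langle\tau\rangle^b\widehat{\eta_T}(\tau)\in L^2_\tau$ for all $b\in\mathbb{R}$; the constant depends on $T$ (and on $b$) through the rescaling $\eta_T(t)=\eta(t/T)$, but that dependence is harmless since the estimate is only claimed with an unspecified $C(T)$. I do not expect any genuine obstacle here: unlike Lemma \ref{Lemma:lin1}, there is no need for a gain in $T$, so one simply absorbs everything into $C(T)$. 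The only mild care needed is to record explicitly which time cutoff $\eta_T$ is meant and to observe that the identity $\widehat{S(t)u_0}(n,\tau)=\widehat{u_0}(n)\widehat{\eta_T}(\tau-n^3)$ uses nothing beyond Fubini/Plancherel in $t$. This separation-of-variables structure — the modulation weight $\langle\tau-n^3\rangle$ interacting only with $\widehat{\eta_T}$, and the spatial weight $\langle n\rangle^s$ only with $\widehat{u_0}$ — is exactly why the estimate holds for \emph{all} $s,b$, in contrast to the more delicate local-in-time smoothing estimate of the previous lemma.
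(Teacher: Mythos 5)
Your argument is correct, and since the paper does not prove Lemma \ref{Lemma:lin1b} itself but defers to \cite{B1,KPV1}, your proof is precisely the standard one that appears there: the identity $(\eta_T(t)S(t)u_0)^{\wedge}(n,\tau)=\widehat{u_0}(n)\,\widehat{\eta_T}(\tau-n^3)$, followed by the change of variables $\tau\mapsto\tau+n^3$, factors the squared $X^{s,b}$-norm as $\|u_0\|_{H^s}^2\cdot\int\langle\tau\rangle^{2b}|\widehat{\eta_T}(\tau)|^2\,d\tau$, and the $\tau$-integral is finite for every $b$ because $\widehat{\eta_T}$ is Schwartz.
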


\begin{lemma}
For any $s \in \mathbb{R}$ and $b \leq \frac{1}{2}$, we have
\begin{align*}
 \bigg\|  \int_0^t S(t-t') F(x, t') dt'\bigg\|_{X^{s, b}_{ T}}
\lesssim \| F \|_{Z^{s, b-1}_{ T}}.
\end{align*}

\noindent
Also, we have
$ \big\|  \int_0^t S(t-t') F(x, t') dt'\big\|_{X^{s, b}_{ T}}
\lesssim \| F \|_{X^{s, b-1}}$
for $b > \frac{1}{2}$.
\label{Lemma:lin2}
\end{lemma}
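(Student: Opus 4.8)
The plan is to prove this by the classical argument of Bourgain \cite{B1} and Kenig-Ponce-Vega \cite{KPV1}: reduce the operator bound to a one-dimensional estimate in the time-frequency variable and a ``calculus lemma'' for the multiplier $\tau\mapsto (e^{it\tau}-1)/(i\tau)$. \textbf{Step 1 (reduction to a global estimate).} By definition of the local norms it suffices to fix a smooth cutoff $\eta$ with $\eta\equiv 1$ on $[-1,1]$ and $\mathrm{supp}\,\eta\subset[-2,2]$, and to prove, for globally-defined $F$,
\[
\Big\|\eta(t)\int_0^t S(t-t')F(x,t')\,dt'\Big\|_{X^{s,b}}\lesssim \|F\|_{Z^{s,b-1}}\ (b\le\tfrac12),\qquad \lesssim \|F\|_{X^{s,b-1}}\ (b>\tfrac12).
\]
Restricting to $[0,T]$ on the left and taking the infimum over extensions of $F$ on the right yields the claim, with a constant uniform in $T\le1$ since $\eta$ is fixed (this is why no power of $T$ appears, in contrast with Lemma \ref{Lemma:lin1}).

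\textbf{Step 2 (conjugation; reduction to one frequency).} Writing $w(t)=\int_0^tS(t-t')F(t')\,dt'$, on the spatial Fourier side $\widehat w(n,t)=\int_0^t e^{i(t-t')n^3}\widehat F(n,t')\,dt'$, so $e^{-itn^3}\widehat w(n,t)=\int_0^t g_n(s)\,ds$ with $g_n(t):=e^{-itn^3}\widehat F(n,t)$. Since the $X^{s,b}$ norm of $w$ equals $\|\langle n\rangle^s\langle\tau\rangle^b\,\mathcal{F}_t[e^{-itn^3}\widehat w(n,\cdot)](\tau)\|_{\ell^2_nL^2_\tau}$ after the modulation shift $\tau\mapsto\tau+n^3$, and likewise the $X^{s,b-1}$ and $Y^{s,b-3/2}$ norms of $F$ in terms of the $g_n$, the estimate decouples over $n$ and reduces to the scalar claim: for $g\in\mathcal{S}(\mathbb{R})$,
\[
\Big\|\langle\tau\rangle^b\,\mathcal{F}_t\Big[\eta(t)\int_0^t g(s)\,ds\Big](\tau)\Big\|_{L^2_\tau}\lesssim \|\langle\tau\rangle^{b-1}\widehat g\|_{L^2_\tau}+\|\langle\tau\rangle^{b-3/2}\widehat g\|_{L^1_\tau},
\]
with the second term dropped when $b>\tfrac12$.

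\textbf{Step 3 (the calculus lemma).} Using $\int_0^t g(s)\,ds=\int_{\mathbb{R}}\widehat g(\tau)\frac{e^{it\tau}-1}{i\tau}\,d\tau$, split the $\tau$-integral into $|\tau|\le1$ and $|\tau|\ge1$. For $|\tau|\le1$, Taylor-expanding $\frac{e^{it\tau}-1}{i\tau}=t\sum_{k\ge0}\frac{(it\tau)^k}{(k+1)!}$ writes this contribution as $\eta(t)$ times an absolutely convergent series of functions $t^{k+1}$ with Schwartz (hence $\langle\tau\rangle^b$-integrable) Fourier transforms, times the moments $\int_{|\tau|\le1}\tau^k\widehat g(\tau)\,d\tau$, each bounded by $\|\widehat g\|_{L^1(|\tau|\le1)}\le\|\langle\tau\rangle^{b-3/2}\widehat g\|_{L^1}$ (and only when $b>\tfrac12$, by $\|\langle\tau\rangle^{b-1}\widehat g\|_{L^2}$ via Cauchy-Schwarz). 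For $|\tau|\ge1$ write $\eta(t)\frac{e^{it\tau}-1}{i\tau}=\frac{\eta(t)e^{it\tau}}{i\tau}-\frac{\eta(t)}{i\tau}$: the first term has time-Fourier transform $\frac{\widehat\eta(\cdot-\tau)}{i\tau}$, and since $\langle\sigma\rangle^b\lesssim\langle\sigma-\tau\rangle^b\langle\tau\rangle^b$ with $\widehat\eta$ Schwartz and $\langle\tau\rangle^{-1}\langle\tau\rangle^b\sim\langle\tau\rangle^{b-1}$ on $|\tau|\ge1$, a Minkowski/Young bound gives $\|\langle\tau\rangle^{b-1}\widehat g\|_{L^2}$; the second term contributes $\widehat\eta(\sigma)$ times the scalar $\int_{|\tau|\ge1}\frac{\widehat g(\tau)}{i\tau}\,d\tau$, whose modulus is $\lesssim\|\langle\tau\rangle^{b-1}\widehat g\|_{L^2}$ when $b>\tfrac12$ (Cauchy-Schwarz, $\langle\tau\rangle^{-b}\in L^2(|\tau|\ge1)$) and $\lesssim\|\langle\tau\rangle^{b-3/2}\widehat g\|_{L^1}$ when $b\le\tfrac12$. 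Summing the pieces over $n$ in $\ell^2$ closes the estimate.

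\textbf{Main obstacle.} The delicate point is exactly the range $b\le\tfrac12$, and especially the endpoint $b=\tfrac12$: the constant-in-$\tau$ piece ``$-\eta(t)/(i\tau)$'' and the low-modulation piece $|\tau|\le1$ cannot be controlled by the $L^2_\tau$-weight $\langle\tau\rangle^{b-1}$ alone — they genuinely require the $L^1_\tau$-based $Y^{s,b-3/2}$ component of the $Z^{s,b-1}$ norm (with the $\langle\tau\rangle^{-3/2}$ weight, equivalently $b-1-\tfrac12$, exactly matching the $L^1$-endpoint bookkeeping). This is the structural reason the statement must use $Z$ rather than $X$ when $b\le\tfrac12$; for $b>\tfrac12$ the weight $\langle\tau\rangle^{b-1}$ is square-integrable at infinity and all terms close in $X^{s,b-1}$, recovering the standard estimate of \cite{B1, KPV1}.
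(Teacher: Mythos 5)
Your plan — reduce to a global estimate with a smooth cutoff $\eta$, conjugate away $S(t)$ to decouple over $n$, and estimate the one-variable multiplier $(e^{it\tau}-1)/(i\tau)$ after splitting $|\tau|\le 1$ from $|\tau|\ge 1$ — is the standard Ginibre--Tsutsumi--Velo / Kenig--Ponce--Vega argument. The paper itself only cites \cite{B1,KPV1} for this lemma and does not reprove it, so there is no in-paper proof to compare against. Your treatment of the low-modulation piece $|\tau|\le 1$, of the convolutive piece $\eta(t)e^{it\tau}/(i\tau)$ via $\langle\sigma\rangle^b\lesssim\langle\sigma-\tau\rangle^{|b|}\langle\tau\rangle^b$ and Young, and of the $b>\frac12$ case is correct.

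There is, however, a genuine gap in Step 3 for $b$ strictly less than $\frac12$. You assert that $\big|\int_{|\tau|\ge 1}\widehat g(\tau)/(i\tau)\,d\tau\big|\lesssim\|\langle\tau\rangle^{b-3/2}\widehat g\|_{L^1}$ when $b\le\frac12$, but on $|\tau|\ge 1$ the needed pointwise inequality $|\tau|^{-1}\lesssim\langle\tau\rangle^{b-3/2}$ is equivalent to $\langle\tau\rangle^{1/2-b}\lesssim 1$, which fails for every $b<\frac12$; the exponents match only at $b=\frac12$. This is not cosmetic. Take $\widehat g(\tau)=\tau^{1/4}\chi_{[1,M]}(\tau)$ and $b=0$: then $\|\langle\tau\rangle^{b-1}\widehat g\|_{L^2}$ and $\|\langle\tau\rangle^{b-3/2}\widehat g\|_{L^1}$ stay $O(1)$ uniformly in $M$, while $\int_{|\tau|\ge 1}|\widehat g(\tau)|/|\tau|\,d\tau\sim M^{1/4}$, so the constant piece $-\eta(t)\int_{|\tau|\ge 1}\widehat g(\tau)/(i\tau)\,d\tau$ has $L^2_t$-norm of order $M^{1/4}$ and, since the other two pieces remain $O(1)$, your reduced scalar inequality is false. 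What the argument actually establishes, uniformly over $b\le\frac12$, is $\|\int_0^t S(t-t')F\,dt'\|_{X^{s,b}_T}\lesssim\|F\|_{X^{s,b-1}_T}+\|F\|_{Y^{s,-1}_T}$, obtained by bounding the constant piece through $\|\langle\tau\rangle^{-1}\widehat g\|_{L^1}$ rather than $\|\langle\tau\rangle^{b-3/2}\widehat g\|_{L^1}$; this coincides with $\|F\|_{Z^{s,b-1}_T}$ only at $b=\frac12$, and for $b<\frac12$ it is the (weaker, but correct) version that the paper actually uses downstream — see the display following \eqref{Eqn:NL-0-a2}, where $\|\mathcal{N}_0\|_{Y^{s,-1-\delta}_T}$ is immediately replaced by the larger $\|\mathcal{N}_0\|_{Y^{s,-1}_T}$. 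You should either prove and state that $Y^{s,-1}$ version, or flag that for $b<\frac12$ the $Y$-index in the lemma should read $-1$ rather than $b-\frac32$.
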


We will also require the following Lemma concerning the $X^{s,b}_{T}$ spaces, which allows us to gain a small power of $T$ by raising the temporal exponent $b$.
\begin{lemma}
Let $0<b<\frac{1}{2}$, $s\in\mathbb{R}$, then
\[ \|u\|_{X^{s,b}_{T}}\lesssim T^{\frac{1}{2}-b-}\|u\|_{X^{s,\frac{1}{2}}_{T}}.\]
\label{Lemma:gainpowerofT}
\end{lemma}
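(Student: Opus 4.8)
The plan is to reduce the estimate to a purely one-dimensional statement about time-cutoffs, and then prove that by a Hölder/Sobolev argument at one endpoint, a product (paraproduct) argument at the other, and interpolation in between.

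First, the spatial weight $\langle n\rangle^s$ is a Fourier multiplier in $n$ alone and commutes with restriction to $[0,T]$, so we may assume $s=0$. Fix $\eta\in C_c^\infty(\mathbb{R})$ with $\eta\equiv 1$ on $[0,1]$, and set $\eta_T(t)=\eta(t/T)$, so $\eta_T\equiv 1$ on $[0,T]$ whenever $0<T\le 1$. If $u=\tilde u|_{[0,T]}$, then $\eta_T\tilde u$ is again an extension of $u$, so by definition of the restriction norm $\|u\|_{X^{0,b}_T}\le\|\eta_T\tilde u\|_{X^{0,b}}$ for every extension $\tilde u$. Choosing $\tilde u$ with $\|\tilde u\|_{X^{0,1/2}}\le 2\|u\|_{X^{0,1/2}_T}$, it suffices to prove
\[ \|\eta_T w\|_{X^{0,b}}\lesssim T^{\frac12-b-}\|w\|_{X^{0,1/2}}, \qquad w\in X^{0,1/2},\ 0<b<\tfrac12. \]
Multiplication by $\eta_T(t)$ acts on the space-time Fourier transform as convolution in $\tau$ only, hence commutes with the shift $\tau\mapsto\tau-n^3$ for each fixed $n$. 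Writing $g_n$ for the function with time-Fourier transform $\widehat{g_n}(\sigma)=\hat w(n,\sigma+n^3)$, one checks that $\|w\|_{X^{0,1/2}}^2=\sum_n\|g_n\|_{H^{1/2}_t}^2$ and $\|\eta_T w\|_{X^{0,b}}^2=\sum_n\|\eta_T g_n\|_{H^{b}_t}^2$. Thus the desired bound reduces, upon summing in $n$, to the scalar estimate
\begin{equation}
\|\eta_T g\|_{H^{b}_t(\mathbb{R})}\lesssim T^{\frac12-b-}\|g\|_{H^{1/2}_t(\mathbb{R})},\qquad 0<b<\tfrac12.
\label{Eqn:scalarT}
\end{equation}

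For \eqref{Eqn:scalarT} I would interpolate between the two endpoints. At $b=0$, Hölder gives $\|\eta_T g\|_{L^2_t}\le\|\eta_T\|_{L^q_t}\|g\|_{L^p_t}$ with $\frac1p+\frac1q=\frac12$; taking $p<\infty$ large, using the one-dimensional embedding $H^{1/2}_t(\mathbb{R})\hookrightarrow L^p_t(\mathbb{R})$ and $\|\eta_T\|_{L^q_t}\sim T^{1/q}=T^{1/2-1/p}$ yields $\|\eta_T g\|_{L^2_t}\lesssim T^{1/2-}\|g\|_{H^{1/2}_t}$. At $b=\tfrac12$ I would use a product estimate $\|\eta_T g\|_{H^{1/2}_t}\lesssim\big(\|\eta_T\|_{L^\infty_t}+\|\eta_T\|_{H^{1/2+\varepsilon}_t}\big)\|g\|_{H^{1/2}_t}$ (by a Littlewood–Paley/paraproduct decomposition in time, or a Kato–Ponce type inequality); since $\|\eta_T\|_{L^\infty_t}\sim 1$ and $\|\eta_T\|_{\dot H^{1/2+\varepsilon}_t}=T^{-\varepsilon}\|\eta\|_{\dot H^{1/2+\varepsilon}_t}$ with $T\le 1$, this gives $\|\eta_T g\|_{H^{1/2}_t}\lesssim T^{-\varepsilon}\|g\|_{H^{1/2}_t}$. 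Complex interpolation of the linear map $g\mapsto\eta_T g$ between $(H^{1/2}_t\to L^2_t,\ T^{1/2-})$ and $(H^{1/2}_t\to H^{1/2}_t,\ T^{-\varepsilon})$, with parameter $\theta=2b$, then produces \eqref{Eqn:scalarT} with exponent $(1-\theta)(\tfrac12-)+\theta(0-)=\tfrac12-b-$.

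The only genuinely delicate point is the $b=\tfrac12$ endpoint, where no uniform-in-$T$ smallness is available — this is exactly the source of the infinitesimal loss ``$-$'' in the statement. The subtlety is that $H^{1/2}(\mathbb{R})$ is not preserved under multiplication by a general bounded function, so one must spend a little extra regularity on the cutoff (the $H^{1/2+\varepsilon}_t$ norm above), which costs the factor $T^{-\varepsilon}$. Alternatively one may split $g$ into time-frequencies $\lesssim 1/T$ and $\gtrsim 1/T$: on the low piece the cutoff does not see the regularity of $g$ and one gains $T^{1/2-b}$ from the measure of the support of $\eta_T$, while on the high piece one uses the near-orthogonality of convolution against $\widehat{\eta_T}$ (a bump of width $\sim 1/T$) to transfer the derivative, at the cost of a logarithm absorbed into the ``$-$''. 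Either route settles the endpoint; the remaining steps — the reduction to $s=0$, the restriction-norm manipulation, and the passage from \eqref{Eqn:scalarT} back to $X^{s,b}_T$ via Minkowski's inequality — are routine bookkeeping.
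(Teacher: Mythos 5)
Your proof is correct, and it takes a genuinely different route from the one the paper indicates. The paper cites Colliander--Oh and says the argument there is ``based on'' the sharp-cutoff equivalence \eqref{Eqn:cutoff}, i.e.\ $\|u\|_{X^{s,b}_T}\sim\|\chi_{[0,T]}u\|_{X^{s,b}}$ for $b<\tfrac12$; once that is in hand, the gain of $T$ is obtained by estimating $\|\chi_{[0,T]}g\|_{H^b_t}$ with a sharp time cutoff. You instead insert a \emph{smooth} cutoff $\eta_T$ --- legitimate, since $\eta_T\tilde u$ is still an extension of $u$, so only the definition of the restriction norm as an infimum is used --- and reduce to a scalar estimate for $\eta_T$, which you close by complex interpolation. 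This sidesteps \eqref{Eqn:cutoff} altogether, and in particular avoids having to establish that the Heaviside function is a bounded $H^b_t$-multiplier for $|b|<\tfrac12$. The trade-off is that your interpolation scheme needs a usable $b=\tfrac12$ endpoint, and that exists for $\eta_T$ (via a fractional Leibniz/product rule in $H^{1/2}_t$, paying the $T^{-\varepsilon}$) but would fail outright for $\chi_{[0,T]}$, which is not in $H^{1/2}$; this is precisely why the sharp-cutoff route argues differently rather than interpolating. The bookkeeping is otherwise sound: the reduction to $s=0$, the factorization $\|w\|_{X^{0,1/2}}^2=\sum_n\|g_n\|_{H^{1/2}_t}^2$, the scaling $\|\eta_T\|_{\dot H^{1/2+\varepsilon}_t}=T^{-\varepsilon}\|\eta\|_{\dot H^{1/2+\varepsilon}_t}$ (which requires $T\le 1$ for the inhomogeneous part, consistent with the paper's standing convention), and the interpolation exponent $(1-2b)(\tfrac12-)+2b(0-)=\tfrac12-b-$ all check out.
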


The proof of Lemma \ref{Lemma:gainpowerofT} can be found in \cite{COH}; it is based on the following property of the $X^{s,b}_{T}$ spaces, which will be exploited throughout this paper.  For any $b<\frac{1}{2}$, letting $\chi_{[0,T]}$ denote the characteristic function of the interval $[0,T]$, we have
\begin{align}
\|u\|_{X^{s,b}_{T}} \sim  \|\chi_{[0,T]}u\|_{X^{s,b}}.
\label{Eqn:cutoff}
\end{align}

Most of our probabilistic lemmata will not be needed until the appendix, where we will prove the crucial nonlinear estimates.  The proof of Theorem \ref{Thm:LWP} will, however, make use of the following lemma regarding large deviations.
\begin{lemma}
Fix $\gamma>0$, then for $K>0$ sufficiently large, $\exists\, c>0$ such that
\begin{align*}
P\big(\|u_{0,\omega}\|_{H^{\frac{1}{2}-\gamma}}
\leq K\big) \leq e^{-cK^2},
\end{align*}
where $u_{0,\omega}$ is given by \eqref{Eqn:initialdata}.
\label{Lemma:largedev}
\end{lemma}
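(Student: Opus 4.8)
\noindent\textit{Proof proposal.} I read this as the large-deviation estimate actually invoked in the proof of Theorem~\ref{Thm:LWP}: that the event $\{\|u_{0,\omega}\|_{H^{1/2-\gamma}}\le K\}$ fails only on a set of probability $<e^{-cK^{2}}$, i.e.\ $P\big(\|u_{0,\omega}\|_{H^{1/2-\gamma}}\ge K\big)\le e^{-cK^{2}}$ for $K$ large (as $\|u_{0,\omega}\|_{H^{1/2-\gamma}}$ is a.s.\ finite, the bound has to sit on the large-norm event). First I would expand the norm in Fourier modes; using $g_{-n}=\overline{g_{n}}$,
\[
\|u_{0,\omega}\|_{H^{1/2-\gamma}}^{2}
=\sum_{n\neq0}\frac{\langle n\rangle^{1-2\gamma}}{n^{2}}\,|g_{n}(\omega)|^{2}
=\sum_{n\ge1}b_{n}\,|g_{n}(\omega)|^{2},
\qquad b_{n}:=\frac{2\langle n\rangle^{1-2\gamma}}{n^{2}} .
\]
The hypothesis $\gamma>0$ enters precisely through the summability $B:=\sum_{n\ge1}b_{n}<\infty$ (since $b_{n}\sim 2|n|^{-1-2\gamma}$), which is why the Wiener-measure threshold $s=\tfrac12$ is sharp. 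Also $\{g_{n}\}_{n\ge1}$ are independent and $|g_{n}|^{2}\sim\mathrm{Exp}(1)$, so $\mathbb{E}\big[e^{\lambda|g_{n}|^{2}}\big]=(1-\lambda)^{-1}$ for $0\le\lambda<1$.

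The plan is then a Chernoff bound for this convergent quadratic form. Set $\lambda_{0}:=\big(2\sup_{n}b_{n}\big)^{-1}$, so that $\lambda_{0}b_{n}\le\tfrac12$ for every $n$; using independence, monotone convergence, and the elementary inequality $(1-x)^{-1}\le e^{2x}$ on $[0,\tfrac12]$,
\[
\mathbb{E}\Big[e^{\lambda_{0}\sum_{n\ge1}b_{n}|g_{n}|^{2}}\Big]
=\prod_{n\ge1}\frac{1}{1-\lambda_{0}b_{n}}
\le\prod_{n\ge1}e^{2\lambda_{0}b_{n}}
=e^{2\lambda_{0}B}<\infty .
\]
Markov's inequality applied to $e^{\lambda_{0}\|u_{0,\omega}\|_{H^{1/2-\gamma}}^{2}}$ then yields $P\big(\|u_{0,\omega}\|_{H^{1/2-\gamma}}\ge K\big)\le e^{2\lambda_{0}B}\,e^{-\lambda_{0}K^{2}}$, and for $K$ large enough that $K^{2}\ge4B$ this is $\le e^{-\lambda_{0}K^{2}/2}$, i.e.\ the claim with $c:=\lambda_{0}/2$.

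No step is delicate: this is the textbook Gaussian concentration bound for a convergent quadratic form in i.i.d.\ Gaussians, and the only ``obstacle'' is the two bookkeeping points already flagged --- that the summability of $(b_{n})$, hence the whole estimate, genuinely uses $\gamma>0$, and that the Chernoff parameter $\lambda$ must be kept strictly below the blow-up threshold $1/\sup_{n}b_{n}$ of the exponential moments so the infinite product converges. One could instead deduce the bound from the concentration of Gaussian measure, since $\omega\mapsto\|u_{0,\omega}\|_{H^{1/2-\gamma}}$ is $1$-Lipschitz in the natural weighted $\ell^{2}$ (Cameron--Martin type) norm on $(g_{n})$, but the direct computation above is shorter.
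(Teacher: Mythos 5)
Your reading of the statement is the right one: as printed, with ``$\leq K$'', the event has probability tending to $1$, and the way the lemma is invoked in the proof of Theorem \ref{Thm:LWP} (to bound $P(\Omega_T^c)$, i.e.\ the probability that the weighted Gaussian series has \emph{large} $H^{\frac{1}{2}-\delta}$ norm) confirms that the intended inequality is $P(\|u_{0,\omega}\|_{H^{1/2-\gamma}}\geq K)\leq e^{-cK^2}$; flagging and fixing that typo is correct, not a liberty.

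On the substance: the paper gives no proof of Lemma \ref{Lemma:largedev} at all --- it is stated as a standard large-deviation fact, with proofs of such lemmata deferred to \cite{R-Th} --- so there is no in-paper argument to compare against line by line. Your Chernoff computation is correct and self-contained: the expansion $\|u_{0,\omega}\|_{H^{1/2-\gamma}}^2=\sum_{n\ge1}b_n|g_n|^2$ with $b_n\sim 2|n|^{-1-2\gamma}$ summable (this is exactly where $\gamma>0$ enters), the exponential moment bound $\prod_n(1-\lambda_0 b_n)^{-1}\le e^{2\lambda_0 B}$ with $\lambda_0=(2\sup_n b_n)^{-1}$ and $(1-x)^{-1}\le e^{2x}$ on $[0,\tfrac12]$, and Markov applied to $e^{\lambda_0\|u_{0,\omega}\|^2}$ all check out, yielding $c=\lambda_0/2$ for $K^2\ge 4B$. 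The route usually taken in this literature (and the one most consonant with the tools the paper does prove, namely Lemmas \ref{Lemma:prob2}--\ref{Lemma:prob3}) is to first show $\big\|\,\|u_{0,\omega}\|_{H^{1/2-\gamma}}\big\|_{L^p(\Omega)}\lesssim \sqrt{p}$ and then apply Lemma \ref{Lemma:prob3} with $k=1$ to get the Gaussian tail; your direct exponential-moment argument buys a completely elementary derivation with an explicit constant, while the moment/hypercontractivity route has the advantage of reusing machinery already needed elsewhere in the paper and of generalizing to higher-order chaoses. Either way the conclusion matches the statement as it is actually used, so there is no gap.
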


Next we list some embeddings involving the $X^{s,b}$ spaces, to be used throughout this paper.  We will use the trivial embedding
\begin{align}
X^{s,b}\subset X^{s',b'}
\label{Eqn:X-triv}
\end{align}
for $s\geq s'$, $b\geq b'$.  The spatial Sobolev embedding gives
\begin{align}
X^{s,0} = L^{2}_{t}H^{s}_{x} \subset L^{2}_{t}L^{p}_{x}
\label{Eqn:X-sobolev}
\end{align}
where $0 \leq s < 1/2$ and $2\leq p \leq \frac{2}{1-2s}$, or where $s>1/2$ and $2\leq p \leq \infty$.
Also recall the energy estimate
\begin{align}
X^{s,1/2+} \subset L^{\infty}_{t}H^{s}_{x} \subset L^{\infty}_{t}L^{p}_{x}
\label{Eqn:X-energy}
\end{align}
under the same conditions on $s$ and $p$.  This gives
\begin{align}
X^{1/2+,1/2+} \subset L^{\infty}_{x,t}.
\label{Eqn:X-infty}
\end{align}
Interpolating \eqref{Eqn:X-energy} with \eqref{Eqn:X-infty}, for $s>1/2$, we have $\displaystyle X^{1/2+,1/2+} \subset L^{q}_{t}L^{r}_{x}$ for all $2\leq q,r \leq \infty$.  Interpolating this estimate with \eqref{Eqn:X-sobolev} for $s=0$, $p=2$, we find
\begin{align}
X^{1/2-\delta,1/2-\delta} \subset L^{q}_{t}L^{r}_{x}
\label{Eqn:X-onehalfminus}
\end{align}
whenever $0<\delta<1/2$ and $2\leq q,r < 1/\delta$.

Recall the following Strichartz estimates from \cite{B1},
\begin{align}
X^{0,1/3} \subset L^{4}_{t,x},
\label{Eqn:X-Str-1}
\end{align}
and
\begin{align}
X^{0+,1/2+} \subset L^{6}_{t,x}.
\label{Eqn:X-Str-2}
\end{align}
We can interpolate \eqref{Eqn:X-Str-1} with \eqref{Eqn:X-Str-2} to obtain
\begin{align}
X^{0+,1/2-\sigma} \subset L^{q}_{x,t},
\label{Eqn:X-Str-interp}
\end{align}
whenever $4<q<6$ and $\sigma<2(\frac{1}{q}-\frac{1}{6})$.

Lastly we recall the following embeddings for the $Y^{s,b}$ space: for $s\in\mathbb{R}$, we have
\begin{align}
X^{s,\frac{1}{2}+}_{T}\subset Y^{s,0}_{T}\subset C([0,T];H^{s}(\mathbb{T})).
\label{Eqn:linear-Y}
\end{align}

\section{Nonlinear estimates}
\label{Sec:NLest}

In this section we will formulate and state two key propositions (see Proposition \ref{Prop:nonlin} and Proposition \ref{Prop:nonlin2} below).  These propositions provide multilinear estimates to be used in the proof of local well-posedness (Theorem \ref{Thm:LWP}).  The proof of Theorem \ref{Thm:LWP} can be found in the next section (Section \ref{Sec:LWP}).  The proofs of Proposition \ref{Prop:nonlin} and Proposition \ref{Prop:nonlin2} can be found in Section \ref{Sec:NLproof-main}.


We begin with an outline of the role of Proposition \ref{Prop:nonlin} and Proposition \ref{Prop:nonlin2} in the proof of almost sure local well-posedness (Theorem \ref{Thm:LWP}).  In the proof of Theorem \ref{Thm:LWP}, the local solution $u$ to \eqref{Eqn:gKdV-zeromean} will be constructed as the limit of a sequence of solutions $u^{N}$ which evolve from frequency truncated initial data.  This sequence will converge in the space $X^{s,b}_{T}$ of functions of space-time (for certain values of $s, b \in \mathbb{R}$) for $T>0$ sufficiently small, and this convergence will follow from a priori estimates satisfied by solutions to \eqref{Eqn:gKdV-zeromean} (see \eqref{Eqn:LWP-1a}-\eqref{Eqn:LWP-1b} in section \ref{Sec:LWP}).  Proposition \ref{Prop:nonlin} and Proposition \ref{Prop:nonlin2} provide multilinear estimates in $X^{s,b}_{T}$ which are designed to produce the a priori estimates \eqref{Eqn:LWP-1a}-\eqref{Eqn:LWP-1b}.  That is, the sequence of solutions $u^{N}$ will be inserted into the multilinear estimates of Propositions \ref{Prop:nonlin} and \ref{Prop:nonlin2} (in various ways) during the proof of Theorem \ref{Thm:LWP} to establish the a priori estimates \eqref{Eqn:LWP-1a}-\eqref{Eqn:LWP-1b}.

Let us explicitly formulate the multilinear functions which appear in Proposition \ref{Prop:nonlin} and Proposition \ref{Prop:nonlin2}.
In this paper, we solve the integral formulation of \eqref{Eqn:gKdV-zeromean} with data $u_{0,\omega}$ (given by \eqref{Eqn:initialdata}),
\begin{align}
u = S(t)u_{0,\omega} + \mathcal{D}(u).
\label{Eqn:fixedpoint}
\end{align}
Here $\mathcal{D}(u):=\mathcal{D}(u,u,u,u)$, where, using the nonlinearity in \eqref{Eqn:gKdV-zeromean3},
$$\mathcal{D}(u_{1},u_{2},u_{3},u_{4}):=\int_{0}^{t}S(t-t')\mathcal{N}(u_{1},u_{2},u_{3},u_{4})(t')dt',$$
\noindent with
\begin{align*}
\Big(\mathcal{N}(u_{1},u_{2},u_{3},u_{4})\Big)
^{\wedge}(n,t) = \sum_{\zeta(n)}
(in_{1})\widehat{u_{1}}(n_{1},t)\widehat{u_{2}}(n_{2},t)
\widehat{u_{3}}(n_{3},t)\widehat{u_{4}}(n_{4},t),
\end{align*}
where $\zeta(n)$ is the set of frequencies satisfying certain restrictions (dictated by the nonlinearity of \eqref{Eqn:gKdV-zeromean3}).  The definition of $\zeta(n)$ is slightly cumbersome, and we avoid it here.  See Section \ref{Sec:NLproof} for details.


Taking the Fourier transform in time, we have
\begin{align}
\Big(\mathcal{N}(u_{1},u_{2},u_{3},u_{4})\Big)
^{\wedge}(n,\tau) = \sum_{\zeta(n)} \int_{\tau=\tau_{1}+\cdots +\tau_{4}}
(in_{1})\hat{u_{1}}(n_{1},\tau_{1})
\cdots \hat{u_{4}}(n_{4},\tau_{4}).
\label{Eqn:NLstate-3}
\end{align}
Consider the domain $A$ of frequency space given by $$A:= \{(n,n_{1},\ldots,n_{4},\tau,
\tau_{1},\ldots,\tau_{4})\in\mathbb{Z}^{5}\times \mathbb{R}^{5}: (n_{1},n_{2},n_{3},n_{4})\in \zeta(n),\tau=\tau_{1}+\cdots +\tau_{4} \}$$
depending on the relative sizes of the dispersive weights $\sigma:= \tau-n^{3}$, $\sigma_{k} := \tau_{k}-n_{k}^{3}$, and the spatial frequencies $n$, $n_{k}$, for $k=1,\ldots,4$.
Specifically, letting $|\sigma_{\text{max}}|:=\max(|\sigma|,|\sigma_{1}|,|\sigma_{2}|,|\sigma_{3}|,|\sigma_{4}|)$ and
$|n_{\text{max}}|:=\max(|n|,|n_{1}|,|n_{2}|,
|n_{3}|,|n_{4}|)$, we express $A=A_{-1}\cup A_{0}\cup\cdots\cup
A_{4}$ by letting
\begin{align}
A_{-1} &:= A\cap \{|\sigma_{\text{max}}| \ll |n_{\text{max}}|^{2} \},
\notag \\
A_{0} &:= A\cap \{|\sigma| \gtrsim |n_{\text{max}}|^{2}\},  \label{Eqn:NL-part} \\
A_{k} &:= A\cap \{|\sigma_{k}| \gtrsim |n_{\text{max}}|^{2}\}, \notag
\end{align}
for $k=1,2,3,4$.  We will use $\mathcal{N}_{j}(u_{1},u_{2},u_{3},u_{4})$
($\mathcal{D}_{j}(u_{1},u_{2},u_{3},u_{4})$) to denote the contribution to $\mathcal{N}(u_{1},u_{2},u_{3},u_{4})$ ($\mathcal{D}(u_{1},u_{2},u_{3},u_{4})$) coming from $A_{j}$.

The partition of type \eqref{Eqn:NL-part} is standard; see for example \cite{KPV2} in the context of KdV.  However, in the analysis of KdV, the region $A_{-1}$ has no analogue.  Indeed, due to the quadratic nonlinearity in KdV, we have the convolution restriction $n=n_{1}+n_{2}$ in frequency space, and this leads to the factorization $n^{3}-n_{1}^{3}-n_{2}^{3} = 3nn_{1}n_{2}$.  Then using $\tau=\tau_{1}+\tau_{2}$, with $\sigma$, $\sigma_{1}$ and $\sigma_{2}$ defined as above, we have for $0\neq n\neq n_{1} \neq 0$ that
\begin{align}
\max(|\sigma|,|\sigma_{1}|,|\sigma_{2}|)
\gtrsim |\sigma - \sigma_{1} - \sigma_{2}|
= |n^{3}-n_{1}^{3}-n_{2}^{3}| = |nn_{1}n_{2}| \geq |n_{\text{max}}|^{2}.
\label{Eqn:KdVmax}
\end{align}
By \eqref{Eqn:KdVmax} the configuration $\max(|\sigma|,|\sigma_{1}|,|\sigma_{2}|)\ll |n_{\text{max}}|^{2}$ is impossible, and the domain of integration in frequency space can be partitioned into regions of the type $A_{0}$, $A_{1}$ and $A_{2}$.  A similar factorization takes place in the analysis of mKdV (with cubic nonlinearity), and the region $A_{-1}$ is not included, see for example \cite{TaTs}.

In the analysis of \eqref{Eqn:gKdV-zeromean} (with quartic nonlinearity), there are nontrivial contributions from the region $A_{-1}$; indeed, there are combinations of distinct non-zero frequencies $n,n_{1},\ldots,n_{4}\in \mathbb{Z}$ such that $|n|\sim|n_{1}|\sim N$, with $N\gg0$ large, but such that $n^{3}-n_{1}^{3}-\cdots -n_{4}^{3} \ll N^{2}$.  In fact, the sequence of initial data which produces $C^{4}$-failure of the data-to-solution map for \eqref{Eqn:gKdV-zeromean} (when posed in $H^{s}(\mathbb{T})$ for $s<\frac{1}{2}$, see \cite{CKSTT1}) is concentrated on frequency combinations of precisely this type.
We are forced to include $A_{-1}$ in our analysis of \eqref{Eqn:gKdV-zeromean}, and we proceed with the
knowledge that this region is responsible for the failure of the deterministic fixed point method below $H^{\frac{1}{2}}(\mathbb{T})$.

With the failure of deterministic methods in the region $A_{-1}$, the multilinear estimates we establish in this region will use a probabilistic analysis: these estimates will rely on nonlinear smoothing induced by initial data randomization and dispersion.  That is, in Proposition \ref{Prop:nonlin} below, we will establish \textit{probabilistic quadrilinear estimates} on $\mathcal{D}_{-1}(u_{1},\ldots,u_{4})$; estimates which involve the randomized data and nonlinear smoothing.  We will also need to use a probabilistic analysis (although it will be somewhat simpler) for estimates in the regions $A_{1}$.  In contrast, our estimates in the regions $A_{0},A_{2},A_{3}$ and $A_4$ will be purely deterministic.

In order to describe the estimates in $A_{0},\ldots,A_{4}$ in more detail, let us digress to discuss the temporal regularity of the $X^{s,b}$ spaces used in our analysis.  Recall from the introduction that (in the proof of Theorem \ref{Thm:LWP}) we will study the convergence of a sequence of smooth solutions $u^N$ (evolving from frequency truncated data) in $X^{s,b}$ for some $s,b<\frac{1}{2}$.  In fact, we will take $s=b=\frac{1}{2}-\delta$, for some $0<\delta\ll 1$.  The choice of temporal regularity $b=\frac{1}{2}-\delta<\frac{1}{2}$ is helpful (analytically) for the nonlinear estimates in some regions of frequency space, but it is cumbersome in others.  For example, in the region $A_{0}$, the choice of $b<\frac{1}{2}$ is beneficial, and we will establish \textit{deterministic quadrilinear estimates} on $\mathcal{D}_{0}(u_{1},\ldots,u_{4})$ (see Proposition \ref{Prop:nonlin2} below).  However, when estimating $\mathcal{D}_{i}(u)$, $i=1,2,3,4$, the selection of $b<\frac{1}{2}$ backfires, and the analysis becomes more challenging.  In these troublesome regions ($A_{1},\ldots,A_{4}$)
we consider a \textit{second iteration} of \eqref{Eqn:fixedpoint}; we replace $u^N$ (in the appropriately chosen factor) with the right-hand side of \eqref{Eqn:fixedpoint}.  More precisely, when estimating $\mathcal{N}_{i}(u^N)=\mathcal{N}_{i}(u^N,u^N,u^N,u^N)$, $i=1,\ldots,4$, we will substitute \eqref{Eqn:fixedpoint} into the $i^{\text{th}}$ factor of $u^N$; the linear part of the solution makes no contribution in this region, and this factor is replaced by $u^N\sim \mathcal{D}(u^N)
=\mathcal{D}(u^N,u^N,u^N,u^N)$.  This substitution resolves the difficulty created by taking $b<\frac{1}{2}$, but what started as a \textit{quadrilinear} estimate (for example, of $\mathcal{D}_{1}(u^N)=\mathcal{D}_{1}(u^N,u^N,u^N,u^N)$ in $A_{1}$) becomes a \textit{heptilinear} estimate (of $\mathcal{D}_{1}(\mathcal{D}(u^N,u^N,u^N,u^N)
,u^N,u^N,u^N)$).  Furthermore, we will still depend on probabilistic methods (that is, on nonlinear smoothing) in the region $A_{1}$.  For these reasons, we will establish \textit{probabilistic heptilinear estimates} on $\mathcal{D}_{1}(\mathcal{D}(u_{5},u_{6},u_{7},u_{8}),u_{2},u_{3},u_{4})$ (see Proposition \ref{Prop:nonlin} below).  In the regions $A_{2},A_{3},A_{4}$, we can proceed with deterministic estimates.  That is, we establish \textit{deterministic heptilinear estimates} on
$\mathcal{D}_{2}(u_{1},\mathcal{D}(u_{5},u_{6},u_{7},u_{8}),u_{3},u_{4})$ (and the analogous expressions for $\mathcal{D}_{3}$ and $\mathcal{D}_{4}$).




We proceed to state the multilinear estimates to be used in the proof of Theorem \ref{Thm:LWP}.  The probabilistic nonlinear estimates in $A_{-1}$ and $A_{1}$ are grouped into the following proposition.

\begin{proposition} [Probabilistic nonlinear estimates] For $\delta>0$ sufficiently small, any $\delta_0\geq0$ such that $\delta>\delta_0$, and any $0<T\ll 1$, there exists $\varepsilon, \beta, c,C>0$ with $\beta,\varepsilon\ll\delta,\delta_0$ and a measurable set $\Omega_{T}\subset \Omega$ satisfying $P(\Omega_{T}^{c})<e^{-\frac{c}{T^{\beta}}}$ and the following conditions: if $\omega \in\Omega_{T}$, then for every quadruple of Fourier multipliers $\Lambda_{1},\ldots,\Lambda_{4}$ defined by
\begin{align*}
\widehat{\Lambda_{i}f}(n) = \chi_{N_{i}\leq |n|\leq M_{i}}\hat{f}(n),
\end{align*}
for some dyadic $N_{i},M_{i}$,
we have the following estimate
\begin{align}
\|\mathcal{D}_{-1}&(u_{1},u_{2},u_{3},u_{4})\|_
{\frac{1}{2}+\delta-\delta_0,\frac{1}{2}+\delta,T}
\notag \\ &\leq CT^{-2\beta}\prod_{j=1}^{4}\big(N_{j}^{-\varepsilon} + \|u_{j}\|_{\frac{1}{2}-\delta-\delta_j,\frac{1}{2}-\delta,T} + \|u_{j} - S(t)\Lambda_{j}(u_{0,\omega})\|_{\frac{1}{2}+\delta-\delta_j,\frac{1}{2}-\delta,T}
\big),
\label{Eqn:NL-neg1}
\end{align}
for all quadruples $(\delta_1,\delta_2,\delta_3,\delta_4)
\in\{(\delta_0,0,0,0),(0,\delta_0,0,0),(0,0,\delta_0,0),
(0,0,0,\delta_0)\}$.  We also have
\begin{align}
\|\mathcal{D}_{1}(\mathcal{D}&(u_{5},u_{6},u_{7},u_{8}),
u_{2},u_{3},u_{4})\|_{\frac{1}{2}+\delta-\delta_0,\frac{1}{2}+\delta,T}
  \notag \\ &\leq CT^{-\beta}
\big(N_{5}^{-\varepsilon} + \|u_{5}\|_{\frac{1}{2}-\delta-\delta_5,\frac{1}{2}-\delta,T} + \|u_{5} - S(t)\Lambda_{1}(u_{0,\omega})\|_{\frac{1}{2}+\delta-\delta_5,\frac{1}{2}-\delta,T}
\big) \notag \\ & \ \ \ \ \ \ \ \ \ \ \ \cdot\prod_{j=2,j\neq 5}^{8}
\|u_{j}\|_{\frac{1}{2}-\delta-\delta_j,\frac{1}{2}-\delta,T},
\label{Eqn:NL-1}
\end{align}
\begin{align}
\|\mathcal{D}_{1}(\mathcal{D}_{0}&(u_{5},u_{6},u_{7},u_{8}),
u_{2},u_{3},u_{4})\|_{\frac{1}{2}+\delta-\delta_0,\frac{1}{2}+\delta,T}
  \notag \\ &\leq CT^{-\beta}
\big(N_{5}^{-\varepsilon} + \|u_{5}\|_{\frac{1}{2}-\delta-\delta_5,\frac{1}{2}-\delta,T} + \|u_{5} - S(t)\Lambda_{1}(u_{0,\omega})\|_{\frac{1}{2}+\delta-\delta_5,\frac{1}{2}-\delta,T}
\big)
\notag \\ & \ \ \ \ \ \ \ \ \ \ \ \cdot\prod_{j=2,j\neq 5}^{8}
\|u_{j}\|_{\frac{1}{2}-\delta-\delta_j,\frac{1}{2}-\delta,T}
\big),
\label{Eqn:NL-1b}
\end{align}
for all septuples $(\delta_1,\delta_2,\delta_3,\delta_4,\delta_5,\delta_6,\delta_7)
\in\{(\delta_0,0,0,0,0,0,0),\ldots,(0,0,0,0,0,0,\delta_0)\}$.
\label{Prop:nonlin}
\end{proposition}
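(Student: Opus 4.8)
The plan is to prove the three estimates \eqref{Eqn:NL-neg1}, \eqref{Eqn:NL-1}, \eqref{Eqn:NL-1b} by reducing everything to a single master multilinear estimate at the level of Fourier coefficients, via duality. Write the left-hand side of \eqref{Eqn:NL-neg1} as $\langle \mathcal{D}_{-1}(u_1,\ldots,u_4), w\rangle$ with $w$ ranging over the unit ball of the dual space, use Lemma \ref{Lemma:lin2} and the characterization \eqref{Eqn:cutoff} to pass from $\mathcal{D}_{-1}$ to $\mathcal{N}_{-1}$ (so the time integration contributes a factor $\langle \sigma \rangle^{-1}$ and Lemma \ref{Lemma:gainpowerofT} / Lemma \ref{Lemma:lin1} yields the $T^{-2\beta}$, resp.\ $T^{-\beta}$, losses after one passes to the global-in-time extensions and pays for raising the temporal exponent from $\tfrac12-\delta$ to $\tfrac12+\delta$). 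This converts the problem to a weighted convolution estimate on $\mathbb{Z}^5\times\mathbb{R}^5$ restricted to the region $A_{-1}$, with the spatial weight $\langle n\rangle^{\frac12+\delta-\delta_0}|n_1|\prod\langle n_i\rangle^{-(\frac12-\delta)}$ (or $\langle n_i\rangle^{-(\frac12+\delta)}$ on the distinguished factor) and temporal weights controlled by $\langle\sigma\rangle,\langle\sigma_i\rangle$, all $\ll |n_{\max}|^2$ on $A_{-1}$.

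For the purely deterministic \emph{factors} (the $u_j$ that are \textbf{not} fed by the data), I would use the Strichartz/Sobolev embeddings collected in Section \ref{Sec:lin} — \eqref{Eqn:X-Str-1}, \eqref{Eqn:X-Str-interp}, \eqref{Eqn:X-onehalfminus}, \eqref{Eqn:X-sobolev}, combined with Hölder in space-time and careful dyadic bookkeeping on $N_1,\ldots,N_4$ — to absorb those factors into their $X^{\frac12-\delta,\frac12-\delta}_T$ norms, bounding products $\prod u_{j}$ in $L^{q'}_{x,t}$ for a suitable $q'$ and pairing against $w$ in $L^{q}_{x,t}$ via the energy estimate \eqref{Eqn:X-energy}; the dispersive gain $|\sigma_{\max}|\ll|n_{\max}|^2$ on $A_{-1}$ is what makes the needed exponents close with room to spare, producing the small power $N_j^{-\varepsilon}$. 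The key decomposition is \eqref{Eqn:type12}: wherever a factor is the data-driven one, split $u_j = S(t)\Lambda_j u_{0,\omega} + (u_j - S(t)\Lambda_j u_{0,\omega})$; the difference term is placed in $X^{\frac12+\delta-\delta_j,\frac12-\delta}_T$ (the higher-regularity, nonlinear-smoothing norm that appears on the right-hand side), and the genuinely random term $S(t)\Lambda_j u_{0,\omega} = \sum \tfrac{g_n(\omega)}{|n|}e^{inx}e^{-in^3 t}$ is handled probabilistically.

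The \textbf{main obstacle} is precisely the probabilistic estimate on the random linear contribution in $A_{-1}$: here, because $|n^3-n_1^3-\cdots-n_4^3|\ll |n_{\max}|^2$ can occur for distinct high frequencies with $|n|\sim|n_1|\sim N$, no amount of deterministic dispersion suffices, and one must exploit independence of the $g_n$. I would expand $\|\mathcal{D}_{-1}\|^2$ (or pair against $w$ and take $L^p_\omega$), reduce to estimating the $L^p(\Omega)$ norm of a (homogeneous) Gaussian chaos of order $\leq 4$, and invoke the hypercontractivity of the Ornstein–Uhlenbeck semigroup (Lemmas \ref{Lemma:prob1}–\ref{Lemma:prob2}, referenced for Section \ref{Sec:NLproof}) to trade the $L^p_\omega$ norm for the $L^2_\omega$ norm at the cost of a power of $p$; then Chebyshev over the choices of dyadic blocks gives the exceptional set $\Omega_T^c$ with $P(\Omega_T^c)<e^{-c/T^\beta}$. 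The second-moment computation reduces to counting lattice points in $A_{-1}$ under the constraints $n=\sum n_i$ and near-resonance $|n^3-\sum n_i^3|\lesssim \langle\sigma_{\max}\rangle \ll N^2$; after diagonalizing the pairings forced by the $\delta_{n_i=m_j}$ from the second moment, this is controlled by the matrix-norm bound (Lemma \ref{Lemma:matrixnorm}), which is exactly what converts the divergence one would see at $b=\tfrac12$ into a convergent sum once $b=\tfrac12-\delta$. The heptilinear estimates \eqref{Eqn:NL-1}, \eqref{Eqn:NL-1b} follow the same template: substitute $\mathcal{D}$ (resp.\ $\mathcal{D}_0$) for the first factor, use the already-proven deterministic quartilinear bound on $\mathcal{D}$ (Proposition \ref{Prop:nonlin2}) or its $A_0$-piece to control that sub-object in $X^{\frac12+\delta,\frac12-\delta}_T$ — crucially gaining regularity there so the outer $A_1$ estimate has a derivative to spare — and then run the $A_1$ analysis, which is probabilistic but, as the paper notes, simpler than $A_{-1}$ because on $A_1$ the resonance function is controlled by $|\sigma_1|\gtrsim N^2$; the only new bookkeeping is propagating the $N_5^{-\varepsilon}$ and the split \eqref{Eqn:type12} through the inner $\mathcal{D}$.
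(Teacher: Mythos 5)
Your sketch of the quadrilinear estimate \eqref{Eqn:NL-neg1} matches the paper's architecture: reduce $\mathcal{D}_{-1}$ to $\mathcal{N}_{-1}$ via Lemma~\ref{Lemma:lin2} and Lemma~\ref{Lemma:gainpowerofT} (this is exactly how the paper derives Proposition~\ref{Prop:nonlin} from Proposition~\ref{Prop:NL-neg1-nlpart}); then, on $A_{-1}$, decompose each factor into type~(I) (random free evolution) and type~(II) (the smoothed remainder), control deterministic factors with the $X^{s,b}$ Strichartz embeddings, control the Gaussian chaos via the matrix-norm lemma plus Lemmas~\ref{Lemma:prob2}--\ref{Lemma:prob3}, and assemble $\Omega_T$ by a Chebyshev/union bound over dyadic blocks (the paper does this via the dyadically localized Proposition~\ref{Prop:NL-neg1-local}, whose exceptional set decays like $(NN_1\cdots N_4)^{-\kappa}e^{-\tilde c/T^\beta}$ so the union is summable). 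That part is sound.

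The heptilinear estimates \eqref{Eqn:NL-1}, \eqref{Eqn:NL-1b} are where the proposal has a genuine gap, in fact two. First, you propose to first bound the inner $\mathcal{D}(u_5,\ldots,u_8)$ in $X^{\frac12+\delta,\frac12-\delta}_T$ using Proposition~\ref{Prop:nonlin2} and then treat it as a generic input to a quadrilinear $A_1$ estimate. This cannot close: in $A_1$ with $|\sigma_1|\gtrsim|n_{\max}|^2$ and $|n|\sim|n_1|\sim N$, feeding an $X^{\frac12+\delta,\frac12-\delta}_T$ input into the first slot only buys $\langle\sigma_1\rangle^{-(\frac12-\delta)}$, and the surviving weight
$\langle n\rangle^{\frac12+\delta}|n_1|\,\langle n_1\rangle^{-(\frac12+\delta)}\langle\sigma_1\rangle^{-(\frac12-\delta)}\sim N^{2\delta}$
is a \emph{positive} power of $N$ — precisely the divergence that $b<\frac12$ creates. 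The entire point of iterating the Duhamel formula (the ``second iteration'') is that expanding $\widehat{\mathcal{D}(u_5,\ldots,u_8)}(n_1,\tau_1)$ at the Fourier level supplies the full factor $1/|\sigma_1|$, not $\langle\sigma_1\rangle^{-(\frac12-\delta)}$; the paper proves \eqref{Eqn:NL-1} by carrying out the genuinely heptilinear estimate of $\mathcal{N}_1(\mathcal{D}(\cdot),\cdot,\cdot,\cdot)$ directly (Proposition~\ref{Prop:NL-1-nlpart}), not by composing the quartilinear bounds.

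Second, and more seriously, even with the full $1/|\sigma_1|$, there is a resonant sub-region $A_{1,c}$ of $A_1$ — where $n=n_5$ and all of $n_2,n_3,n_4,n_6,n_7,n_8$ and all modulations are $\ll|n|^{\sqrt{2\delta}}$ — where the straightforward bound $\frac{|n_1|}{|\sigma_1|}\lesssim (N^0)^{-1+\gamma}$ is one power of $N^0$ short of being dyadically summable (the paper points this out explicitly around \eqref{Eqn:LWP-17-2}). The paper's fix is a deterministic cancellation: after symmetrizing the dummy variables $(n_2,n_3,n_4,\tau_2,\tau_3,\tau_4)\leftrightarrow(n_6,n_7,n_8,\tau_6,\tau_7,\tau_8)$ in the kernel, the two $1/\sigma_1$ terms combine as in \eqref{Eqn:NL-1-50}--\eqref{Eqn:NLdef}, and the resulting denominator is of size $(N^0)^2$ rather than $N^0$ (see \eqref{Eqn:Ac1}--\eqref{Eqn:Ac2}). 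This cancellation is the delicate point highlighted in Remark~\ref{Rem:cancel} and requires that the multilinear expressions be \emph{redefined} in $A_{1,c}$ via \eqref{Eqn:NLdef2b}--\eqref{Eqn:NLdef2} when the input factors are not all identical. Without recognizing and exploiting this cancellation, Case~2.b.ii of Proposition~\ref{Prop:NL-1-nlpart} (and hence \eqref{Eqn:NL-1} and \eqref{Eqn:NL-1b}) does not close.
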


The estimates of Proposition \ref{Prop:nonlin} are based on the nonlinear smoothing due to initial data randomization.  However, in some regions (e.g. $A_{0}$), the choice of $b=\frac{1}{2}-\delta<\frac{1}{2}$ allows us to establish deterministic estimates.

\begin{proposition}[Deterministic nonlinear estimates]
For $\delta>0$ sufficiently small, any $\delta_0\geq0$ such that $\delta>\delta_0$, and any $T>0$, there exists $\theta,C>0$ such that
\begin{align}
\|\mathcal{D}_{0}&(u_{1},u_{2},u_{3},u_{4})\|_
{\frac{1}{2}+\delta-\delta_0,\frac{1}{2}-\delta,T}
\leq CT^{\theta}
\prod_{j=1}^{4} \|u_{j}\|_{\frac{1}{2}-\delta-\delta_j,\frac{1}{2}-\delta,T},
\label{Eqn:NL-0-a}
\end{align}
\begin{align}
\|\mathcal{N}_{0}(u_{1},u_{2},u_{3},u_{4})
\|_{Y^{\frac{1}{2}+\delta-\delta_0,-1}_{T}}
\leq CT^{\theta}\prod_{j=1}^{4}\|u_{j}\|_{\frac{1}{2}-\delta-\delta_j,\frac{1}{2}-\delta,T},
\label{Eqn:NL-0-b}
\end{align}
\begin{align}
\|\mathcal{D}_{1}&(u_{1},u_{2},u_{3},u_{4})\|_
{\frac{1}{2}+\delta-\delta_0,\frac{1}{2}+\delta,T}
\leq CT^{\theta}\|u_{1}\|_{\frac{1}{2}-\delta-\delta_1,\frac{1}{2}+\delta}
\prod_{\substack{j=2}}^{4} \|u_{j}\|_{\frac{1}{2}-\delta-\delta_j,\frac{1}{2}-\delta,T},
\label{Eqn:NL-k-1}
\end{align}
and
\begin{align}
\|\mathcal{D}_{2}&(u_{1},u_{2},u_{3},u_{4})\|_
{\frac{1}{2}-\delta_0,\frac{1}{2}+\delta,T}
\leq CT^{\theta}\|u_{2}\|_{\frac{1}{2}-\delta-\delta_2,\frac{1}{2}+\delta}
\prod_{\substack{j=1\\j\neq 2}}^{4} \|u_{j}\|_{\frac{1}{2}-\delta-\delta_j,\frac{1}{2}-\delta,T},
\label{Eqn:NL-k-2}
\end{align}
for all quadruples $(\delta_1,\delta_2,\delta_3,\delta_4)
\in\{(\delta_0,0,0,0),(0,\delta_0,0,0),(0,0,\delta_0,0),
(0,0,0,\delta_0)\}$.
We also have
\begin{align}
\|\mathcal{D}_{2}(u_{1},
\mathcal{D}(u_{5},u_{6},u_{7},u_{8}),
u_{3},u_{4})\|_{\frac{1}{2}+\delta-\delta_0,\frac{1}{2}+\delta,T} \leq
CT^{\theta}\prod_{\substack{j=1\\j\neq 2}}^{8} \|u_{j}\|_{\frac{1}{2}-\delta-\delta_j,\frac{1}{2}-\delta,T},
\label{Eqn:NL-2}
\end{align}
and
\begin{align}
\|\mathcal{D}_{2}(u_{1},
\mathcal{D}_{0}(u_{5},u_{6},u_{7},u_{8}),
u_{3},u_{4})\|_{\frac{1}{2}+\delta-\delta_0,\frac{1}{2}+\delta,T}
  \leq CT^{\theta}\prod_{\substack{j=1\\j\neq 2}}^{8} \|u_{j}\|_{\frac{1}{2}-\delta-\delta_j,\frac{1}{2}-\delta,T},
\label{Eqn:NL-2b}
\end{align}
for all heptuples $(\delta_1,\delta_2,\delta_3,\delta_4,\delta_5,\delta_6,\delta_7)
\in\{(\delta_0,0,0,0,0,0,0),\ldots,(0,0,0,0,0,0,\delta_0)\}$.
\label{Prop:nonlin2}
\end{proposition}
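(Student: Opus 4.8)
The plan is to prove Proposition \ref{Prop:nonlin2} by a dyadic case analysis in frequency space, combining the $X^{s,b}$-type linear estimates of Section \ref{Sec:lin} (in particular Lemmas \ref{Lemma:lin2} and \ref{Lemma:gainpowerofT}) with the Strichartz embeddings \eqref{Eqn:X-Str-1}--\eqref{Eqn:X-Str-interp} and the spatial Sobolev embeddings \eqref{Eqn:X-sobolev}--\eqref{Eqn:X-onehalfminus}. By duality, each estimate reduces to bounding a multilinear form $\Sigma = \sum_{\zeta(n)} \int_{\tau = \tau_1 + \cdots} (in_1) \prod \widehat{u_i}(n_i,\tau_i) \cdot \overline{\widehat{w}(n,\tau)}$ over the appropriate region $A_j$, where $w$ is dual to the output norm. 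First I would record the key dispersive identity: on the support of $\zeta(n)$ one has the algebraic relation bounding $|n^3 - n_1^3 - \cdots - n_4^3|$ from below by $|\sigma| + \sum |\sigma_k|$ via $\tau = \sum \tau_k$, so that in $A_0$ (resp. $A_k$) the weight $\langle \sigma \rangle$ (resp. $\langle \sigma_k \rangle$) is $\gtrsim |n_{\max}|^2$; this is precisely the gain that compensates the derivative $n_1$ and the half-derivative loss in passing from regularity $\tfrac12 - \delta$ to $\tfrac12 + \delta$.

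For the quadrilinear estimates \eqref{Eqn:NL-0-a}--\eqref{Eqn:NL-k-2}, the scheme is: (1) apply Lemma \ref{Lemma:lin2} to reduce $\|\mathcal{D}_j(\cdots)\|_{s',b'}$ to $\|\mathcal{N}_j(\cdots)\|_{s', b'-1}$ (for $b' = \tfrac12 + \delta$ one uses the $X^{s,b-1}$ version; for the $Y$-based estimate \eqref{Eqn:NL-0-b} one uses the $Z^{s,b-1}$ version), picking up the factor $T^\theta$ from the local-in-time gain in Lemma \ref{Lemma:gainpowerofT} together with \eqref{Eqn:cutoff}; (2) in region $A_0$, use $\langle \sigma \rangle^{-1} \lesssim |n_{\max}|^{-2}$ from \eqref{Eqn:NL-part}, which more than absorbs $|n_1| \langle n \rangle^{s'}$ against the output, leaving a product estimate $\|\prod u_i\|_{L^2_{x,t}}$ that is handled by Hölder and four applications of \eqref{Eqn:X-onehalfminus} (or \eqref{Eqn:X-Str-1}) — this gives \eqref{Eqn:NL-0-a} and, with the $l^2_n L^1_\tau$ bookkeeping, \eqref{Eqn:NL-0-b}; (3) in region $A_k$, the weight $\langle \sigma_k \rangle^{-\text{(something)}}$ is unavailable on the factor $u_k$ because we are \emph{not} iterating, so we must keep $u_k$ in the higher space $X^{\tfrac12 - \delta, \tfrac12 + \delta}$ as the hypothesis of \eqref{Eqn:NL-k-1}--\eqref{Eqn:NL-k-2} allows; then $\langle \sigma_k \rangle^{\tfrac12 + \delta}$ controls $u_k$ in $L^\infty_t H^{\tfrac12-}_x$ via \eqref{Eqn:X-energy}--\eqref{Eqn:linear-Y}, the remaining dispersive surplus $|n_{\max}|^{2} \cdot |n_{\max}|^{-1}$ beats the derivative and the regularity shift, and we close with Hölder placing the remaining three factors in $L^6_{x,t}$ or $L^q_{x,t}$. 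The cases $\mathcal{D}_3, \mathcal{D}_4$ mentioned in the discussion are identical to $\mathcal{D}_2$ by the symmetry of $\zeta(n)$ in $n_2,n_3,n_4$.

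For the heptilinear estimates \eqref{Eqn:NL-2}--\eqref{Eqn:NL-2b}, the idea is to \emph{first} apply the deterministic inner bound on $\mathcal{D}(u_5,u_6,u_7,u_8)$ (or $\mathcal{D}_0$), treating it as a single function $U := \mathcal{D}(u_5,\ldots,u_8)$ whose norm $\|U\|_{\tfrac12 + \delta', \tfrac12 + \delta}$ is controlled by $\prod_{j=5}^8 \|u_j\|_{\tfrac12-\delta-\delta_j, \tfrac12-\delta}$ — for the $\mathcal{D}_0$ case this is exactly \eqref{Eqn:NL-0-a} (reading off the right regularities), and for the full $\mathcal{D}$ case it follows by summing the estimates over $A_0, \ldots, A_4$ (using \eqref{Eqn:NL-0-a}, \eqref{Eqn:NL-k-1}, \eqref{Eqn:NL-k-2} and the $A_{-1}$ bound, all of which hold deterministically in this configuration since $U$ will sit in $A_2$ as the high-$\sigma$ factor) — and \emph{then} feed $U$ into the second slot of \eqref{Eqn:NL-k-2}, which is precisely designed to take its second argument in the higher space $X^{\tfrac12 - \delta_2, \tfrac12 + \delta}$. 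Because $U$ actually lives in $H^{\tfrac12 + \delta'}$, not merely $H^{\tfrac12 - \delta}$, we have extra room that absorbs the extra $\delta_0$ appearing in the output index of \eqref{Eqn:NL-2}--\eqref{Eqn:NL-2b}. The main obstacle, as the paper itself flags, is the bookkeeping of \emph{where} the regularity shifts $\delta_0, \delta_j$ are permitted to land: one must verify that in region $A_2$ the combination of the $|n_{\max}|^{2}$ dispersive gain, the single derivative $|n_1|$, and the Sobolev indices balances so that moving one unit of $\delta_0$ from the output to one input factor (as encoded in the quadruples/heptuples over which the estimate is claimed) never costs more than the $|n_{\max}|^{-\text{const}}$ surplus; this is a finite check but it is the delicate point, and it is also where one must be careful that the $T^\theta$ gain survives — it does, because $b = \tfrac12 - \delta < \tfrac12$ everywhere, so Lemma \ref{Lemma:gainpowerofT} applies at every step.
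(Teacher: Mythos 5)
Your overall architecture — duality, the dispersive gain $|\sigma_j|\gtrsim |n_{\max}|^2$ in region $A_j$, and Hölder plus the Strichartz embeddings to close the product estimate — matches the paper's proof of \eqref{Eqn:NL-0-a}, \eqref{Eqn:NL-k-1}, \eqref{Eqn:NL-k-2}. But there are two concrete gaps.

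First, for the $Y$-norm estimate \eqref{Eqn:NL-0-b}, your claim that it follows from the $L^2_{x,t}$ Hölder/Strichartz argument ``with the $l^2_n L^1_\tau$ bookkeeping'' is not a proof, and in fact that route does not close: Hölder and the embeddings naturally control $L^2_{n,\tau}$-type quantities, i.e.\ $X^{s,b}$ norms, and passing to $l^2_n L^1_\tau$ costs a power of $\langle\sigma\rangle$ that you have not accounted for. The paper handles \eqref{Eqn:NL-0-b} by an entirely different mechanism: after spending a fraction of the $\langle\sigma\rangle^{-1}$ weight together with $|\sigma|\gtrsim|n_{\max}|^2$ to beat the derivative, it applies Cauchy--Schwarz in the convolution variables, then iterates the calculus inequality of Lemma \ref{Lemma:integralineq} (Ginibre--Tsutsumi--Velo) in $\tau_1,\tau_2,\tau_3$, and finally Cauchy--Schwarz in $\tau$ using the remaining $\langle\sigma\rangle^{-(12\delta+2\gamma)}$ weight to make the $L^1_\tau\to L^2_\tau$ step summable. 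This integral-calculus machinery is essential for the $Y$-norm and is absent from your plan.

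Second, and more seriously, your scheme for the heptilinear estimates \eqref{Eqn:NL-2}--\eqref{Eqn:NL-2b} is circular. You propose to first bound $U:=\mathcal{D}(u_5,\ldots,u_8)$ in $X^{\frac12+\delta',\frac12+\delta}_T$ using the quadrilinear estimates and ``the $A_{-1}$ bound, all of which hold deterministically in this configuration.'' But there is no deterministic bound for $\mathcal{D}_{-1}$ — the entire point of Proposition \ref{Prop:nonlin} and the paper as a whole is that the $A_{-1}$ contribution requires the probabilistic randomized-data analysis. Moreover, \eqref{Eqn:NL-0-a} only gives $b=\frac12-\delta$ output, and \eqref{Eqn:NL-k-1}--\eqref{Eqn:NL-k-2} require one input in the higher temporal space, so even the deterministic pieces do not assemble into the bound on $U$ you need. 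The paper avoids this by never treating $U$ as a black box: it expands the composite nonlinearity into a genuine heptilinear form, so that the Duhamel kernel contributes a factor $\langle\sigma_2\rangle^{-1}$, which under the $A_2$ constraint $|\sigma_2|\gtrsim|n_{\max}|^2$ is strong enough to absorb \emph{both} the outer derivative $n_1$ and the inner derivative $n_5$ simultaneously (the key pointwise bound being
\begin{align*}
\frac{|n|^{\frac12+\delta}|n_1||n_5|}{|\sigma_2||n_1|^{\frac12-\delta}|n_5|^{2\delta}}\lesssim 1),
\end{align*}
after which Hölder with $L^4_{x,t}$ and $L^{24}_{x,t}$ closes the estimate. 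Replacing your black-box step by this direct expansion is not a cosmetic change; without it the argument does not exist.
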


\begin{remark}
Observe that the function
$\mathcal{D}(u_{1},u_{2},u_{3},u_{4})$ is symmetric with respect to its last three slots.  It follows that inequalities analogous to \eqref{Eqn:NL-k-2}-\eqref{Eqn:NL-2b}
for $\mathcal{D}_{3}$ and $\mathcal{D}_{4}$ also hold.
\end{remark}

\begin{remark}  The estimate \eqref{Eqn:NL-0-a} will be used during the proof of Theorem \ref{Thm:LWP} to establish the existence of a (local-in-time) solution to \eqref{Eqn:gKdV-zeromean} (with data given by \eqref{Eqn:initialdata}), while the estimate \eqref{Eqn:NL-0-b} is needed to prove that (almost surely) the nonlinear part of this solution is continuous in time with values in a Sobolev space of higher regularity than the data (condition (ii) in the statement of Theorem \ref{Thm:LWP}).  The $Y^{s,b}$-space estimate \eqref{Eqn:NL-0-b} is required for the region $A_{0}$, but it is not required elsewhere (ie. in the regions $A_{-1}$ and $A_{1},\ldots,A_{4}$).  This is due to the difference in temporal regularity (of the norms appearing) on the left-hand sides of \eqref{Eqn:NL-neg1}-\eqref{Eqn:NL-2} ($b=\frac{1}{2}+\delta$) and \eqref{Eqn:NL-0-a} ($b=\frac{1}{2}-\delta$).  Indeed, by using nonlinear estimates with $b=\frac{1}{2}+\delta>\frac{1}{2}$ (as in \eqref{Eqn:NL-neg1}-\eqref{Eqn:NL-2}), and combining Lemma \ref{Lemma:lin2} with the embedding \eqref{Eqn:linear-Y}, the corresponding contribution to (the nonlinear part of) the solution is automatically continuous in time.
However, using estimates on the nonlinearity with $b=\frac{1}{2}-\delta<\frac{1}{2}$ (as in \eqref{Eqn:NL-0-a}), we must establish continuity with a separate argument, and this is where the estimate \eqref{Eqn:NL-0-b} will be needed.
\end{remark}

\begin{remark}
There is one region of frequency space, produced by using the second iteration, which appears lethal, at first glance, to our nonlinear analysis.  Luckily there is a cancelation in this region which allows us to prove the estimates we require.  A technicality emerges, due to this cancelation, that needs to be addressed in this section.  In Proposition \eqref{Prop:nonlin} we establish multilinear estimates on $\mathcal{D}_{1}$ with different input functions $u_{2},u_{3},\ldots,u_{8}$, but the cancelation that we need to invoke in the troublesome region of frequency space requires that all input functions are the same.

This is not problematic, however, as we only need multilinear estimates with different input functions in order to bound the difference of two expressions, each given by $\mathcal{D}_{1},\ldots,\mathcal{D}_{4}$ evaluated with all input functions the same; that is, we bound this difference using multilinearity of the functions $\mathcal{D}_{1},\ldots,\mathcal{D}_{4}$ and a telescoping series (see \eqref{Eqn:LWP-5}-\eqref{Eqn:LWP-6a} in the proof of Theorem \ref{Thm:LWP}).  To incorporate the cancelation with different input functions, we simply define $\mathcal{D}_{1}(\mathcal{D}(u_{5},u_{6},u_{7},u_{8}),u_{2},u_{3},u_{4})$ with the cancelation \textit{imposed} in the troublesome region.  In other words, when we use multilinearity and a telescoping series to control these differences, we add and subtract terms with the cancelation built in.  For a more precise discussion of this cancelation (and the proper definition of the multilinear functions appearing in Proposition \ref{Prop:nonlin} above) see the discussion on heptilinear estimates that precedes the proof of Proposition \ref{Prop:NL-1-nlpart}, found in Section \ref{Sec:NLproof-main}.
\label{Rem:cancel}
\end{remark}

We postpone the proofs of Proposition \ref{Prop:nonlin} and Proposition \ref{Prop:nonlin2} to Section \ref{Sec:NLproof-main}.

\section{Local well-posedness}
\label{Sec:LWP}

In this section we present the proof of Theorem \ref{Thm:LWP}.  The key inputs for this proof are Proposition \ref{Prop:nonlin} and Proposition \ref{Prop:nonlin2} (presented in the last section).


\begin{proof}[Proof of Theorem \ref{Thm:LWP}:]

We will construct the local solution to \eqref{Eqn:gKdV-zeromean} as the limit of a sequence of solutions $u^{N}$ which evolve from frequency truncated data.  Consider initial data of the form
\begin{align*}
u_{0,\omega}^{N}(x) = \mathbb{P}_{N}(u_{0,\omega}(x)),
\end{align*}
where $u_{0,\omega}$ is given by \eqref{Eqn:initialdata}, and $\mathbb{P}_{N}$ is the Dirichlet projection to $$E_{N}=\text{span}
\{\sin(nx),\cos(nx):|n|\leq N\}$$.  Notice that $u^{N}_{0,\omega}\in H^{s}(\mathbb{T})$ almost surely, for every $s\in\mathbb{R}$.  By Theorem 2 in \cite{CKSTT1}, for each $N$, almost surely, there exists a unique global-in-time solution $u^{N}$ to \eqref{Eqn:gKdV-zeromean} with data $u_{0,\omega}^{N}$.  Define $\Gamma^{N}=\Gamma^{N}_{\omega}$ by
\begin{align}
\Gamma^{N}(v):= S(t)u_{0,\omega}^{N}(x) + \mathcal{D}(v),
\label{Eqn:LWP-0a}
\end{align}
where $$\mathcal{D}(u_{1},u_{2},u_{3},u_{4}):=\int_{0}^{t}S(t-t')\mathcal{N}(u_{1},u_{2},u_{3},u_{4})(t')dt'$$
and $\mathcal{N}(u_{1},u_{2},u_{3},u_{4})$ is defined by \eqref{Eqn:nonlin}.  We will use the notation $\mathcal{N}(u)=\mathcal{N}(u,u,u,u)$, $\mathcal{D}(u)=\mathcal{D}(u,u,u,u)$ and $\mathcal{D}^{N}=\mathcal{D}(u^{N})$.
With these definitions, due to the reformulation
\eqref{Eqn:gKdV-zeromean3} of \eqref{Eqn:gKdV-zeromean},
$u^{N}$ satisfies $u^{N}=\Gamma^{N}(u^{N})$.

Here is an outline of the proof of Theorem \ref{Thm:LWP}: we show that for $\delta>0$ sufficiently small, $\exists\,c,\beta>0$ and
$\Omega_{T}\subset \Omega$ with $P(\Omega_{T}^{c})<e^{-\frac{c}{T^{\beta}}}$ such that
$\forall \,\omega \in\Omega_{T}$, the sequence $u^{N}$ converges in
$X^{\frac{1}{2}-\delta,\frac{1}{2}-\delta}_{T}$ to a solution $u$ of \eqref{Eqn:gKdV-zeromean} with initial data $u_{0,\omega}$.  In particular, we prove that for $\delta,T>0$ sufficiently small, $\forall\, \omega \in\Omega_{T} $, we have the following: $\exists \,\epsilon >0$ such that for every $N>M>0$,
\begin{align}
\|u^{N}-u^{M}\|_{\frac{1}{2}-\delta,\frac{1}{2}-\delta,T} \lesssim M^{-\epsilon},
\label{Eqn:LWP-1a}
\end{align}
\begin{align}
\|\mathcal{D}^{N}-\mathcal{D}^{M}\|_{\frac{1}{2}+\delta,\frac{1}{2}-\delta,T} \lesssim M^{-\epsilon}.  \label{Eqn:LWP-1b}
\end{align}
These estimates show that $u^{N}$ and $\mathcal{D}^{N}$
are Cauchy in $X^{\frac{1}{2}-\delta,\frac{1}{2}-\delta}_{T}$
and $X^{\frac{1}{2}+\delta,\frac{1}{2}-\delta}_{T}$, respectively.  Then we show that the convergent $u$ (of $u^{N}$) is a solution to \eqref{Eqn:gKdV-zeromean}, and proceed to prove uniqueness, continuity and stability properties of this solution.

We begin by constructing a set $\Omega_{T}\subset \Omega$
with $P\big((\Omega_{T})^{c}\big)<e^{-\frac{c}{T^{\beta}}}$ such that for $T>0$ sufficiently small, if
$\omega \in\Omega_{T}$, we have
\begin{align}
\|u^{N}\|_{\frac{1}{2}-\delta,\frac{1}{2}-\delta,T} \leq  R,
\label{Eqn:LWP-16}
\end{align}
\begin{align}
\|\mathcal{D}^{N}\|_{\frac{1}{2}+\delta,\frac{1}{2}-\delta,T} \leq \tilde{R},
\label{Eqn:LWP-17}
\end{align}
for some constants $R,\tilde{R}\sim 1$ (independent of $N$).  Then using the estimates \eqref{Eqn:LWP-16} and \eqref{Eqn:LWP-17}, and imposing additional constraints on $T$, we will show that if $\omega\in\Omega_{T}$, then \eqref{Eqn:LWP-1a} and \eqref{Eqn:LWP-1b} hold true.

By $u^{N}=\Gamma(u^{N})$,
\eqref{Eqn:LWP-0a} and Lemma \ref{Lemma:lin1}, we find
\begin{align}
\|u^{N}\|_{\frac{1}{2}-\delta,\frac{1}{2}-\delta,T} &\leq  \|S(t)u_{0,\omega}^{N}\|_{\frac{1}{2}-\delta,\frac{1}{2}-\delta,T}
+ \|\mathcal{D}(u^{N})\|_{\frac{1}{2}-\delta,\frac{1}{2}-\delta,T}  \notag \\
&\leq  C_{0}T^{\delta}\|u_{0,\omega}^{N}\|_{H^{\frac{1}{2}-\delta}} + \|\mathcal{D}(u^{N})\|_{\frac{1}{2}+\delta,\frac{1}{2}-\delta,T}.
\label{Eqn:LWP-1}
\end{align}
Observe that we have applied the trivial embedding \eqref{Eqn:X-triv} to the last term in \eqref{Eqn:LWP-1} by raising the spatial regularity from $s=\frac{1}{2}-\delta$ to
$s=\frac{1}{2}+\delta$.  Next we use the triangle inequality and Lemma \ref{Lemma:gainpowerofT} to find
\begin{align}
\|\mathcal{D}(u^{N})\|_{\frac{1}{2}+\delta,\frac{1}{2}-\delta,T} &\leq  \|\mathcal{D}_{-1}(u^{N})\|_{\frac{1}{2}+\delta,\frac{1}{2}-\delta,T}+ \cdots
+ \|\mathcal{D}_{4}(u^{N})\|_{\frac{1}{2}+\delta,\frac{1}{2}-\delta,T}
\notag \\
& \|\mathcal{D}_{0}(u^{N})\|_{\frac{1}{2}+\delta,\frac{1}{2}-\delta,T}
+ T^{\delta-}\sum_{k=-1,k\neq 0}^{4}
\|\mathcal{D}_{k}(u^{N})\|_{\frac{1}{2}+\delta,\frac{1}{2}+\delta,T}.
\label{Eqn:LWP-1e}
\end{align}
We proceed to estimate each term on the right-hand side of \eqref{Eqn:LWP-1e} using Propositions \ref{Prop:nonlin} and \ref{Prop:nonlin2}.
Notice that by $u^{N}=\Gamma(u^{N})$ and
\eqref{Eqn:LWP-0a} we have
\begin{align}
u^{N} - S(t)u^{N}_{0,\omega}=\mathcal{D}(u^{N}).
\label{Eqn:LWP-1d}
\end{align}
By Proposition \ref{Prop:nonlin} and Proposition \ref{Prop:nonlin2}
$\exists\,\theta,\varepsilon, \beta, c,C>0$ with $\beta,\varepsilon\ll \delta,\delta-\delta_1$ and a measurable set $\Omega_{T}^{0}\subset \Omega$ with $P((\Omega_{T}^{0})^{c})<e^{-\frac{c}{T^{\beta}}}$ such that
$\forall\, \omega \in\Omega_{T}^{0} $, the estimates \eqref{Eqn:NL-neg1}-\eqref{Eqn:NL-1b} hold true.
In particular, using the estimates \eqref{Eqn:NL-neg1} and \eqref{Eqn:NL-0-a} (with $\delta_0= 0$, each $u_{i}=u^{N}$, and each  $\Lambda_{i}=\mathbb{P}_{N}$, $i=1,2,3,4$)
we have
\begin{align}
T^{\delta-}\|\mathcal{D}_{-1}(u^{N})&\|_{\frac{1}{2}+\delta,\frac{1}{2}+\delta,T} + \|\mathcal{D}_{0}(u^{N})\|_{\frac{1}{2}+\delta,\frac{1}{2}-\delta,T}\notag \\
& \lesssim  T^{\theta}
\big(1+\|u^{N}\|_{\frac{1}{2}-\delta,\frac{1}{2}-\delta,T}
+
\|u^{N} - S(t)u^{N}_{0,\omega}\|_{\frac{1}{2}+\delta,\frac{1}{2}-\delta,T}
\big)^{4} \notag \\
& =
T^{\theta}
\big(1+\|u^{N}\|_{\frac{1}{2}-\delta,\frac{1}{2}-\delta,T}
+
\|\mathcal{D}(u^{N})\|_{\frac{1}{2}+\delta,\frac{1}{2}-\delta,T}
\big)^{4},
\label{Eqn:LWP-2-a}
\end{align}
where we have used \eqref{Eqn:LWP-1d} in the last line.  When we estimate $\mathcal{D}_{k}$, for $k=1,2,3,4$, we consider a second iteration of
\eqref{Eqn:LWP-0a} in the $k^{\text{th}}$ factor.
For example, in the region $A_{1}$ we substitute
\begin{align}
u^{N} = S(t)u^{N}_{0,\omega} + \mathcal{D}(u^{N})
\label{Eqn:LWP-2b}
\end{align}
into the first slot of $\mathcal{D}_{1}$, and estimate the linear and nonlinear contributions from \eqref{Eqn:LWP-2b} separately.  To estimate $\|\mathcal{D}_{1}(S(t)u_{0,\omega}^{N},u^{N},u^{N}
,u^{N})\|_{\frac{1}{2}+\delta,\frac{1}{2}+\delta,T}$, we find
by \eqref{Eqn:NL-k-1}, $\delta_0=0$, and Lemma \ref{Lemma:lin1b},
\begin{align}
T^{\delta-}\|\mathcal{D}_{1}(S(t)u^{N}_{0,\omega}
,u^{N},&u^{N},u^{N})\|_
{\frac{1}{2}+\delta,\frac{1}{2}+\delta,T}
\notag \\ &\lesssim T^{\theta}
\|\eta(t)S(t)u^{N}_{0,\omega}\|_{\frac{1}{2}-\delta,\frac{1}{2}+\delta} \|u^{N}\|_{\frac{1}{2}-\delta,\frac{1}{2}-\delta,T}^{3}
\notag \\
&\lesssim T^{\theta}
\|u^{N}_{0,\omega}\|_{H^{\frac{1}{2}-\delta}} \|u^{N}\|_{\frac{1}{2}-\delta,\frac{1}{2}-\delta,T}^{3}.
\label{Eqn:NL-1-3}
\end{align}
Next we estimate $\|\mathcal{D}_{1}(\mathcal{D}(u^{N}),
u^{N},u^{N},u^{N})\|_{\frac{1}{2}+\delta,\frac{1}{2}-\delta,T}$.
Using \eqref{Eqn:NL-1} (with $\delta_0=0$, each $u_{i}=u^{N}$, and  $\Lambda_{1}=\mathbb{P}_{N}$)
we have
\begin{align}
T^{\delta-}\|\mathcal{D}_{1}(\mathcal{D}(u^{N}),u^{N},&
u^{N},u^{N})\|_{\frac{1}{2}+\delta,\frac{1}{2}-\delta}
\notag \\ &\lesssim
T^{\theta}\big(1+\|u^{N}\|_{\frac{1}{2}-\delta,\frac{1}{2}-\delta,T}
+
\|\mathcal{D}^{N}\|_{\frac{1}{2}+\delta,\frac{1}{2}-\delta,T}
\big)^{7}.
\label{Eqn:LWP-2-c}
\end{align}
Using \eqref{Eqn:NL-2}-\eqref{Eqn:NL-2b} we obtain estimates analogous to \eqref{Eqn:NL-1-3} and \eqref{Eqn:LWP-2-c} for $k=2,3,4$.  Combining these estimates with \eqref{Eqn:LWP-2-a} we have
\begin{align}
\|\mathcal{D}^{N}\|_{\frac{1}{2}+\delta,\frac{1}{2}-\delta,T} &\leq  C_{1}T^{\theta} Z\big(\|u_{0,\omega}^{N}\|_{H^{\frac{1}{2}-\delta}},\|u^{N}\|_{\frac{1}{2}-\delta,\frac{1}{2}-\delta,T},
\|\mathcal{D}^{N}\|_{\frac{1}{2}+\delta,\frac{1}{2}-\delta,T}\big),
\label{Eqn:LWP-2}
\end{align}
for some $\theta>0$, where $Z(x,y,z)$ is a polynomial of degree 7 with positive coefficients.

Now, for some fixed $0<\varepsilon<\delta$, $C>0$, let
$$\displaystyle\Omega_{T}:= \Omega_{T}^{0}\cap\Bigg\{\omega\in\Omega \Bigg| \Big\|\sum_{n\in\mathbb{Z}\setminus\{0\}}\frac{g_{n}(\omega)e^{inx}}{
|n|^{1-\epsilon}}\Big\|_{H^{\frac{1}{2}-\delta}}
\leq \frac{C}{T^{\frac{\beta}{2}}} \Bigg\}.$$
We have by Lemma \ref{Lemma:largedev} that, for $T>0$ sufficiently small, $P\big((\Omega_{T})^{c}\big)< P\big((\Omega_{T}^{0})^{c}\big) + e^{-(K(T))^{2}} \leq e^{-\frac{c}{T^{\beta}}}$.  Next we will show that, if $\omega\in\Omega_{T}$, then \eqref{Eqn:LWP-16} and \eqref{Eqn:LWP-17} are satisfied.  Given $R,\tilde{R}>0$, for $0<\alpha<\delta - \frac{\beta}{2},
0<\tilde{\alpha}<\theta-\frac{7\beta}{2}$, let
\begin{align*}
T_{1}&:= \inf\Big\{t>0 \Big| t^{\theta}
Z\Big(\frac{C}{t^{\frac{\beta}{2}}},R,\tilde{R}\Big)
\geq t^{\tilde{\alpha}}\tilde{R}\Big\}, \\
T_{2}&:= \inf\Big\{t>0 \Big|C_{0}t^{\alpha}
+ C_{1}t^{\tilde{\alpha}}\tilde{R}
\geq \frac{1}{2}R\Big\}, \\
T&:=T_{1}\wedge T_{2}\wedge\Big(\frac{1}{2C_{1}}\Big)^{\frac{1}{\tilde{\alpha}}}.
\end{align*}
Then $T>0$ by definition, and $T$ is also independent of $N$.  We claim that \eqref{Eqn:LWP-16} and \eqref{Eqn:LWP-17} hold for $\omega\in\Omega_{T}$.

The $X^{\frac{1}{2}\pm \delta,\frac{1}{2}-\delta}_{t}$ norms of $u^{N}$ and $\mathcal{D}^{N}$ are finite, continuous and increasing functions of $t$, and $\|u^{N}\|_{\frac{1}{2}-\delta,\frac{1}{2}-\delta,t}$,
$\|D^{N}\|_{\frac{1}{2}+\delta,\frac{1}{2}-\delta,t}
\rightarrow 0$ as $t\downarrow 0$ (almost surely, for each fixed $N>0$).  In particular, letting
$$ T^*:= \inf\big\{t>0\big| \|u^{N}\|_{\frac{1}{2}-\delta,\frac{1}{2}-\delta,t}\geq R\big\}\wedge \inf\big\{t>0\big|
\|D^{N}\|_{\frac{1}{2}-\delta,\frac{1}{2}-\delta,t}\geq \tilde{R} \big\},$$
we have $T^*=T^*(N)>0$ almost surely.  We claim that, if $\omega\in\Omega_{T}$, then $T^*\geq T$ for each $N>0$;  \eqref{Eqn:LWP-16} and \eqref{Eqn:LWP-17} follow from the definition of $T^*$.  Suppose not, and $T^{*}<T$ for some $\omega\in\Omega_{T}$.  Then we have
\begin{align}
\|\mathcal{D}^{N}\|_{\frac{1}{2}+\delta,
\frac{1}{2}-\delta,T^*} &\leq
C_{1}(T^*)^{\theta}
Z\big(\|u_{0,\omega}^{N}\|_{H^{\frac{1}{2}-\delta}},
\|u^{N}\|_{\frac{1}{2}-\delta,\frac{1}{2}-\delta,T^*},
\|\mathcal{D}^{N}\|_{\frac{1}{2}+\delta,
\frac{1}{2}-\delta,T^*}\big) \notag \\
&\leq
C_{1}(T^*)^{\theta}
Z\Big(\Big\|\sum_{n\neq 0}\frac{g_{n}(\omega)e^{inx}}{
|n|^{1-\epsilon}}\Big\|_{H^{\frac{1}{2}-\delta}},
R,\tilde{R}\Big) \ \ \text{by the definition of }T^*,  \notag \\
&\leq
C_{1}(T^*)^{\theta}
Z\Big(\frac{C}{T^{\frac{\beta}{2}}},
R,\tilde{R}\Big) \ \ \text{if}\ \omega\in\Omega_{T}, \notag \\
&\leq
C_{1}(T^*)^{\theta}
Z\Big(\frac{C}{(T^*)^{\frac{\beta}{2}}},
R,\tilde{R}\Big)
  \ \ \text{since}\ T^*<T, \notag \\
&\leq C_{1}(T^*)^{\tilde{\alpha}}\tilde{R} \label{Eqn:LWP-18}
 \\
&\leq \frac{1}{2}\tilde{R}. \notag
\end{align}
The last two lines follow from the definition of $T$, and the assumption $T^*<T$.
Also, by \eqref{Eqn:LWP-1}, \eqref{Eqn:LWP-18}, $T^*<T$, and the definition of $T$,
\begin{align}
\|u^{N}\|_{\frac{1}{2}-\delta,
\frac{1}{2}-\delta,T^*} &\leq
C_{0}(T^{*})^{\delta}\|u_{0,\omega}^{N}\|_{H^{\frac{1}{2}-\delta}} + \|\mathcal{D}(u^{N})\|_{\frac{1}{2}+\delta,\frac{1}{2}-\delta,T^*}
 \notag \\
&\leq
C_{0}(T^*)^{\alpha}+ C_{1}(T^*)^{\tilde{\alpha}}\tilde{R}
\notag \\
&\leq
C_{0}T^{\alpha}+ C_{1}T^{\tilde{\alpha}}\tilde{R}
\notag \\
&\leq \frac{1}{2}R.
\label{Eqn:LWP-19}
\end{align}
Since $\|u^{N}\|_{\frac{1}{2}-\delta,
\frac{1}{2}-\delta,t}$ and $\|\mathcal{D}^{N}\|_{\frac{1}{2}+\delta,
\frac{1}{2}-\delta,t}$ are increasing and continuous functions of $t$, by \eqref{Eqn:LWP-18} and \eqref{Eqn:LWP-19}, for each fixed $N>0$, $\exists\, T^{*}_{0}=T^{*}_{0}(N)>T^*$ such that $\|u^{N}\|_{\frac{1}{2}-\delta,
\frac{1}{2}-\delta,t}<R$ and $\|\mathcal{D}^{N}\|_{\frac{1}{2}+\delta,
\frac{1}{2}-\delta,t}<\tilde{R}$ for all $t\in[0,T^{*}_{0}]$.  This violates the definition of $T^*$ and we conclude that, if $\omega\in\Omega_{T}$, then $T^{*}\geq T$ for each $N>0$; \eqref{Eqn:LWP-16} and \eqref{Eqn:LWP-17} follow.

To establish the convergence of $u^{N}$ (and
$\mathcal{D}^{N}$), we obtain further restrictions on $T$ by considering,
for $\omega \in \Omega_{T}$,
\begin{align}
\|u^{N}-u^{M}\|_{\frac{1}{2}-\delta,\frac{1}{2}-\delta,T} &\leq \|S(t)(u_{0}^{N}-u_{0}^{M})\|_{\frac{1}{2}-\delta,\frac{1}{2}-\delta} +
\|\mathcal{D}^{N}-\mathcal{D}^{M}\|_{\frac{1}{2}-\delta,\frac{1}{2}-\delta,T}  \notag \\
&\leq M^{-\epsilon}\frac{\tilde{C}}{T^{\frac{\beta}{2}}} +
\|\mathcal{D}^{N}-\mathcal{D}^{M}\|_{\frac{1}{2}+\delta,\frac{1}{2}-\delta,T}.
\label{Eqn:LWP-4}
\end{align}
Then, using the multilinearity of $\mathcal{D}$,
\begin{align}
\|\mathcal{D}^{N}-\mathcal{D}^{M}\|_{\frac{1}{2}+\delta,\frac{1}{2}-\delta,T}  &\leq  \|\mathcal{D}(u^{N}-u^{M},u^{N},u^{N},u^{N})\|_{\frac{1}{2}+\delta,\frac{1}{2}-\delta,T}  \notag \\
&\ \ \ \ \ \ \ +\|\mathcal{D}(u^{M},u^{N}-u^{M},u^{N},u^{N})\|_{\frac{1}{2}+\delta,\frac{1}{2}-\delta,T}  \notag \\
&\ \ \ \ \ \ \ +\|\mathcal{D}(u^{M},u^{M},u^{N}-u^{M},u^{N})\|_{\frac{1}{2}+\delta,\frac{1}{2}-\delta,T} \notag \\
&\ \ \ \ \ \ \ +\|\mathcal{D}(u^{M},u^{M},u^{M},u^{N}-u^{M})\|_{\frac{1}{2}+\delta,\frac{1}{2}-\delta,T}.
\label{Eqn:LWP-5}
\end{align}
Each of the terms in \eqref{Eqn:LWP-5} will be bounded in a similar way.  We bound the first term explicitly using \eqref{Eqn:NL-neg1}-\eqref{Eqn:NL-2b}.  Consider \eqref{Eqn:NL-neg1} and \eqref{Eqn:NL-0-a} applied with $\delta_0=0$, $u_{1}=u^{N}-u^{M}$,
$\Lambda_{1}=\mathbb{P}_{N}-\mathbb{P}_{M}$, and
$u_{k}=u^{N}$,
$\Lambda_{k}=\mathbb{P}_{N}$ for $k=2,3,4$.
This gives
\begin{align}
\|\mathcal{D}_{0}(u^{N}-u^{M},&u^{N},u^{N},u^{N})\|_{\frac{1}{2}+\delta,\frac{1}{2}-\delta,T}  + \|\mathcal{D}_{-1}(u^{N}-u^{M},u^{N},u^{N},u^{N})\|_{\frac{1}{2}+\delta,\frac{1}{2}+\delta,T}  \notag \\
&\lesssim
T^{\theta}\big(M^{-\epsilon}+\|u^{N}-u^{M}\|_{\frac{1}{2}-\delta,\frac{1}{2}-\delta,T}
+
\|\mathcal{D}^{N}-\mathcal{D}^{M}\|_{\frac{1}{2}+\delta,\frac{1}{2}-\delta,T}
\big) \notag \\
&\ \ \ \ \ \ \ \cdot
\big(1+\|u^{N}\|_{\frac{1}{2}-\delta,\frac{1}{2}-\delta,T}
+
\|\mathcal{D}(u^{N})\|_{\frac{1}{2}+\delta,\frac{1}{2}-\delta,T}
\big)^{3}.
\label{Eqn:LWP-5a}
\end{align}
To estimate $\mathcal{D}_{k}(u^{N}-u^{M},u^{N},u^{N},u^{N})$, for $k=1,2,3,4$, we again consider a second iteration of
\eqref{Eqn:LWP-2b} in the $k^{\text{th}}$ factor.
This argument requires modification when we consider $\mathcal{D}_{1}(u^{N}-u^{M},u^{N},u^{N},u^{N})$.
We substitute
\begin{align}
u^{N}-u^{M} = S(t)(u^{N}_{0,\omega}-u^{M}_{0,\omega}) + \mathcal{D}(u^{N})-\mathcal{D}(u^{M}).
\label{Eqn:LWP-5b}
\end{align}
Then to estimate $\mathcal{D}_{1}(S(t)(u_{0,\omega}^{N}-u^{M}_{0,\omega}),u^{N},u^{N}
,u^{N})$, we proceed as in \eqref{Eqn:NL-1-3} above.
To be precise, by \eqref{Eqn:NL-k-1}, Lemma \ref{Lemma:lin1b}, and the definition of $\Omega_{T}$, we have
\begin{align}
\|\mathcal{D}_{1}(S(t)(u^{N}_{0,\omega}-u^{M}_{0,\omega})
,u^{N}&,u^{N},u^{N})\|_
{\frac{1}{2}+\delta,\frac{1}{2}-\delta,T}
\notag \\
&\lesssim T^{\theta}
\|\eta(t)S(t)(u^{N}_{0,\omega}-u^{M}_{0,\omega})\|
_{\frac{1}{2}-\delta,\frac{1}{2}+\delta} \|u_{j}\|_{\frac{1}{2}-\delta,\frac{1}{2}-\delta,T}^{3}
\notag \\
&\lesssim T^{\theta-\frac{\beta}{2}}
M^{-\varepsilon}
 \|u_{j}\|_{\frac{1}{2}-\delta,\frac{1}{2}-\delta,T}^{3}.
\label{Eqn:LWP-20}
\end{align}
Next we estimate $\|\mathcal{D}_{1}(\mathcal{D}^{N}-\mathcal{D}^{M},
u^{N},u^{N},u^{N})\|_{\frac{1}{2}+\delta,\frac{1}{2}-\delta,T}$.
We find
\begin{align}
\|\mathcal{D}_{1}(\mathcal{D}^{N}-\mathcal{D}^{M},u^{N},&u^{N},u^{N})
\|_{\frac{1}{2}+\delta,\frac{1}{2}-\delta,T}
\notag \\ &\lesssim \|\mathcal{D}_{1}(\mathcal{D}(u^{N}-u^{M},u^{N},u^{N},u^{N})
,u^{N},u^{N},u^{N})
\|_{\frac{1}{2}+\delta,\frac{1}{2}-\delta,T}
\notag \\
& \ \ \ \ \ \ + \|\mathcal{D}_{1}(\mathcal{D}(u^{M},u^{N}-u^{M},u^{N},u^{N})
,u^{N},u^{N},u^{N})
\|_{\frac{1}{2}+\delta,\frac{1}{2}-\delta,T}
\notag \\
& \ \ \ \ \ \ + \|\mathcal{D}_{1}(\mathcal{D}(u^{M},u^{M},u^{N}-u^{M},u^{N})
,u^{N},u^{N},u^{N})
\|_{\frac{1}{2}+\delta,\frac{1}{2}-\delta,T}
\notag \\
& \ \ \ \ \ \ + \|\mathcal{D}_{1}(\mathcal{D}(u^{M},u^{M},u^{M},u^{N}-u^{M})
,u^{N},u^{N},u^{N})
\|_{\frac{1}{2}+\delta,\frac{1}{2}-\delta,T}.
\label{Eqn:LWP-6a}
\end{align}
Each of the terms in \eqref{Eqn:LWP-6a} will be bounded in a similar way, we bound the first term explicitly.  Applying \eqref{Eqn:NL-1} with $\delta_0=0$, $u_{5}=u^{N}-u^{M}$, and
$u_{k}=u^{N}$ for $k=2,3,4,6,7,8$, we find
\begin{align*}
\|\mathcal{D}_{1}(\mathcal{D}(u^{N}-u^{M},u^{N},&u^{N},u^{N})
,u^{N},u^{N},u^{N})
\|_{\frac{1}{2}+\delta,\frac{1}{2}+\delta,T} \notag \\
&\lesssim
T^{\theta}\big(M^{-\epsilon}+\|u^{N}-u^{M}\|_{\frac{1}{2}-\delta,\frac{1}{2}-\delta,T}
+
\|\mathcal{D}^{N}-\mathcal{D}^{M}\|_{\frac{1}{2}+\delta,\frac{1}{2}-\delta,T}
\big) \notag \\
&\ \ \ \ \ \ \ \cdot
\big(1+\|u^{N}\|_{\frac{1}{2}-\delta,\frac{1}{2}-\delta,T}
+
\|\mathcal{D}(u^{N})\|_{\frac{1}{2}+\delta,\frac{1}{2}-\delta,T}
\big)^{6}.
\end{align*}
With \eqref{Eqn:LWP-6a}, this leads to the bound
\begin{align*}
\|\mathcal{D}_{1}(u^{N}-u^{M}&,u^{N},u^{N},u^{N})
\|_{\frac{1}{2}+\delta,\frac{1}{2}+\delta,T} \notag \\
&\lesssim
T^{\theta}\big(M^{-\epsilon}+\|u^{N}-u^{M}\|_{\frac{1}{2}-\delta,\frac{1}{2}-\delta,T}
+
\|\mathcal{D}^{N}-\mathcal{D}^{M}\|_{\frac{1}{2}+\delta,\frac{1}{2}-\delta,T}
\big)\notag \\
&\ \ \ \ \ \ \ \cdot
\big(1+\|u^{N}\|_{\frac{1}{2}-\delta,\frac{1}{2}-\delta,T}
+ \|u^{M}\|_{\frac{1}{2}-\delta,\frac{1}{2}-\delta,T}
\notag \\
&\ \ \ \ \ \ \ \ \ \ \ \ \ \ \ \ \ \ \ \ +
\|\mathcal{D}(u^{N})\|_{\frac{1}{2}+\delta,\frac{1}{2}-\delta,T}
+ \|\mathcal{D}(u^{M})\|_{\frac{1}{2}+\delta,\frac{1}{2}-\delta,T}
\big)^{6}.
\end{align*}
With similar arguments, using the inequalities \eqref{Eqn:NL-2}-\eqref{Eqn:NL-2b}, we find
\begin{align}
\|\mathcal{D}_{k}(u^{N}-u^{M}&,u^{N},u^{N},u^{N})
\|_{\frac{1}{2}+\delta,\frac{1}{2}+\delta,T} \notag \\
&\lesssim
T^{\theta}\big(M^{-\epsilon}+\|u^{N}-u^{M}\|_{\frac{1}{2}-\delta,\frac{1}{2}-\delta,T}
+
\|\mathcal{D}^{N}-\mathcal{D}^{M}\|_{\frac{1}{2}+\delta,\frac{1}{2}-\delta,T}
\big)\notag \\
&\ \ \ \ \ \ \ \cdot
\big(1+\|u^{N}\|_{\frac{1}{2}-\delta,\frac{1}{2}-\delta,T}
+ \|u^{M}\|_{\frac{1}{2}-\delta,\frac{1}{2}-\delta,T}
\notag \\
&\ \ \ \ \ \ \ \ \ \ \ \ \ \ \ \ \ \ \ \ +
\|\mathcal{D}(u^{N})\|_{\frac{1}{2}+\delta,\frac{1}{2}-\delta,T}
+ \|\mathcal{D}(u^{M})\|_{\frac{1}{2}+\delta,\frac{1}{2}-\delta,T}
\big)^{6}.
\label{Eqn:LWP-6b}
\end{align}
for all $k=1,2,3,4$.  Combining \eqref{Eqn:LWP-5a} and \eqref{Eqn:LWP-6b} we have
\begin{align}
\|\mathcal{D}(u^{N}-u^{M}&,u^{N},u^{N},u^{N})\|_{\frac{1}{2}+\delta,\frac{1}{2}-\delta,T}  \notag \\
&\lesssim T^{\theta}\big(M^{-\epsilon}+\|u^{N}-u^{M}\|_{\frac{1}{2}-\delta,\frac{1}{2}-\delta,T}
+
\|\mathcal{D}^{N}-\mathcal{D}^{M}\|_{\frac{1}{2}+\delta,\frac{1}{2}-\delta,T}
\big)\notag \\
&\ \ \cdot
Z_{0}\big(\|u^{N}\|_{\frac{1}{2}-\delta,\frac{1}{2}-\delta,T},
\|u^{M}\|_{\frac{1}{2}-\delta,\frac{1}{2}-\delta,T},
\|\mathcal{D}^{N}\|_{\frac{1}{2}+\delta,\frac{1}{2}-\delta,T},
\|\mathcal{D}^{M}\|_{\frac{1}{2}+\delta,\frac{1}{2}-\delta,T}\big).
\label{Eqn:LWP-6c}
\end{align}
where $Z_{0}(x,y,z,w)$ is a polynomial of degree 6 with positive coefficients.  Each of the terms in \eqref{Eqn:LWP-5} can be bounded with similar arguments.  This leads to
\begin{align}
\|\mathcal{D}^{N}&-\mathcal{D}^{M}\|_{\frac{1}{2}+\delta,\frac{1}{2}-\delta,T}
\notag \\ &\leq C_{1}T^{\theta}\big(M^{-\epsilon}+\|u^{N}-u^{M}\|_{\frac{1}{2}-\delta,\frac{1}{2}-\delta,T}
+
\|\mathcal{D}^{N}-\mathcal{D}^{M}\|_{\frac{1}{2}+\delta,\frac{1}{2}-\delta,T}
\big)\notag \\
&\ \ \ \ \ \ \ \ \ \  \cdot Z_{1}\big(\|u^{N}\|_{\frac{1}{2}-\delta,\frac{1}{2}-\delta,T},
\|u^{M}\|_{\frac{1}{2}-\delta,\frac{1}{2}-\delta,T},
\|\mathcal{D}^{N}\|_{\frac{1}{2}+\delta,\frac{1}{2}-\delta,T},
\|\mathcal{D}^{M}\|_{\frac{1}{2}+\delta,\frac{1}{2}-\delta,T}\big),
\label{Eqn:LWP-6d}
\end{align}
where $Z_{1}(x,y,z,w)$ is a polynomial of degree 6 with positive coefficients.  If we choose $T>0$ sufficiently small such that
\begin{align}
C_{1}T^{\theta}Z_{1}(R,R,\tilde{R},\tilde{R}) \leq \frac{1}{4},
\label{Eqn:LWP-21}
\end{align}
we find from \eqref{Eqn:LWP-6d}, \eqref{Eqn:LWP-16} \eqref{Eqn:LWP-17} and \eqref{Eqn:LWP-21},
\begin{align}
\|\mathcal{D}^{N}-\mathcal{D}^{M}\|_{\frac{1}{2}+\delta,\frac{1}{2}-\delta,T} \leq \frac{1}{2}M^{-\epsilon}  + \frac{1}{2}\|u^{N}-u^{M}\|_{\frac{1}{2}-\delta,\frac{1}{2}-\delta,T}.
\label{Eqn:LWP-8}
\end{align}
Then combining \eqref{Eqn:LWP-4} and \eqref{Eqn:LWP-8}, we have
\begin{align*}
\|u^{N}-u^{M}\|_{\frac{1}{2}-\delta,\frac{1}{2}-\delta,T} \leq (\tilde{C}T^{\delta-\frac{\beta}{2}}+1)M^{-\epsilon}
\lesssim M^{-\epsilon},
\end{align*}
by taking $T<1$.  With \eqref{Eqn:LWP-8} this gives
\begin{align*}
\|\mathcal{D}^{N}-\mathcal{D}^{M}\|_{\frac{1}{2}+\delta,\frac{1}{2}-\delta,T} \lesssim M^{-\epsilon},
\end{align*}
and we conclude that \eqref{Eqn:LWP-1a} and \eqref{Eqn:LWP-1b} hold for $\omega\in\Omega_{T}$.
By \eqref{Eqn:LWP-1a} and \eqref{Eqn:LWP-1b}, $u^{N}$
and $\mathcal{D}(u^{N})$ converge in $X^{\frac{1}{2}-\delta,\frac{1}{2}-\delta,T}$ and
$X^{\frac{1}{2}+\delta,\frac{1}{2}-\delta,T}$, respectively, for $\omega\in\Omega_{T}$.  It remains to be shown that, for $\omega\in\Omega_{T}$,
\begin{enumerate}[(i)]
\item The convergent $u$ is indeed a solution to \eqref{Eqn:gKdV-zeromean} with initial data $u_{0,\omega}$.
\item $u-S(t)u_{0,\omega}\in C([0,T];H^{\frac{1}{2}+\delta}(\mathbb{T}))$.
\item $u$ is unique in $\Big\{S(t)u_{0,\omega}
    + \{\|\cdot\|_{\frac{1}{2}+\delta,\frac{1}{2}-\delta,T} \leq \tilde{R}\}\Big\}$.
\item $u$ depends continuously on the initial data, in the sense that the solution map $\Phi:
    \Big\{u_{0,\omega} + \{\|\cdot\|_{H^{\frac{1}{2}+\delta}}\leq K\}\Big\}\rightarrow  \Big\{S(t)u_{0,\omega} + \{\|\cdot\|_{C([0,T];H^{\frac{1}{2}+\delta})} \leq \tilde{K}\}
    \Big\}$ is Lipschitz.
\item The solution $u$ is well-approximated by the solution of \eqref{Eqn:gKdV-ZMN}.  More precisely,
    \begin{align*}
    \|u-S(t)u_{0,\omega}
    -(\Phi^{N}(t)-S(t))\mathbb{P}_{N}u_{0,\omega}
    \|_{C([0,T];H^{\frac{1}{2}+\delta_1})}
    \lesssim {N}^{-\beta}.
    \end{align*}
\end{enumerate}
To establish (i), we need to prove that $u= \lim_{N\rightarrow \infty}u^{N}$
satisfies
\begin{align}
u=S(t)u_{0,\omega}+\mathcal{D}(u),
\label{Eqn:LWP-10}
\end{align}
in the sense of distributions.  That is, we show that for all test functions
$\varphi\in \mathcal{D}(\mathbb{T}\times[0,T])$, we have
$$ \langle u,\varphi\rangle
= \langle S(t)u_{0,\omega}+\mathcal{D}(u),\varphi \rangle. $$

Clearly $S(t)u_{0,\omega}^{N} \rightarrow S(t)u_{0,\omega}$
in $X^{\frac{1}{2}-\delta,\frac{1}{2}-\delta}_{T}$, for $\omega\in\Omega_{T}$, and
$ \mathcal{D}_{0}(u^{N}) \rightarrow \mathcal{D}_{0}(u)$
in $X^{\frac{1}{2}+\delta,\frac{1}{2}-\delta}_{T}$,
by multilinearity of $\mathcal{D}_{0}$ and the deterministic estimate \eqref{Eqn:NL-0-a}.  Then from \eqref{Eqn:LWP-5a}, \eqref{Eqn:LWP-6b}, \eqref{Eqn:LWP-1a}, \eqref{Eqn:LWP-1b}, and multilinearity
we have that for $\omega \in \Omega_{T}$, $\mathcal{D}_{j}(u^{N})$ is Cauchy in $X^{\frac{1}{2}+\delta,\frac{1}{2}+\delta}_{T}$.  That is,
\begin{align}
\mathcal{D}_{j}(u^{N}) \rightarrow v_{j} \ \ \text{in}\ X^{\frac{1}{2}+\delta,\frac{1}{2}+\delta}_{T}
\label{Eqn:LWP-11}
\end{align}
for some $v_{j}\in X^{\frac{1}{2}+\delta,\frac{1}{2}+\delta}_{T}$, $j=-1,1,2,3,4$.  We can then express
\begin{align}
u=S(t)u_{0,\omega} + \mathcal{D}_{0}(u) + v_{-1} + v_{1} + \cdots + v_{4}.
\label{Eqn:LWP-12}
\end{align}
It remains to be shown that $v_{j}= \mathcal{D}_{j}(u)$
for each $j=-1,1,2,3,4$.  For $j=-1,2,3,4$, we consider, using the multilinearity of $\mathcal{D}_{j}$,
\begin{align}
\|\mathcal{D}_{j}(u)-\mathcal{D}_{j}(u^{N})\|_{\frac{1}{2},\frac{1}{2}+\delta,T}  &\leq  \|\mathcal{D}_{j}(u-u^{N},u,u,u)\|_{\frac{1}{2},\frac{1}{2}+\delta,T}  \notag \\
&\ \ \ \ \ \ \ +\|\mathcal{D}_{j}(u^{N},u-u^{N},u,u)\|_{\frac{1}{2},\frac{1}{2}+\delta,T}  \notag \\
&\ \ \ \ \ \ \ +\|\mathcal{D}_{j}(u^{N},u^{N},u-u^{N},u)\|_{\frac{1}{2},\frac{1}{2}+\delta,T} \notag \\
&\ \ \ \ \ \ \ +\|\mathcal{D}_{j}(u^N,u^N,u^N,u-u^N)\|_{\frac{1}{2},\frac{1}{2}+\delta,T}.
\label{Eqn:LWP-13b}
\end{align}
We bound the first term in \eqref{Eqn:LWP-13b} for each $j=-1,2,3,4$.  For $j=-1$, applying \eqref{Eqn:NL-1} with $\delta_0=0$, $u_{1}=u-u^{N}$, $\Lambda_{1}=\text{Id}-\mathbb{P}_{N}$ and
$u_{k}=u$, $\Lambda_{k}=\text{Id}$, for $k=2,3,4$, we find for $\omega\in\Omega_{T}$, that
\begin{align*}
\|\mathcal{D}_{-1}&(u-u^N,u,u,u)\|_{\frac{1}{2}+\delta,\frac{1}{2}+\delta,T}
\notag \\
&\lesssim
T^{\theta}\big(N^{-\epsilon}+\|u-u^{N}\|_{\frac{1}{2}-\delta,\frac{1}{2}-\delta,T}
+
\|\mathcal{D}_{0}(u)+\sum_{\substack{k=-1 \\ k\neq 0}}^{4}v_{k}-\mathcal{D}(u^{N})\|_{\frac{1}{2}+\delta,\frac{1}{2}-\delta,T}
\big) \notag \\
&\ \ \ \ \ \ \ \cdot
\big(1+\|u\|_{\frac{1}{2}-\delta,\frac{1}{2}-\delta,T}
+
\|\mathcal{D}_{0}(u)+\sum_{\substack{k=-1 \\ k\neq 0}}^{4}v_{k}\|_{\frac{1}{2}+\delta,\frac{1}{2}-\delta,T}
\big)^{3} \notag \\
&\lesssim
T^{\theta}\big(N^{-\epsilon}+\|u-u^{N}\|_{\frac{1}{2}-\delta,\frac{1}{2}-\delta,T}
+
\sum_{\substack{k=-1 \\ k\neq 0}}^{4}\|v_{k}-\mathcal{D}_{k}(u^{N})\|_{\frac{1}{2}+\delta,\frac{1}{2}-\delta,T}
\notag\\
&\ \ \ \ \ \ \ \ \ \ \ \ \ \ \ \ \ \ \ \ \ \ \ \ \ \ \ \ \ \ \ \ \ \ \ \ \ +
\|\mathcal{D}_{0}(u)-\mathcal{D}_{0}(u^N)\|_{\frac{1}{2}+\delta,\frac{1}{2}-\delta,T}
\big) \big(1+R + \tilde{R} \big)^{3} \notag \\
&\rightarrow 0, \ \ \ \ \ \text{as}\ N\rightarrow \infty,
\end{align*}
by \eqref{Eqn:NL-0-a} and \eqref{Eqn:LWP-11}.

For $j=2,3,4$, we will show how to bound
$\|\mathcal{D}_{2}(u^N,u-u^N,u,u)\|_{\frac{1}{2},\frac{1}{2}+\delta,T}$ (where the difference appears in the second factor).  Treating other types of terms will follow from the same estimates (or simpler ones).  Inserting \eqref{Eqn:LWP-2b} and \eqref{Eqn:LWP-12} into the second factor, we find
\begin{align}
\|\mathcal{D}_{2}(u^N,&u-u^N,u,u)\|_{\frac{1}{2},\frac{1}{2}+\delta,T}
\notag  \\
&\leq \|\mathcal{D}_{2}(u^N,(\text{Id}-
\mathbb{P}_{N})S(t)u_{0,\omega},u,u)\|_{\frac{1}{2},\frac{1}{2}+\delta,T}
\notag  \\ &\ \ \ \ \ \ \ \ +
\|\mathcal{D}_{2}(u^N,\mathcal{D}_{0}(u)+\sum_{\substack{k=-1 \\ k\neq 0}}v_{k}-\mathcal{D}(u^{N}),u,u)\|_{\frac{1}{2},\frac{1}{2}+\delta,T}
\label{Eqn:LWP-22-new}
\end{align}
The first term in \eqref{Eqn:LWP-22-new} is bounded (and decays to zero) exactly as in \eqref{Eqn:LWP-20} above, by using \eqref{Eqn:NL-k-2}.  For the second term, we consider
\begin{align}
\|\mathcal{D}_{2}(u^N,
\mathcal{D}_{0}(u)+&\sum_{\substack{k=-1 \\ k\neq 0}}v_{k}-\mathcal{D}(u^{N}),u,u)
\|_{\frac{1}{2},\frac{1}{2}+\delta,T}
\notag \\
&\leq
\|\mathcal{D}_{2}(u^N,\mathcal{D}_{0}(u)
-\mathcal{D}_{0}(u^{N}),u,u)\|_{\frac{1}{2},\frac{1}{2}+\delta,T}
\notag \\
&\ \ \ \ \ \ \ \ \ \ \ + \sum_{\substack{k=-1 \\ k\neq 0}}^{4}\|\mathcal{D}_{2}(u^N,v_{k}-\mathcal{D}_{k}(u^{N}),
u,u)\|_{\frac{1}{2},\frac{1}{2}+\delta,T}.
\label{Eqn:LWP-15-new}
\end{align}
The first term in \eqref{Eqn:LWP-15-new} is bounded
using multilinearity of $\mathcal{D}_{0}$ and
a telescoping sum, as considered in \eqref{Eqn:LWP-6a}.
This reduces to estimating terms of the form
\begin{align*}
\|\mathcal{D}_{2}(u^N,\mathcal{D}_{0}(u-u^N,u,u,u),u,u)\|_{\frac{1}{2}+\delta,\frac{1}{2}+\delta,T}.
\end{align*}
By \eqref{Eqn:NL-2b} we have
\begin{align}
\|\mathcal{D}_{2}(u^N,\mathcal{D}_{0}&(u-u^N,u,u,u)
,u,u)
\|_{\frac{1}{2}+\delta,\frac{1}{2}+\delta,T} \notag \\
&\lesssim
T^{\theta}\|u-u^N\|_{\frac{1}{2}-\delta,\frac{1}{2}-\delta,T}\|u^N\|_{\frac{1}{2}-\delta,\frac{1}{2}-\delta,T}
\|u\|_{\frac{1}{2}-\delta,\frac{1}{2}-\delta,T}^{5}
\rightarrow 0,
\label{Eqn:LWP-24-new}
\end{align}
as $N\rightarrow \infty$.  The remaining terms in \eqref{Eqn:LWP-15-new} are bounded using the deterministic estimate \eqref{Eqn:NL-k-2}.  That is, for each $k\in\{-1,1,2,3,4\}$, \eqref{Eqn:NL-k-2} (with $\delta_0=0$) gives
\begin{align}
\|\mathcal{D}_{2}(u^N,v_{k}-\mathcal{D}_{k}(u^N),u,u)
\|_{\frac{1}{2},\frac{1}{2}+\delta,T}
&\lesssim  \|v_{k}-\mathcal{D}_{k}(u^{N})\|_{\frac{1}{2}-\delta,\frac{1}{2}+\delta,T}
\|u^N\|_{\frac{1}{2}-\delta,\frac{1}{2}-\delta,T}\|u\|^{2}_{\frac{1}{2}-\delta,\frac{1}{2}-\delta,T}
\notag \\
&\rightarrow 0,
\label{Eqn:LWP-23-new}
\end{align}
as $N\rightarrow \infty$, by \eqref{Eqn:LWP-11}.  Thus, with \eqref{Eqn:LWP-15-new}, \eqref{Eqn:LWP-24-new}, \eqref{Eqn:LWP-23-new} and an expansion similar to \eqref{Eqn:LWP-6a}, we have
\begin{align*}
\|\mathcal{D}_{2}(u^N,u-u^N,u,u)\|_{\frac{1}{2}+\delta,\frac{1}{2}+\delta,T}
\rightarrow 0,
\end{align*}
as $N\rightarrow \infty$.

It remains to show that $v_{1}=\mathcal{D}_{1}(u)$, in the sense of distributions.
In other words, we still need to prove that,
for all test functions
$\varphi\in \mathcal{D}(\mathbb{T}\times[0,T])$, we have
\begin{align}
\langle \mathcal{D}_{1}(u)-\mathcal{D}_{1}(u^{N}),\varphi\rangle \rightarrow 0, \ \ \text{as} \  N\rightarrow \infty.
\label{Eqn:LWP-13-1}
\end{align}

Once again using a telescoping sum, this is reduced to establishing that
\begin{align}
\langle \mathcal{D}_{1}(u-u^N,u,u,u),\varphi\rangle \rightarrow 0, \ \ \text{as} \  N\rightarrow \infty.
\label{Eqn:LWP-13-2}
\end{align}
Inserting \eqref{Eqn:LWP-2b} and \eqref{Eqn:LWP-12} into the first factor, we find
\begin{align}
\langle\mathcal{D}_{1}(u-u^N,&u,u,u),\varphi\rangle
\notag  \\
&= \langle\mathcal{D}_{1}((\text{Id}-
\mathbb{P}_{N})S(t)u_{0,\omega},u,u,u),\varphi\rangle
\notag  \\ &\ \ \ \ \ \ \ \ +
\langle\mathcal{D}_{1}(\mathcal{D}_{0}(u)+\sum_{\substack{k=-1 \\ k\neq 0}}v_{k}-\mathcal{D}(u^{N}),u,u,u)
,\varphi\rangle.
\label{Eqn:LWP-22-2}
\end{align}
The first term in \eqref{Eqn:LWP-22-2} is bounded (and decays to zero) exactly as in \eqref{Eqn:LWP-20} above (using H\"{o}lder).  For the second term, we expand
\begin{align}
\langle\mathcal{D}_{1}(
\mathcal{D}_{0}(u)+&\sum_{\substack{k=-1 \\ k\neq 0}}v_{k}-\mathcal{D}(u^{N}),u,u,u),
\varphi\rangle
\notag \\
&=
\langle\mathcal{D}_{1}(\mathcal{D}_{0}(u)
-\mathcal{D}_{0}(u^{N}),u,u,u),\varphi\rangle
\notag \\
&\ \ \ \ \ \ \ \ \ \ \ + \sum_{\substack{k=-1 \\ k\neq 0}}^{4}\langle \mathcal{D}_{1}(v_{k}-\mathcal{D}_{k}(u^{N}),
u,u,u),\varphi\rangle.
\label{Eqn:LWP-15-2}
\end{align}
Each of the last five terms in \eqref{Eqn:LWP-15-2} is bounded (and goes to zero) using H\"{o}lder and \eqref{Eqn:NL-k-1}, as in \eqref{Eqn:LWP-23-new} above.

For the first term in \eqref{Eqn:LWP-15-2}, we use a telescoping sum and reduce to establishing statements of the type
\begin{align}
\langle\mathcal{D}_{1}(\mathcal{D}_{0}(u-u^N,u,u,u),u,u,u),
\varphi\rangle \rightarrow 0, \ \ \text{as} \  N\rightarrow \infty.
\label{Eqn:LWP-13-2}
\end{align}
Here is where we take advantage of the distributional formulation of equivalence.
More precisely, as described in Section \ref{Sec:probhept} (of the appendix), the expression $$\mathcal{D}_{1}(\mathcal{D}_{0}(u_5,u_6,u_7,u_8),u_2,u_3,u_4)(x,t),$$
as appearing in the statement of Proposition \ref{Prop:nonlin}, is defined (in the region $A_{1,c}$) by the right-hand side of \eqref{Eqn:NLdef2} (after imposing a certain cancellation).  On the other hand, since we are attempting to demonstrate here that $\mathcal{D}(u^N)\rightarrow \mathcal{D}(u)$, the expressions
\begin{align}\mathcal{D}_{1}(\mathcal{D}_{0}(u,u,u,u),u,u,u)(x,t)
\label{Eqn:LWP-14-2b}\end{align}
and
\begin{align}\mathcal{D}_{1}(\mathcal{D}_{0}(u^N,u^N,u^N,u^N),u^N,u^N,u^N)(x,t)
\label{Eqn:LWP-14-2}\end{align}
are implicitly defined (in $A_{1,c}$) through the second last term in \eqref{Eqn:NLdef3} (before the cancellation), and we must justify that this can be replaced with \eqref{Eqn:NLdef2}, in order for Proposition \ref{Prop:nonlin} to be applied.  This can be done by showing that the contribution to \eqref{Eqn:LWP-14-2b} and \eqref{Eqn:LWP-14-2} from $A_{1,c}$ is equivalent in distribution whether we use the right-hand side of \eqref{Eqn:NLdef3} or \eqref{Eqn:NLdef2} for the definition of $K_{1}(n,\tau)$.

Let us slow down to explain this point more carefully.  When we estimated \eqref{Eqn:LWP-14-2} above, and used Proposition \ref{Prop:nonlin}, this was justified since the solutions $u^N$, which evolve from frequency truncated data, are smooth, and the cancellation leading to \eqref{Eqn:NLdef2}, witnessed for each fixed $(n,\tau)\in(\mathbb{Z}\setminus\{0\})\times \mathbb{R}$, extends to the full contribution to \eqref{Eqn:LWP-14-2} from the region $A_{1,c}$.  That is, when estimating \eqref{Eqn:LWP-14-2} above, we were able to define the contribution to \eqref{Eqn:LWP-14-2} from $A_{1,c}$ according to \eqref{Eqn:NLdef2} (taking advantage of the cancellation).  However, when the input is $u$ (the limit of the sequence $u^N$), it is less obvious that the cancellation holds, and any application of Proposition \ref{Prop:nonlin} with these ``rough'' inputs needs to be justified carefully.  In other words, Proposition \ref{Prop:nonlin} assumes that the nonlinearity is defined through \eqref{Eqn:NLdef2} (in the region $A_{1,c}$), but this is not obvious if the inputs are the limit $u$ of the sequence $u^N$.

We will verify directly that the cancellation leading to \eqref{Eqn:NLdef2} holds (when the input is $u$) in the sense of distributions.  More precisely, if $K_{1}(n,\tau)$ is defined by the right-hand side of \eqref{Eqn:NLdef3}, and $J_1(n,\tau)$ is defined using \eqref{Eqn:NLdef2} instead, we will verify that for all $\varphi\in \mathcal{D}(\mathbb{T}\times[0,T])$, we have
\begin{align}
\langle (K_{1})^{\vee}(\cdot,\cdot),\varphi\rangle = \langle (J_{1})^{\vee}(\cdot,\cdot),\varphi\rangle.
\label{Eqn:LWP-15-new-2}
\end{align}
Indeed, we can use H\"{o}lder to find
\begin{align}
\langle (K_{1})^{\vee}(\cdot,\cdot)-(J_{1})^{\vee}(\cdot,\cdot),\varphi \rangle
\leq \|(K_{1})^{\vee}-(J_{1})^{\vee}\|_{L^{2}_{x,t\in[0,T]}}\|\varphi\|_{L^{2}_{x,t\in[0,T]}}.
\label{Eqn:LWP-16-2}
\end{align}
By the cancelation that led to \eqref{Eqn:NLdef2} for each fixed $(n,\tau)$, if the right-hand side of \eqref{Eqn:LWP-16-2} is finite, it is necessarily zero.  It is shown in line \eqref{Eqn:NL-1-7} of the Appendix (and the lines that precede it) that the contribution to
\begin{align*}
\|\mathcal{D}_{1}(\mathcal{D}_{0}(u,u,u,u),u,u,u)\|_{\frac{1}{2}+\delta,\frac{1}{2}+\delta,T}
\end{align*}
from $J_{1}(n,\tau)$ (after cancelation) is finite (since $u\in X^{\frac{1}{2}-\delta,\frac{1}{2}-\delta}_{T}$).  If, instead, we wish to estimate the contribution to
\begin{align*}
\|\mathcal{D}_{1}(\mathcal{D}_{0}(u,u,u,u),u,u,u)\|_{L^2_{x,t\in[0,T]}}
\end{align*}
from $K_{1}(n,\tau)$ (that is, to estimate $\|(K_{1})^{\vee}\|_{L^{2}_{x,t\in[0,T]}}$), it turns out that we can proceed with the exact same analysis, because we are using a weaker norm.

The benefit using the reformulation \eqref{Eqn:NLdef2} is the introduction of an extra power of $N^0$ in the denominator of the estimate \eqref{Eqn:NL-1-7}.  More precisely, using the definition \eqref{Eqn:NLdef3} instead of \eqref{Eqn:NLdef2}, we could try to use
\begin{align}
\frac{|n_1|}{|\sigma_{1}|}\lesssim \frac{1}{(N^0)^{1-\gamma}}
\label{Eqn:LWP-17-2}
\end{align}
to replace \eqref{Eqn:Ac1}-\eqref{Eqn:Ac2}, but the resulting estimate as in line \eqref{Eqn:NL-1-7} would not be dyadically summable.  This is why we need to use the definition \eqref{Eqn:NLdef2} during the proof of Proposition \ref{Prop:nonlin}; we exploit the estimates  \eqref{Eqn:Ac1}-\eqref{Eqn:Ac2}, which have an extra power of $N^0$ in the denominator, in order to produce the estimate \eqref{Eqn:NLdef2}.

However, if we are trying to control $\|(K_{1})^{\vee}\|_{L^{2}_{x,t\in[0,T]}}$ (that is, using the $L^{2}_{x,t\in[0,T]}$-norm instead of the $X^{\frac{1}{2}+\delta,\frac{1}{2}+\delta}_{T}$-norm), we can use \eqref{Eqn:LWP-17-2} and Lemma \ref{Lemma:lin2} to modify the analysis that led to \eqref{Eqn:NL-1-7}, and find
\begin{align*}
\|(K_{1})^{\vee}\|_{L^{2}_{x,t\in[0,T]}} &= \|(K_{1})^{\vee}\|_{0,0,T} \sim \|\mathcal{D}_{1}(\mathcal{D}_{0}(u,u,u,u),u,u,u)\|_{0,0,T} \\
&\lesssim \|\mathcal{N}_{1}(\mathcal{D}_{0}(u,u,u,u),u,u,u)\|_{0,-1,T}
\\
&\lesssim \frac{T^{-\frac{\beta}{2}}}{(N^0)^{\frac{1}{2}-3\gamma-6\delta^{\frac{3}{2}}-2\beta}}
\|u\|_{\frac{1}{2}-\delta,\frac{1}{2}-\delta,T}^{7}.
\end{align*}
Since $u\in X^{\frac{1}{2}-\delta,\frac{1}{2}-\delta,T}$, we conclude that $(K_{1})^{\vee}\in L^{2}_{x,t\in[0,T]}$, and the equality \eqref{Eqn:LWP-15-new-2} holds.  Therefore, in the definition of \eqref{Eqn:LWP-14-2b} in the region $A_{1,c}$, we can interpret $K_{1}(n,\tau)$ using either \eqref{Eqn:NLdef3} or \eqref{Eqn:NLdef2}.  In particular, we can establish \eqref{Eqn:LWP-13-2} as we did \eqref{Eqn:LWP-24-new}.  Applying Proposition \ref{Prop:nonlin} with
$u_{5}=u-u^N$, $\Lambda_{5}=\text{Id}-\mathbb{P}_{N}$, and $u_{k}=u$, $\Lambda_{k}=\text{Id}$ for $k=2,3,4,6,7,8$, and $\delta_0=0$, we have
\begin{align}
\|\mathcal{D}_{1}(\mathcal{D}_{0}&(u-u^N,u,u,u)
u,u,u)
\|_{\frac{1}{2}+\delta,\frac{1}{2}+\delta,T} \notag \\
&\lesssim
T^{\theta}\|u-u^N\|_{\frac{1}{2}-\delta,\frac{1}{2}-\delta,T}
\|u\|_{\frac{1}{2}-\delta,\frac{1}{2}-\delta,T}^{6}
\rightarrow 0,
\label{Eqn:LWP-30-new}
\end{align}
as $N\rightarrow \infty$.  Having established \eqref{Eqn:LWP-13-2}, we have $v_1=\mathcal{D}_{1}(u)$, and \eqref{Eqn:LWP-10} follows.  We conclude that $u$ is indeed a solution to \eqref{Eqn:gKdV-zeromean} with data $u_{0,\omega}$ for $t\in[0,T]$.  The discussion of point (i) is complete.

Let us emphasize that thanks to the preceding discussion (and in particular \eqref{Eqn:LWP-15-new-2}) we may now use the definition \eqref{Eqn:NLdef2} (for $K_{1}(n,\tau)$) when we consider \eqref{Eqn:LWP-14-2}; i.e. we have justified that the cancellation holds for the limit $u$, and this may be used in the future.
\medskip

To address point (ii), we remark that by \eqref{Eqn:LWP-10}, \eqref{Eqn:LWP-11} and \eqref{Eqn:linear-Y}, if $\omega\in\Omega_{T}$, then $\mathcal{D}_{j}(u)\in C([0,T];H^{\frac{1}{2}+\delta}(\mathbb{T}))$ for all
$j\in\{-1,1,2,3,4\}$.  For $j=0$, we have by \eqref{Eqn:linear-Y} and \eqref{Eqn:NL-0-b} that
\begin{align*}
\|\mathcal{D}_{0}(u)-\mathcal{D}_{0}(u^N)\|
_{C([0,T];H^{\frac{1}{2}+\delta}(\mathbb{T}))}  &\lesssim
\|\mathcal{D}_{0}(u)-\mathcal{D}_{0}(u^N)\|
_{Y^{\frac{1}{2}+\delta,0}_{T}} \\ &\sim \|\mathcal{N}_{0}(u)-\mathcal{N}_{0}(u^N)\|
_{Y^{\frac{1}{2}+\delta,-1}_{T}}
T^{\theta}\|u-u^N\|_{\frac{1}{2}-\delta,\frac{1}{2}-\delta,T}.
\end{align*}
Then with \eqref{Eqn:linear-Y} we conclude that $\mathcal{D}_{0}(u)\in C([0,T];H^{\frac{1}{2}+\delta}(\mathbb{T}))$. Therefore, if $\omega\in\Omega_{T}$, we have $$u-S(t)u_{0,\omega}=\mathcal{D}(u)\in C([0,T];H^{\frac{1}{2}+\delta}(\mathbb{T})).$$

\medskip

Turning to point (iii) (uniqueness), we establish that, for $\omega\in\Omega_{T}$, the solution $u$ to \eqref{Eqn:gKdV-zeromean} with data $u_{0,\omega}$ (obtained as the limit of $u^N$ given by \eqref{Eqn:LWP-2b}) is unique in $\Big\{S(t)u_{0,\omega} + \{\|\cdot\|_{\frac{1}{2}+\delta,\frac{1}{2}-\delta,T}\leq R\}\Big\}$.  Suppose $\tilde{u}$ is another solution to \eqref{Eqn:gKdV-zeromean} with data $u_{0,\omega}$ in this function space.  With the methods used above, if $\omega\in\Omega_{T}$, then
\begin{align}
\|&\mathcal{D}(\tilde{u})-\mathcal{D}(u)\|
_{\frac{1}{2}+\delta,\frac{1}{2}-\delta,T} \notag \\
&\leq T^{\theta}(\|\tilde{u}-u\|
_{\frac{1}{2}-\delta,\frac{1}{2}-\delta,T} + \|\mathcal{D}(\tilde{u})-\mathcal{D}(u)\|
_{\frac{1}{2}+\delta,\frac{1}{2}-\delta,T})
\notag \\
&\ \ \ \cdot Z_{2}(\|\tilde{u}\|
_{\frac{1}{2}-\delta,\frac{1}{2}-\delta,T},
\|u\|_{\frac{1}{2}-\delta,\frac{1}{2}-\delta,T},
\|\mathcal{D}(\tilde{u})\|
_{\frac{1}{2}+\delta,\frac{1}{2}-\delta,T}
,\|\mathcal{D}(u)\|
_{\frac{1}{2}+\delta,\frac{1}{2}-\delta,T}),
\label{Eqn:LWP-26}
\end{align}
where $Z_{2}(x,y,z,w)$ is a polynomial of degree 6 with positive coefficients.  With the definition of $\Omega_{T}$, we have
\begin{align}
\|\mathcal{D}(\tilde{u})-\mathcal{D}(u)\|
_{\frac{1}{2}+\delta,\frac{1}{2}-\delta,T}
&\leq T^{\theta} \|\mathcal{D}(\tilde{u})-\mathcal{D}(u)\|
_{\frac{1}{2}+\delta,\frac{1}{2}-\delta,T}
\notag \\
&\ \ \ \ \ \ \ \ \ \ \cdot Z_{2}(CT^{\alpha}+R,CT^{\alpha}+R,R,R)
\notag \\
&\leq \frac{1}{2}\|\mathcal{D}(\tilde{u})-\mathcal{D}(u)\|
_{\frac{1}{2}+\delta,\frac{1}{2}-\delta,T},
\label{Eqn:LWP-27}
\end{align}
for $T>0$ sufficiently small.  We conclude that $\mathcal{D}(\tilde{u})=\mathcal{D}(u)$ in $X^{\frac{1}{2}+\delta,\frac{1}{2}-\delta}_{T}$, and thus $u=\tilde{u}$ in $X^{\frac{1}{2}-\delta,\frac{1}{2}-\delta}_{T}$, for $\omega\in\Omega_{T}$.  The proof of uniqueness is complete.
\bigskip

Next we discuss point (iv).  We will show that, for $\omega\in\Omega_{T}$, the solution map $\Phi:
\Big\{u_{0,\omega} + \{\|\cdot\|_{H^{\frac{1}{2}+\delta}}\leq R\}\Big\}\rightarrow  \Big\{S(t)u_{0,\omega} + \{\|\cdot\|_{C([0,T];H^{\frac{1}{2}+\delta})} \leq \tilde{R}\}
\Big\}$ for \eqref{Eqn:gKdV-zeromean} is well-defined and Lipschitz.  That is, given $v_{0}$ such that $\|u_{0,\omega}-v_{0}\|_{H^{\frac{1}{2}+\delta}}\leq R$, we will demonstrate that:
\begin{enumerate}[(a)]
\item The solution to \eqref{Eqn:gKdV-zeromean} with data $v_{0}$ exists, is unique in the sense described above, and satisfies $$\|v\|_{\frac{1}{2}-\delta,\frac{1}{2}-\delta,T}\leq R, \ \ \|\mathcal{D}(v)\|_{\frac{1}{2}-\delta,
    \frac{1}{2}-\delta,T}\leq \tilde{R}.$$
\item We have $\|u-v\|_{C([0,T];H^{\frac{1}{2}+\delta}(\mathbb{T}))}
    \lesssim
    \|u_{0,\omega}-v_{0}\|_{H^{\frac{1}{2}+\delta}(\mathbb{T})}$.
\end{enumerate}
To establish point (a), for $N>0$ we let $v^{N}_{0}:=\mathbb{P}_{N}v_{0}$.  By Theorem 2 in \cite{CKSTT1} the solution $v^{N}$ to \eqref{Eqn:gKdV-zeromean} with data $v_{0}^{N}$ exists for all $t\in\mathbb{R}$.  We will show that, if $\omega\in\Omega_{T}$, then for all $N>M>0$,
\begin{align}
\|v^N\|_{\frac{1}{2}-\delta,\frac{1}{2}-\delta,T} \leq R,
\label{Eqn:LWP-28}
\end{align}
\begin{align}
\|\mathcal{D}(v^N)\|_{\frac{1}{2}-\delta,\frac{1}{2}-\delta,T} \leq \tilde{R},
\label{Eqn:LWP-29}
\end{align}
and
\begin{align}
\|v^N-v^M\|_{\frac{1}{2}-\delta,\frac{1}{2}-\delta,T},
\|\mathcal{D}(v^N)-\mathcal{D}(v^M)\|
_{\frac{1}{2}+\delta,\frac{1}{2}-\delta,T} \rightarrow 0, \ \ \text{as}\ M\rightarrow \infty.
\label{Eqn:LWP-30}
\end{align}
The existence of a convergent $v\in X^{\frac{1}{2}-\delta,\frac{1}{2}-\delta}_{T}$ of the $v^N$ follows from \eqref{Eqn:LWP-28}-\eqref{Eqn:LWP-30}.  The justification of points (i)-(iii) (continuity and uniqueness) for the convergent $v$ follows the discussion above (for $u$ with data $u_{0,\omega}$) very closely, and we omit details.  We proceed to justify \eqref{Eqn:LWP-28}-\eqref{Eqn:LWP-30}.

The solution $v^N$ to \eqref{Eqn:gKdV-zeromean} with data $v^{N}_{0}$ satisfies
\begin{align}
v^N &= S(t)v^{N}_{0} + \mathcal{D}(v^N) \notag \\
&= S(t)u^{N}_{0,\omega} + S(t)(v^{N}_{0}-u^{N}_{0,\omega}) + \mathcal{D}(v^N),
\label{Eqn:LWP-31}
\end{align}
and
\begin{align}
v^N &-v^M \notag \\ &= S(t)(\mathbb{P}_{N}-\mathbb{P}_{M})u_{0,\omega} +
S(t)(\mathbb{P}_{N}-\mathbb{P}_{M})(v_{0}-u_{0,\omega}) + \mathcal{D}(v^N)- \mathcal{D}(v^M).
\label{Eqn:LWP-32}
\end{align}
The new contributions to \eqref{Eqn:LWP-31} and \eqref{Eqn:LWP-32} (ie. contributions which were absent in the analysis of the sequence $u^N$ above) satisfy, for any $b\in\mathbb{R}$,
\begin{align}
\|S(t)(v^{N}_{0}-u^{N}_{0,\omega})\|_{\frac{1}{2}+\delta,b,T} &\lesssim  \|v_{0}-u_{0,\omega}\|_{H^{\frac{1}{2}+\delta}(\mathbb{T})},
\label{Eqn:LWP-33}
\end{align}
and
\begin{align}
\|S(t)(\mathbb{P}_{N}-\mathbb{P}_{M})(v_{0}-u_{0,\omega})\|_{\frac{1}{2}+\delta,b,T} &\lesssim  \|(\text{Id}-\mathbb{P}_{M})(v_{0}-u_{0,\omega})\|_{H^{\frac{1}{2}+\delta}(\mathbb{T})}
\notag \\
&\leq C_{M} \ \rightarrow 0,
\label{Eqn:LWP-34}
\end{align}
as $M\rightarrow \infty$.  With the estimates \eqref{Eqn:LWP-33} and \eqref{Eqn:LWP-34}, we can treat the contributions from $(v_{0}-u_{0,\omega})$ in \eqref{Eqn:LWP-31}-\eqref{Eqn:LWP-32} as ``type (II)''; that is, to be estimated in the space $X^{\frac{1}{2}+\delta,\frac{1}{2}-\delta}_{T}$ (with spatial regularity $s=\frac{1}{2}+\delta$).  In fact, we can do better, and estimate these contributions in $X^{\frac{1}{2}+\delta,\frac{1}{2}+\delta}_{T}$ (with temporal regularity $b=\frac{1}{2}+\delta$ as well).  This improvement is crucial, as the proofs of inequalities \eqref{Eqn:LWP-28}-\eqref{Eqn:LWP-30} will require modification (from the proofs of \eqref{Eqn:LWP-1a}-\eqref{Eqn:LWP-17}), when we consider the second iteration of the Duhamel formulation (as in lines \eqref{Eqn:LWP-2b} and \eqref{Eqn:LWP-5b} for $u^N$).  The problem is that cannot expand the contributions from $(v_{0}-u_{0,\omega})$ in \eqref{Eqn:LWP-31} and \eqref{Eqn:LWP-32} into septilinear expression (as we can for $\mathcal{D}(v^{N})$); instead these contributions are bounded using \eqref{Eqn:LWP-33} and \eqref{Eqn:LWP-34} with $b=\frac{1}{2}+\delta$, and the deterministic estimates \eqref{Eqn:NL-k-1} and \eqref{Eqn:NL-k-2}.  Using this approach, we establish
\begin{align*}
\|v^N\|_{\frac{1}{2}-\delta,\frac{1}{2}-\delta,T} \leq C_{0}T^{\delta-}\|u_{0,\omega}^{N}\|_{H^{\frac{1}{2}-\delta}}
 + \tilde{C}T^{\delta-}\|v^{N}_{0}-u^{N}_{0,\omega}\|_{H^{\frac{1}{2}+\delta}}
+ \|\mathcal{D}(v^N)\|_{\frac{1}{2}+\delta,\frac{1}{2}-\delta,T},
\end{align*}
and if $\omega\in\Omega_{T}$, then
\begin{align}
\|\mathcal{D}&(v^N)\|_{\frac{1}{2}+\delta,\frac{1}{2}-\delta,T}\notag \\ &\leq C_{1}T^{\theta}Z_{2}(\|u_{0,\omega}^{N}\|_{H^{\frac{1}{2}-\delta}},\|v_{0}^{N}-u_{0,\omega}^{N}
\|_{H^{\frac{1}{2}+\delta}},
\notag \\ &\quad\quad\quad\quad\quad\quad
\|v^N\|_{\frac{1}{2}-\delta,\frac{1}{2}-\delta,T},
\|\mathcal{D}(v^N)\|_{\frac{1}{2}+\delta,\frac{1}{2}-\delta,T}),
\label{Eqn:LWP-34b}
\end{align}
where $Z_{2}(x,y,z,w)$ is a polynomial of degree 7 with positive coefficients.  Under the assumption
$\|v_{0}^{N}-u_{0,\omega}^{N}\|_{H^{\frac{1}{2}+\delta}} \leq R$, we can repeat the analysis done for $u^N$, and \eqref{Eqn:LWP-28}-\eqref{Eqn:LWP-29} follows for $T>0$ sufficiently small.  To prove \eqref{Eqn:LWP-30}, we proceed as above, using \eqref{Eqn:LWP-31}-\eqref{Eqn:LWP-34}, Proposition \ref{Prop:nonlin} and Proposition \ref{Prop:nonlin2} to establish, if $\omega\in\Omega_{T}$, then for all $N>M>0$ we have
\begin{align*}
\|\mathcal{D}&(v^N)-\mathcal{D}(v^M)\|_{\frac{1}{2}+\delta,\frac{1}{2}-\delta,T}  \\
&\leq C_{1}T^{\theta}(M^{-\varepsilon}
+ \|S(t)(\mathbb{P}^N-\mathbb{P}^M)(v_{0}-u_{0,\omega})\|_{\frac{1}{2}+\delta,\frac{1}{2}+\delta,T}
\\
&\ \ \ \ \ \ \ \ \ \ \ \ \ \ \ + \|v^N-v^M\|_{\frac{1}{2}-\delta,\frac{1}{2}-\delta,T}
+ \|\mathcal{D}(v^N)-\mathcal{D}(v^M)\|_{\frac{1}{2}+\delta,\frac{1}{2}-\delta,T})
\\
&\ \ \ \
\cdot Z_{3}(\|u_{0,\omega}^{N}\|_{H^{\frac{1}{2}-\delta}},\|u_{0,\omega}^{M}\|_{H^{\frac{1}{2}-\delta}},
\|v_{0}^{N}-u_{0,\omega}^{N}\|_{H^{\frac{1}{2}+\delta}},\|v_{0}^{M}-u_{0,\omega}^{M}\|_{H^{\frac{1}{2}+\delta}},
\\
&\ \ \ \ \ \ \ \ \ \ \ \ \ \ \
\|v^N\|_{\frac{1}{2}-\delta,\frac{1}{2}-\delta,T},\|v^M\|_{\frac{1}{2}-\delta,\frac{1}{2}-\delta,T},
\|\mathcal{D}(v^N)\|_{\frac{1}{2}+\delta,\frac{1}{2}-\delta,T},
\|\mathcal{D}(v^M)\|_{\frac{1}{2}+\delta,\frac{1}{2}-\delta,T}),
\end{align*}
where $Z_{3}(s,t,u,v,w,x,y,z)$ is a polynomial of degree 6 with positive coefficients.  Then using \eqref{Eqn:LWP-28} and \eqref{Eqn:LWP-29} we find
\begin{align*}
\|\mathcal{D}(v^N)-\mathcal{D}&(v^M)\|_{\frac{1}{2}+\delta,\frac{1}{2}-\delta,T}  \\ &\leq C_{1}T^{\theta}(M^{-\varepsilon} + C_{M}
+ \|\mathcal{D}(v^N)-\mathcal{D}(v^M)\|_{\frac{1}{2}+\delta,\frac{1}{2}-\delta,T})
\\
&\ \ \ \ \ \ \ \ \
\cdot Z_{3}(CT^{-\frac{\beta}{2}},CT^{-\frac{\beta}{2}},R,R,R,R,
\tilde{R},\tilde{R}).
\end{align*}
By taking $T>0$ sufficiently small (subsequently rearranging the last inequality), and using $C_M \rightarrow 0$ as $M\rightarrow 0$, we conclude that \eqref{Eqn:LWP-30} holds true for $\omega\in\Omega_{T}$.  This completes the justification of point (a): for $\omega\in\Omega_{T}$, the local solution $v$ to \eqref{Eqn:gKdV-zeromean} with data $v_{0}\in \Big\{u_{0,\omega} + \{\|\cdot\|_{H^{\frac{1}{2}+\delta}}\leq R\}\Big\}$ exists and is unique (in the sense described above).

We proceed to establish point (b).  That is, we show that the solution map
$$\Phi:
\Big\{u_{0,\omega} + \{\|\cdot\|_{H^{\frac{1}{2}+\delta}}\leq R\}\Big\}\rightarrow  \Big\{S(t)u_{0,\omega} + \{\|\cdot\|_{C([0,T];H^{\frac{1}{2}+\delta})} \leq \tilde{R}\}
\Big\}$$
for \eqref{Eqn:gKdV-zeromean} is Lipschitz.  Using Proposition \ref{Prop:nonlin} and Proposition \ref{Prop:nonlin2}, (note that we can apply these Proposition \ref{Prop:nonlin} to the nonlinearity evaluated at the limits $u$ and $v$ because of the justification of \eqref{Eqn:LWP-15-new-2} found above), if $\omega\in\Omega_{T}$, then we have
\begin{align*}
\|u-v\|_{\frac{1}{2}-\delta,\frac{1}{2}-\delta,T}
&= \|\mathcal{D}(u)-
\mathcal{D}(v)\|_{\frac{1}{2}-\delta,\frac{1}{2}-\delta,T}
 \\
&\leq \|\mathcal{D}(u)-
\mathcal{D}(v)\|_{\frac{1}{2}+\delta,\frac{1}{2}-\delta,T}
\\
&\leq
\sum_{j=-1}^{4}\|\mathcal{D}_{j}(u)-
\mathcal{D}_{j}(v)\|_{\frac{1}{2}+\delta,\frac{1}{2}-\delta,T}
\\
&\leq C_{1}T^{\theta}(\|u_{0,\omega}-v_{0}\|
_{H^{\frac{1}{2}+\delta}} + \|u-v\|_{\frac{1}{2}-\delta,\frac{1}{2}-\delta,T}
+ \|\mathcal{D}(u)-\mathcal{D}(v)\|_{\frac{1}{2}+\delta,\frac{1}{2}-\delta,T})
\\
&\ \ \ \
\cdot Z_{4}(\|u_{0,\omega}\|_{H^{\frac{1}{2}-\delta}},
\|v_{0}-u_{0,\omega}\|_{H^{\frac{1}{2}+\delta}},
\|u\|_{\frac{1}{2}-\delta,\frac{1}{2}-\delta,T},
\|v\|_{\frac{1}{2}-\delta,\frac{1}{2}-\delta,T},
\\
&\ \ \ \ \ \ \ \ \ \ \ \
\|\mathcal{D}(u)\|_{\frac{1}{2}+\delta,\frac{1}{2}-\delta,T},
\|\mathcal{D}(v)\|_{\frac{1}{2}+\delta,\frac{1}{2}-\delta,T}),
\end{align*}
where $Z_{4}(u,v,w,x,y,z)$ is a polynomial of degree 6 with positive coefficients.  Repeating the arguments above, we conclude that, if $T>0$ is sufficiently small, then for $\omega\in\Omega_{T}$, we have
\begin{align}
\|\mathcal{D}(u)-
\mathcal{D}(v)\|_{\frac{1}{2}+\delta,\frac{1}{2}-\delta,T}
\lesssim \|u_{0,\omega}-v_{0}\|_{H^{\frac{1}{2}+\delta}}.
\label{Eqn:LWP-36}
\end{align}
Then using \eqref{Eqn:linear-Y}, Proposition \ref{Prop:nonlin} and \eqref{Eqn:NL-0-b} (instead of \eqref{Eqn:NL-0-a}), we have
\begin{align}
\|u-v\|_{C([0,T];H^{\frac{1}{2}+\delta}(\mathbb{T}))}
&= \|\mathcal{D}(u)-
\mathcal{D}(v)\|_{C([0,T];H^{\frac{1}{2}+\delta}(\mathbb{T}))}
\notag \\
&\leq \|\mathcal{D}(u)-
\mathcal{D}(v)\|_{Y^{\frac{1}{2}+\delta,0}_{T}}
\notag \\
&\leq
\sum_{j=-1}^{4}\|\mathcal{D}_{j}(u)-
\mathcal{D}_{j}(v)\|_{Y^{\frac{1}{2}+\delta,0}_{T}}
\notag \\
&\leq \|\mathcal{N}_{0}(u)-
\mathcal{N}_{0}(v)\|_{Y^{\frac{1}{2}+\delta,0}_{T}}
+ \sum_{j=-1,j\neq 0}^{4}\|\mathcal{D}_{j}(u)-
\mathcal{D}_{j}(v)\|_{X^{\frac{1}{2}+\delta,\frac{1}{2}+\delta}_{T}}
\notag \\
&\leq C_{1}T^{\theta}(\|u_{0,\omega}-v_{0}\|
_{H^{\frac{1}{2}+\delta}} + \|u-v\|_{\frac{1}{2}-\delta,\frac{1}{2}-\delta,T}
\notag \\ &\ \ \ \ \ \ \ \ \ \ \ \ \ \ \ \ \ + \|\mathcal{D}(u)-\mathcal{D}(v)\|_{\frac{1}{2}+\delta,\frac{1}{2}-\delta,T})
\notag \\
&\ \ \ \ \ \ \cdot
Z_{4}(\|u_{0,\omega}\|_{H^{\frac{1}{2}-\delta}},
\|v_{0}-u_{0,\omega}\|_{H^{\frac{1}{2}+\delta}},
\|u\|_{\frac{1}{2}-\delta,\frac{1}{2}-\delta,T},
\|v\|_{\frac{1}{2}-\delta,\frac{1}{2}-\delta,T},
\notag \\
&\ \ \ \ \ \ \ \ \ \ \ \ \|\mathcal{D}(u)\|_{\frac{1}{2}+\delta,\frac{1}{2}-\delta,T},
\|\mathcal{D}(v)\|_{\frac{1}{2}+\delta,\frac{1}{2}-\delta,T}),
\notag \\
&\leq \tilde{C}_{1}T^{\theta}
\|u_{0,\omega}-v_{0}\|_{H^{\frac{1}{2}+\delta}} Z_{4}(CT^{-\frac{\beta}{2}},R,R,R,\tilde{R},\tilde{R})
\notag \\
&\lesssim
\|u_{0,\omega}-v_{0}\|_{H^{\frac{1}{2}+\delta}},
\label{Eqn:LWP-37}
\end{align}
for $T>0$ sufficiently small.  From \eqref{Eqn:LWP-37} we conclude that the solution map $\Phi$ for \eqref{Eqn:gKdV-zeromean} is Lipschitz.  This completes the discussion of point (iv).

Lastly, we need to address point (v).
We compare solutions $\tilde{u}^{N}$ of the truncated system \eqref{Eqn:gKdV-ZMN} to the local solution $u$ of \eqref{Eqn:gKdV-zeromean} constructed above.  Let us be clear that we are using $\tilde{u}^{N}$ to denote the solution to the frequency truncated PDE \eqref{Eqn:gKdV-ZMN} to avoid confusion with the solution $u^N$ to \eqref{Eqn:gKdV-zeromean} with frequency truncated data.  Avoiding frequency truncation of the nonlinearity was useful above, but we will need to study the finite-dimensional dynamics of \eqref{Eqn:gKdV-trunc} (in particular to exploit the invariance of the Gibbs measure) in order to extend the local solutions of Theorem \ref{Thm:LWP} to global solutions.  We will also use the notation $\widetilde{\mathcal{D}}^N:=\mathbb{P}_{N}\mathcal{D}(\tilde{u}^{N},\ldots,\tilde{u}^{N})$.

We remark that the analysis applied to the sequence $u^{N}$ above, for \textit{fixed} $N>0$, applies to the frequency truncated sequence $\tilde{u}^{N}$ as well (the $X^{s,b}$-norm ``behaves nicely'' with respect to frequency truncation).  It was only when we estimated differences of solutions in the proof of Theorem \ref{Thm:LWP} above that avoiding frequency truncation for the sequence $u^N$ became useful.
In particular, the estimates \eqref{Eqn:LWP-16}-\eqref{Eqn:LWP-17} with $\tilde{u}^{N}$ (instead of $u^N$) imply that for $\omega\in \Omega_{T}$ we have
\begin{align}
\|\tilde{u}^{N}\|_{\frac{1}{2}-\delta,\frac{1}{2}-\delta,T} &\leq R, \ \
\|u\|_{\frac{1}{2}-\delta,\frac{1}{2}-\delta,T} \leq R, \notag \\
\|\mathcal{D}(\tilde{u}^{N})\|_{\frac{1}{2}+\delta,\frac{1}{2}-\delta,T} &\leq \tilde{R}, \ \
\|\mathcal{D}(u)\|_{\frac{1}{2}+\delta,\frac{1}{2}-\delta,T} \leq \tilde{R},
\label{Eqn:conv-bdd}
\end{align}
for each $N>0$.  Recall $0<\delta_{1}<\delta$ as given in the statement of Theorem \ref{Thm:LWP}.  We claim that the following estimates hold for $\omega\in \Omega_{T}$:
\begin{align}
\|\tilde{u}^{N}-u\|_{\frac{1}{2}-2\delta+\delta_{1},\frac{1}{2}-\delta,T}
&\lesssim N^{-\varepsilon}, \label{Eqn:conv-est1}\\
\|\widetilde{\mathcal{D}}^{N}-\mathcal{D}\|_{\frac{1}{2}+\delta_{1},\frac{1}{2}-\delta,T}
&\lesssim N^{-\varepsilon}.
\label{Eqn:conv-est2}
\end{align}
We proceed to justify \eqref{Eqn:conv-est1} and \eqref{Eqn:conv-est2}.
Using the equations \eqref{Eqn:gKdV-zeromean}
and \eqref{Eqn:gKdV-ZMN} we have
\begin{align}
\|\tilde{u}^{N}-u\|_{\frac{1}{2}-2\delta+\delta_{1},\frac{1}{2}-\delta,T}
&\leq
\|S(t)(I-\mathbb{P}_{N})u_{0,\omega}\|_{\frac{1}{2}-2\delta+\delta_{1},\frac{1}{2}-\delta,T}
+
\|\widetilde{\mathcal{D}}^{N}-\mathcal{D}\|_{\frac{1}{2}-2\delta+\delta_{1},\frac{1}{2}-\delta,T}
\notag \\
&\leq \frac{N^{-\epsilon}}{T^{\frac{\beta}{2}}} +
\|\widetilde{\mathcal{D}}^{N}-\mathcal{D}\|_{\frac{1}{2}+\delta_{1},\frac{1}{2}-\delta,T}.
\label{Eqn:preconv-1}
\end{align}
Then
\begin{align*}
\|\widetilde{\mathcal{D}}^{N}-\mathcal{D}\|_{\frac{1}{2}+\delta_{1},\frac{1}{2}-\delta,T}
\leq
\|(I-\mathbb{P}_{N})\mathcal{D}\|_
{\frac{1}{2}+\delta_{1},\frac{1}{2}-\delta,T}
+
\|\mathbb{P}_{N}(\widetilde{\mathcal{D}}^{N}-\mathcal{D})\|
_{\frac{1}{2}+\delta_{1},\frac{1}{2}-\delta,T}.
\end{align*}
We find
\begin{align}
\|(I-\mathbb{P}_{N})\mathcal{D}\|_
{\frac{1}{2}+\delta_{1},\frac{1}{2}-\delta,T}
\leq N^{-(\delta-\delta_{1})}\|\mathcal{D}\|_
{\frac{1}{2}+\delta,\frac{1}{2}-\delta,T}
\leq
N^{-(\delta-\delta_{1})}R,
\label{Eqn:preconv-1b}
\end{align}
and
\begin{align}
\|\mathbb{P}_{N}(\widetilde{\mathcal{D}}^{N}-\mathcal{D})\|
_{\frac{1}{2}+\delta_{1},\frac{1}{2}-\delta,T}
&\leq \|\mathbb{P}_{N}\mathcal{D}(\tilde{u}^{N}-u,\tilde{u}^{N},\tilde{u}^{N},\tilde{u}^{N})
\|_{\frac{1}{2}+\delta_{1},\frac{1}{2}-\delta,T} \notag \\
&\ \ \ \ \ \ \ +
\|\mathbb{P}_{N}\mathcal{D}(u,\tilde{u}^{N}-u,\tilde{u}^{N},\tilde{u}^{N})\|
_{\frac{1}{2}+\delta_{1},\frac{1}{2}-\delta,T}
\notag \\
&\ \ \ \ \ \ \ +
\|\mathbb{P}_{N}\mathcal{D}(u,u,\tilde{u}^{N}-u,\tilde{u}^{N})\|
_{\frac{1}{2}+\delta_{1},\frac{1}{2}-\delta,T}
\notag \\
&\ \ \ \ \ \ \ +
\|\mathbb{P}_{N}\mathcal{D}(u,u,u,\tilde{u}^{N}-u)\|
_{\frac{1}{2}+\delta_{1},\frac{1}{2}-\delta,T}.
\label{Eqn:preconv-2}
\end{align}
Each term on the right-hand side of \eqref{Eqn:preconv-2} will be bounded in a similar way.  We bound the first term explicitly.
Using the decomposition of frequency space from section \ref{Sec:NLest}, we expand
\begin{align*}
\|\mathbb{P}_{N}\mathcal{D}(\tilde{u}^{N}-u,\tilde{u}^{N},\tilde{u}^{N},\tilde{u}^{N})
\|_{\frac{1}{2}+\delta_{1},\frac{1}{2}-\delta,T}
&\leq
\sum_{k=-1}^{4}
\|\mathbb{P}_{N}\mathcal{D}_{k}(\tilde{u}^{N}-u,\tilde{u}^{N},\tilde{u}^{N},\tilde{u}^{N})
\|_{\frac{1}{2}+\delta_{1},\frac{1}{2}-\delta,T}.
\end{align*}
For $k=-1$ and $k=0$ we have by \eqref{Eqn:NL-neg1} and  \eqref{Eqn:NL-0-a}, with $\delta_0=\delta-\delta_1>0$,
\begin{align}
&\sum_{k=-1}^{0}
\|\mathbb{P}_{N}\mathcal{D}_{k}(\tilde{u}^{N}-u,\tilde{u}^{N},\tilde{u}^{N},\tilde{u}^{N})
\|_{\frac{1}{2}+\delta_{1},\frac{1}{2}-\delta,T}
\notag \\
&\leq
T^{\theta}\bigg(N^{-\epsilon} + \|\tilde{u}^{N}-u\|_{\frac{1}{2}-2\delta +\delta_{1},\frac{1}{2}-\delta,T} +
\|\widetilde{\mathcal{D}}^{N}-\mathcal{D}\|_{\frac{1}{2}
+\delta_{1},\frac{1}{2}-\delta,T}\bigg)
\notag  \\
&\ \ \ \ \ \ \ \ \cdot\bigg(1+\|\tilde{u}^{N}\|_{\frac{1}{2}-\delta,
\frac{1}{2}-\delta,T} +
\|\widetilde{\mathcal{D}}^{N}\|_{\frac{1}{2}
+\delta,\frac{1}{2}-\delta,T}\bigg)^{3}
\notag  \\
&\lesssim
T^{\theta}(1+R+\tilde{R})^{3}\bigg(N^{-\epsilon} + \|\tilde{u}^{N}-u\|_{\frac{1}{2}-2\delta +\delta_{1},\frac{1}{2}-\delta,T} +
\|\widetilde{\mathcal{D}}^{N}-\mathcal{D}\|_{\frac{1}{2}
+\delta_{1},\frac{1}{2}-\delta,T}\bigg).
\label{Eqn:pre-conv3}
\end{align}
Then for $k=1$ we find
\begin{align*}
\|\mathbb{P}_{N}\mathcal{D}_{1}(\tilde{u}^{N}-u,\tilde{u}^{N},\tilde{u}^{N},\tilde{u}^{N})
\|_{\frac{1}{2}+\delta_{1},\frac{1}{2}-\delta,T}
&\leq
\|\mathbb{P}_{N}\mathcal{D}_{1}((I-\mathbb{P}_{N})u,\tilde{u}^{N},\tilde{u}^{N},\tilde{u}^{N})
\|_{\frac{1}{2}+\delta_{1},\frac{1}{2}-\delta,T}
\\
&\ \ \ \ \
+\|\mathbb{P}_{N}\mathcal{D}_{1}(\mathbb{P}_{N}(\tilde{u}^{N}-u),\tilde{u}^{N},\tilde{u}^{N},\tilde{u}^{N})
\|_{\frac{1}{2}+\delta_{1},\frac{1}{2}-\delta,T}.
\end{align*}
Then we have
\begin{align}
\|\mathbb{P}_{N}\mathcal{D}_{1}(&(I-\mathbb{P}_{N})u,\tilde{u}^{N},\tilde{u}^{N},\tilde{u}^{N})
\|_{\frac{1}{2}+\delta_{1},\frac{1}{2}-\delta,T}
\notag \\
&\sim
\|\mathbb{P}_{N}\mathcal{D}_{1}((I-\mathbb{P}_{N})\mathcal{D},\tilde{u}^{N},\tilde{u}^{N},\tilde{u}^{N})
\|_{\frac{1}{2}+\delta_{1},\frac{1}{2}-\delta,T}
\notag\\
&\leq \|\mathbb{P}_{N}\mathcal{D}_{1}(\mathcal{D}((I-\mathbb{P}_{N/4})u,u,u,u),\tilde{u}^{N},\tilde{u}^{N},\tilde{u}^{N})
\|_{\frac{1}{2}+\delta_{1},\frac{1}{2}-\delta,T} \notag \\
&\ \ \ \ \ \ \ +
\|\mathbb{P}_{N}\mathcal{D}_{1}(\mathcal{D}
(u,(I-\mathbb{P}_{N/4})u,u,u),\tilde{u}^{N},\tilde{u}^{N},\tilde{u}^{N})
\|_{\frac{1}{2}+\delta_{1},\frac{1}{2}-\delta,T}
\notag \\
&\ \ \ \ \ \ \ +
\|\mathbb{P}_{N}\mathcal{D}_{1}(\mathcal{D}
(u,u,(I-\mathbb{P}_{N/4})u,u),\tilde{u}^{N},\tilde{u}^{N},\tilde{u}^{N})
\|_{\frac{1}{2}+\delta_{1},\frac{1}{2}-\delta,T}
\notag \\
&\ \ \ \ \ \ \ +
\|\mathbb{P}_{N}\mathcal{D}_{1}(\mathcal{D}
(u,u,u,(I-\mathbb{P}_{N/4})u),\tilde{u}^{N},\tilde{u}^{N},\tilde{u}^{N})
\|_{\frac{1}{2}+\delta_{1},\frac{1}{2}-\delta,T}.
\label{Eqn:preconv-3}
\end{align}
Once again, each term on the right-hand side of \eqref{Eqn:preconv-3} will be bounded in a similar way, and we proceed to bound the first term explicitly.  Using \eqref{Eqn:NL-1} with $\delta_0=\delta-\delta_1$, we find
\begin{align}
\|\mathbb{P}_{N}&\mathcal{D}_{1}(\mathcal{D}(
(I-\mathbb{P}_{N/4})u,u,u,u),\tilde{u}^{N},\tilde{u}^{N},\tilde{u}^{N})
\|_{\frac{1}{2}+\delta_{1},\frac{1}{2}-\delta,T}
\notag \\
&\lesssim
T^{\theta}\bigg(N^{-\varepsilon} + \|(I-\mathbb{P}_{N/4})u\|_{\frac{1}{2}-2\delta +\delta_{1},\frac{1}{2}-\delta,T} +
\|(I-\mathbb{P}_{N/4})\mathcal{D}\|_{\frac{1}{2}
+\delta_{1},\frac{1}{2}-\delta,T}\bigg)
\notag  \\
&\ \ \cdot\bigg(1+\|u\|_{\frac{1}{2}-\delta,
\frac{1}{2}-\delta,T} +
\|\mathcal{D}\|_{\frac{1}{2}
+\delta,\frac{1}{2}-\delta,T}\bigg)^{3}
\bigg(1+\|\tilde{u}^{N}\|_{\frac{1}{2}-\delta,
\frac{1}{2}-\delta,T} +
\|\widetilde{\mathcal{D}}^{N}\|_{\frac{1}{2}
+\delta,\frac{1}{2}-\delta,T}\bigg)^{3}
\notag \\
&\leq
T^{\theta}\bigg(N^{-\varepsilon} + N^{-(\delta-\delta_{1})}\big(\|u\|_{\frac{1}{2}-\delta
,\frac{1}{2}-\delta,T} +
\|\mathcal{D}\|_{\frac{1}{2}
+\delta,\frac{1}{2}-\delta,T}\big)\bigg)
\notag  \\
&\ \ \cdot\bigg(1+\|u\|_{\frac{1}{2}-\delta,
\frac{1}{2}-\delta,T} +
\|\mathcal{D}\|_{\frac{1}{2}
+\delta,\frac{1}{2}-\delta,T}\bigg)^{3}
\bigg(1+\|\tilde{u}^{N}\|_{\frac{1}{2}-\delta,
\frac{1}{2}-\delta,T} +
\|\widetilde{\mathcal{D}}^{N}\|_{\frac{1}{2}
+\delta,\frac{1}{2}-\delta,T}\bigg)^{3}
\notag \\
&\lesssim N^{-\varepsilon}T^{\theta}(1+R+\tilde{R})^{7}.
\label{Eqn:preconv-4}
\end{align}
Next we find
\begin{align}
\|\mathbb{P}_{N}\mathcal{D}_{1}(\mathbb{P}_{N}&(\tilde{u}^{N}-u),\tilde{u}^{N},\tilde{u}^{N},\tilde{u}^{N})
\|_{\frac{1}{2}+\delta_{1},\frac{1}{2}-\delta,T}
\notag \\
&\sim \|\mathbb{P}_{N}\mathcal{D}_{1}(\mathbb{P}_{N}
(\widetilde{\mathcal{D}}^{N}-\mathcal{D}),\tilde{u}^{N},\tilde{u}^{N},\tilde{u}^{N})
\|_{\frac{1}{2}+\delta_{1},\frac{1}{2}-\delta,T}
\notag \\
&\leq
\|\mathbb{P}_{N}\mathcal{D}_{1}(\mathbb{P}_{N}
(\mathcal{D}(\tilde{u}^{N}-u,\tilde{u}^{N},\tilde{u}^{N},\tilde{u}^{N})),\tilde{u}^{N},\tilde{u}^{N},\tilde{u}^{N})
\|_{\frac{1}{2}+\delta_{1},\frac{1}{2}-\delta,T} \notag \\
&\ \ \ \ \ \ \ +
\|\mathbb{P}_{N}\mathcal{D}_{1}(\mathbb{P}_{N}
(\mathcal{D}(u,\tilde{u}^{N}-u,\tilde{u}^{N},\tilde{u}^{N})),\tilde{u}^{N},\tilde{u}^{N},\tilde{u}^{N})
\|_{\frac{1}{2}+\delta_{1},\frac{1}{2}-\delta,T}
\notag \\
&\ \ \ \ \ \ \ +
\|\mathbb{P}_{N}\mathcal{D}_{1}(\mathbb{P}_{N}
(\mathcal{D}(u,u,\tilde{u}^{N}-u,\tilde{u}^{N})),\tilde{u}^{N},\tilde{u}^{N},\tilde{u}^{N})
\|_{\frac{1}{2}+\delta_{1},\frac{1}{2}-\delta,T}
\notag \\
&\ \ \ \ \ \ \ +
\|\mathbb{P}_{N}\mathcal{D}_{1}(\mathbb{P}_{N}
(\mathcal{D}(u,u,u,\tilde{u}^{N}-u)),\tilde{u}^{N},\tilde{u}^{N},\tilde{u}^{N})
\|_{\frac{1}{2}+\delta_{1},\frac{1}{2}-\delta,T}.
\label{Eqn:preconv-5}
\end{align}
Each term on the right-hand side of \eqref{Eqn:preconv-5} will be bounded in a similar way, and we proceed to bound the first term explicitly.  By \eqref{Eqn:NL-1} with $\delta_0=\delta-\delta_1$, we have
\begin{align}
\|&\mathbb{P}_{N}\mathcal{D}_{1}(\mathbb{P}_{N}
(\mathcal{D}(\tilde{u}^{N}-u,\tilde{u}^{N},\tilde{u}^{N},\tilde{u}^{N})),\tilde{u}^{N},\tilde{u}^{N},\tilde{u}^{N})
\|_{\frac{1}{2}+\delta_{1},\frac{1}{2}-\delta,T}
\notag \\
&\leq
T^{\theta}\bigg(N^{-\epsilon} + \|\tilde{u}^{N}-u\|_{\frac{1}{2}-2\delta +\delta_{1},\frac{1}{2}-\delta,T} +
\|\widetilde{\mathcal{D}}^{N}-\mathcal{D}\|_{\frac{1}{2}
+\delta_{1},\frac{1}{2}-\delta,T}\bigg)
\notag  \\
&\ \ \ \ \ \ \ \ \cdot\bigg(1+\|\tilde{u}^{N}\|_{\frac{1}{2}-\delta,
\frac{1}{2}-\delta,T} +
\|\widetilde{\mathcal{D}}^{N}\|_{\frac{1}{2}
+\delta,\frac{1}{2}-\delta,T}\bigg)^{6}
\notag \\
&\leq
T^{\theta}(1+R+\tilde{R})^{6}\bigg(N^{-\epsilon} + \|\tilde{u}^{N}-u\|_{\frac{1}{2}-2\delta +\delta_{1},\frac{1}{2}-\delta,T} +
\|\widetilde{\mathcal{D}}^{N}-\mathcal{D}\|_{\frac{1}{2}
+\delta_{1},\frac{1}{2}-\delta,T}\bigg).
\end{align}
With a continuity argument, as in the proof of \eqref{Eqn:LWP-1a}-\eqref{Eqn:LWP-1b} above, we arrive at \eqref{Eqn:conv-est1}-\eqref{Eqn:conv-est2}.

Given the convergence results \eqref{Eqn:conv-est1} and \eqref{Eqn:conv-est2}, by following the approach taken in \eqref{Eqn:LWP-37} (using an estimate of the type \eqref{Eqn:preconv-1b} to control high frequencies), we can establish the a posteriori estimate
\begin{align}
\|u-S(t)u_{0,\omega}
    -(\Phi^{N}(t)-S(t))\mathbb{P}_{N}u_{0,\omega}
    \|_{C([0,T];H^{\frac{1}{2}+\delta_1})}
    \lesssim N^{-\varepsilon}.
\label{Eqn:conv-est3}
\end{align}
This completes the discussion of point (v), and the proof of Theorem \ref{Thm:LWP} is complete.

\end{proof}

\section{Global well-posedness and invariance of the Gibbs measure}
\label{Sec:GWP}

In this section we will extend the local solutions produced by Theorem \ref{Thm:LWP} to global solutions, and prove our main result: the invariance of the Gibbs measure under the flow of \eqref{Eqn:gKdV-zeromean}, the gauge-transformed quartic gKdV (Theorem \ref{Thm:GWP}).

\subsection{Construction of the Gibbs measure}

Let $\{g_{n}(\omega)\}_{n=1}^{\infty}$ be a sequence of standard independent complex-valued Gaussian random variables on a probability space $(\Omega,\mathcal{F},P)$.  For each $N>0$, define $$E_{N}=\text{span}\{\sin(nx),\cos(nx):1 \leq |n| \leq N\}.$$  The finite-dimensional Wiener measure $\rho_{N}$ on $E_{N}$ is the push-forward of $P$ under the map from $(\Omega,\mathcal{F},P)$ to $E_{N}$ (equipped with the Borel sigma algebra) given by
\begin{align}
\omega \longmapsto \sum_{1\leq |n|\leq N}\frac{g_{n}(\omega)}{|n|}e^{inx},
\label{Eqn:GWP-A1}
\end{align}
where $g_{-n}:=\overline{g_n}$.

We proceed to extend the Wiener measure to infinite dimensions\footnote{We will state the results required for the proof of Theorem \ref{Thm:GWP}.  For more details about the Wiener measure, and Gaussian measures on Banach spaces, see \cite{Kuo}.}.  Fix $\delta >0$ (smallness conditions will eventually be imposed on $\delta$), the Wiener measure $\rho$ on $H^{\frac{1}{2}-\delta}(\mathbb{T})$ is the push-forward of $P$ under the map from $(\Omega,\mathcal{F},P)$ to $H^{\frac{1}{2}-\delta}(\mathbb{T})$ (equipped with the Borel sigma algebra) given by
\begin{align}
\omega \longmapsto u_{0,\omega}:=\sum_{n\in \mathbb{Z}\setminus \{0\}}\frac{g_{n}(\omega)}{|n|}e^{inx},
\label{Eqn:GWP-A2}
\end{align}
where $g_{-n}:=\overline{g_{n}}$.

The ``finite-dimensional''  Gibbs measure $\mu_N$ on $H^{\frac{1}{2}-\delta}(\mathbb{T})$ is the push-forward under the map \eqref{Eqn:GWP-A2} of the weighted measure
\begin{align*}
e^{-\frac{1}{20}\int_{\mathbb{T}}(\sum_{1\leq |n|\leq N}\frac{g_{n}(\omega)}{|n|}e^{inx})^{5}dx}
\chi_{\big\{\|\sum_{1\leq |n|\leq N}\frac{g_{n}(\omega)}{|n|}e^{inx}\|_{2}\leq B\big\}}dP(\omega).
\end{align*}
We recall a crucial result from \cite{LRS,B3}.
\begin{theorem}[\cite{LRS,B3}]
Let $B<\infty$, then for each $r\geq1$, we have
$$
e^{-\frac{1}{20}\int_{\mathbb{T}}(u_{0,\omega}(x))^{5}dx}
\chi_{\big\{\|u_{0,\omega}\|_{2}\leq B\big\}}\in L^{r}(\Omega).$$
In particular the \textbf{Gibbs measure} $\mu$, defined as the push-forward under \eqref{Eqn:GWP-A2} of the weighted measure
\begin{align*}
e^{-\frac{1}{20}\int_{\mathbb{T}}(u_{0,\omega}(x))^{5}dx}
\chi_{\big\{\|u_{0,\omega}\|_{2}\leq B\big\}}dP(\omega),
\end{align*}
is absolutely continuous with respect to the Wiener measure $\rho$.
\label{Thm:LRS}
\end{theorem}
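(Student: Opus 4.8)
The statement is the classical Lebowitz--Rose--Speer moment bound (as adapted to gKdV in \cite{B3}), and the plan is to deduce it from a super-exponential large-deviation estimate for $F(\omega):=\int_{\mathbb{T}}u_{0,\omega}^{5}\,dx$ combined with the layer-cake formula. Since $\rho$ is supported on $\bigcap_{s<1/2}H^{s}(\mathbb{T})\subset L^{5}(\mathbb{T})$ (indeed on $C(\mathbb{T})$) almost surely, $F$ is a well-defined finite random variable, and the asserted membership $e^{-\frac{1}{20}F}\chi_{\{\|u_{0,\omega}\|_{2}\le B\}}\in L^{r}(\Omega)$ is exactly the finiteness of $\mathbb{E}\big[e^{-\frac{r}{20}F}\chi_{\{\|u_{0,\omega}\|_{2}\le B\}}\big]$. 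By the layer-cake identity,
\[
\mathbb{E}\big[e^{-\frac{r}{20}F}\chi_{\{\|u_{0,\omega}\|_{2}\le B\}}\big]\le 1+\int_{0}^{\infty}e^{s}\,P\big(\{F<-\tfrac{20}{r}s\}\cap\{\|u_{0,\omega}\|_{2}\le B\}\big)\,ds,
\]
so everything reduces to: there is $c=c(B)>0$ with
\[
P\big(\{F<-M\}\cap\{\|u_{0,\omega}\|_{2}\le B\}\big)\le e^{-cM^{4/3}}\qquad(M\ge 1).
\]
Since the exponent $4/3$ exceeds $1$, the $s$-integral converges for \emph{every} $r\ge1$, and the lemma follows. (One may equally well establish the same bound with $u_{0,\omega}$ replaced by $\mathbb{P}_{N}u_{0,\omega}$, uniformly in $N$, and conclude by Fatou; that is the route that ties the estimate to the finite-dimensional measures $\mu_{N}$.)

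The deviation bound is the heart of the matter, and it is here that the $L^{2}$-cutoff is indispensable --- this is also why the quartic case is harder than KdV or mKdV, for which a plain Gagliardo--Nirenberg plus Fernique argument already closes. The naive estimate does not suffice: the best deterministic consequence of $\|v\|_{2}\le B$ is, by the $x$-space layer-cake and Chebyshev ($|\{v<-t\}|\le\min(2\pi,B^{2}t^{-2})$),
\[
\int_{\mathbb{T}}v^{5}\,dx\ge-\int_{v<0}|v|^{5}\,dx\ge-C\big(B^{5}+B^{2}\|v\|_{\infty}^{3}\big),
\]
and combining this with the (uniform-in-truncation) Gaussian tail $P(\|u_{0,\omega}\|_{\infty}>K)\le Ce^{-cK^{2}}$ only yields the exponent $\tfrac23<1$, which is not summable against $e^{s}$. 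The improvement comes from noticing that, on $\{\|u_{0,\omega}\|_{2}\le B\}$, having $\int_{u<0}|u|^{5}\ge M$ forces more than a large supremum: it forces a \emph{narrow} spike. Indeed one gets $\|u_{0,\omega}\|_{\infty}\gtrsim(M/B^{2})^{1/3}=:\eta$ while $|\{u_{0,\omega}<-\eta/2\}|\le 4B^{2}\eta^{-2}\lesssim M^{-2/3}=:\delta$, so by continuity $u_{0,\omega}$ must oscillate by at least $\eta/2$ across some interval of length $\le\delta$. Covering $\mathbb{T}$ by $O(\delta^{-1})$ intervals of length $2\delta$ and using that the increment of $u_{0,\omega}$ over such an interval is Gaussian with variance $\lesssim\delta$ (the covariance kernel satisfies $C(0)-C(t)\lesssim|t|$, Brownian-bridge-type), each interval contributes $\lesssim e^{-c\eta^{2}/\delta}$, so
\[
P\big(\exists\,J,\ |J|\le\delta,\ \tfrac12\eta\le\mathrm{osc}_{J}\,u_{0,\omega}\big)\lesssim \delta^{-1}e^{-c\eta^{2}/\delta}\lesssim e^{-c'M^{4/3}}.
\]
All the inputs (the sup-norm tail, the increment bound via $C_{N}(0)-C_{N}(t)\le C(0)-C(t)$) are stable under replacing $u_{0,\omega}$ by $\mathbb{P}_{N}u_{0,\omega}$, giving the uniform-in-$N$ version.

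Finally, once $\mathbb{E}\big[e^{-\frac{r}{20}F}\chi_{\{\|u_{0,\omega}\|_{2}\le B\}}\big]<\infty$ for all $r\ge1$ is in hand, the second assertion is immediate: the density $e^{-\frac{1}{20}F}\chi_{\{\|u_{0,\omega}\|_{2}\le B\}}$ is nonnegative and in $L^{1}(\rho)$, so $d\mu:=e^{-\frac{1}{20}F}\chi_{\{\|u_{0,\omega}\|_{2}\le B\}}\,d\rho$ is a finite positive measure with that density, hence absolutely continuous with respect to $\rho$. The single genuine obstacle is the super-exponential deviation estimate of the second paragraph, which cannot come from $\|u_{0,\omega}\|_{\infty}$-tails alone and truly exploits the interplay between the hard $L^{2}$-cutoff and the (Brownian-bridge-type) local regularity of the Gaussian field; the remaining ingredients --- layer-cake, the $x$-space Chebyshev inequality, Fernique/Gaussian concentration --- are soft.
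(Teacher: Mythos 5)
The paper does not prove Theorem~\ref{Thm:LRS}; it cites \cite{LRS,B3} and defers to those references, so there is no in-paper argument to compare against. Your reconstruction is correct and is essentially the Lebowitz--Rose--Speer argument: the reduction via the layer-cake identity, the observation that the crude $\|u\|_{\infty}$-tail (exponent $2/3$) is insufficient, and the key insight that the $L^2$-cutoff forces a \emph{narrow} negative spike whose probability is controlled by the increment regularity $C(0)-C(t)\lesssim |t|$ of the Gaussian field, yielding the super-exponential rate $e^{-cM^{4/3}}$. Your numerics check out: with $\eta\sim(M/B^{2})^{1/3}$ and $\delta\sim B^{2}\eta^{-2}$, one gets $\eta^{2}/\delta\sim M^{4/3}B^{-14/3}$, which beats the linear $e^{s}$ in the layer-cake integral for every fixed $B<\infty$ and $r\geq 1$. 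The one place you should be a little more explicit if you write this out fully is the oscillation bound on an interval of length $\delta$: you need both $\mathbb{E}[\operatorname{osc}_{J}u]\lesssim\sqrt{\delta}$ (Dudley, from the increment bound) and then Borell--TIS to get $P(\operatorname{osc}_{J}u\geq\eta/2)\lesssim e^{-c\eta^{2}/\delta}$, valid once $\eta\gg\sqrt{\delta}$, i.e.\ once $M$ exceeds a $B$-dependent threshold; for small $M$ the trivial bound $P\le 1$ suffices. Your remarks on uniformity in $N$ (monotonicity of $C_{N}(0)-C_{N}(t)$ in $N$) and on the softness of the final absolute-continuity step are correct as stated.
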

The proof of Theorem \ref{Thm:LRS} first appeared in \cite{LRS}, but it is clarified and expanded in \cite{B3}.
\begin{remark}
It is easily verified from the proof in \cite{B3} that we also have the following conclusion: there exists $0<C<\infty$ such that that for all $N>0$,
$$ \Big\|e^{-\frac{1}{20}\int_{\mathbb{T}}(\mathbb{P}_{N}u_{0,\omega}(x))^{5}dx}
\chi_{\big\{\|\mathbb{P}_{N}u_{0,\omega}\|_{2}\leq B\big\}}\Big\|_{L^{r}(\Omega)}
\leq C < \infty.$$
\end{remark}

Having defined these measures, we establish a convergence property to be used in the proof of Theorem \ref{Thm:GWP}.
The application of this property (and its proof) are inspired by similar arguments appearing in Burq-Tzvetkov \cite{BT3}.
\begin{proposition}
Set $$f(u)=e^{-\frac{1}{20}\int_{\mathbb{T}}u^{5}dx}
\chi_{\big\{\|u\|_{2}\leq B\big\}}\  \text{and} \ f_{N}(u)=e^{-\frac{1}{20}\int_{\mathbb{T}}(\mathbb{P}_{N}u(x))^{5}dx}
\chi_{\big\{\|\mathbb{P}_{N}(u)\|_{2}\leq B\big\}}.$$  Then
$$ \lim_{N\rightarrow \infty}
\int_{H^{\frac{1}{2}-\delta}}|f_{N}(u)-f(u)|d\rho (u)=0.$$
\label{Prop:conv}
\end{proposition}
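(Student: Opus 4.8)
The plan is to apply the Dominated Convergence Theorem in $L^{1}(d\rho)$ (equivalently, in $L^1(d\mathbb{P})$ after pulling back by the map \eqref{Eqn:GWP-A2}). For this we need two ingredients: (i) $f_{N}(u) \to f(u)$ $\rho$-almost everywhere, and (ii) a dominating function in $L^{1}(d\rho)$. The domination is essentially free: since $f_{N}\geq 0$ and $\|\mathbb{P}_{N}u\|_{2}\leq \|u\|_{2}$, one has $f_{N}(u)\leq e^{-\frac{1}{20}\int_{\mathbb{T}}(\mathbb{P}_{N}u)^{5}dx}$, which is not quite bounded uniformly; so instead I would argue via uniform integrability rather than a pointwise dominating function. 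Concretely, the Remark following Theorem \ref{Thm:LRS} gives $\sup_{N}\|f_{N}\|_{L^{r}(\Omega)}\leq C<\infty$ for every $r\geq 1$; combined with $\|f\|_{L^r(\Omega)}<\infty$ from Theorem \ref{Thm:LRS}, the family $\{f_{N}-f\}_{N}$ is bounded in $L^{2}(d\rho)$, hence uniformly integrable. So once almost-everywhere convergence is established, Vitali's convergence theorem yields convergence in $L^{1}(d\rho)$, which is exactly the claim.

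The remaining point is the $\rho$-almost-everywhere convergence $f_{N}(u)\to f(u)$. First, for $\rho$-a.e.\ $u\in H^{1/2-}(\mathbb{T})$ we have $\mathbb{P}_{N}u\to u$ in $H^{s}(\mathbb{T})$ for every $s<\tfrac12$; in particular $\mathbb{P}_N u \to u$ in $L^5(\mathbb{T})$ by the Sobolev embedding $H^{s}(\mathbb{T})\subset L^{5}(\mathbb{T})$ for $s$ close to $\tfrac12$, and also $\|\mathbb{P}_{N}u\|_{2}\to\|u\|_{2}$. Hence $\int_{\mathbb{T}}(\mathbb{P}_{N}u)^{5}dx \to \int_{\mathbb{T}}u^{5}dx$, so the exponential factors converge. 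For the characteristic functions, $\chi_{\{\|\mathbb{P}_{N}u\|_{2}\leq B\}}\to \chi_{\{\|u\|_{2}\leq B\}}$ holds at every $u$ with $\|u\|_{2}\neq B$; and the exceptional set $\{\|u\|_{2}=B\}$ has $\rho$-measure zero, because under $\rho$ the quantity $\|u\|_{2}^{2}=\sum_{n\neq 0}|g_{n}(\omega)|^{2}/n^{2}$ has a continuous (atomless) distribution. Therefore $f_{N}(u)\to f(u)$ for $\rho$-a.e.\ $u$, and the proof is complete.

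The main obstacle — and the only place that requires genuine care — is the boundary set $\{\|u\|_{2}=B\}$: one must confirm it is $\rho$-null so that the jump discontinuity of the indicator does not obstruct a.e.\ convergence. This follows since $\|u\|_{2}^{2}$ is an a.s.-convergent sum of independent non-degenerate random variables and hence has no atoms (for instance, its characteristic function decays, or one simply notes the first coordinate $|g_{1}|^{2}$ already has a density). Everything else is a routine combination of the Sobolev embedding, the $L^{r}(\Omega)$ bounds from Theorem \ref{Thm:LRS} and its Remark, and Vitali's theorem.
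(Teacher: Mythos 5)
Your proof is correct and takes essentially the same route as the paper: both establish pointwise ($\rho$-a.e., hence in measure) convergence $f_N\to f$ and combine it with the uniform $L^2(d\rho)$ bound from Theorem~\ref{Thm:LRS} and its Remark; the paper's explicit Cauchy--Schwarz split over $A_{N,\varepsilon}$ and its complement is precisely a bare-hands instance of the uniform-integrability/Vitali argument you invoke. One small remark: the set $\{\|u\|_2=B\}$ that you single out as ``the main obstacle'' is in fact no obstacle at all --- since $\mathbb{P}_N$ is an orthogonal projection, $\|\mathbb{P}_N u\|_2\le\|u\|_2$, so on $\{\|u\|_2=B\}$ one has $\chi_{\{\|\mathbb{P}_N u\|_2\le B\}}(u)=1=\chi_{\{\|u\|_2\le B\}}(u)$ for every $N$, and the indicators agree pointwise there without any need to argue that the set is $\rho$-null (your nullity argument is correct, just superfluous).
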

\begin{proof}
We first claim that $f_{N}(u)\rightarrow f(u)$ in measure with respect to $\rho$, and this follows from showing that $f_{N}(u) \rightarrow f(u)$ $\rho$-almost surely (by Egorov's Theorem).  Clearly we have
$$\chi_{\big\{\|\mathbb{P}_{N}u\|_{2}\leq B\big\}}\longrightarrow
\chi_{\big\{\|u\|_{2}\leq B\big\}}$$
$\rho$-almost surely.  By continuity of the exponential function, we need only verify that $\mathbb{P}_{N}u\rightarrow u$ in $L^{5}(\mathbb{T})$, $\rho$-almost surely, and this follows easily from the Sobolev embedding $L^{5}(\mathbb{T})\hookrightarrow H^{\frac{1}{2}-\delta}(\mathbb{T})$ (for $\delta>0$ sufficiently small).  Fix $\varepsilon>0$, and let $A_{N,\varepsilon}:=\{u\in H^{\frac{1}{2}-\delta}(\mathbb{T}):|f_{N}(u)-f(u)|\leq \varepsilon\}.$  We apply Cauchy-Schwarz followed by Theorem \ref{Thm:LRS},
\begin{align*}
\int_{H^{\frac{1}{2}-\delta}(\mathbb{T})}|f_{N}(u)-f(u)|d\rho(u)
&\leq \big(\int_{A_{N,\varepsilon}} + \int_{A_{N,\varepsilon}^{c}}\big)
|f_{N}(u)-f(u)|d\rho(u) \\
&\leq \int_{A_{N,\varepsilon}}|f_{N}(u)-f(u)|d\rho(u)
+ \|f_{N}-f\|_{L^{2}(d\rho)}\big(\rho(A_{N,\varepsilon}^{c})
\big)^{\frac{1}{2}} \\
&\leq \varepsilon + 2C\big(\rho(A_{N,\varepsilon}^{c})
\big)^{\frac{1}{2}}.
\end{align*}
Then since $f_{N}(u)\rightarrow f(u)$ in measure with respect to $\rho$, we have $\rho(A_{N,\varepsilon}^{c})\rightarrow 0$ as $N\rightarrow \infty$, and the proof of Proposition \ref{Prop:conv} is complete.
\end{proof}

We have the following useful corollary of Proposition \ref{Prop:conv}.
\begin{corollary}
For any Borel set $A\subset H^{\frac{1}{2}-\delta}(\mathbb{T})$, we have
\begin{align}
\mu(A)=\lim_{N\rightarrow \infty}\mu_{N}(A).
\label{Cor:prop}
\end{align}
\label{Cor:conv}
\end{corollary}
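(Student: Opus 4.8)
The plan is to deduce Corollary \ref{Cor:conv} directly from Proposition \ref{Prop:conv} together with Theorem \ref{Thm:LRS} (and the uniform $L^r$-bound in the subsequent remark). The starting point is the observation that, for a Borel set $A\subset H^{\frac{1}{2}-\delta}(\mathbb{T})$, both $\mu$ and $\mu_N$ are absolutely continuous with respect to the Wiener measure $\rho$, with densities $Z^{-1}f(u)$ and $Z_N^{-1}f_N(u)$ respectively, where $f$ and $f_N$ are the functions defined in Proposition \ref{Prop:conv} and $Z=\int f\,d\rho$, $Z_N=\int f_N\,d\rho$ are the normalizing constants. Thus
\begin{align*}
\mu(A)=\frac{1}{Z}\int_A f(u)\,d\rho(u), \qquad \mu_N(A)=\frac{1}{Z_N}\int_A f_N(u)\,d\rho(u).
\end{align*}

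The key step is then to control the difference of the \emph{unnormalized} integrals uniformly in $A$:
\begin{align*}
\Big|\int_A f_N(u)\,d\rho(u)-\int_A f(u)\,d\rho(u)\Big| \leq \int_{H^{\frac{1}{2}-\delta}}|f_N(u)-f(u)|\,d\rho(u)\longrightarrow 0
\end{align*}
as $N\to\infty$, by Proposition \ref{Prop:conv}. Taking $A=H^{\frac{1}{2}-\delta}(\mathbb{T})$ in this bound shows in particular that $Z_N\to Z$; and we must know $Z>0$, which is guaranteed since $f>0$ $\rho$-almost everywhere on the (positive $\rho$-measure) ball $\{\|u\|_2\leq B\}$, so that $\int f\,d\rho>0$. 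With $Z_N\to Z>0$ and $\int_A f_N\,d\rho\to\int_A f\,d\rho$ (the latter convergence being uniform in $A$, though only pointwise convergence in $A$ is needed for the corollary), we conclude
\begin{align*}
\mu_N(A)=\frac{\int_A f_N\,d\rho}{Z_N}\longrightarrow \frac{\int_A f\,d\rho}{Z}=\mu(A),
\end{align*}
which is \eqref{Cor:prop}.

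I do not anticipate a genuine obstacle here: the argument is a soft consequence of $L^1(d\rho)$-convergence of the densities. The one point requiring a moment's care is the positivity of the limiting normalization constant $Z$ (so that the quotient is well-defined in the limit), and the fact that $L^1$-convergence of the densities automatically upgrades to uniform-in-$A$ convergence of the set functions $\int_A f_N\,d\rho$ — both of which are immediate. One should also note that $f_N, f\in L^1(d\rho)$ in the first place, which follows from Theorem \ref{Thm:LRS} (indeed $f,f_N\in L^r(d\rho)$ for every $r\geq 1$, with the $N$-uniform bound recorded in the remark after Theorem \ref{Thm:LRS}). Everything else is bookkeeping.
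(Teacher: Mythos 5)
Your key estimate is exactly the paper's proof, but you have introduced a complication that the paper's definitions do not support. In Subsection 5.1 of the paper, $\mu_N$ and $\mu$ are defined as the push-forwards to $H^{\frac{1}{2}-\delta}(\mathbb{T})$ of the weighted measures $f_N(u_{0,\omega})\,dP(\omega)$ and $f(u_{0,\omega})\,dP(\omega)$, with no normalization constant: $d\mu_N = f_N\,d\rho$ and $d\mu = f\,d\rho$. Your assertion that the densities are $Z_N^{-1}f_N$ and $Z^{-1}f$ is therefore a misreading of the paper's conventions, and the additional bookkeeping you carry out --- showing $Z_N\to Z$, verifying $Z>0$, and passing to the quotient $\int_A f_N\,d\rho / Z_N$ --- is not required. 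The paper's proof is precisely the step you label the ``key step'':
\begin{align*}
|\mu_N(A)-\mu(A)| = \Big|\int_A \big(f_N(u)-f(u)\big)\,d\rho(u)\Big| \leq \int_{H^{\frac{1}{2}-\delta}}|f_N(u)-f(u)|\,d\rho(u)\longrightarrow 0,
\end{align*}
by Proposition \ref{Prop:conv}, and nothing more. Your longer argument is internally correct (and would be the right thing to do had the measures been normalized to probability measures), and in particular the uniform-in-$A$ convergence, the positivity $Z>0$, and the use of the $L^r(d\rho)$ bound from Theorem \ref{Thm:LRS} are all verified correctly; but taken literally it establishes convergence of the normalized measures $Z_N^{-1}\mu_N$ and $Z^{-1}\mu$, which are not the objects the corollary is about. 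Drop the normalization constants and your argument becomes the paper's.
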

\begin{proof}[Proof of Corollary \ref{Cor:conv}]
From the definitions above, the measure $\mu_N$ on $H^{\frac{1}{2}-\delta}(\mathbb{T})$ is defined by its density $d\mu_N(u) = f_{N}(u)d\rho(u)$.  This corollary is then automatic from Proposition \ref{Prop:conv}.  We have
\begin{align*}
|\mu_N(A)-\mu(A)|&= \Big| \int_{A}(f_N(u)-f(u))d\rho(u)\Big| \leq \int_{A}|f_N(u)-f(u)|d\rho(u)
\\
&\leq \int_{H^{\frac{1}{2}-\delta}(\mathbb{T})}|f_N(u)-f(u)|d\rho(u) \rightarrow 0,
\end{align*}
as $N\rightarrow \infty$.
\end{proof}

\subsection{Invariance of the finite-dimensional Gibbs measure}

Consider the frequency cutoff and gauge-transformed quartic gKdV
\begin{align}
\left\{
\begin{array}{ll} \partial_{t}u^{N} + \partial_{x}^{3}u^{N} = \mathbb{P}_N(\mathbb{P}((u^{N})^{3})\partial_{x}u^{N})  , \ \
t\in\mathbb{R}, x\in \mathbb{T},
\\
u^{N}(0,x) = \mathbb{P}_{N}(u_{0}(x)),  \ \ u_{0} \ \text{mean zero.}
\end{array} \right.
\label{Eqn:gKdV-trunc}
\end{align}
where $\mathbb{P}$ is the projection to mean zero functions, $\mathbb{P}_{N}$ is Dirichlet projection to frequencies $\leq N$, $u^{N} = \mathbb{P}_{N}u$, and $u^{N}_{0} =\mathbb{P}_{N}u_{0}$.  We can write \eqref{Eqn:gKdV-trunc} in coordinates as a system of $N$ complex ODEs (for the Fourier coefficients) $c_{n}:=\widehat{u^{N}}(n)$, $1\leq n\leq N$ (see \eqref{Eqn:leb-F1-2} below).  This system is locally well-posed by the Cauchy-Lipschitz Theorem, and it is easily verified that the $L^2$-norm $u^N$ of the solution to \eqref{Eqn:gKdV-trunc} is preserved under the flow (see \eqref{Eqn:L2-trunc-inv} below).  This provides an a priori bound on the $\ell^{\infty}_{n}$-norm of the Fourier coefficients $\{c_n\}_{n\in\mathbb{N}}$, and it follows that solutions $u^N$ to \eqref{Eqn:gKdV-trunc} are global-in-time.



In this subsection we explicitly compute the invariance of the ``finite-dimensional'' Gibbs measure $\mu_{N}$ under the flow of \eqref{Eqn:gKdV-trunc}.  Specifically, we establish the following proposition.
\begin{proposition}
The Gibbs measure $\mu_{N}$ on $H^{\frac{1}{2}-\delta}(\mathbb{T})$
is invariant under the flow of the frequency truncated (gauge-transformed) quartic gKdV \eqref{Eqn:gKdV-trunc}.
\label{Prop:trunc-inv}
\end{proposition}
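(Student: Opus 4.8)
The plan is to prove invariance of $\mu_N$ by combining three separate invariances: (1) invariance of the Lebesgue measure $\prod_{0<n\le N}d\widehat{u}_n$ on the phase space $E_N$, (2) invariance of the $L^2$-norm $\|u^N\|_2$ (which handles the cutoff factor $\chi_{\{\|u^N\|_2\le B\}}$), and (3) invariance of the Hamiltonian $H(u^N)$ (which handles the density $e^{-H(u^N)}$). Since the gauge transformation destroys the Hamiltonian structure, step (1) cannot be invoked via Liouville's theorem for a Hamiltonian flow; instead I will write \eqref{Eqn:gKdV-trunc} explicitly as the first-order ODE system for the Fourier coefficients $c_n=\widehat{u^N}(n)$, $1\le |n|\le N$, and verify directly that the associated vector field is divergence-free. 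Writing $F_n(\{c_k\})$ for the right-hand side of $\dot c_n = i n^3 c_n + \widehat{\mathbb{P}_N(\mathbb{P}((u^N)^3)\partial_x u^N)}(n)$, the linear part $in^3 c_n$ contributes zero to $\sum_n \partial F_n/\partial c_n$ (it is diagonal and imaginary, so its contribution to the real divergence vanishes), and the nonlinear part must be checked term by term: the key observation is that in $\widehat{(\mathbb{P}((u^N)^3)\partial_x u^N)}(n) = \sum (in_1) c_{n_1}c_{n_2}c_{n_3}c_{n_4}$ with $n=n_1+\cdots+n_4$, the only contributions to $\partial F_n/\partial c_n$ come from terms where some $n_j = n$; when $j=1$ the coefficient $in_1 = in$ is paired with three other factors summing to zero frequency, and a symmetrization/cancellation argument (together with the fact that $\partial_x$ produces the factor $in_1$ which does not involve $c_n$ when $j\ne 1$, while when it does the remaining convolution sums to a purely imaginary diagonal term) shows $\sum_{1\le |n|\le N}\mathrm{Re}\,\partial F_n/\partial c_n = 0$. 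This is essentially the content of Proposition~\ref{Prop:trunc-inv} and I would carry out this divergence computation carefully, using that the nonlinearity is a polynomial vector field, so Liouville's formula $\frac{d}{dt}(\text{Leb}) = (\mathrm{div}\,F)\,\text{Leb}$ applies.

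Next I would verify $\frac{d}{dt}\|u^N\|_{L^2}^2 = 0$ along the flow of \eqref{Eqn:gKdV-trunc}. Multiplying the equation by $u^N$ and integrating: $\frac{1}{2}\frac{d}{dt}\int (u^N)^2 = \int u^N \partial_x^3 u^N\,dx + \int u^N \mathbb{P}_N(\mathbb{P}((u^N)^3)\partial_x u^N)\,dx$. The first term vanishes by integration by parts (it is $-\frac{3}{2}\int \partial_x u^N \partial_x^2 u^N$, a total derivative after one more step, or directly $\int u\partial_x^3 u = -\int \partial_x u \partial_x^2 u = -\frac{1}{2}\int \partial_x((\partial_x u)^2) = 0$). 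For the second term, since $u^N \in E_N$ the projection $\mathbb{P}_N$ acts as the identity against $u^N$, so it equals $\int u^N \mathbb{P}((u^N)^3)\partial_x u^N\,dx = \int (u^N)^4 \partial_x u^N\,dx - c\int u^N \partial_x u^N\,dx$ where the constant term vanishes since $\int u^N\partial_x u^N = \frac12\int \partial_x((u^N)^2)=0$, and $\int (u^N)^4\partial_x u^N = \frac{1}{5}\int \partial_x((u^N)^5)=0$. Hence $\|u^N(t)\|_2$ is conserved, so the indicator $\chi_{\{\|u^N\|_2\le B\}}$ is invariant. Similarly, I would check $\frac{d}{dt}H(u^N)=0$: differentiating $H(u^N)=\frac12\int(\partial_x u^N)^2 + \frac{1}{20}\int(u^N)^5$ and substituting the equation, the $\partial_x^3$ contributions cancel against the nonlinear contributions after integration by parts, using again that $\mathbb{P}_N$ acts trivially against functions in $E_N$ and that the relevant boundary terms vanish by periodicity. (In fact, one can shortcut this: $H$ is conserved for the \emph{ungauged} truncated equation \eqref{Eqn:gKdV-N} by standard Hamiltonian considerations, and the gauge map in finite dimensions, being a spatial translation depending only on $\int(u^N)^3$, preserves all translation-invariant integrals including $H$ and $\|u^N\|_2$; I would use whichever route is cleanest.)

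Finally, I would assemble these: the measure $d\mu_N = Z_N^{-1}\chi_{\{\|u^N\|_2\le B\}}e^{-H(u^N)}\prod_{0<n\le N}d\widehat{u}_n$ has a density $\chi_{\{\|u^N\|_2\le B\}}e^{-H(u^N)}$ which is a composition of the conserved quantities $\|u^N\|_2$ and $H(u^N)$ with fixed functions, hence is invariant under the flow; combined with invariance of the underlying Lebesgue measure (step 1), the full measure $\mu_N$ is invariant. The main obstacle I anticipate is step~(1), the divergence-free check: without the Hamiltonian structure to fall back on, one must confront the explicit combinatorics of the quartic convolution sum and show the apparent contributions to the divergence cancel; the subtlety is that $\partial_x$ sits on the \emph{first} factor only (breaking the symmetry one would normally exploit), so the cancellation is not automatic and requires carefully tracking which frequency configurations allow $n_j=n$ and pairing them appropriately. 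I would expect the paper's proof of Proposition~\ref{Prop:trunc-inv} to record equations like \eqref{Eqn:leb-F1-2}, \eqref{Eqn:L2-trunc-inv} for exactly these purposes.
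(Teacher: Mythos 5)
Your proposal matches the paper's proof in all essential respects: the paper (Proposition \ref{Prop:trunc-inv} via Lemma \ref{Lemma:trunc-inv-2}) splits the argument into exactly the same three conservation statements and verifies each one by direct calculation, including the divergence-free check for the Fourier-coefficient ODE system \eqref{Eqn:leb-F1-2}, $L^2$-conservation \eqref{Eqn:L2-trunc-inv}, and Hamiltonian conservation \eqref{Eqn:E-trunc-inv}. One small correction to your discussion of the divergence computation: the troublesome case $n_1=n$ (where $\partial_x$ sits) never actually arises, because the projection $\mathbb{P}$ in $\mathbb{P}((u^N)^3)\partial_x u^N$ removes precisely the frequency configurations with $n_2+n_3+n_4=0$, so the sum in \eqref{Eqn:leb-F1-2} is over $n\neq n_1$ from the start; the derivative $\partial/\partial c_n$ then only ever hits the non-differentiated factors, the symmetry in $(n_2,n_3,n_4)$ gives a factor of $3$, and the remaining sum collapses to the zeroth Fourier mode of a total derivative, which vanishes. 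Also, the ``shortcut'' you sketch for Hamiltonian conservation (transporting it from the ungauged truncated system through the gauge map) is more delicate than it looks, since truncation and gauging do not commute cleanly and the spatial translation is time-dependent; the paper instead verifies $\frac{d}{dt}H(u^N)=0$ directly for \eqref{Eqn:gKdV-trunc}, which is what you correctly flag as the cleanest route.
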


Here we are viewing the flow of \eqref{Eqn:gKdV-trunc} extended to $H^{\frac{1}{2}-\delta}(\mathbb{T})=E_N\oplus E_{N}^{\perp}$ as the combined flow of $\eqref{Eqn:gKdV-trunc}$ on $E_N$ and the trivial flow on $E_{N}^{\perp}$.
We define the (truly) finite-dimensional Gibbs measure $\widetilde{\mu}_{N}$ on $E_N$ by the density
$d\widetilde{\mu}_N(u) = f_N(u)d\rho_N(u)$.  Proposition \ref{Prop:trunc-inv} is then a straight-forward consequence of the following lemma.

\begin{lemma}
The finite-dimensional Gibbs measure $\widetilde{\mu}_{N}$ on $E_N$
is invariant under the flow of the frequency truncated (gauge-transformed) quartic gKdV \eqref{Eqn:gKdV-trunc}.
\label{Lemma:trunc-inv-2}
\end{lemma}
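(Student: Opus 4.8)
The plan is to follow the classical Liouville-type scheme for proving invariance of a finite-dimensional Gibbs measure under a Hamiltonian-like flow, adapted to the fact that the gauge transformation has destroyed the Hamiltonian structure. The measure $\widetilde\mu_N$ on $E_N$ has the form $d\widetilde\mu_N(u) = f_N(u)\,d\rho_N(u)$, and unwinding the definitions of $f_N$ and $\rho_N$ this is (up to the normalizing constant $Z_N$)
\[
\chi_{\{\|u\|_2\le B\}}\,e^{-H(u)}\prod_{0<n\le N}d\widehat u_n,
\]
since the Gaussian weight $e^{-\frac12\int u_x^2}$ from $\rho_N$ combines with $e^{-\frac1{20}\int u^5}$ from $f_N$ to reproduce $e^{-H(u)}$. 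So it suffices to prove that the flow $\Phi^N(t)$ of \eqref{Eqn:gKdV-trunc} preserves three things: (a) the Lebesgue measure $\prod_{0<n\le N}d\widehat u_n$ on $E_N\cong\mathbb C^N$; (b) the $L^2$-norm $\|u^N\|_2$, which controls the cutoff $\chi_{\{\|u\|_2\le B\}}$; and (c) the Hamiltonian $H(u^N)$, which appears in the density. Given (a)--(c), invariance of $\widetilde\mu_N$ under $\Phi^N(t)$ is immediate from the change-of-variables formula together with the group property of the flow.

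The key steps, in order: First I would write \eqref{Eqn:gKdV-trunc} as an explicit system of ODEs $\dot c_n = F_n(c)$ for the Fourier coefficients $c_n = \widehat{u^N}(n)$, $1\le |n|\le N$ (with $c_{-n}=\overline{c_n}$), as referenced in line \eqref{Eqn:leb-F1-2}. Then for step (a) I would compute $\operatorname{div} F = \sum_n \partial F_n/\partial c_n$ and show it vanishes; this is the divergence-free vector field claim made in the introduction. The point is that although the gauge transform breaks the symplectic structure, the transformed nonlinearity $\mathbb P_N(\mathbb P((u^N)^3)\partial_x u^N)$ still has the feature that the coefficient of $c_n$ in $F_n$ does not involve $c_n$ itself (the derivative $\partial_x$ produces a factor $in$ on one frequency slot, and the projections only restrict which frequency tuples contribute, never producing a self-interaction diagonal term), so each diagonal entry $\partial F_n/\partial c_n$ is zero — in particular the sum is zero. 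For step (b), multiply the equation by $\overline{u^N}$, integrate over $\mathbb T$, and take real parts: the linear term $\partial_x^3 u^N$ contributes $\operatorname{Re}\int \overline{u^N}\partial_x^3 u^N = 0$ by integration by parts, and the nonlinear term contributes $\operatorname{Re}\int \overline{u^N}\,\mathbb P_N(\mathbb P((u^N)^3)\partial_x u^N) = \operatorname{Re}\int \overline{u^N}\,\mathbb P((u^N)^3)\partial_x u^N$ (since $u^N\in E_N$ so $\mathbb P_N$ can be dropped against $\overline{u^N}$); writing $u^N = \overline{u^N}$ as it is real-valued, this integrand is $\mathbb P((u^N)^3)\cdot u^N\partial_x u^N = \frac12\mathbb P((u^N)^3)\,\partial_x((u^N)^2)$, and one integration by parts reduces it to $-\frac12\int (u^N)^2\,\partial_x\mathbb P((u^N)^3) = -\frac16\int\partial_x((u^N)^2)\cdot 3(u^N)^2 u^N_x\cdot(\text{const})$ type terms that cancel — more cleanly, $\int \mathbb P((u^N)^3)u^N u^N_x\,dx = \int (u^N)^3 u^N u^N_x\,dx = \int (u^N)^4 u^N_x\,dx = \frac15\int\partial_x((u^N)^5)\,dx = 0$ (the mean-zero projection $\mathbb P$ is harmless inside the integral against the mean-zero function $u^N u^N_x$). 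This gives $\frac{d}{dt}\|u^N\|_2^2 = 0$, which is line \eqref{Eqn:L2-trunc-inv}. For step (c), differentiate $H(u^N) = \frac12\int (u^N_x)^2 + \frac1{20}\int (u^N)^5$ along the flow and substitute the equation; here I expect to need the identity $\frac{\partial H}{\partial u}(u^N) = -\partial_x^2 u^N + \frac14(u^N)^4$, and the reformulations \eqref{Eqn:gKdV-zeromean2}--\eqref{Eqn:gKdV-zeromean3} showing the various permissible additions/subtractions of mean-zero terms, to reduce $\frac{d}{dt}H(u^N)$ to an integral of a perfect $x$-derivative plus terms manifestly antisymmetric under integration by parts.

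The main obstacle I anticipate is step (c), the conservation of $H(u^N)$ under the \emph{gauge-transformed and frequency-truncated} flow. For the untruncated ungauged equation this is just the Hamiltonian conservation law; but the gauge transformation \eqref{Eqn:gaugetransform-a} is a time- and solution-dependent spatial translation, under which $\int (v_x)^2$ and $\int v^5$ are individually invariant (translations preserve these integrals), so $H(v) = H(u)$ — however, one must check this survives the frequency projection $\mathbb P_N$, which does \emph{not} commute with the nonlinear translation. The safe route is not to rely on the gauge identity at all but to verify $\frac{d}{dt}H(u^N) = 0$ directly from \eqref{Eqn:gKdV-trunc}: compute $\frac{d}{dt}H(u^N) = \int \big(-u^N_{xx} + \tfrac14(u^N)^4\big)\partial_t u^N = -\int\big(-u^N_{xx}+\tfrac14(u^N)^4\big)\mathbb P_N\big(\mathbb P((u^N)^3)u^N_x\big) - \int\big(-u^N_{xx}+\tfrac14(u^N)^4\big)\partial_x^3 u^N$, and check each piece vanishes using that $u^N\in E_N$ lets $\mathbb P_N$ be absorbed, that $\mathbb P$ is harmless against mean-zero test functions, and integration by parts — the potential trap being cross terms of the form $\int u^N_{xx}\mathbb P((u^N)^3)u^N_x$ which must be shown to telescope against $\frac14\int (u^N)^4\,\partial_x^3 u^N$ and against the linear-term contributions. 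This is the computation referenced as Proposition \ref{Prop:trunc-inv}'s proof in Section \ref{Sec:GWP}; I would carry it out termwise, being careful that every projection is legitimately dropped only when paired against something already in $E_N$ or already mean-zero.
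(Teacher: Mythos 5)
Your overall three-part strategy (preserve Lebesgue measure, $L^2$-norm, and Hamiltonian) is exactly the paper's, and steps (b) and (c) are planned correctly and executed essentially as in the paper. But there is a genuine error in your justification of step (a).

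You assert that ``the coefficient of $c_n$ in $F_n$ does not involve $c_n$ itself \dots\ so each diagonal entry $\partial F_n/\partial c_n$ is zero --- in particular the sum is zero.'' This is false. Writing the nonlinear part of $F_n$ as
\[
F_n^2 \;=\; \sum_{\substack{n=n_1+n_2+n_3+n_4 \\ n\neq n_1}} (-in_1)\,c_{n_1}c_{n_2}c_{n_3}c_{n_4},
\]
the gauge transformation excludes tuples with $n_1=n$, but there is no restriction forbidding $n_2=n$, $n_3=n$, or $n_4=n$. Thus $F_n^2$ does depend on $c_n$ (e.g.\ take $n_2=n$, $n_1+n_3+n_4=0$ with all entries nonzero), and the diagonal entry of the Jacobian is, after the Leibniz computation,
\[
\frac{\partial F_n^2}{\partial c_n}
\;=\; 3\sum_{\substack{n_3,n_4 \\ n+n_3+n_4=0}} (in)\,c_n c_{n_3}c_{n_4},
\]
which is not zero in general. (Likewise, for the linear part $F_n^1=-in^3c_n$, the diagonal entry is $-in^3$, not zero; it is the combination $\partial F_n^1/\partial c_n + \partial\overline{F_n^1}/\partial\overline{c_n}$ that cancels.) What is true, and what the paper proves, is that the \emph{divergence} $\sum_n \bigl(\partial F_n/\partial c_n + \partial\overline{F_n}/\partial\overline{c_n}\bigr)$ vanishes: summing the quartic diagonal entries over $n$ produces $3\sum_{n+n_3+n_4=0}(in)c_nc_{n_3}c_{n_4} = 3\bigl(\partial_x (u^N)^3\bigr)^{\wedge}(0)=0$, a global cancellation coming from the mean-zero structure, not a termwise vanishing. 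Your conclusion (Lebesgue measure invariance) is correct, but you need to replace the termwise claim with the summed cancellation; the rest of the proposal goes through.
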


We begin by proving Proposition \ref{Prop:trunc-inv} using Lemma \ref{Lemma:trunc-inv-2}, then we present the proof of Lemma \ref{Lemma:trunc-inv-2}.

\begin{proof}[Proof of Proposition \ref{Prop:trunc-inv}]
Let $\Phi_N(t)$ denote the flow map of \eqref{Eqn:gKdV-trunc} extended to $H^{\frac{1}{2}-\delta}(\mathbb{T})=E_{N}\oplus E_{N}^{\perp}$, and let $\widetilde{\Phi}_N(t)$ denote the flow of \eqref{Eqn:gKdV-trunc} restricted to $E_N$.  Then by definition we have $\Phi_N=(\widetilde{\Phi}_N,\text{Id})$, and Proposition \ref{Prop:trunc-inv} is automatic from Lemma \ref{Lemma:trunc-inv-2}, and the invariance of the Gaussian measure (on each Fourier mode) under the trivial flow on $E_N^{\perp}$.
\end{proof}

We now proceed with the proof of Lemma \ref{Lemma:trunc-inv-2}.

\begin{proof}[Proof of Lemma \ref{Lemma:trunc-inv-2}]
For the sake of clarity, let us begin by verifying conservation of the quantity
\begin{align}
H(u) = \frac{1}{2}\int(u_{x})^{2}dx + \frac{1}{20}\int u^{5}dx
\label{Eqn:GWP-1}
\end{align}
under the flow of the gauge-transformed quartic gKdV \eqref{Eqn:gKdV-zeromean} (without frequency truncation).
Suppose $u$ solves \eqref{Eqn:gKdV-zeromean}, then we find
\begin{align*}
\frac{dH(u)}{dt} &= \int u_{x}(u_{t})_{x}dx + \frac{1}{4}\int u^{4}u_{t}dx  \\
&= \int u_{x}[-u_{xxx} + \mathbb{P}(u^{3})u_{x}]_{x}dx + \frac{1}{4}\int u^{4}[-u_{xxx} + \mathbb{P}(u^{3})u_{x}]dx  \\
&= \int u_{x}[-u_{xxx} + u^{3}u_{x} - \big(\int u^{3}dx'\big)u_{x}]_{x}dx +  \frac{1}{4}\int u^{4}[-u_{xxx} + u^{3}u_{x} - \big(\int u^{3}dx'\big)u_{x}]dx
\\
&=  \int\big(u_{xx}u_{xxx} + \frac{1}{4}u^{7}u_{x}\big)dx
\\
&\ \ \ \ \ \ \ \ \ \ \ \ -
\big(\int u^{3}dx'\big)\bigg( \int \big(u_{x}u_{xx}dx + \frac{1}{4} u^{4}u_{x}\big)dx \bigg) \ \ \ \ \  \text{after integrating by parts,}\\
&= \int\big(\frac{1}{2}[(u_{xx})^{2}]_{x}
+ \frac{1}{32}[u^{8}]_{x}\big)dx
-
\big(\int u^{3}dx'\big)\bigg( \int\big( \frac{1}{2}[(u_{x})^{2}]_{x} + \frac{1}{20} [u^{5}]_{x}\big)dx \bigg) \\
&=  0.
\end{align*}
Now suppose $u^{N}$ solves the truncated gauged quartic gKdV \eqref{Eqn:gKdV-trunc}.  With a modified computation we will confirm that
$\displaystyle \frac{dH(u^{N})}{dt}=0$.  Indeed, we have
\begin{align}
\frac{dH(u^{N})}{dt} &= \int u^{N}_{x}(u^{N}_{t})_{x}dx + \frac{1}{4}\int (u^{N})^{4}u^{N}_{t}dx  \notag \\
&= \int u^{N}_{x}\bigg[-u^{N}_{xxx} + \mathbb{P}_{N}\bigg(\mathbb{P}\big((u^{N})^{3}\big)u^{N}_{x}
\bigg)\bigg]_{x}dx \notag \\
&\ \ \ \ \ \ \ \ \ \ \ \ + \frac{1}{4}\int (u^{N})^{4}\bigg[-u^{N}_{xxx} + \mathbb{P}_{N}\bigg(\mathbb{P}\big((u^{N})^{3}\big)u^{N}_{x}
\bigg)\bigg]dx  \notag \\
&= \int u^{N}_{x}\bigg[-u^{N}_{xxx} + \mathbb{P}_{N}\bigg((u^{N})^{3}u^{N}_{x}\bigg) - \mathbb{P}_{N}\bigg(\big(\int (u^{N})^{3}dx'\big)u^{N}_{x}\bigg)\bigg]_{x}dx
\notag \\
&\ \ \ \ \ \ \ \ \ \ \ \ +  \frac{1}{4}\int (u^{N})^{4}\bigg[-u^{N}_{xxx} + \mathbb{P}_{N}\bigg((u^{N})^{3}u^{N}_{x}\bigg) - \mathbb{P}_{N}\bigg(\big(\int (u^{N})^{3}dx'\big)u^{N}_{x}\bigg)\bigg]dx
\notag \\
&=  \int\bigg[u^{N}_{xx}u^{N}_{xxx} + \frac{1}{4}(u^{N})^{4}
\mathbb{P}_{N}\bigg((u^{N})^{3}u^{N}_{x}
\bigg)\bigg]dx
\notag \\
&\ \ \ \ \ \ \ \ \ \ \ \  -
\big(\int (u^{N})^{3}dx'\big)\bigg( \int \big(u^{N}_{x}u^{N}_{xx} + \frac{1}{4} (u^{N})^{4}u^{N}_{x}\big)dx \bigg)
\ \ \ \ \ \text{integrating by parts,}
\notag\\
&= \int\Bigg(\frac{1}{2}[(u^{N}_{xx})^{2}]_{x}
+ \frac{1}{32}\bigg[\bigg(\mathbb{P}_{N}\big[(u^{N})^{4}\big]
\bigg)^{2}\bigg]_{x}\Bigg)dx
\notag \\
&\ \ \ \ \ \ \ \ \ \ \ \  -
\big(\int (u^{N})^{3}dx'\big)\bigg( \int \big(\frac{1}{2}[(u^{N}_{x})^{2}]_{x} + \frac{1}{20} [(u^{N})^{5}]_{x}\big)dx \bigg)
\notag \\
&=  0.
\label{Eqn:E-trunc-inv}
\end{align}
Next we show that the $L^{2}$-norm of $u^N$ is invariant under the flow of \eqref{Eqn:gKdV-trunc}.  We compute
\begin{align}
\frac{d}{dt}\int (u^{N})^{2}dx &=
2\int u^{N}\bigg[-u^{N}_{xxx} + \mathbb{P}_{N}\bigg(\mathbb{P}
\big((u^{N})^{3}\big)u^{N}_{x}
\bigg)\bigg]dx \notag \\
&= 2\int u^{N}\bigg[-u^{N}_{xxx} + \mathbb{P}_{N}\bigg(
(u^{N})^{3}u^{N}_{x}
\bigg)-\mathbb{P}_{N}\bigg(
\big(\int(u^{N})^{3}dx'\big)u^{N}_{x}
\bigg) \bigg]dx
\notag \\
&= \int\bigg([(u^{N}_{x})^{2}]_{x} +
\frac{2}{5}[(u^{N})^{5}]_{x}\bigg)dx
\notag
\\
&\ \ \ \ \ \ \ \ \ \ \ \
-\big(\int(u^{N})^{3}dx'\big)
\bigg(\int[(u^{N})^{2}]_{x}dx\bigg) \notag
\\
&= 0. \label{Eqn:L2-trunc-inv}
\end{align}

We can write \eqref{Eqn:gKdV-trunc} in coordinates as a system of $N$ complex ODEs (for the Fourier coefficients)
$c_{n}:=\hat{u^{N}}(n)$, $1\leq n\leq N$ (recall that $c_{-n}=\overline{c_n}$).  We will verify the invariance of $\widetilde{\mu}_{N}$ in the coordinate space $\mathbb{C}^{N}$, and this extends to $E_{N}$ by its definition.
Having verified Hamiltonian and $L^{2}$-norm conservation for the solution $u^{N}(t)$ to \eqref{Eqn:gKdV-trunc}, to prove that $\widetilde{\mu}_{N}$ is invariant under the flow, it remains to show that the Lebesgue measure on $\mathbb{C}^{N}$ is preserved.

This system will take the form
$\displaystyle \frac{d}{dt}c_{n}=F_{n}(\{c_{k},\overline{c_{k}}\})$,
for a sequence of functions $F_{n}$.  Equivalently
\begin{align*}
\frac{d}{dt}a_{n}=\text{Re}(F_{n}(\{c_{k},\overline{c_{k}}\}))
\\
\frac{d}{dt}b_{n}=\text{Im}(F_{n}(\{c_{k},\overline{c_{k}}\})),
\end{align*}
when written in terms of $a_{n}$ and $b_{n}$, the real and imaginary parts of $c_{n}$.  To show that this system preserves the Lebesgue measure $\displaystyle \prod_{1<n\leq N}da_{n}db_{n}$ on $\mathbb{C}^{N}$, it suffices to verify that the corresponding vector field is divergence-free.  That is, to show that
\begin{align*}
\sum_{1\leq n\leq N}\frac{\partial \text{Re}(F_{n})}
{\partial a_{n}} + \frac{\partial \text{Im}(F_{n})}
{\partial b_{n}} = 0.
\end{align*}
This is equivalent to
\begin{align}
\sum_{0<n \leq N}\frac{\partial F_{n}}
{\partial c_{n}} + \frac{\partial \overline{F_{n}}}
{\partial \overline{c_{n}}} = 0.
\label{Eqn:div-free}
\end{align}
We will verify \eqref{Eqn:div-free} by explicit computation.  The equation \eqref{Eqn:gKdV-trunc}
is a system of $N$ coupled ODEs for each $1\leq n\leq N$,
\begin{align}
\frac{d}{dt}c_{n}(t)&= -in^{3}c_{n}(t) +
\sum_{
\substack{n_{1},\ldots, n_{4}\\n=n_{1}+\cdots n_{4}\\n\neq n_{1}}} (-in_{1})c_{n_{1}}c_{n_{2}}c_{n_{3}}c_{n_{4}}
\notag \\
&=:  F_{n}^{1} + F_{n}^{2}.
\label{Eqn:leb-F1-2}
\end{align}
For any fixed $n$, $1\leq n\leq N$, observe that
\begin{align}
\frac{\partial F_{n}^{1}}{\partial c_{n}}
+ \frac{\partial\overline{F_{n}^{1}}}{\partial \overline{c_{n}}}
=  (-in)^{3} + \overline{(-in)^{3}}
= in^{3} - in^{3}
=0.
\label{Eqn:leb-F1}
\end{align}
Then, using the Liebniz rule, for each fixed $n$, $0<|n|\leq N$,
\begin{align*}
\frac{\partial F_{n}^{1}}{\partial c_{n}} &=
\frac{\partial}{\partial c_{n}} \bigg(\sum_{
\substack{n_{1},\ldots,n_{4}\\n=n_{1}+\cdots n_{4}\\n\neq n_{1}}} (-in_{1})c_{n_{1}}c_{n_{2}}c_{n_{3}}c_{n_{4}}\bigg)
\\
&=
\sum_{
\substack{n_{1},\ldots,n_{4}\\n=n_{1}+\cdots n_{4}\\n\neq n_{1}}}
(-in_{1})
\bigg(c_{n_{1}}\frac{\partial c_{n_{2}}}{\partial c_{n}}c_{n_{3}}c_{n_{4}}
+c_{n_{1}}c_{n_{2}}\frac{\partial c_{n_{3}}}{\partial c_{n}}c_{n_{4}}
+c_{n_{1}}c_{n_{2}}c_{n_{3}}\frac{\partial c_{n_{4}}}{\partial c_{n}}\bigg)
\\
&=
\sum_{
\substack{n_{1},\ldots,n_{4}\\n=n_{1}+\cdots n_{4}\\n\neq n_{1}}}
(-in_{1})
\bigg(c_{n_{1}}\delta(n-n_{2})c_{n_{3}}c_{n_{4}}
+c_{n_{1}}c_{n_{2}}\delta(n-n_{3})c_{n_{4}}
+c_{n_{1}}c_{n_{2}}c_{n_{3}}\delta(n-n_{4})\bigg)
\\
&=
3\sum_{
\substack{n_{1},n_{3},n_{4}\\0=n_{1}+n_{3}+ n_{4}\\n\neq n_{1}}}
(-in_{1})c_{n_{1}}c_{n_{3}}c_{n_{4}}
\ \ \ \text{by symmetry in $n_{2},n_{3},n_{4}$,}
\\
&=
3\bigg(\sum_{\substack{n_{1},n_{3},n_{4}\\0=n_{1}+n_{3}+ n_{4}}}
(-in_{1})c_{n_{1}}c_{n_{3}}c_{n_{4}} + \sum_{\substack{n_{3},n_{4}\\0=n+n_{3}+ n_{4}}}(in)c_{n}c_{n_{3}}c_{n_{4}} \bigg)
\\
&=  3\bigg(\big(\partial_{x}u^{N}(u^{N})^{2}\big)
^{\wedge}(0) + \sum_{\substack{n_{3},n_{4}\\0=n+n_{3}+ n_{4}}}(in)c_{n}c_{n_{3}}c_{n_{4}} \bigg)
\\
&=  3\bigg(\big(\partial_{x}(u^{N})^{3}\big)
^{\wedge}(0) + \sum_{\substack{n_{3},n_{4}\\0=n+n_{3}+ n_{4}}}(in)c_{n}c_{n_{3}}c_{n_{4}} \bigg)
\\
&= 3\sum_{\substack{n_{3},n_{4}\\0=n+n_{3}+ n_{4}}}(in)c_{n}c_{n_{3}}c_{n_{4}}.
\end{align*}
Then
\begin{align*}
\sum_{0<|n|\leq N}\frac{\partial F_{n}^{1}}{\partial c_{n}} &= 3\sum_{\substack{n,n_{3},n_{4}\\0=n+n_{3}+ n_{4}}}(in)c_{n}c_{n_{3}}c_{n_{4}}
 \\
&=
3\sum_{\substack{n,n_{3},n_{4}\\0=n+n_{3}+ n_{4}}}
(-in)c_{n}c_{n_{3}}c_{n_{4}}
  \\
&=  3\big(\partial_{x}u^{N}(u^{N})^{2}\big)
^{\wedge}(0)
 \\
&=  3\big(\partial_{x}(u^{N})^{3}\big)
^{\wedge}(0)
\\
&= 0,
\end{align*}
and $\displaystyle \sum_{0<|n|\leq N}\frac{\partial \overline{F_{n}^{1}}}{\partial \overline{c_{n}}} =0$ with the same computation.  Using the property $c_{-n}=\overline{c_{n}}$ for each $n$, this gives
\begin{align}
\sum_{1\leq n\leq N}\frac{\partial F_{n}^{1}}{\partial c_{n}}
+ \frac{\partial \overline{F_{n}^{1}}}{\partial \overline{c_{n}}}=0.
\label{Eqn:leb-F2}
\end{align}
Combining \eqref{Eqn:leb-F1} and \eqref{Eqn:leb-F2} the justification of \eqref{Eqn:div-free} is complete.  This completes the proof of Proposition \ref{Prop:trunc-inv}.
\end{proof}

\subsection{Extending to global-in-time solutions}

In this subsection we will establish a proposition which uses the finite-dimensional Gibbs measure invariance (Proposition \ref{Prop:trunc-inv}), and an approximation argument, to extend the local solutions of \eqref{Eqn:gKdV-zeromean} (produced by Theorem \ref{Thm:LWP}) to global solutions.  Let $\Phi^{N}(t)$ denote the data-to-solution map for \eqref{Eqn:gKdV-trunc}.

\begin{proposition}
$\forall \,0<\sigma<1$ and $T^{*}>0, \exists \,\delta_1>\delta_2> \varepsilon>0$ sufficiently small and a measurable set $\Lambda_{\sigma,T^{*}}\subset H^{\frac{1}{2}-\delta_1}(\mathbb{T})$ such that
$\mu(\Lambda_{\sigma,T^{*}}^{c})<\sigma$ and
$\forall \, u_0\in\Lambda_{\sigma,T^{*}}$ there exists a (unique) solution $u\in S(t)u_0 + C([0,T^{*}];H^{\frac{1}{2}+\delta_2}(\mathbb{T}))\subset C([0,T^{*}];H^{\frac{1}{2}-\delta_1}(\mathbb{T}))$ to \eqref{Eqn:gKdV-zeromean} with data $u_0$.  Furthermore, for all $N\gg 0$, we have
\begin{align}
\big\|u(t)-S(t)u_0 - (\Phi^{N}(t)-S(t)\mathbb{P}_{N})u_0
\big\|_{C([0,T^{*}];H^{\frac{1}{2}+\delta_2}(\mathbb{T}))}
\lesssim C(\sigma)N^{-\varepsilon}.
\label{Eqn:global-app}
\end{align}
\label{Prop:global}
\end{proposition}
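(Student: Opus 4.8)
The plan is to implement Bourgain's globalization scheme: iterate the local theory of Theorem~\ref{Thm:LWP} on consecutive time intervals of a fixed small length $T$, using the invariance of the finite-dimensional Gibbs measure $\mu_N$ under the truncated flow $\Phi^N$ (Proposition~\ref{Prop:trunc-inv}) to keep the exceptional sets uniformly small at every step. Fix $\sigma\in(0,1)$ and $T^*>0$, let $T=T(\sigma,T^*)\ll1$ be chosen at the end, and set $J=\lceil T^*/T\rceil$. From the proof of Theorem~\ref{Thm:LWP} one extracts a ``good local set'' $G=G_T\subset H^{1/2-\delta_1}(\mathbb{T})$ with $\rho(G^c)<e^{-c/T^\beta}$, consisting of data for which \eqref{Eqn:gKdV-zeromean} and its truncation \eqref{Eqn:gKdV-trunc} are locally well posed on an interval of length $T$ with the uniform bounds \eqref{Eqn:LWP-16}-\eqref{Eqn:LWP-17}, the Lipschitz stability of Theorem~\ref{Thm:LWP}(iv), and the approximation \eqref{Eqn:finapp}. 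Since $S(t)$ acts diagonally on Fourier modes it preserves the law of $u_{0,\omega}$, so the probabilistic input of Theorem~\ref{Thm:LWP} is insensitive to replacing $u_{0,\omega}$ by $S(t_0)u_{0,\omega}+h$ with $\|h\|_{H^{1/2+\delta_2}}\le R$; this is what allows $G$ to be used as a starting set at every step, since the datum produced at step $j$ will be seen to be of exactly this form plus an $O(N^{-\beta})$ error. By Theorem~\ref{Thm:LRS} and the Remark following it, $\|f\|_{L^2(d\rho)}$ and $\sup_N\|f_N\|_{L^2(d\rho)}$ are finite, so Cauchy--Schwarz gives $\mu(G^c),\,\mu_N(G^c)\lesssim e^{-c/(2T^\beta)}$ uniformly in $N$.

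The heart of the argument is the measure estimate. By Proposition~\ref{Prop:trunc-inv}, $\mu_N$ is $\Phi^N(t)$-invariant, so $\mu_N\big((\Phi^N(jT))^{-1}(G^c)\big)=\mu_N(G^c)$ for every $j$. By Proposition~\ref{Prop:conv}, $\varepsilon_N:=\int_{H^{1/2-\delta_1}}|f_N-f|\,d\rho\to0$, and this yields the \emph{uniform} comparison $|\mu(A)-\mu_N(A)|\le\varepsilon_N$ for all Borel $A$. Hence
\begin{align*}
\mu\Big(\bigcup_{j=0}^{J-1}\big\{u_0:\Phi^{N}(jT)u_0\notin G\big\}\Big)
&\le\sum_{j=0}^{J-1}\Big(\mu_N\big((\Phi^{N}(jT))^{-1}(G^c)\big)+\varepsilon_N\Big)\\
&=J\big(\mu_N(G^c)+\varepsilon_N\big)\lesssim\frac{T^*}{T}\big(e^{-c/(2T^\beta)}+\varepsilon_N\big).
\end{align*}
Choose $T=T(\sigma,T^*)$ so small that $\tfrac{T^*}{T}e^{-c/(2T^\beta)}<\sigma/2$ (possible since the exponential beats the polynomial) and that $T$ meets all smallness requirements of the one-step theory; then choose $N_*=N_*(\sigma,T^*)$ so large that $\tfrac{T^*}{T}\varepsilon_N<\sigma/2$ for $N\ge N_*$. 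Setting
\[\Lambda_{\sigma,T^*}:=\big\{u_0\in H^{1/2-\delta_1}(\mathbb{T}):\Phi^{N_*}(jT)u_0\in G\ \text{for}\ j=0,1,\ldots,J-1\big\}\]
gives $\mu(\Lambda_{\sigma,T^*}^c)<\sigma$.

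For $u_0\in\Lambda_{\sigma,T^*}$ we build the true solution of \eqref{Eqn:gKdV-zeromean} by induction on $j$: assuming $u$ is defined on $[0,jT]$ with $u(jT)$ within $H^{1/2+\delta_1}$-distance $\lesssim C(\sigma)N_*^{-\varepsilon}$ of $\Phi^{N_*}(jT)\mathbb{P}_{N_*}u_0+S(jT)(\text{Id}-\mathbb{P}_{N_*})u_0$ (trivially true for $j=0$), we note that $\Phi^{N_*}(jT)u_0\in G$ because $u_0\in\Lambda_{\sigma,T^*}$; writing $\Phi^{N_*}(jT)\mathbb{P}_{N_*}u_0=S(jT)\mathbb{P}_{N_*}u_0+(\text{smooth part of size}\lesssim1)$ (the nonlinear part of the truncated solution, controlled by \eqref{Eqn:LWP-17}), the datum $u(jT)$ is a linear-flow image of $u_0$ plus a smooth $O(1)$ perturbation plus an $O(N_*^{-\beta})$ error, hence lies in a slightly enlarged good set once $N_*$ is large. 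Theorem~\ref{Thm:LWP} (including the stability (iv)) then applies on $[jT,(j+1)T]$, extending $u$ with the uniform bounds and, via \eqref{Eqn:finapp} composed with the Lipschitz dependence of $\Phi^{N_*}$ used to carry the step-$j$ error forward, with the updated distance bound $\lesssim C(\sigma)N_*^{-\varepsilon}$ at time $(j+1)T$. After $J$ steps $u$ reaches $[0,T^*]$; summing the per-step errors (absorbing $J$ and the accumulated Lipschitz constants into $C(\sigma)$) gives \eqref{Eqn:global-app} for $N=N_*$, and uniqueness follows by patching Theorem~\ref{Thm:LWP}(iii). Finally, since the true solution $u$ is now a fixed object attached to $u_0$ whose intermediate data $u(jT)$ have all been shown to be good, rerunning the same stepwise comparison with an arbitrary $N\gg0$ in place of $N_*$ yields \eqref{Eqn:global-app} for all large $N$.

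I expect the main difficulty to be the simultaneous control of the two approximations in play. Only $\mu_N$ (not $\mu$) is preserved along the truncated flow, which forces $T$ to be taken very small in terms of $\sigma,T^*$ so that the $J\sim T^*/T$ union bound stays below $\sigma$; yet at each of the $J$ steps the bridge \eqref{Eqn:finapp} between the truncated and true dynamics degrades by the (large) Lipschitz constant of the local flow, so one must verify that $J$ steps of accumulated error remain $\lesssim C(\sigma)N^{-\varepsilon}$ and that $G$ carries a margin (of size $\gtrsim$ this error) across which the ``goodness'' of $\Phi^{N}(jT)u_0$ genuinely transfers to the non-randomized datum $u(jT)$ --- which is only a linear-flow image of a randomized datum plus a smooth perturbation rather than a randomized datum itself. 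Getting the choices of $T$ and $N_*$ to interlock, and confirming that the decomposition of $u(jT)$ places it in the domain of applicability of Theorem~\ref{Thm:LWP}, is the delicate bookkeeping at the core of the argument.
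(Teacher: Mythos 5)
Your overall strategy is the paper's: iterate the local theory on intervals of fixed small length $T$, use $\mu_N$-invariance under $\Phi^N$ (Proposition~\ref{Prop:trunc-inv}) and the convergence $\mu_N\to\mu$ (Proposition~\ref{Prop:conv}, Corollary~\ref{Cor:conv}) to show the set of data staying ``good'' at every step has nearly full $\mu$-measure, then patch the local solutions. The gap is in your final paragraph, where you assert that ``rerunning the same stepwise comparison with an arbitrary $N\gg0$ in place of $N_*$ yields~\eqref{Eqn:global-app} for all large $N$.'' Your set $\Lambda_{\sigma,T^*}$ is built from a \emph{single} cutoff $N_*$: it guarantees $\Phi^{N_*}(jT)u_0\in G$ for every $j$, and thereby (via the iteration) control of $\Phi^{N_*}$ along the whole trajectory, but it says nothing about $\Phi^{N}(jT)u_0$ for $N\neq N_*$. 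The step on $[jT,(j+1)T]$ compares the true solution to the truncated flow $\Phi^{N}$ restarted from $\Phi^{N}(jT)\mathbb{P}_{N}u_0$; for the uniform a priori bounds~\eqref{Eqn:LWP-16}--\eqref{Eqn:LWP-17} and the approximation estimate~\eqref{Eqn:finapp} to apply at step $j$, one needs this restarted truncated datum to lie in the good set $\tilde\Omega_{N,T}$, and that is exactly what $\mu_N$-invariance delivers --- but only for the specific $N$ you conditioned on. You cannot infer it for other $N$ from closeness to a good point: $G$ is cut out by moment conditions on the Gaussian coefficients (Lemmas~\ref{Lemma:prob1}--\ref{Lemma:prob3}), not by a metric ball, and a union bound over all $N\ge N_*$ destroys the smallness because $\varepsilon_N$ and $\mu_N(G^c)$ are not summable in $N$.

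The paper resolves this with a $\limsup$. It sets $\Lambda_{M,T_0}=\tilde\Omega_{M,T_0}\cap(\Phi^M(T_0))^{-1}\tilde\Omega_{M,T_0}\cap\cdots$, shows $\mu_M(\Lambda_{M,T_0}^c)\ll\sigma$ uniformly in $M$ by invariance, and then takes $\Lambda_\sigma=\tilde\Omega_{T_0}\cap\bigcap_{n}\bigcup_{M\ge n}\Lambda_{M,T_0}$; Fatou together with Corollary~\ref{Cor:conv} gives $\mu(\Lambda_\sigma^c)<\sigma$. A $u_0\in\Lambda_\sigma$ then lies in $\Lambda_{M,T_0}$ for a ($u_0$-dependent) subsequence $M\to\infty$. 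Given arbitrary $N_1\le N_2$, one picks $M\ge N_2$ in this subsequence and bounds
$\big(\Phi^{N_2}\mathbb{P}_{N_2}-S\mathbb{P}_{N_2}-\Phi^{N_1}\mathbb{P}_{N_1}+S\mathbb{P}_{N_1}\big)u_0$
by triangle inequality against $\Phi^{M}\mathbb{P}_{M}$, each leg controlled by~\eqref{Eqn:ext-10}. This gives the Cauchy estimate at rate $N_1^{-\varepsilon}$ uniformly in $N_1\le N_2$, hence both the existence of the limit $\tilde{\mathcal D}$ and the rate~\eqref{Eqn:global-app} for \emph{all} large $N$ --- which a single-$N_*$ set cannot deliver. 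A secondary omission: $\Lambda_{\sigma,T^*}$ should also include the intersection $M_T=\bigcap_j\Omega_T(jT)$ of per-interval probabilistic good sets (see~\eqref{Eqn:times} and Remark~\ref{Rem:time}), which is what licenses Theorem~\ref{Thm:LWP} to be run with $S(jT)u_{0,\omega}$ as the random anchor at step $j$; that $S(t)$ preserves the law shows these sets have equal measure, not that they coincide, so they must be intersected explicitly.
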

\begin{remark}
Regarding the uniqueness of the solution in Proposition \ref{Prop:global}, recall that, for $\omega\in \Omega_{T}$, the local solution produced by Theorem \ref{Thm:LWP} is unique in a ball in $X^{\frac{1}{2}+\delta_1,\frac{1}{2}-\delta_1}_{[0,T]}$ centered at the randomized data $S(t)u_{0,\omega}$.  For the solution produced by Proposition \ref{Prop:global}, this characterization is extended to $\frac{T^{*}}{T_{0}}$ intervals of size $T_{0}$ for some $T_{0}>0$ sufficiently small.  That is, for each $j=1,\ldots,\frac{1}{T_{0}}$, $u$ is the unique solution to \eqref{Eqn:gKdV-zeromean} for $t\in[jT_{0},(j+1)T_{0}]$ (with data $u(jT_{0})$) in a ball in $X^{\frac{1}{2}+\delta_1,\frac{1}{2}-\delta_1}_{[jT_{0},(j+1)T_{0}]}$ centered at $S(t-jT_{0})u(jT_{0})$.
\label{Rem:un1}
\end{remark}

\begin{proof}[Proof of Proposition \ref{Prop:global}:]
We will prove this proposition with $T^{*}=1$.  It will be obvious, after the proof, to see how this argument can be generalized to any value of $T^{*}>0$.  Let $\delta>\delta_1>\delta_2>0$ be sufficiently small such that we can apply Theorem \ref{Thm:LWP} twice, first with $\delta$ and $\delta_1$ matching here and in the statement of Theorem \ref{Thm:LWP}, and second with $\delta_1$ and $\delta_2$ playing the roles of $\delta$ and $\delta_1$ within Theorem \ref{Thm:LWP}.  Let $T>0$ be sufficiently small such that Theorem \ref{Thm:LWP} applies with both sets of parameters.  Denote by $\Omega_{1,T}\subset \Omega$ and $\Omega_{2,T}\subset \Omega$ the sets of ``good data'' produced by our first and second applications of Theorem \ref{Thm:LWP}, respectively.  Also let $\Omega_{T}=\Omega_{1,T}\cap \Omega_{2,T}$.
Then, in particular, $P(\Omega_{T}^c)<e^{-\frac{c}{T^{\beta}}}$, and the conclusion \eqref{Eqn:finapp} holds true for $\omega\in\Omega_{T}$.  By the triangle inequality, for each $\omega\in\Omega_{T}$, and every $M\geq N$, we have
\begin{align}
\|\big[\Phi^{M}(t)\mathbb{P}_{M}
-\Phi^{N}(t)\mathbb{P}_{N}-S(t)(\mathbb{P}_{M
}-\mathbb{P}_{N})\big]u_{0,\omega}\|
_{H^{\frac{1}{2}+\delta_{1}}} \leq N^{-\varepsilon},
\label{Eqn:ext-1}
\end{align}
for $t\in[0,T]$.

Our next objective is to extend \eqref{Eqn:ext-1} to the interval $[T,2T]$.  Let
\begin{align*}
\widetilde{\Omega}_{M,T}:=\bigg\{\sum_{1\leq|n|\leq N}\frac{g_{n}(\omega)}{|n|}e^{inx}
\big| \,\omega \in \Omega_{T} \bigg\}\oplus E_{M}^{\perp}\subset H^{\frac{1}{2}-\delta}(\mathbb{T}).
\end{align*}
Then we have $\rho(\widetilde{\Omega}_{M,T}^{c}) \leq P(\Omega_{T}^{c})<e^{-\frac{c}{T^{\beta}}}$, and by Cauchy-Schwarz and Theorem \ref{Thm:LRS},
\begin{align}
\mu_{M}(\widetilde{\Omega}_{M,T}^{c}) &=
\int_{\widetilde{\Omega}_{M,T}^{c}}f_{M}(u) d\rho
\notag \\
&\leq
\| f_{N}(u)\|_{L^{2}(d\rho)}
\Big(\rho(\widetilde{\Omega}_{M,T}^{c})\Big)^{\frac{1}{2}}
\notag\\
&\leq e^{-\frac{c}{T^{\beta}}},
\end{align}
for the appropriate constant $c>0$.  It follows that \eqref{Eqn:ext-1} holds if $u_{0}
\in\widetilde{\Omega}_{M,T}$ with $\mu_{M}(\widetilde{\Omega}_{M,T}^{c})<
e^{-\frac{1}{T^{\tilde{\beta}}}}$.  In particular, letting $u_{1}=\Phi^{M}(T)\mathbb{P}_{M}u_{0}$, we have
\begin{align}
\|u_{1}
-\big[\Phi^{N}(T)\mathbb{P}_{N}+
S(T)(\mathbb{P}_{M
}-\mathbb{P}_{N})\big]u_{0}\|
_{H^{\frac{1}{2}+\delta_{1}}} \leq N^{-\varepsilon}.
\label{Eqn:ext-2}
\end{align}
Separating \eqref{Eqn:ext-2}
into low and high frequencies,
\begin{align}
\|(I-\mathbb{P}_{N})u_{1}
-S(T)(\mathbb{P}_{M
}-\mathbb{P}_{N})u_{0}\|
_{H^{\frac{1}{2}+\delta_{1}}} \leq N^{-\varepsilon},
\label{Eqn:ext-3}
\end{align}
and
\begin{align}
\|\mathbb{P}_{N}u_{1}
-\Phi^{N}(T)\mathbb{P}_{N}u_{0}\|
_{H^{\frac{1}{2}+\delta_{1}}} \leq N^{-\varepsilon}.
\label{Eqn:ext-4}
\end{align}
For future use, notice that \eqref{Eqn:ext-3} easily gives, for $t\in[0,T]$,
\begin{align}
\|S(t)(I-\mathbb{P}_{N})u_{1}
-S(t+T)(\mathbb{P}_{M
}-\mathbb{P}_{N})u_{0}\|
_{H^{\frac{1}{2}+\delta_{1}}} \leq N^{-\varepsilon}.
\label{Eqn:ext-5}
\end{align}
Next suppose that $u_{1}\in\tilde{\Omega}_{M,T}$ (i.e. suppose that $u_{1}$ is ``good data" up to frequency $M$).
Then we can apply \eqref{Eqn:ext-1} to find, for $t\in[0,T]$,
\begin{align*}
\|\big[\Phi^{M}(t)\mathbb{P}_{M}-\Phi^{N}(t)\mathbb{P}_{N}-S(t)(\mathbb{P}_{M
}-\mathbb{P}_{N})\big]u_{1}\|
_{H^{\frac{1}{2}+\delta_{1}}} \leq N^{-\varepsilon},
\end{align*}
Or, equivalently
\begin{align}
\|\Phi^{M}(T+t)\mathbb{P}_{M}
u_{0}
-\big[\Phi^{N}(t)\mathbb{P}_{N}+S(t)(\mathbb{P}_{M
}-\mathbb{P}_{N})\big]u_{1}\|
_{H^{\frac{1}{2}+\delta_{1}}} \leq N^{-\varepsilon},
\label{Eqn:ext-7}
\end{align}
Furthermore, since $u_{1}$ is ``good data", we have (by adapting the justification of property (iv) in the proof of Theorem \ref{Thm:LWP} to the truncated system \eqref{Eqn:gKdV-trunc}) that $\Phi^{N}(t)$ is Lipschitz - with the same Lipschitz constant for all $N>0$ - on a ball in $H^{\frac{1}{2}+\delta_{1}}$ centered at $u_{1}$, and \eqref{Eqn:ext-5} gives, for $t\in[0,T]$,
\begin{align}
\|\Phi^{N}(t)u_{1}
-\Phi^{N}(T+t)\mathbb{P}_{N}u_{0}\|
_{H^{\frac{1}{2}+\delta_{1}}} \lesssim N^{-\varepsilon}.
\label{Eqn:ext-8}
\end{align}
Combining \eqref{Eqn:ext-5}, \eqref{Eqn:ext-7}
and \eqref{Eqn:ext-8}, we have for $t\in[0,T]$
that
\begin{align}
\|\big[\Phi^{M}(T+t)\mathbb{P}_{M}-
&\Phi^{N}(T+t)\mathbb{P}_{N}
-S(T+t)(\mathbb{P}_{M
}-\mathbb{P}_{N})\big]u_{0}\|
_{H^{\frac{1}{2}+\delta_{1}}} \notag\\
&\leq
\|\Phi^{M}(T+t)\mathbb{P}_{M}
u_{0}-
\big[\Phi^{N}(t)\mathbb{P}_{N}
-S(t)(\mathbb{P}_{M
}-\mathbb{P}_{N})\big]u_{1}\|
_{H^{\frac{1}{2}+\delta_{1}}} \notag \\
&\ \ \ \ \ \ \ +
\|\Phi^{N}(t)\mathbb{P}_{N}u_{1}-
\Phi^{N}(T+t)\mathbb{P}_{N}u_{0}\|
_{H^{\frac{1}{2}+\delta_{1}}}
\notag \\
&\ \ \ \ \ \ \ +
\|S(t)(\mathbb{P}_{M}-
\mathbb{P}_{N})u_{1}
-S(T+t)(\mathbb{P}_{M
}-\mathbb{P}_{N})u_{0}\|
_{H^{\frac{1}{2}+\delta_{1}}}
\notag \\
&\lesssim N^{-\varepsilon}.
\label{Eqn:ext-9}
\end{align}

This process can be iterated to extend
\eqref{Eqn:ext-1} to the interval $[0,1]$.
More precisely, if
$$u_{0}
\in\Lambda_{M,T}:=
\widetilde{\Omega}_{M,T}\cap
(\Phi^{M}(T))^{-1}\widetilde{\Omega}_{M,T}
\cap\cdots\cap
(\Phi^{M}(T))^{-k}\widetilde{\Omega}_{M,T},$$
where $k\sim \frac{1}{T}$, we can repeat this procedure $k$ times to obtain
\begin{align}
\|\big[\Phi^{M}(t)\mathbb{P}_{M}
-\Phi^{N}(t)\mathbb{P}_{N}-S(t)(\mathbb{P}_{M
}-\mathbb{P}_{N})\big]u_{0}\|
_{H^{\frac{1}{2}+\delta_{1}}} \leq C^{\frac{1}{T}}N^{-\varepsilon},
\label{Eqn:ext-10}
\end{align}
for all $t\in[0,1]$.

By the invariance of the truncated Gibbs measure $\mu_{M}$ under the flow $\Phi^{M}(t)$ (Proposition \ref{Prop:trunc-inv}), we have
\begin{align}
\mu_{M}(\Lambda_{M,T}^{c})
&\leq
\sum_{j=1}^{k}
\mu_{M}\big(((\Phi^{M}(T))^{-k}
\widetilde{\Omega}_{M,T})^{c}\big)
=
\sum_{j=1}^{k}
\mu_{M}\big(\widetilde{\Omega}_{M,T}^{c}\big)
\notag \\
&\lesssim
\frac{1}{T}e^{-\frac{1}{T^{\beta}}}
\rightarrow 0,
\label{Eqn:need}
\end{align}
as $T\rightarrow 0+$.

Following the definition found in Remark \ref{Rem:time} (of the Appendix), we let
\begin{align}
M_T := \Omega_{T}(0)\cap \cdots \cap \Omega_{T}(kT)\subset \Omega,
\label{Eqn:times}
\end{align}
where $k\sim \frac{1}{T}$.  Then
\begin{align*}
P(M_{T}^{c})\leq \sum_{j=1}^{k}P((\Omega_{T}(jT))^{c})\leq \frac{1}{T}e^{-\frac{c}{T^{\beta}}}.
\end{align*}
If we take
$\tilde{\Omega}_{T}:=\big\{\sum_{n\neq 0}
\frac{g_{n}(\omega)}{|n|}e^{inx}:\omega \in M_{T}\big\}$, we have by Theorem \ref{Thm:LRS} that $\mu(\tilde{\Omega}_{T}^{c})<\frac{1}{\sqrt{T}}e^{-\frac{c}{T^{\beta}}}$.  Then for any $\sigma>0$, by \eqref{Eqn:need}, we can pick
$T_{0}=T_{0}(\sigma)$
sufficiently small such that
$\displaystyle\mu_{M}(\Lambda_
{M,T_{0}}^{c})< \frac{\sigma}{2}$ for each $M>0$, and $\displaystyle\mu(\tilde{\Omega}_{T_{0}}^{c})<\frac{\sigma}{2}$.


Then consider
\begin{align*}
\Lambda_{\sigma}:&=\tilde{\Omega}_{T_{0}}\cap\limsup_{N\rightarrow \infty}\Lambda_{N,T_{0}}  \\ &=\tilde{\Omega}_{T_{0}}\cap\bigcap_{N=1}^{\infty}\bigcup_{M=N}^{\infty}
\Lambda_{M,T_{0}}.
\end{align*}
We will show that $\mu(\Lambda_{\sigma}^{c})< \sigma$.  By Fatou's Lemma we have
\begin{align}
\mu(\limsup_{N\rightarrow \infty}\Lambda_{N,T_{0}})
\geq \limsup_{N\rightarrow \infty}\mu(\Lambda_{N,T_{0}}).
\label{Eqn:GWP-A6}
\end{align}
We also have
$\mu(\Lambda_{N,T_{0}})=\int_{\Lambda_{N,T_{0}}}
f(u)d\rho(u)$
and
$\mu_{N}(\Lambda_{N,T_{0}})=\int_{\Lambda_{N,T_{0}}}
f_{N}(u)d\rho(u)$.
Applying Proposition \ref{Prop:conv}, we have
$$ \lim_{N\rightarrow \infty}(\mu(\Lambda_{N,T_{0}})
-\mu_{N}(\Lambda_{N,T_{0}}))=0.$$
Then using Corollary \ref{Cor:conv}, we have
\begin{align}
 \limsup_{N\rightarrow \infty}
\mu\Big(\Lambda_{N,T_{0}}\Big)
&= \limsup_{N\rightarrow \infty}
\mu_{N}\Big(\Lambda_{N,T_{0}}\Big)
= \limsup_{N\rightarrow \infty}
\Big[\mu_{N}\big(H^{\frac{1}{2}-\delta}(\mathbb{T})\big)
-
\mu_{N}\Big(\Lambda_{N,T_{0}}^{c}\Big)\Big]
\notag \\
&\geq  \limsup_{N\rightarrow \infty}
\Big[\mu_{N}\big(H^{\frac{1}{2}-\delta}(\mathbb{T})\big) - \frac{\sigma}{2}\Big]
=
\mu(H^{\frac{1}{2}-\delta}(\mathbb{T})) - \frac{\sigma}{2}.
\label{Eqn:GWP-A5}
\end{align}
Combining \eqref{Eqn:GWP-A6} and \eqref{Eqn:GWP-A5} we have
$$\mu(\limsup_{N\rightarrow \infty}\Lambda_{N,T_{0}})
\geq \mu(H^{\frac{1}{2}-\delta}(\mathbb{T})) - \frac{\sigma}{2},$$ so that
$$ \mu((\limsup_{N\rightarrow \infty}\Lambda_{N,T_{0}})^{c})\leq \frac{\sigma}{2}.$$  It follows that $\mu(\Lambda_{\sigma}^{c})<\sigma$.

Next, for $u_{0}\in\Lambda_{\sigma}$, we will use the estimate \eqref{Eqn:ext-10} to extend the truncated solutions $\tilde{u}^{N}$ to a full solution $u$ (of \eqref{Eqn:gKdV-zeromean}) for all $t\in[0,1]$. Indeed, for $\omega\in \Lambda_{\sigma}$, given any $N_{2}\geq N_{1}>0$, we have
$\omega\in\Lambda_{M,T_{0}}$ for some $M>N_{2}$.  Then by the triangle inequality and \eqref{Eqn:ext-10}, we have
\begin{align}
\|\big[(\Phi^{N_{2}}(t)\mathbb{P}_{N_{2}}
-S(t)\mathbb{P}_{N_{2}})
-(\Phi^{N_{1}}(t)\mathbb{P}_{N_{1}}&
-S(t)\mathbb{P}_{N_{1}})\big]u_{0}\|
_{C([0,1];H^{\frac{1}{2}+\delta_{1}})} \notag \\
&\leq  C(\sigma)N_{1}^{-\varepsilon}.
\label{Eqn:ext-11}
\end{align}
That is, the sequence $(\Phi^{N}(t)\mathbb{P}_{N}
-S(t)\mathbb{P}_{N})u_{0}$ is Cauchy in $C([0,1];H^{\frac{1}{2}+\delta_{1}})$ and converges to some $\tilde{\mathcal{D}}\in C([0,1];H^{\frac{1}{2}+\delta_{1}})$.

Let $\Phi(t)u_{0} := S(t)u_{0} + \tilde{\mathcal{D}}$.  Then we have
\begin{align}
\|\big[\Phi(t)
-S(t)
-(\Phi^{N}(t)
-S(t))\mathbb{P}_{N}\big]u_{0}\|
_{C([0,1];H^{\frac{1}{2}+\delta_{1}})} \leq  C(\sigma)N^{-\varepsilon}.
\label{Eqn:ext-11b}
\end{align}
We claim that $\tilde{\mathcal{D}}=\mathcal{D}(\Phi(t)u_{0},
\Phi(t)u_{0},\Phi(t)u_{0},\Phi(t)u_{0})$, and $\Phi(t)u_{0}$ is a solution to \eqref{Eqn:gKdV-zeromean} with data $u_{0}$.  To reach this conclusion we repeat the analysis of section \ref{Sec:LWP} on the interval $[jT_{0},(j+1)T_{0}]$ for each $j=1,2,\ldots\frac{1}{T_{0}}$.  We obtain $u$, a solution to \eqref{Eqn:gKdV-zeromean} for $t\in[jT_{0},(j+1)T_{0}]$ with data $\Phi(jT_{0})u_{0}$, as the limit of a sequence of solutions to \eqref{Eqn:gKdV-zeromean} with truncated data $\Phi^{N_k}(jT_{0})\mathbb{P}_{N_{k}}u_{0}$.  Then we prove that solutions to the truncated equation \eqref{Eqn:gKdV-ZMN} converge to the same limit, and therefore $u=\Phi(t)u_{0}$ satisfies \eqref{Eqn:gKdV-zeromean}.

More precisely, given $u_{0}\in \Lambda_{\sigma}$, there exists
a subsequence $N_{k}\rightarrow\infty$, as $k\rightarrow \infty$, such that for each $k\geq 0$,
$\Phi^{N_{k}}(jT_{0})\mathbb{P}_{N_{k}}u_{0}
\in \tilde{\Omega}_{N_{k},T_{0}}$, for all $j=1,2,\ldots\frac{1}{T_{0}}$.  That is, for every $k\geq 0$, $j=1,2,\ldots\frac{1}{T_{0}}$,
$\Phi^{N_{k}}(jT_{0})\mathbb{P}_{N_{k}}u_{0}$ is ``good data'' (up to frequency $N_k$). Consider the sequence of solutions $u^{N_{k}}_{j}$ to  \eqref{Eqn:gKdV-zeromean} with truncated data
$\Phi_{N_{k}}(jT_{0})\mathbb{P}_{N_k}u_{0}$ evolving from time $t=jT_{0}$.
These solutions exists globally in time by the results of \cite{CKSTT1,S1}.  In particular, for each $j$, the solution $u^{N_{k}}_{j}(t)$ exists for $t\in[jT_{0},(j+1)T_{0}]$.
We first claim that for each $j=1,2,\ldots,\frac{1}{T_{0}}$, there exists $\varepsilon>0$ such that for every $k> l \geq K_{0}(\sigma)$ sufficiently large,
\begin{align}
\|u^{N_{k}}_{j}
\|_{\frac{1}{2}-\delta_{1},\frac{1}{2}-\delta_{1},[jT_{0},(j+1)T_{0}]}
&\leq   R,
\label{Eqn:GWP-sol-2a}\\
\|\mathcal{D}(u^{N_{k}}_{j})
\|_{\frac{1}{2}-\delta_{1},\frac{1}{2}-\delta_{1},[jT_{0},(j+1)T_{0}]}
&\leq   \tilde{R},
\label{Eqn:GWP-sol-2b}\\
\|u^{N_{k}}_{j}-u^{N_{l}}_{j}
\|_{\frac{1}{2}-\delta_{1},\frac{1}{2}-\delta_{1},[jT_{0},(j+1)T_{0}]}
&\leq  \tilde{C}_1(\sigma)N_{l}^{-\varepsilon},
\label{Eqn:GWP-sol-2}\\
\|\mathcal{D}(u^{N_{k}}_{j})-
\mathcal{D}(u^{N_{l}}_{j})\|_{\frac{1}{2}+\delta_{1},\frac{1}{2}-\delta_{1},[jT_{0},(j+1)T_{0}]}
&\leq   \tilde{C}_2(\sigma)N_{l}^{-\varepsilon}.
\label{Eqn:GWP-sol-3}
\end{align}
These inequalities are established in a manner very similar to the justification of \eqref{Eqn:LWP-28}-\eqref{Eqn:LWP-30} found in the proof of Theorem \ref{Thm:LWP}.  In particular, each solution $u_j^{N_k}$ is evolving from ``good data'', and \eqref{Eqn:GWP-sol-2a}-\eqref{Eqn:GWP-sol-2b} follow automatically for $u_0\in\Lambda_{\sigma}$ following the justification of \eqref{Eqn:LWP-28}-\eqref{Eqn:LWP-29} (using Proposition \ref{Prop:nonlin} and Proposition \ref{Prop:nonlin2}).  To prove \eqref{Eqn:GWP-sol-2}-\eqref{Eqn:GWP-sol-3} we have to work a bit harder, since although each solution $u^{N_k}_{j}$ is evolving from ``good data'' at time $t=jT_{0}$, for different values of $k$, these solutions are not necessarily evolving from frequency truncation of the \textit{same} data.

In particular, we cannot use Proposition \ref{Prop:nonlin} directly to control an expression of (for example) the type $\mathcal{N}(u^{N_k}_{j},u^{N_l}_{j},u^{N_l}_{j},u^{N_l}_{j})$ with $k>l$, as Proposition \ref{Prop:nonlin} invokes the ``type (I)-type (II)'' decomposition with respect to the \textit{same} randomized data in each factor.  However, we can avoid this issue by using a telescopic summation
of $\mathcal{D}(u^{N_k}_{j})-\mathcal{D}(u^{N_l}_{j})$, for $k>l$, where $u^{N_l}_{j}$ only appears in factors of the form $u^{N_k}_{j}-u^{N_l}_{j}$.  That is, any factor of the type $u_{j}^{N_l}$ can always be decomposed as $u_{j}^{N_l}=u_{j}^{N_k}-(u_{j}^{N_k}-u_{j}^{N_l})$ to produce two terms, one with the factor $u_{j}^{N_k}$, the other with $u_{j}^{N_k}-u_{j}^{N_l}$.  Furthermore, by beginning with the telescopic summation(s) used in the proof of Theorem \ref{Thm:LWP}, we can ensure that each term has at least one factor of the type $u_{j}^{N_k}-u_{j}^{N_l}$.

Then we can decompose
\begin{align}
u_{j}^{N_k}(t)-&u_{j}^{N_l}(t)=S(t-jT_{0})
(\Phi^{N_{k}}(jT_{0})\mathbb{P}_{N_{k}}u_{0}-\Phi^{N_{l}}(jT_{0})\mathbb{P}_{N_{l}}u_{0})
+ \mathcal{D}_{j}^{N_k}(t)-\mathcal{D}_{j}^{N_l}(t)\notag \\
&= S(t-jT_{0})(\mathbb{P}_{N_k}-\mathbb{P}_{N_l})\Phi^{N_{k}}(jT_{0})
 +S(t-jT_{0})(\mathbb{P}_{N_l}(\Phi^{N_k}(jT_{0})u_{0}
- \Phi(jT_{0})u_{0}))
\notag \\
&\ \ \ +S(t-jT_{0})(\mathbb{P}_{N_l}(\Phi^{N_l}(jT_{0})u_{0}
- \Phi(jT_{0})u_{0}))
+ \mathcal{D}_{j}^{N_k}(t)-\mathcal{D}_{j}^{N_l}(t),
\label{Eqn:GWP-A6}
\end{align}
where
$\mathcal{D}_{j}^{N_k}(t)=\int_{jT_{0}}^{t}S(t-t')
\mathcal{N}(u_{j}^{N_k}(t'))dt'$.  The first term in \eqref{Eqn:GWP-A6} can be treated as type (I), matching the contributions from other factors that are either produced from $u_{j}^{N_k}-u_{l}^{N_k}$, or $u_{j}^{N_k}$, so that Proposition \ref{Prop:nonlin} is applicable.  The last three terms in \eqref{Eqn:GWP-A6} can be treated as type (II) by using \eqref{Eqn:ext-11b}.  With these modifications, we can use Proposition \ref{Prop:nonlin} and Proposition \ref{Prop:nonlin2} to obtain that for
$u_{0}\in \Lambda_{\sigma}$, we have
\begin{align}
\|u_{j}^{N_k}-&u_{j}^{N_l}\|_{\frac{1}{2}-\delta_1,\frac{1}{2}-\delta_1,
[jT_{0},(j+1)T_{0}]}
\notag \\
&\leq T_{0}^{\delta_{1}-}\tilde{C}(\sigma)N_{l}^{-\varepsilon}
+ \|\mathcal{D}_{j}^{N_k}-\mathcal{D}_{j}^{N_l}\|
_{\frac{1}{2}+\delta_1,\frac{1}{2}-\delta_1,
[jT_{0},(j+1)T_{0}]},
\label{Eqn:GWP-A8}
\end{align}
and
\begin{align}
\|&\mathcal{D}_{j}^{N_k}-\mathcal{D}_{j}^{N_l}\|
_{\frac{1}{2}+\delta_1,\frac{1}{2}-\delta_1,
[jT_{0},(j+1)T_{0}]}
\notag \\
&\leq T_{0}^{\theta}\big(\tilde{C}(\sigma)N_{l}^{-\varepsilon}
+ \|u_{j}^{N_k}-u_{j}^{N_l}\|_{\frac{1}{2}-\delta_1,\frac{1}{2}-\delta_1,
[jT_{0},(j+1)T_{0}]}
 + \|\mathcal{D}_{j}^{N_k}-\mathcal{D}_{j}^{N_l}\|
_{\frac{1}{2}+\delta_1,\frac{1}{2}-\delta_1,
[jT_{0},(j+1)T_{0}]}\big) \notag \\
&\ \ \ \ \ \ \cdot Z_{6}(\tilde{C}(\sigma)N_{l}^{-\varepsilon},2R,2\tilde{R}),
\label{Eqn:GWP-A8b}
\end{align}
where $Z_{6}(x,y,z)$ is a polynomial of degree 6.

The constants $R,\tilde{R}>0$ are fixed (to our choosing) in the statement of Theorem \ref{Thm:LWP}.  We can impose on $T_{0}>0$, in a manner that depends only on $R$ and $\tilde{R}$, such that if \eqref{Eqn:GWP-A8} holds, and we further have
\begin{align}
\|&\mathcal{D}_{j}^{N_k}-\mathcal{D}_{j}^{N_l}\|
_{\frac{1}{2}+\delta_1,\frac{1}{2}-\delta_1,
[jT_{0},(j+1)T_{0}]}
\notag \\
&\leq T_{0}^{\theta}\big(\tilde{C}(\omega)N_{l}^{-\varepsilon}
+ \|u_{j}^{N_k}-u_{j}^{N_l}\|_{\frac{1}{2}-\delta_1,\frac{1}{2}-\delta_1,
[jT_{0},(j+1)T_{0}]}
 + \|\mathcal{D}_{j}^{N_k}-\mathcal{D}_{j}^{N_l}\|
_{\frac{1}{2}+\delta_1,\frac{1}{2}-\delta_1,
[jT_{0},(j+1)T_{0}]}\big) \notag \\
&\ \ \ \ \ \ \cdot Z_{6}(R,2R,2\tilde{R}),
\label{Eqn:GWP-A9}
\end{align}
then by rearrangement it follows that \eqref{Eqn:GWP-sol-2}-\eqref{Eqn:GWP-sol-3} hold true.  Then we choose $K_{0}=K_{0}(T_{0})$ sufficiently large such that
$\tilde{C}(\sigma)N_{K_0}^{-\varepsilon}\leq R$.  For $k>l\geq K_0$,  \eqref{Eqn:GWP-A8b} implies \eqref{Eqn:GWP-A9}, and \eqref{Eqn:GWP-sol-2}-\eqref{Eqn:GWP-sol-3} are satisfied.

It follows that $u^{N_k}_{j}$ and $\mathcal{D}^{N_k}_{j}$ converge in $X^{\frac{1}{2}-\delta_1,\frac{1}{2}-\delta_1}_{jT_{0},(j+1)T_{0}}$ and $X^{\frac{1}{2}+\delta_1,\frac{1}{2}-\delta_1}_{jT_{0},(j+1)T_{0}}$, respectively, as $k\rightarrow \infty$.  As in the proof of Theorem \ref{Thm:LWP}, we proceed to show that, for each $j=1,2,\ldots,\frac{1}{T_{0}}$,
\begin{enumerate}[(i)]
\item  $u_{j}^{N_k}\rightarrow u\in C([jT_{0},(j+1)T_{0}];H^{\frac{1}{2}-\delta_{1}})$, a solution to \eqref{Eqn:gKdV-zeromean} with data $\Phi(jT_{0})u_0$.
\item The estimate
\begin{align}
\big\|u(t)-S(t)u_0 - (\Phi^{N_k}(t)-S(t)\mathbb{P}_{N_k})u_0
\big\|_{C([0,jT_{0}];H^{\frac{1}{2}+\delta_2}(\mathbb{T}))}
\lesssim (C(\sigma))^{j}N_k^{-\varepsilon}
\end{align}
 is satisfied.
\end{enumerate}
From (i) and (ii) (and \eqref{Eqn:ext-11b}) it will follow that $u(t)=\Phi(t)u_0$ solves \eqref{Eqn:gKdV-zeromean} for $t\in[0,1]$.
We proceed to justify (i) and (ii) by induction on $j=1,\ldots,\frac{1}{T_{0}}$.  The base case ($j=0$) follows from the conclusions of Theorem 1, which hold since $u_0\in\Lambda_{\sigma}\subset \tilde{\Omega}_{T_{0}}$ by construction.  Now suppose (i) and (ii) hold up to some $j$, and we wish to extend these results to $t\in[jT_{0},(j+1)T_{0}]$.

By (i) and (ii) we have
\begin{align*}
\Phi(t)u_0 = S(t)u_0 + \int_{0}^{t}S(t-t')\mathcal{N}(\Phi(t')u_0)dt'
\end{align*}
for $t\in [0,j T_{0}]$.  We also have, for $t\in [0,j T_{0}]$, that
$\int_{0}^{t}S(t-t')\mathcal{N}(\Phi(t')u_0)dt'=\tilde{D}(t)$
is the convergent of $(\Phi^{N}(t)-S(t))\mathbb{P}_{N}u_{0}$ in $C([0,1];H^{\frac{1}{2}+\delta_{1}})$, and we therefore have
\begin{align}
\|\int_{0}^{t}S(t-t')\mathcal{N}(\Phi(t')u_0)dt'\|_{C([0,jT_{0}];H^{\frac{1}{2}+\delta_{1}})}
\leq \|\tilde{D}\|_{C([0,1];H^{\frac{1}{2}+\delta_{1}})}\leq C_4,
\label{Eqn:GWP-A10b}
\end{align}
for some constant $0<C_4<\infty $.
We also remark, that if (ii) holds, then we have
\begin{align}
\|(I-P_{N_k})S(t-jT_{0})\Phi(jT_{0})u_0\|_{X^{\frac{1}{2}-\delta_1,\frac{1}{2}-\delta_{1}}_{T}}
\rightarrow 0,
\label{Eqn:GWP-A10}
\end{align}
as $k\rightarrow \infty$.

Let $u_j$ denote the convergent of $u_j^{N_k}$ in  $X^{\frac{1}{2}-\delta_{1},\frac{1}{2}-\delta_{1}}_{[jT_{0},(j+1)T_{0}]}$.  We wish to show that
\begin{align}
u_{j}(t)=S(t-jT_{0})\Phi(jT_{0})u_0
+ \int_{jT_{0}}^{t}S(t-t')\mathcal{N}(u_j(t'))dt'.
\label{Eqn:GWP-A10c}
\end{align}
for $t\in[jT_{0},(j+1)T_{0}]$.

As in the justification of this property during the proof of Theorem \ref{Thm:LWP} (where we established \eqref{Eqn:LWP-12}), we have
$$ \int_{jT_{0}}^{t}S(t-t')\mathcal{N}_{0}(u^{N_k}_j(t'))dt'\rightarrow
\int_{jT_{0}}^{t}S(t-t')\mathcal{N}_{0}(u_j(t'))dt' \ \ \text{in }\ X^{\frac{1}{2}-\delta_1,\frac{1}{2}-\delta_1}_{[jT_{0},(j+1)T_{0}]},$$
and
$$ \int_{jT_{0}}^{t}S(t-t')\mathcal{N}_{r}(u^{N_k}_j(t'))dt'\rightarrow
v_{r,j}, \ \ \text{in }\ X^{\frac{1}{2}+\delta_1,\frac{1}{2}-\delta_1}_{[jT_{0},(j+1)T_{0}]},$$
for $r=-1,1,2,3,4$, where each $v_{r,j}\in X^{\frac{1}{2}+\delta_1,\frac{1}{2}-\delta_1}_{[jT_{0},(j+1)T_{0}]}$.
We can then express
\begin{align*}
u_j(t)-S(t-jT_{0})\Phi(jT_{0})u_0
&=  u_{j}(t)-u_{j}^{N_k}(t) + (I-P_{N_k})S(t-jT_{0})\Phi(jT_{0})u_0 \\
&\ \ \ \ - S(t-jT_{0})
(\mathbb{P}_{N_k}\Phi(jT_{0})-\Phi^{N_k}(jT_{0})\mathbb{P}_{N_k})u_0
\\&\ \ \ \ + \sum_{r=-1}^{4}\int_{jT_{0}}^{t}S(t-t')\mathcal{N}_{r}(u^{N_k}_j(t'))dt'
\\
&\rightarrow  \int_{jT_{0}}^{t}S(t-t')\mathcal{N}_{0}(u^{N_k}_j(t'))dt'
+ \sum_{r=-1,r\neq 0}^{4}v_{r,j},
\end{align*}
in $X^{\frac{1}{2}-\delta_1,\frac{1}{2}-\delta_1}_{[jT_{0},(j+1)T_{0}]}$, as $k\rightarrow \infty$.  It remains to show that $v_{r,j}=\mathcal{D}_{r}(u_j)$
for $r=-1,1,2,3,4$.  We follow the approach taken in the justification of \eqref{Eqn:LWP-12}.  This argument requires modification in several ways.  As above, we use a telescopic summation that only includes factors of the form $u_j-u_j^{N_k}$ or $u_j$.  If $u_j^{N_k}$ appears in a factor by itself, it can be expanded as $u_{j}^{N_k}=u_j-(u_j-u_j^{N_k})$.  For the type (I)-type (II) decomposition of factors of the type $u_j-u_j^{N_k}$, we consider
\begin{align}
u_j(t)-u_{j}^{N_k}(t)
&= (I-P_{N_k})S(t-jT_{0})\Phi(jT_{0})u_0 \notag \\
&\ \ \ \ + S(t-jT_{0})
(\mathbb{P}_{N_k}\Phi(jT_{0})-\Phi^{N_k}(jT_{0})\mathbb{P}_{N_k})u_0
\notag \\&\ \ \ \ +
\int_{jT_{0}}^{t}S(t-t')(\mathcal{N}_{0}(u_j(t'))-\mathcal{N}_{0}(u^{N_k}_j(t')))dt'
\notag \\
&\ \ \ \ + \sum_{r=-1,r\neq 0}^{4}(v_{r,j}-\int_{jT_{0}}^{t}S(t-t')\mathcal{N}_{r}(u^{N_k}_j(t'))dt')
\notag \\
&= (I-P_{N_k})\big(S(t)u_0+ S(t-jT_{0})\int_{0}^{jT_{0}}S(jT_{0}-t')\mathcal{N}(\Phi(t')u_0)dt'\big)
\notag \\ &\ \ \ \ + (I-P_{N_k})S(t-jT_{0})\Phi(jT_{0})u_0 \notag \\
&\ \ \ \ + S(t-jT_{0})
(\mathbb{P}_{N_k}\Phi(jT_{0})-\Phi^{N_k}(jT_{0})\mathbb{P}_{N_k})u_0
\notag \\&\ \ \ \ +
\int_{jT_{0}}^{t}S(t-t')(\mathcal{N}_{0}(u_j(t'))-\mathcal{N}_{0}(u^{N_k}_j(t')))dt'
\notag \\
&\ \ \ \ + \sum_{r=-1,r\neq 0}^{4}(v_{r,j}-\int_{jT_{0}}^{t}S(t-t')\mathcal{N}_{r}(u^{N_k}_j(t'))dt').
\label{Eqn:GWP-A11}
\end{align}
Similarly we decompose contributions from factors of the type $u_j$ as follows
\begin{align}
u_j(t)&=S(t)u_0 + S(t-jT_{0})\int_{0}^{jT_{0}}S(jT_{0}-t')\mathcal{N}(\Phi(t')u_0)dt'
\notag \\&\ \ \ \
+ \int_{jT_{0}}^{t}S(t-t')\mathcal{N}_{0}(u_j(t'))dt' + \sum_{r=-1,r\neq 0}^{4}v_{r,j}.
\label{Eqn:GWP-A12}
\end{align}
The first term on the right-hand side of both \eqref{Eqn:GWP-A11} and \eqref{Eqn:GWP-A12} can be treated as type (I) on the interval $[jT_{0},(j+1)T_{0}]$.  This is not entirely obvious, because the initial data is not propagating from ``good data'' at time $t=jT_{0}$, but rather from data at time $t=0$.  This is not a problem, however, because we defined $M_T$, $\Omega_{T}$ and $\Lambda_{\sigma}$ in a way that manages this complication (see \eqref{Eqn:times} above, and Remark \ref{Rem:time} for more discussion).  The second term on the right-hand side of both \eqref{Eqn:GWP-A11} and \eqref{Eqn:GWP-A12} can be treated as type (II), and estimated with the higher temporal regularity $b=\frac{1}{2}+\delta_1$, since by \eqref{Eqn:GWP-A10b}, we have
\begin{align*}
\|S(t-jT_{0})\int_{0}^{jT_{0}}&S(jT_{0}-t')\mathcal{N}(\Phi(t')u_0)dt'
\|_{X^{\frac{1}{2}+\delta_1,\frac{1}{2}+\delta_1}_{[jT_{0},(j+1)T_{0}]}}\\
&\leq
\|\int_{0}^{jT_{0}}S(jT_{0}-t')\mathcal{N}(\Phi(t')u_0)dt'
\|_{H^{\frac{1}{2}-\delta_1}}\leq B_{N_k}\rightarrow 0,
\end{align*}
as $k\rightarrow \infty$.  The remaining terms in \eqref{Eqn:GWP-A11} and \eqref{Eqn:GWP-A12} can (as in the proof of Theorem \ref{Thm:LWP}) be treated as type (II) and estimated in the higher regularity $b=\frac{1}{2}+\delta_1$, or for the contribution from $A_0$, through the second iteration.  With this strategy, we have by Proposition \ref{Prop:nonlin} and Proposition \ref{Prop:nonlin2}, that for $r=-1,1,2,3,4$,
\begin{align*}
\|\int_{jT_{0}}^{t}S(t-t')&(\mathcal{N}_{r}(u_j(t'))-\mathcal{N}_{r}(u^{N_k}_j(t')))dt'
\|_{\frac{1}{2}+\delta_{1},\frac{1}{2}+\delta_1,[jT_{0},(j+1)T_{0}]}
\\
&\lesssim T^{\theta}\Big(C_{4}(\sigma)N_{k}^{-\varepsilon} + B_{N_k}
 + \|u_j-u_j^{N_k}\|_{\frac{1}{2}-\delta_{1},\frac{1}{2}-\delta_1,[jT_{0},(j+1)T_{0}]}
\\
&\ \ \ \ \quad \quad + \|\int_{jT_{0}}^{t}S(t-t')(\mathcal{N}_{0}(u_j(t'))-\mathcal{N}_{0}(u^{N_k}_j(t')))dt'
\|_{\frac{1}{2}+\delta_{1},\frac{1}{2}-\delta_1,[jT_{0},(j+1)T_{0}]}
\\
&\ \ \ \ \quad \quad +
\sum_{r=-1,r\neq 0}^{4}
\|v_{r,j}-\int_{jT_{0}}^{t}S(t-t')\mathcal{N}_{r}(u^{N_k}_j(t'))dt'
\|_{\frac{1}{2}+\delta_{1},\frac{1}{2}+\delta_1,[jT_{0},(j+1)T_{0}]}
 \Big)\\
&\rightarrow 0,
\end{align*}
as $k\rightarrow \infty$, where the implied constant in the last expression depends on $R$ and $\tilde{R}$.  It follows that $v_{r,j}=\int_{jT_{0}}^{t}S(t-t')\mathcal{N}_{r}(u_j(t'))dt'$ for $r=-1,1,2,3,4$, and we have justified \eqref{Eqn:GWP-A10c}.  The conclusion of point (i) has been established for $t\in[jT_{0},(j+1)T_{0}]$.

We turn to the justification of point (ii) for $t\in[jT_{0},(j+1)T_{0}]$.  Again we closely follow the argument used in the proof of Theorem \ref{Thm:LWP}, specifically we follow the justification of \eqref{Eqn:conv-est3} in lines \eqref{Eqn:conv-bdd}-\eqref{Eqn:conv-est3}.  In fact, the only modifications required are very similar those described above in the discussion of point (i).  For $j=1,2,\ldots,\frac{1}{T_{0}}$, let $\tilde{u}_{j}^{N_k}(t)=\Phi^{N_k}(t)\mathbb{P}_{N_k}(u_0)$.  We first claim that for $u_0\in\Lambda_{\sigma}$, we have
\begin{align}
\|\tilde{u}^{N_{k}}_{j}
\|_{\frac{1}{2}-\delta_{1},\frac{1}{2}-\delta_{1},[jT_{0},(j+1)T_{0}]}
&\leq   R,\notag \\
\|\tilde{\mathcal{D}}_{j}^{N_k}
\|_{\frac{1}{2}+\delta_{1},\frac{1}{2}-\delta_{1},[jT_{0},(j+1)T_{0}]}
&\leq   \tilde{R}.
\label{Eqn:GWP-A12b}
\end{align}
where $\mathcal{D}_{j}(u)(t)=\int_{jT_{0}}^{t}S(t-t')\mathcal{N}(u(t'))dt'$, $\mathcal{D}_{j}:= \mathcal{D}_{j}(u)$, and $\tilde{\mathcal{D}}_{j}^{N_k}:=\mathbb{P}_{N_k}\mathcal{D}_{j}(\tilde{u}^{N_k}_{j})$.
In fact this is automatic from the same justification as \eqref{Eqn:GWP-sol-2a}-\eqref{Eqn:GWP-sol-2b}, since each $\tilde{u}^{N_k}$ is evolving from ``good data'' $\Phi^{N_k}(jT_{0})\mathbb{P}_{N_{k}}u_0\in \tilde{\Omega}_{N_k,T_{0}}$.  Next we claim that
\begin{align}
\|u-\tilde{u}^{N_{k}}_{j}
\|_{\frac{1}{2}-2\delta_{1}+\delta_2,\frac{1}{2}-\delta_{1},[jT_{0},(j+1)T_{0}]}
&\lesssim   C_6(\sigma) N_{k}^{-\varepsilon},\notag \\
\|\mathcal{D}_{j}-\tilde{\mathcal{D}}^{N_k}_{j}
\|_{\frac{1}{2}+\delta_{2},\frac{1}{2}-\delta_{1},[jT_{0},(j+1)T_{0}]}
&\lesssim   C_6(\sigma) N_{k}^{-\varepsilon},
\label{Eqn:GWP-A13}
\end{align}
where the implied constants in \eqref{Eqn:GWP-A13} depend on $R$ and $\tilde{R}$.
The justification of \eqref{Eqn:GWP-A13} follows that of \eqref{Eqn:conv-est1} closely, but once again we use two modifications: a telescopic summation of $\mathcal{D}_{j}-\tilde{\mathcal{D}}^{N_k}_{j}$ which only includes factors of the form
$u$ or $u - \tilde{u}^{N_{k}}_{j}$, and a decomposition of $u$ and $u - \tilde{u}^{N_{k}}_{j}$ as follows:
\begin{align}
u(t)-\tilde{u}_{j}^{N_k}(t)
&= (I-P_{N_k})\big(S(t)u_0+ S(t-jT_{0})\int_{0}^{jT_{0}}S(jT_{0}-t')\mathcal{N}(\Phi(t')u_0)dt'\big)
\notag \\
&\ \ \ \ + S(t-jT_{0})
(\mathbb{P}_{N_k}\Phi(jT_{0})-\Phi^{N_k}(jT_{0})\mathbb{P}_{N_k})u_0
\notag \\&\ \ \ \ +
\mathcal{D}_{j}(t)-\tilde{\mathcal{D}}^{N_k}_{j}(t),
\label{Eqn:GWP-A14}
\end{align}
and
\begin{align}
u(t)=S(t)u_0 + S(t-jT_{0})\int_{0}^{jT_{0}}S(jT_{0}-t')\mathcal{N}(\Phi(t')u_0)dt'
+ \mathcal{D}_{j}.
\label{Eqn:GWP-A15}
\end{align}
With \eqref{Eqn:GWP-A13} in place, it follows that
\begin{align}
\|u(t)-&S(t-jT_{0})\Phi(jT_{0})u_0 + \Phi^{N_k}(t)\mathbb{P}_{N_k}u_0 -S(t-jT_{0})\Phi^{N_k}(jT_{0})u_0 \|_{C([jT_{0},(j+1)T_{0}];H^{\frac{1}{2}+\delta_{2}}(\mathbb{T}))} \notag \\
&= \|\mathcal{D}_{j}-\tilde{\mathcal{D}}^{N_k}_{j}\|
_{C([jT_{0},(j+1)T_{0}];H^{\frac{1}{2}+\delta_{2}}(\mathbb{T}))}
 \notag \\
&\lesssim \|\mathcal{N}_{j}-\tilde{\mathcal{N}}^{N_k}_{j}\|_{Y^{\frac{1}{2}+\delta_{2},-1}_{T}}
\notag \\
&\leq
C_7(\sigma)\big(N_{k}^{-\varepsilon}+ \|u-\tilde{u}^{N_{k}}_{j}
\|_{\frac{1}{2}-2\delta_{1}+\delta_2,\frac{1}{2}-\delta_{1},[jT_{0},(j+1)T_{0}]}
\notag \\
&\ \ \ \ +
\|\mathcal{D}_{j}-\tilde{\mathcal{D}}_{j}\mathbb{P}_{N_{k}}\mathcal{D}_{j}(\tilde{u}^{N_{k}}_{j})
\|_{\frac{1}{2}+\delta_{2},\frac{1}{2}-\delta_{1},[jT_{0},(j+1)T_{0}]}
\big)
\leq C_8(\sigma)N_{k}^{-\varepsilon}.
\label{Eqn:GWP-A16}
\end{align}
By \eqref{Eqn:GWP-A16} and the triangle inequality,
\begin{align}
&\|u(t)-S(t)u_0 + (\Phi^{N_k}(t)-S(t))\mathbb{P}_{N_k}u_0
\|_{C([jT_{0},(j+1)T_{0}];H^{\frac{1}{2}+\delta_{2}}(\mathbb{T}))}
\notag \\
&\leq
\|u(t)-S(t-jT_{0})\Phi(jT_{0})u_0 + \Phi^{N_k}(t)\mathbb{P}_{N_k}u_0 -S(t-jT_{0})\Phi^{N_k}(jT_{0})u_0
\|_{C([jT_{0},(j+1)T_{0}];H^{\frac{1}{2}+\delta_{1}}(\mathbb{T}))}
\notag \\
&\ \ \ \ + \|\Phi(jT_{0})-S(jT_{0})u_0 + (\Phi^{N_k}(jT_{0})-S(jT_{0}))\mathbb{P}_{N_k}u_0
\|_{H^{\frac{1}{2}+\delta_{2}}(\mathbb{T})}\notag \\
&\leq C_{9}(\sigma)N_{k}^{-\varepsilon}.
\label{Eqn:GWP-A17}
\end{align}
Finally by our inductive hypothesis this gives
\begin{align}
\|u(t)-S(t)u_0 &+ (\Phi^{N_k}(t)-S(t))\mathbb{P}_{N_k}u_0
\|_{C([0,(j+1)T_{0}];H^{\frac{1}{2}+\delta_{2}}(\mathbb{T}))}
\notag \\
&\leq
\|u(t)-S(t)u_0 + (\Phi^{N_k}(t)-S(t))\mathbb{P}_{N_k}u_0
\|_{C([0,jT_{0}];H^{\frac{1}{2}+\delta_{2}}(\mathbb{T}))}
\notag \\
&\ \ \ \
+\|u(t)-S(t)u_0 + (\Phi^{N_k}(t)-S(t))\mathbb{P}_{N_k}u_0
\|_{C([jT_{0},(j+1)T_{0}];H^{\frac{1}{2}+\delta_{2}}(\mathbb{T}))}
\notag \\
&\leq C_{10}(\sigma)N_{k}^{-\varepsilon}.
\label{Eqn:GWP-A18}
\end{align}
With \eqref{Eqn:GWP-A18} we have justified point (ii) on the interval $[jT_{0},(j+1)T_{0}]$.  The proof by induction of points (i) and (ii) is complete.
The justification of uniqueness as described in Remark \ref{Rem:un1} is easily established following the proof of Theorem \ref{Thm:LWP} (and the modifications outlined above).
The proof of Proposition \ref{Prop:global} is complete.
\end{proof}

\subsection{Invariance of the Gibbs measure}

In this subsection we prove Theorem \ref{Thm:GWP}.  The main ingredients of this proof are: (i) weak convergence of the finite-dimensional Gibbs measures (Proposition \ref{Prop:conv}), (ii) invariance of the Gibbs measure under the flow of \eqref{Eqn:gKdV-trunc} (Proposition \ref{Prop:trunc-inv}), and (iii) the existence of global-in-time solutions to \eqref{Eqn:gKdV-zeromean} in the support of the Gibbs measure with a good approximation to the finite-dimensional dynamics (Proposition \ref{Prop:global}).

\begin{proof}[Proof of Theorem \ref{Thm:GWP}]
Given $n,j\in\mathbb{N}$, let $T_{j}=2^{j}$ and $\displaystyle\sigma_{n,j}=\frac{1}{n 2^{j}}$.  Also let $\Lambda_{\sigma_{n,j},T_j}$ be the subset of $H^{\frac{1}{2}-\delta_1}(\mathbb{T})$ produced by Proposition \ref{Prop:global} with $\sigma=\sigma_{n,j}$ and $T^*=T_j$.  Define $\Sigma_{n}:=\cap_{j=1}^{\infty}\Lambda_{\sigma_{n,j},T_j}$, so that $\mu(\Sigma_{n}^{c})<\frac{1}{n}$.  By taking $\Sigma:=\cup_{n=1}^{\infty}\Sigma_{n}$, it follows that $\mu(\Sigma^{c})=0$.  Moreover if $u_0\in\Sigma$, we have
$u_0 \in \cap_{j=1}^{\infty}\Lambda_{\sigma_{n,j},T_j}$, and \eqref{Eqn:gKdV-zeromean} is globally well-posed by the conclusions of Proposition \ref{Prop:global}.

Next we prove that the Gibbs measure $\mu$ is invariant under the flow.  For $u_0\in\Sigma$, let $\Phi(t)$ denote the data-to-solution map of \eqref{Eqn:gKdV-zeromean}.  One formulation of invariance is the following: for all $F\in L^{1}(H^{\frac{1}{2}-\delta_1}(\mathbb{T}),d\mu)$, we have
\begin{align}
\int_{\Sigma}F(\Phi(t)(u)) d\mu(u) = \int_{\Sigma}F(u) d\mu(u)
\label{Eqn:GWP-B1}
\end{align}
for all $t\geq 0$.  It suffices to establish \eqref{Eqn:GWP-B1}
on a dense set in $L^{1}(H^{\frac{1}{2}-\delta_1}(\mathbb{T}),d\mu)$, in particular we choose $\mathcal{H}\subset L^{1}(H^{\frac{1}{2}-\delta_1}(\mathbb{T}),d\mu)$ given by
\begin{align}
\mathcal{H}=\bigcup_{N\in\mathbb{N}}
\{F=F(\hat{u}_{-N},\ldots,\hat{u}_{N}) \ \text{bounded and continuous}\}.
\label{Eqn:GWP-B2}
\end{align}
Fix $F\in \mathcal{H}$, and $\kappa>0$.  For $N>0$ sufficiently large we have
$$ \int_{\Sigma}F(u) d\mu_N(u) = \int_{\Sigma} F(u) f_{N}(u)d\rho.$$
We also have $\int_{\Sigma}F(u) d\mu(u) = \int_{\Sigma}F(u)f(u)d\rho(u)$, and
by boundedness of $F$ combined with Proposition \ref{Prop:conv}, it follows that $\exists\, N_1>0$ such that for $N\geq N_1$, we have
\begin{align}
\big|\int_{\Sigma}&F(u) d\mu(u) - \int_{\Sigma}F(u) d\mu_N(u)\big| \notag \\
& + \big|\int_{\Sigma}F(\Phi(t)(u)) d\mu(u) - \int_{\Sigma}F(\Phi(t)(u)) d\mu_N(u)\big|
<\frac{\kappa}{2}.
\label{Eqn:GWP-B3}
\end{align}
Let $n>0$ be sufficiently large such that $\displaystyle \frac{1}{n}<\frac{\kappa}{32\|F\|_{L^{\infty}}}$.  Then we have
\begin{align}
\big|\int_{\Sigma\setminus\Sigma_{n}}&F(\Phi(t)(u)) d\mu_{N}(u)
- \int_{\Sigma\setminus\Sigma_{n}}F(\Phi_{N}(t)(u)) d\mu_N(u)\big|\notag \\
&\leq  2\|F\|_{L^{\infty}}\mu_N(\Sigma\setminus\Sigma_{n})
\leq  2\|F\|_{L^{\infty}}\mu_{N}(\Sigma_{n}^{c})
<\frac{\kappa}{4},
\label{Eqn:GWP-B4}
\end{align}
for $N$ sufficiently large, where we have used Corollary \ref{Cor:conv} in the last line.

By continuity of $F$, there exists $\gamma>0$ such that if $\|\Phi(t)u_0-\Phi^{N}(t)\mathbb{P}_{N}u_0\|_{H^{\frac{1}{2}-\delta_1}(\mathbb{T})}<\gamma$, then $\displaystyle |F(\Phi(t)u_0)-F(\Phi^{N}(t)\mathbb{P}_{N}u_0)|<\frac{\kappa}{8\mu(H^{\frac{1}{2}-\delta_1}(\mathbb{T}))}$.
For $u_0 \in \Sigma_n$, we project \eqref{Eqn:global-app} to $E_{N}$ to obtain, for all $N\gg 0$,
\begin{align}
\|\Phi(t)u_0-\Phi^{N}(t)\mathbb{P}_{N}u_0\|_{H^{\frac{1}{2}+\delta_2}(\mathbb{T})}
\leq C(n)N^{-\varepsilon}.
\label{Eqn:GWP-B5}
\end{align}
Taking $N\geq N_2$ sufficiently large, we have $\|\Phi(t)u_0-\Phi^{N}(t)\mathbb{P}_{N}u_0\|_{H^{\frac{1}{2}-\delta_1}(\mathbb{T})}<\gamma$, and $\displaystyle|F(\Phi(t)u_0)-F(\Phi^{N}(t)\mathbb{P}_{N}u_0)|\leq \frac{\kappa}{8\mu(H^{\frac{1}{2}-\delta_1}(\mathbb{T}))}$ is satisfied.  This gives
\begin{align}
\big|\int_{\Sigma_{n}}&F(\Phi(t)(u)) d\mu_{N}(u)
- \int_{\Sigma_{n}}F(\Phi_{N}(t)(u)) d\mu_N(u)\big| \leq \frac{\kappa}{4}.
\label{Eqn:GWP-B6}
\end{align}
We also have, by Proposition \ref{Prop:trunc-inv}, for all $t\geq 0$,
\begin{align}
\int_{\Sigma_{n}}F(\Phi_{N}(t)(u))d\mu_{N}(u) = \int_{\Sigma_{n}}F(u)d\mu_{N}(u).
\label{Eqn:GWP-B7}
\end{align}
By combining \eqref{Eqn:GWP-B3}, \eqref{Eqn:GWP-B4}, \eqref{Eqn:GWP-B6} and \eqref{Eqn:GWP-B7}, we conclude that for $N\gg 0$ sufficiently large, we have
\begin{align*}
\big|\int_{\Sigma}F(\Phi(t)(u)) d\mu(u)
- \int_{\Sigma}F(u) d\mu(u)\big| < \kappa.
\end{align*}
Since $\kappa$ was arbitrary, we conclude that $\int_{\Sigma}F(\Phi(t)(u)) d\mu(u)
= \int_{\Sigma}F(u) d\mu(u)$, and the Gibbs measure $\mu$ is invariant under the flow of \eqref{Eqn:gKdV-zeromean}.

We have now established global well-posedness of \eqref{Eqn:gKdV-zeromean} on a set $\Sigma\subset H^{\frac{1}{2}-\delta_{1}}(\mathbb{T})$ of full $\mu$-measure, and invariance of the Gibbs measure under the flow.  Let us now describe how this extends to global well-posedness almost surely, with randomized initial data given by \eqref{Eqn:initialdata}.  From the definition of $\mu$, it follows that
\begin{align*}
P(\{\omega\in\Omega:u_{0,\omega}\in \Sigma^{c},\|u_0\|_{L^2}\leq B\})=0.
\end{align*}
Recall that our initial choice of $B>0$ (in the definition of the Gibbs measure $\mu$) was arbitrary.  By the large deviation estimate (Lemma \ref{Lemma:largedev}), for all $n\in\mathbb{N}$, there exists $B_{n}>0$ sufficiently large such that
\begin{align}
P(\{\omega\in\Omega:\|u_0\|_{L^2}\geq B_n\})<\frac{1}{n}.
\label{Eqn:GWP-C1}
\end{align}
By the arguments above, for each fixed $n\in\mathbb{N}$, there exists a corresponding sequence of Gibbs measures $\mu_{B_n}$ and $\mu_{B_n}$-measurable sets $\Sigma_{B_n}\subset H^{\frac{1}{2}-\delta_1}(\mathbb{T})$ such that \eqref{Eqn:gKdV-zeromean} is globally well-posed for $u_0\in \Sigma_{B_n}$, and such that
\begin{align}
P(\{\omega\in\Omega:u_{0,\omega}\in \Sigma_{B_n}^{c},\|u_0\|_{L^2}\leq B_n\})=0,
\label{Eqn:GWP-C2}
\end{align}
for all $n\in\mathbb{N}$.  Let $\tilde{\Omega}:=\bigcup_{n=1}^{\infty}\{\omega\in\Omega:u_{0,\omega}\in\Sigma_{B_{n}}\}$, then $P(\tilde{\Omega})=1$, and for each $\omega\in\tilde{\Omega}$, we have $u_{0,\omega}\in\Sigma_{B_n}$ for some $n$; global well-posedness of \eqref{Eqn:gKdV-zeromean} with data $u_{0,\omega}$ follows.  The proof of Theorem \ref{Thm:GWP} is complete.

\end{proof}

\section{Proof of nonlinear estimates}
\label{Sec:NLproof-main}

In this Section we prove the crucial nonlinear estimates (Proposition \ref{Prop:nonlin} and Proposition \ref{Prop:nonlin2}).  We follow the definitions and notations of Section \ref{Sec:NLest}.  In Subsections \ref{Sec:NLproof}-\ref{Sec:probhept} we establish Proposition \ref{Prop:nonlin}.  In Section \ref{Sec:NLproof2} we present the proof of Proposition \ref{Prop:nonlin2}.  These proofs will rely on certain lemmata, and the proofs of these lemmata can either be found in Section 5.2.5 of \cite{R-Th}, or elsewhere in the literature, as indicated below.

\subsection{Setup}
\label{Sec:NLproof}

In this section we prove Proposition \ref{Prop:nonlin} using two sets of estimates: quadrilinear probabilistic estimates (see Proposition \ref{Prop:NL-neg1-local} below), and heptilinear probabilistic estimates (see Proposition \ref{Prop:NL-1-nlpart}).  The proof of Proposition \ref{Prop:NL-neg1-local} can be found in Section \ref{Sec:probquad}, and the proof of Proposition \ref{Prop:NL-1-nlpart} is in Section \ref{Sec:probhept}.

We begin by identifying the exact form of the multilinear expressions appearing in Proposition \ref{Prop:nonlin}.  Following the reformulation \eqref{Eqn:gKdV-zeromean3}, we consider the multilinear function
\begin{align}
\mathcal{N}(u_{1},u_{2},u_{3},u_{4}) &:= \mathbb{P}\big[(u_{1})_{x}\mathbb{P}(u_{2}u_{3}u_{4})\big] - \mathbb{P}(u_{2})\int_{\mathbb{T}}(u_{1})_{x}u_{3}u_{4}
- \mathbb{P}(u_{3})\int_{\mathbb{T}}(u_{1})_{x}u_{2}u_{4}
\notag \\
&\ \ \ \ \ \ \ - \mathbb{P}(u_{4})\int_{\mathbb{T}}(u_{1})_{x}u_{2}u_{3}
- \mathbb{P}(u_{3}u_{4})\int_{\mathbb{T}}(u_{1})_{x}u_{2}
- \mathbb{P}(u_{2}u_{4})\int_{\mathbb{T}}(u_{1})_{x}u_{3}
\notag  \\
&\ \ \ \ \ \ \ - \mathbb{P}(u_{2}u_{4})\int_{\mathbb{T}}(u_{1})_{x}u_{4}.
\label{Eqn:nonlin}
\end{align}
We also let $$\mathcal{D}(u_{1},u_{2},u_{3},u_{4}):=\int_{0}^{t}S(t-t')\mathcal{N}(u_{1},u_{2},u_{3},u_{4})(t')dt'.$$
We will use the notation $\mathcal{N}(u):=\mathcal{N}(u,u,u,u)$ and $\mathcal{D}(u):=\mathcal{D}(u,u,u,u)$.
With these definitions (due to the reformulation
\eqref{Eqn:gKdV-zeromean3} of \eqref{Eqn:gKdV-zeromean}) $u$ solves
\eqref{Eqn:gKdV-zeromean} for $t\in[0,T]$ with data $u_{0,\omega}$ given by \eqref{Eqn:initialdata} if and only if
\begin{align}
u = S(t)u_{0,\omega} + \mathcal{D}(u)
\label{Eqn:fixedpoint2}
\end{align}
for all $t\in[0,T]$.  For fixed $n\in \mathbb{Z}\setminus \{0\}$, $t\in\mathbb{R}$, consider the $n^{\text{th}}$ Fourier coefficient of $\mathcal{N}(u_{1},u_{2},u_{3},u_{4})(t)$ (we suppress the dependance on time below)
\begin{align}
\big(\mathcal{N}(u_{1},u_{2},u_{3}&,u_{4})
\big)^{\wedge}(n)
= \bigg(\mathbb{P}\big[(u_{1})_{x}\mathbb{P}(u_{2}u_{3}u_{4})\big]
 - \mathbb{P}(u_{2})\int_{\mathbb{T}}(u_{1})_{x}u_{3}u_{4} \notag \\
& - \mathbb{P}(u_{3})\int_{\mathbb{T}}(u_{1})_{x}u_{2}u_{4}
- \mathbb{P}(u_{4})\int_{\mathbb{T}}(u_{1})_{x}u_{2}u_{3}
- \mathbb{P}(u_{3}u_{4})\int_{\mathbb{T}}(u_{1})_{x}u_{2}
\notag  \\
&
- \mathbb{P}(u_{2}u_{4})\int_{\mathbb{T}}(u_{1})_{x}u_{3}
- \mathbb{P}(u_{2}u_{3})\int_{\mathbb{T}}(u_{1})_{x}u_{4}
\bigg)^{\wedge}(n)
\notag \\
=  \sum_{n=n_{1}+m_{1}} & (in_{1})\widehat{u_{1}}(n_{1})\big(\mathbb{P}(u_{2}u_{3}u_{4})\big)^{\wedge}(m_{1})
-(\mathbb{P}(u_{2}))^{\wedge}(n)\int_{\mathbb{T}}(u_{1})_{x}u_{3}u_{4} \notag
\\
\
-(\mathbb{P}(u_{3}))^{\wedge}&(n)\int_{\mathbb{T}}(u_{1})_{x}u_{2}u_{4}
-(\mathbb{P}(u_{4}))^{\wedge}(n)\int_{\mathbb{T}}(u_{1})_{x}u_{2}u_{3}
-(\mathbb{P}(u_{3}u_{4}))^{\wedge}(n)\int_{\mathbb{T}}(u_{1})_{x}u_{2} \notag \\
&\ \ \ \ \ \ \ \ \ \ -(\mathbb{P}(u_{2}u_{4}))^{\wedge}(n)\int_{\mathbb{T}}(u_{1})_{x}u_{3}
-(\mathbb{P}(u_{2}u_{3}))^{\wedge}(n)\int_{\mathbb{T}}(u_{1})_{x}u_{4}.
\label{Eqn:NLstate-1}
\end{align}
Using  $\int_{\mathbb{T}}w=\hat{w}(0)$, we find
\begin{align}
\eqref{Eqn:NLstate-1}
=
\Bigg(&\sum_{\substack{n=n_{1}+\cdots + n_{4}\\n\neq 0 }} - \sum_{k=1}^{4}\sum_{\substack{n=n_{1}+\cdots + n_{4}\\0\neq n=n_{k} }}
-\sum_{k=2}^{4}\sum_{\substack{n=n_{1}+\cdots + n_{4}\\ n\neq 0, n_{1}=-n_{k} }}\Bigg) n_{1}\widehat{u_{1}}(n_{1})
\widehat{u_{2}}(n_{2})\widehat{u_{3}}(n_{3})\widehat{u_{4}}(n_{4}) \notag
\\
&= \Bigg(\sum_{\zeta_{1}(n)} - \sum_{\zeta_{2}(n)}\Bigg)n_{1}\widehat{u_{1}}(n_{1})
\widehat{u_{2}}(n_{2})\widehat{u_{3}}(n_{3})\widehat{u_{4}}(n_{4}),
\label{Eqn:NLstate-2a}
\end{align}
where
\begin{align*}
\zeta_{1}(n)=\Big\{(n_{1},n_{2},n_{3},n_{4})\in \mathbb{Z}^{4}:
n=&n_{1}+ n_{2} +n_{3} +n_{4}, \ n\neq n_{k}\
\text{for each} \ k\in\{1,2,3,4\},
\\& \text{and}\ n_{1}\neq -n_{j}
\ \text{for each} \ j\in\{2,3,4\} \Big\},
\end{align*}
and
\begin{align*}
\zeta_{2}(n)=\Big\{(n_{1},n_{2},n_{3},n_{4})\in \mathbb{Z}^{4}:
n= & n_{1}+n_{2}+n_{3} +n_{4},
\text{with}\ n_{k}, n_{j}\in\{n,-n_{1}\},
\\
&\
\ \text{for some}\  k\neq j, \ k,j\in\{1,2,3,4\},
\\
&\
\ \text{where} \ n_{k}=n  \ \text{if} \ k=1 \
(\text{and}\ n_{j}=n \ \text{if}\ j=1)  \Big\}.
\end{align*}
\noindent We define $\zeta(n):=\zeta_{1}(n)\cup \zeta_{2}(n)$, and abuse notation by taking
$\displaystyle \sum_{\zeta(n)}:= \sum_{\zeta_{1}(n)}-\sum_{\zeta_{2}(n)}$.
With \eqref{Eqn:NLstate-1} and \eqref{Eqn:NLstate-2a} (reinserting the dependance on time) this gives
\begin{align}
\Big(\mathcal{N}(u_{1},u_{2},u_{3},u_{4})\Big)
^{\wedge}(n,t) = \sum_{\zeta(n)}
(in_{1})\widehat{u_{1}}(n_{1},t)\widehat{u_{2}}(n_{2},t)
\widehat{u_{3}}(n_{3},t)\widehat{u_{4}}(n_{4},t).
\label{Eqn:NLform}
\end{align}

We remark on one more restriction in frequency space.  All of the factors $u_j$ we will consider will be solutions to \eqref{Eqn:gKdV-zeromean} (equivalently \eqref{Eqn:gKdV-zeromean3}) or the truncation of these systems to finite dimensions.  In all cases, the input factors are functions with mean zero for all time.  Thus, we may assume that:
\begin{align} \text{Each} \ n_{k}\neq 0, \ \text{for}\ k=1,2,3,4.
\label{Eqn:meanzero}
\end{align}
To avoid cumbersome notation, we will not carry this restriction with us in notation, but we will rely on this property from time to time.

We will now present the proof of Proposition \ref{Prop:nonlin}.  That is, for $\delta>0$ sufficiently small, any $\delta_0\geq0$ such that $\delta>\delta_0$, and any $0<T\ll 1$, we prove there exists $\varepsilon, \beta, c,C>0$ with $\beta,\varepsilon\ll\delta,\delta_0$ and a measurable set  $\Omega_{T}\subset \Omega$ satisfying $P(\Omega_{T}^{c})<e^{-\frac{c}{T^{\beta}}}$ such that if $\omega \in\Omega_{T}$, then \eqref{Eqn:NL-neg1}-\eqref{Eqn:NL-1b} hold true.  To simplify presentation, we will prove the estimates \eqref{Eqn:NL-neg1}-\eqref{Eqn:NL-1b} with $\delta_0=0$, and provide a discussion (see Remark \ref{Rem:deltazero} below) for generalizing to $0<\delta_0<\delta$.  
The estimates \eqref{Eqn:NL-neg1}-\eqref{Eqn:NL-1b} will follow from standard linear estimates (Lemmas \ref{Lemma:lin1}-\ref{Lemma:gainpowerofT}) and the probabilistic nonlinear estimates given by the following propositions.
\begin{proposition}
For $\delta>0$ sufficiently small, and any $0<T\ll 1$, there exists $\varepsilon, \beta, C, c>0$ and a measurable set $\Omega_{T}\subset \Omega$ satisfying $P(\Omega_{T}^{c})<e^{-\frac{c}{T^{\beta}}}$ and the following conditions: if $\omega \in\Omega_{T}$, then for every quadruple of Fourier multipliers $\Lambda_{1},\ldots,\Lambda_{4}$ defined by
\begin{align*}
\widehat{\Lambda_{i}f}(n) = \chi_{M_{i}\leq |n|\leq K_{i}}\hat{f}(n),
\end{align*}
for some dyadic $M_{i},K_{i}$, we have the following estimate:
\begin{align}
\|\mathcal{N}_{-1}&(\chi_{[0,T]}u_{1},\chi_{[0,T]}u_2,\chi_{[0,T]}u_3,\chi_{[0,T]}u_{4})\|
_{\frac{1}{2}+\delta,-\frac{1}{2}+\delta}
\notag \\
&\leq
CT^{-\beta}\prod_{j=1}^{4}\big(M_{j}^{-\varepsilon} + \|u_{j}\|_{\frac{1}{2}-\delta,\frac{1}{2}-\delta,T} + \|u_{j} - S(t)\Lambda_{j}(u_{0,\omega})\|_{\frac{1}{2}+\delta,\frac{1}{2}-\delta,T}
\big).
\label{Eqn:NL-neg1-nlpart}
\end{align}
\label{Prop:NL-neg1-nlpart}
\end{proposition}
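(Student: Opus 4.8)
\textbf{Proof plan for Proposition \ref{Prop:NL-neg1-nlpart}.}

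The plan is to estimate the $X^{\frac12+\delta,-\frac12+\delta}$-norm of $\mathcal{N}_{-1}$ by duality: pair $\widehat{\mathcal{N}_{-1}}(n,\tau)$ against a generic function $\psi(n,\tau)$ with $\|\langle n\rangle^{-\frac12-\delta}\langle\sigma\rangle^{\frac12-\delta}\psi\|_{\ell^2_nL^2_\tau}\le 1$, reducing \eqref{Eqn:NL-neg1-nlpart} to a quadrilinear sum over the region $A_{-1}$ (where $|\sigma_{\mathrm{max}}|\ll|n_{\mathrm{max}}|^2$) with the extra factor $|n_1|$ from the derivative. Since $A_{-1}$ is precisely the region where deterministic estimates fail (this is the region carrying the $C^4$-counterexamples of \cite{CKSTT1}), the core of the argument must be probabilistic: in each of the four slots I would perform the ``type (I)–type (II)'' split
\[
u_j = S(t)\Lambda_j(u_{0,\omega}) + \big(u_j - S(t)\Lambda_j(u_{0,\omega})\big),
\]
and treat the type (II) contributions by placing them in $X^{\frac12+\delta,\frac12-\delta}_T$ and using the deterministic multilinear machinery (with a gain of $T^\beta$ via Lemma \ref{Lemma:gainpowerofT} and the cutoff identity \eqref{Eqn:cutoff}), while the type (I) contributions — the ``genuinely random'' pieces — must be controlled almost surely on a set $\Omega_T$ of exponentially large measure. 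Because the split produces $2^4$ terms, it suffices to bound each one; the resulting product structure on the right-hand side of \eqref{Eqn:NL-neg1-nlpart} is exactly what this split generates (with $M_j^{-\varepsilon}$ emerging from the type (I) term after a dyadic frequency decomposition, since on $\{M_j\le|n_j|\le K_j\}$ the random Gaussian coefficient $g_{n_j}/|n_j|$ contributes a decaying factor once summed).

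Concretely, I would first dyadically localize all frequencies $|n|\sim N_0$, $|n_j|\sim N_j$ and dispersion weights $|\sigma|\sim L$, $|\sigma_j|\sim L_j$, and organize the sum according to which frequencies are comparable to $N_{\mathrm{max}}$. The defining property of $A_{-1}$ forces $L, L_j\ll N_{\mathrm{max}}^2$, and combined with the identity $\sigma-\sigma_1-\sigma_2-\sigma_3-\sigma_4 = n^3-n_1^3-\cdots-n_4^3$, this pins the frequencies onto the degenerate resonant set where $|n^3-n_1^3-\cdots-n_4^3|\ll N_{\mathrm{max}}^2$ — a thin, lower-dimensional set. On this set I would invoke the hypercontractivity estimates (Lemmas \ref{Lemma:prob1}–\ref{Lemma:prob2}) to pass from an $L^2_\omega$ bound to an almost-sure bound with exponential tails, and the matrix-norm estimate (Lemma \ref{Lemma:matrixnorm}) to efficiently sum the contributions of the Gaussian coefficients over the resonant configurations; the key point is that the $|n_1|$ derivative loss is beaten by the smallness of the resonant set together with the $\frac12-\delta$ temporal smoothing coming from integrating in $\tau_1,\dots,\tau_4$ with $L_j\ll N_{\mathrm{max}}^2$. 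Summing over the dyadic parameters (which requires $\varepsilon,\beta\ll\delta$ so that all geometric series converge) and taking the union of the exceptional sets yields $\Omega_T$ with $P(\Omega_T^c)<e^{-c/T^\beta}$, where the $T^{-\beta}$ on the right of \eqref{Eqn:NL-neg1-nlpart} is the price paid for the large-deviation/union argument at time scale $T$.

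The main obstacle is controlling the type (I)$\times$type (I)$\times$type (I)$\times$type (I) term — all four slots random — inside $A_{-1}$: here the derivative $|n_1|$ is genuinely dangerous because no input can absorb it via a Sobolev norm, and one must extract the needed smoothing entirely from the interplay of the dispersion relation and the randomness. This is where the combinatorics of the resonant set $\{n^3-n_1^3-\cdots-n_4^3=O(N_{\mathrm{max}}^2)\}$ and the matrix-norm lemma do the heavy lifting: one bounds the relevant operator norm by a power of $N_{\mathrm{max}}$ strictly smaller than $1$, which, paired with the $L_j$-summation, defeats the derivative and leaves a net negative power $N_j^{-\varepsilon}$. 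A secondary technical nuisance is bookkeeping the frequency truncations $\Lambda_j$ uniformly (the estimate must hold for \emph{all} choices of dyadic $M_j, K_j$, since in the applications these will be $\mathbb{P}_N$, $\mathbb{P}_N-\mathbb{P}_M$, or $\mathrm{Id}-\mathbb{P}_N$); this is handled by noting that the arguments above never use anything about $\Lambda_j$ beyond that it is a projection onto a frequency annulus, so all constants are independent of $M_j,K_j$.
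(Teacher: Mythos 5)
Your high-level toolkit matches the paper's — type (I)/(II) decomposition in each slot, dyadic localization, hypercontractivity via Lemmas \ref{Lemma:prob1}–\ref{Lemma:prob3}, the matrix-norm Lemma \ref{Lemma:matrixnorm}, and a union bound at time scale $T$. The reduction to a $\Lambda_j$-independent dyadic version is also correct in spirit: the paper codifies it as Proposition \ref{Prop:NL-neg1-local}, a statement with tail probability $(NN_1\cdots N_4)^{-\kappa}e^{-\tilde c/T^\beta}$ on each dyadic block, which then sums cleanly over dyadic parameters and gives the uniformity in $M_j,K_j$ essentially for free via $\min(f_{0,j},f_{1,j})\leq $ RHS of \eqref{Eqn:NL-neg1-nlpart}.

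There is, however, a genuine gap in the logic. You assert that because $A_{-1}$ carries the $C^4$-counterexamples, ``the core of the argument must be probabilistic: in each of the four slots I would perform the type (I)–type (II) split.'' This is not how the region $A_{-1}$ actually behaves, and a proof along these lines runs into trouble. Two algebraic observations are load-bearing and neither appears in your plan. First, on $A_{-1}$ the multi-index is automatically in $\zeta_1(n)$ (no degenerate factors from $\zeta_2(n)$). Second — and this is the crucial organizing fact — one must split on whether the two largest frequencies in $\{-n,n_1,\dots,n_4\}$ sum to zero ($n^0=-n^1$) or not. When $n^0=-n^1$, membership in $\zeta_1(n)$ forces $n_2=-n_3$ (up to permutation), so $n=n_1+n_4$ and you recover the KdV-type cubic factorization $\max_j|\sigma_j|\gtrsim|nn_1n_4|$; this subregion of $A_{-1}$ is estimated \emph{deterministically} (Case 1 of the paper's proof of Proposition \ref{Prop:NL-neg1-local}), with no recourse to randomness. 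More importantly, the probabilistic machinery would actively fail there: $n_2=-n_3$ means $g_{n_2}=\overline{g_{n_3}}$, the Gaussian expectations in the $L^2(\Omega)$ computations degenerate through these pairings, and the cardinality bounds of the counting lemmas (Lemmas \ref{Lemma:1bi}, \ref{Lemma:2ci}, \ref{Lemma:prob-bound-3}) no longer hold. The no-two-frequencies-sum-to-zero property \eqref{Eqn:NL-new-2}, which underpins all the resonant-set counting, is only valid once $n^0\neq -n^1$ has been imposed.

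Beyond this, your identification of the all-type-(I) term as ``the main obstacle'' is misplaced. The paper never runs a fully quadrilinear random computation: in every subcase at least one slot is kept in $X^{\frac12-\delta,\frac12-\delta}_T$ via condition (i) and resolved by the Klainerman--Selberg representation \eqref{Eqn:NL-neg1-24}, with the matrix-norm bound applied to a \emph{trilinear} (at most) random form. The actual difficulty is not ``extracting smoothing entirely from dispersion + randomness'' but proving the cardinality bounds on the resonant sets $\{n^3-n_1^3-\cdots-n_4^3=\mu\}$ intersected with dyadic shells, under the constraint $N^2N^3N^4\gtrsim N^0N^1|n^0+n^1|$ that survives after one rules out Case 2.a (where $|\sigma_{\max}|\gtrsim N^0N^1|n^0+n^1|\gtrsim|n_{\max}|^2$ contradicts $A_{-1}$). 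These counting lemmas are the real content, and without the preliminary $n^0=\pm n^1$ split and the ensuing arithmetic constraints, you have no route to them.
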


\begin{proposition}
For $\delta>0$ sufficiently small, and any $0<T\ll 1$, there exists $\varepsilon, \beta, C, c>0$ and a measurable set $\Omega_{T}\subset \Omega$ satisfying $P(\Omega_{T}^{c})<e^{-\frac{c}{T^{\beta}}}$ and the following conditions: if $\omega \in\Omega_{T}$, then for every Fourier multipliers $\Lambda_{5}$ defined by
\begin{align*}
\widehat{\Lambda_{5}f}(n) = \chi_{M_{5}\leq |n|\leq K_{5}}\hat{f}(n),
\end{align*}
for some dyadic $M_{5},K_{5}$, we have the following estimates:
\begin{align}
\|\mathcal{N}_{1}(\mathcal{D}&(\chi_{[0,T]}u_{5},\chi_{[0,T]}u_{6},
\chi_{[0,T]}u_{7},\chi_{[0,T]}u_{8}),
\chi_{[0,T]}u_{2},\chi_{[0,T]}u_{3},\chi_{[0,T]}u_{4})
\|_{\frac{1}{2}+\delta,-\frac{1}{2}+\delta,T}
  \notag \\ &\leq CT^{-\beta}
\big(N_{5}^{-\varepsilon} + \|u_{5}\|_{\frac{1}{2}-\delta-\delta_5,\frac{1}{2}-\delta,T} + \|u_{5} - S(t)\Lambda_{5}(u_{0,\omega})\|_{\frac{1}{2}+\delta-\delta_5,\frac{1}{2}-\delta,T}
\big) \notag \\ & \ \ \ \ \ \ \ \ \ \ \ \cdot\prod_{j=2,j\neq 5}^{8}
\|u_{j}\|_{\frac{1}{2}-\delta-\delta_j,\frac{1}{2}-\delta,T},
\label{Eqn:NL-1-nlpart}
\end{align}
\begin{align}
\|\mathcal{N}_{1}(\mathcal{D}_{0}&(\chi_{[0,T]}u_{5},\chi_{[0,T]}u_{6},
\chi_{[0,T]}u_{7},\chi_{[0,T]}u_{8}),
\chi_{[0,T]}u_{2},\chi_{[0,T]}u_{3},\chi_{[0,T]}u_{4})\|
_{\frac{1}{2}+\delta,-\frac{1}{2}+\delta,T}
  \notag \\ &\leq CT^{-\beta}
\big(N_{5}^{-\varepsilon} + \|u_{5}\|_{\frac{1}{2}-\delta-\delta_5,\frac{1}{2}-\delta,T} + \|u_{5} - S(t)\Lambda_{5}(u_{0,\omega})\|_{\frac{1}{2}+\delta-\delta_5,\frac{1}{2}-\delta,T}
\big) \notag \\ & \ \ \ \ \ \ \ \ \ \ \ \cdot\prod_{j=2,j\neq 5}^{8}
\|u_{j}\|_{\frac{1}{2}-\delta-\delta_j,\frac{1}{2}-\delta,T}.
\label{Eqn:NL-1b-nlpart}
\end{align}
\label{Prop:NL-1-nlpart}
\end{proposition}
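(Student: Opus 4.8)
The plan is to prove Proposition \ref{Prop:NL-1-nlpart} by the heptilinear-symbol analysis in region $A_1$: dualize, expand everything in frequency space, and exploit the large modulation weight $|\sigma_1|\gtrsim|n_{\max}|^2$ defining $A_1$ together with the $\langle\sigma_1\rangle^{-1}$ smoothing carried by the inner Duhamel operator $\mathcal D$ (resp.\ $\mathcal D_0$). As noted in Remark \ref{Rem:deltazero}, it suffices to treat the case $\delta_0=0$; the case $0<\delta_0<\delta$ follows by shifting one unit of regularity from the output onto the distinguished input, the argument being otherwise unchanged. Concretely, writing $K_1(n,\tau)$ for the full heptilinear symbol of $\mathcal N_1(\mathcal D(\chi_{[0,T]}u_5,\dots,\chi_{[0,T]}u_8),\chi_{[0,T]}u_2,\chi_{[0,T]}u_3,\chi_{[0,T]}u_4)$ --- which contains the factor $\langle\sigma_1\rangle^{-1}$ coming from the inner $\mathcal D$, up to the boundary piece concentrated at $\sigma_1=O(1)$, handled separately via the $Y$-norm and the embedding \eqref{Eqn:linear-Y} --- I would estimate $\|K_1^\vee\|_{X^{1/2+\delta,-1/2+\delta}}$ by pairing against a test function $v$ with $\|v\|_{X^{-1/2-\delta,1/2-\delta}}\le1$ and writing the pairing as an integral over $n,n_1,\dots,n_4,n_5,\dots,n_8$ (subject to $n=n_1+\dots+n_4$ with $(n_1,\dots,n_4)\in\zeta(n)$ and $n_1=n_5+\dots+n_8$) and the modulation variables. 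In $A_1$ the derivative $in_1$ in $\mathcal N_1$ against $\langle\sigma_1\rangle^{-1}$ produces the genuine gain $|n_1|\langle\sigma_1\rangle^{-1}\lesssim|n_{\max}|^{-1}$.

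The six slots $u_2,u_3,u_4,u_6,u_7,u_8$ would then be estimated deterministically --- placed in $X^{1/2-\delta-\delta_j,1/2-\delta}$ and handled with the Strichartz and Sobolev embeddings \eqref{Eqn:X-onehalfminus}, \eqref{Eqn:X-Str-1}--\eqref{Eqn:X-Str-interp} --- while the distinguished slot $u_5$ is split as $u_5=S(t)\Lambda_5 u_{0,\omega}+(u_5-S(t)\Lambda_5 u_{0,\omega})$; the type (II) remainder goes into $X^{1/2+\delta-\delta_5,1/2-\delta}$ deterministically, and the type (I) Gaussian contribution (after a dyadic restriction producing the $N_5^{-\varepsilon}$ terms) is handled with the probabilistic lemmata (Lemmas \ref{Lemma:prob1}--\ref{Lemma:prob2}, via hypercontractivity of the Ornstein--Uhlenbeck semigroup) and the matrix-norm estimate (Lemma \ref{Lemma:matrixnorm}). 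The set $\Omega_T$ is taken to be the event on which all the relevant Gaussian quantities arising in the type (I) analysis lie below thresholds of size $\sim T^{-\beta}$, and a large-deviation estimate then gives $P(\Omega_T^c)<e^{-c/T^\beta}$; the loss $T^{-\beta}$ on the right side of \eqref{Eqn:NL-1-nlpart}--\eqref{Eqn:NL-1b-nlpart} absorbs these thresholds together with the time-cutoff losses (via \eqref{Eqn:cutoff} and Lemma \ref{Lemma:gainpowerofT}). To organize the frequency sum I would split $A_1$ using the identity $\sigma_1=\sigma_5+\sigma_6+\sigma_7+\sigma_8-(n_1^3-n_5^3-n_6^3-n_7^3-n_8^3)$: on the ``resolvable'' part either the inner resonance function $|n_1^3-n_5^3-n_6^3-n_7^3-n_8^3|$ is comparable to $|\sigma_1|$, or one of $|\sigma_5|,\dots,|\sigma_8|$ is, and in both cases the $\langle\sigma_1\rangle^{-1}$ gain and the inner modulation weights suffice to close after the probabilistic input on $u_5$.

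The hard part will be the subregion $A_{1,c}$, where the inner frequencies realize the configuration responsible for the $C^4$-failure of \cite{CKSTT1} --- $|n_1|\sim|n_5|\sim N\gg0$ with $|n_1^3-n_5^3-n_6^3-n_7^3-n_8^3|\ll N^2$ --- so that the only elementary bound, $|n_1|\langle\sigma_1\rangle^{-1}\lesssim(N^0)^{-(1-\gamma)}$ (cf.\ \eqref{Eqn:LWP-17-2}), is not dyadically summable. The resolution, flagged in Remark \ref{Rem:cancel}, is to impose the cancellation: on $A_{1,c}$ the symbol $K_1(n,\tau)$ is replaced by its cancelled form \eqref{Eqn:NLdef2}, which supplies an extra power of the smallest frequency $N^0$ in the denominator --- the improved bounds \eqref{Eqn:Ac1}--\eqref{Eqn:Ac2} --- restoring dyadic summability; here the estimate on the $u_5$ factor is exactly where hypercontractivity and the matrix-norm lemma must be used most sharply, the deterministic gains being otherwise exhausted. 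Since this cancellation is an algebraic identity only when the remaining six slots carry the same function, the heptilinear form in the statement is by convention \emph{defined} with the cancellation imposed on $A_{1,c}$, consistently with the way it is used (and the limit $u$ is treated) in the proof of Theorem \ref{Thm:LWP}. Finally, the estimate \eqref{Eqn:NL-1b-nlpart}, with $\mathcal D_0$ in place of $\mathcal D$, is the special case in which $|\sigma_1|\gtrsim|n_{\max}^{\mathrm{inner}}|^2$ already holds, so the same analysis applies and is in fact simpler, since the inner resonance-function case split becomes unnecessary.
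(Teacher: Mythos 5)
Your outline gets the broad skeleton right (exploit the $\langle\sigma_1\rangle^{-1}$ factor from the inner Duhamel operator on $A_1$, split only the distinguished slot $u_5$ into type~(I)/type~(II), and invoke the cancellation on the subregion $A_{1,c}$ where $n=n_5$), and you correctly identify that the heptilinear form in the statement must be \emph{defined} with the cancellation imposed on $A_{1,c}$. But there is a concrete error in the probabilistic input you propose, one the paper flags explicitly at the opening of Section~\ref{Sec:probhept}: the proof of Proposition~\ref{Prop:NL-1-nlpart} does \emph{not} use hypercontractivity (Lemma~\ref{Lemma:prob2}), the Tzvetkov moment estimate (Lemma~\ref{Lemma:prob3}), or the matrix-norm bound (Lemma~\ref{Lemma:matrixnorm}). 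Those are needed only for the quadrilinear estimate of Proposition~\ref{Prop:NL-neg1-local}, where up to four factors can simultaneously be of type~(I), producing a product of several Gaussians whose $\ell^2$-in-$n$ norm must be controlled via Wiener-chaos tail bounds and operator-norm arguments. In Proposition~\ref{Prop:NL-1-nlpart}, by contrast, \emph{only} $u_5$ is ever of type~(I); there is a single Gaussian $g_{n_5}(\omega)$ in any term, and the pointwise large-deviation bound $|g_n(\omega)|\lesssim T^{-\beta/2}\langle n\rangle^\varepsilon$ of Lemma~\ref{Lemma:prob1} is all that is required. Your remark that ``hypercontractivity and the matrix-norm lemma must be used most sharply'' in the $A_{1,c}$ subregion is therefore the opposite of what happens: the probabilistic content there is the \emph{simplest} in the paper, and the difficulty is entirely in the deterministic symbol cancellation and the combinatorics of the frequency sum (the nondegenerate quadratic-equation count limiting the solutions of $\mu=n^3-\sum_j n_j^3$).

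The case decomposition you propose is also not the one the paper uses, and it is not clear yours would close. You suggest splitting on whether the inner resonance function $n_1^3-n_5^3-\cdots-n_8^3$ is comparable to $|\sigma_1|$ or to one of $|\sigma_5|,\ldots,|\sigma_8|$. The paper instead first isolates the truly bad regime by the conditions $|\sigma|,|\sigma_k|,|n_k|\ll|n|^{\sqrt{2\delta}}$ for all $k\in\{2,3,4,6,7,8\}$ (Case~2 in the proof); outside that regime one always has an extra $|n|^{\sqrt{2\delta}}$-sized quantity in the denominator and everything closes with H\"older, Strichartz, and \eqref{Eqn:X-onehalfminus}--\eqref{Eqn:X-Str-interp}, no randomness needed. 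Inside that regime $|n_5|\sim|n|$ is forced (since $n=n_2+\cdots+n_8$ with all other summands small), and the split is then on $u_5$ type~(II) (deterministic, Case~2a), $u_5$ type~(I) with $n\neq n_5$ (Case~2b.i, where the crucial counting argument is that fixing $n_2,n_3,n_4,n_6,n_7,n_8,\mu$ determines $n_5$ via the convolution constraint, and then $n$ solves a nondegenerate quadratic), and $u_5$ type~(I) with $n=n_5$ (Case~2b.ii, exactly $A_{1,c}$, where the cancelled form \eqref{Eqn:NLdef2} together with \eqref{Eqn:Ac1}--\eqref{Eqn:Ac2} gives the extra power of $N^0$). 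Your proposed split by inner resonance function does not naturally isolate $n=n_5$, which is the precise locus where the ``naive'' bound $|n_1|\langle\sigma_1\rangle^{-1}\lesssim(N^0)^{-(1-\gamma)}$ fails to sum and the cancellation must be invoked; without that identification the argument does not close.

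=== END REVIEW ===
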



\begin{remark}
Notice that in Proposition \ref{Prop:NL-neg1-nlpart} and  \ref{Prop:NL-1-nlpart} we have taken $\delta_0=0$ (compared with Proposition \ref{Prop:nonlin}).  It is not hard to prove
that, for the set $\Omega_T$ produced by these theorems, if $\omega\in\Omega_{T}$, then the inequalities \eqref{Eqn:NL-neg1-nlpart}
and \eqref{Eqn:NL-1-nlpart}-\eqref{Eqn:NL-1b-nlpart} hold for any fixed $0<\delta_0<\delta$.  This is because, as can be observed a posteriori, the proofs of Proposition \ref{Prop:NL-neg1-nlpart} (or rather, of Proposition \ref{Prop:NL-neg1-local} found below) and Proposition \ref{Prop:NL-1-nlpart} will be flexible with respect to this particular manipulation.

If we wish to prove the statement analogous to \eqref{Eqn:NL-neg1-nlpart} with $0<\delta_0<\delta$ (see the statement of Proposition \ref{Prop:nonlin}), then on the left-hand side of the inequality, we will have lowered the spatial Sobolev regularity to $s=\frac{1}{2}+\delta-\delta_0$ from $s=\frac{1}{2}+\delta$.  This amounts to having the factor $|n|^{\frac{1}{2}+\delta-\delta_0}$ in the nonlinear estimates below, instead of $|n|^{\frac{1}{2}+\delta}$ (e.g. in the lines \eqref{Eqn:NL-neg1-26}).

In every case of each nonlinear estimate we establish below (excluding Case 1.b. in the proof of Proposition \ref{Prop:NL-neg1-local}, which we discuss in the next paragraph), we control the factor $|n|^{\frac{1}{2}+\delta}$ using the estimate $|n|\leq N^0$.  That is, we control this factor using terms in the denominator that are known to be of the size $N^0$ (see, for example, \eqref{Eqn:NL-neg1-38}).  This means that we can replace $|n|^{\frac{1}{2}+\delta}$ with $|n|^{\frac{1}{2}+\delta-\delta_0}|n_k|^{\delta_0}$, for any $k=1,2,3,4$ (or $k=2,3,4,5,6,7,8$ for the heptilinear estimate \eqref{Eqn:NL-1-nlpart}).  This allows us to lower the spatial Sobolev regularity of any one of the factors on the right-hand side by the same amount $\delta_0>0$.  That is, we can establish the nonlinear estimate with $0<\delta_0<\delta$ as stated in Proposition \ref{Prop:nonlin}.

We should comment that, in Case 1.b. during the proof of Proposition \ref{Prop:NL-neg1-local}, we did not use the estimate $|n|\leq N^0$.  However, it is easily verified that we can still lower the spatial regularity of any one of the factors on the right-hand side by a small amount $\delta_0>0$ (the estimates in this case have some room to spare).
\label{Rem:deltazero}
\end{remark}

\begin{remark}
There is another flexibility implicit to the nonlinear estimates of Proposition \ref{Prop:NL-neg1-nlpart} and Proposition \ref{Prop:NL-1-nlpart} (and thus Proposition \ref{Prop:nonlin}), which was, in fact, already used in the globalizing estimates of Proposition \ref{Prop:global} (in Section \ref{Sec:GWP}).  Specifically, the time interval $[0,T]$ can be replaced with an interval $I$ of length $T$.  Furthermore, we do not need the randomized data $S(t)u_{0,\omega}$ to evolve from time $t=0$.  In particular, we can prove Proposition \ref{Prop:nonlin} replacing $S(t)u_{0,\omega}$ with $S(t+t_0)u_{0,\omega}$, for any $t_0\in\mathbb{R}$, as the linear evolution of gKdV preserves the Gaussian probability densities of the (independent) randomized Fourier coefficients in \eqref{Eqn:initialdata}.  However, by varying $t_0$ the probabilistic set $\Omega_{T}=\Omega_{T}(t_0)$ varies as well.  We can use this flexibility (varying $t_0\in\mathbb{R}$), but the measurable set of good data produced by Proposition \ref{Prop:nonlin} changes.

We will stick to the following notation: $\Omega_{T}(t_0)$ is the set satisfying the conclusions of Proposition \ref{Prop:nonlin} on the time interval $[t_0,t_0+T]$ (instead of $[0,T]$) with initial data $u_{0,\omega}$ evolving from time $t=0$.
\label{Rem:time}
\end{remark}

\noindent Before we prove Proposition \ref{Prop:NL-neg1-nlpart} and Proposition \ref{Prop:NL-1-nlpart}, let us use them to establish Proposition \ref{Prop:nonlin}.

\begin{proof}[Proof of Proposition \ref{Prop:nonlin}:]
Apply Proposition \ref{Prop:NL-1-nlpart}, and suppose $\omega\in\Omega_{T}$ so that the estimate \eqref{Eqn:NL-neg1-nlpart} holds true.  Note that by the equivalence
$$\chi_{[0,T]}\mathcal{D}(u_{1},\ldots,u_{4})
=\chi_{[0,T]}\mathcal{D}(\chi_{[0,T]}u_{1},\ldots,
\chi_{[0,T]}u_{4}),$$ we have
$$ \|\mathcal{D}_{-1}(u_{1},u_{2},u_{3},u_{4})\|_
{\frac{1}{2}+\delta,\frac{1}{2}+\delta,T}\|\leq
\|\mathcal{D}_{-1}(\chi_{[0,T]}u_{1},\chi_{[0,T]}u_{2},\chi_{[0,T]}u_{3},
\chi_{[0,T]}u_{4})\|_{\frac{1}{2}+\delta,\frac{1}{2}+\delta,T}.$$
Applying Lemma \ref{Lemma:gainpowerofT}, Lemma \ref{Lemma:lin2}, and \eqref{Eqn:NL-neg1-nlpart}, we find
\begin{align*}
\|&\mathcal{D}_{-1}(\chi_{[0,T]}u_{1},\ldots,
\chi_{[0,T]}u_{4})\|_
{\frac{1}{2}+\delta,\frac{1}{2}+\delta,T}
\\
&\lesssim \|\mathcal{D}_{-1}(\chi_{[0,T]}u_{1},\ldots,
\chi_{[0,T]}u_{4})\|_
{\frac{1}{2}+\delta,\frac{1}{2}+\delta,T}
\\
&\lesssim \|\mathcal{N}_{-1}(\chi_{[0,T]}u_{1},\ldots,
\chi_{[0,T]}u_{4})\|_
{\frac{1}{2}+\delta,-\frac{1}{2}+\delta}
\\
&\lesssim
T^{-\beta}\prod_{j=1}^{4}\big(M_{j}^{-\varepsilon} + \|u_{j}\|_{\frac{1}{2}-\delta,\frac{1}{2}-\delta,T} + \|u_{j} - S(t)\Lambda_{j}(u_{0,\omega})\|_{\frac{1}{2}+\delta,\frac{1}{2}-\delta,T}
\big).
\end{align*}
The proof of \eqref{Eqn:NL-neg1} is complete.  The justification of \eqref{Eqn:NL-1}-\eqref{Eqn:NL-1b} follows from \eqref{Eqn:NL-1-nlpart}-\eqref{Eqn:NL-1b-nlpart} using the same type of argument.  This completes the proof of Proposition \ref{Prop:nonlin}.
\end{proof}

For the proof of Proposition \ref{Prop:NL-neg1-nlpart}, we will use a dyadically localized estimate (this will not be necessary for the proof of Proposition \ref{Prop:NL-1-nlpart}).  That is, we will establish probabilistic quadrilinear estimates which are independent of the Fourier multipliers $\Lambda_{1},\ldots,\Lambda_{4}$ appearing in the statement of Proposition \ref{Prop:NL-neg1-nlpart}.  In the following, subscripts with capital letters denote dyadic localization; i.e. $u_{N_j}=(\chi_{|n_j|\sim N_{j}}\widehat{u_{j}})^{\vee}$ for $N_{j}$ dyadic.  Let
\begin{align*}
f_{0,j}&:= \|u_{N_j}\|
_{\frac{1}{2}+\delta,\frac{1}{2}-\delta,T}, \\
f_{1,j}&:= N_{j}^{-\varepsilon}
+ \|u_{N_j}\|
_{\frac{1}{2}-\delta,\frac{1}{2}-\delta,T}
+ \|u_{N_j}-(S(t)u_{0,\omega})_{N_j}\|
_{\frac{1}{2}+\delta,\frac{1}{2}-\delta,T}.
\end{align*}
Here is the dyadically localized probabilistic quadrilinear estimate.
\begin{proposition}
For $\delta>0$ sufficiently small, and any $0<T\ll 1$, there exists $\alpha, \beta, \kappa, C, c>0$ with $\beta,\alpha,\kappa \ll \delta$ such that for every quintuple of dyadic frequencies $N,N_{1},\ldots,N_{4}$,
$\exists\, \Omega_{N,N_{1},\ldots,N_{4},T}\subset\Omega$
with $P(\Omega_{N,N_{1},\ldots,N_{4},T}^{c})<
\frac{1}{(NN_{1}\cdots N_{4})^{\kappa}}e^{-\frac{\tilde{c}}{T^{\beta}}}$
such that for all $\omega\in\widetilde{\Omega}_{T}\cap
\Omega_{N,N_{1},\ldots,N_{4},T}$ we have
\begin{align}
\|\mathcal{N}_{-1}|_{|n|\sim N}(u_{N_1}&,u_{N_2},u_{N_3},u_{N_4})\|
_{\frac{1}{2}+\delta,-\frac{1}{2}+\delta}
\notag \\
&\leq
\frac{CT^{-\beta}}{(NN_{1}\cdots N_{4})^{\alpha}}
\prod_{j=1}^{4}\min(f_{0,j},f_{1,j}),
\label{Eqn:NL-neg1-local}
\end{align}
where $\widetilde{\Omega}_{T}$ is the set obtained from Lemma \ref{Lemma:prob1}.
\label{Prop:NL-neg1-local}
\end{proposition}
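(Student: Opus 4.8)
The plan is to run a standard dyadic Littlewood–Paley decomposition and duality argument, but with the key twist that in the region $A_{-1}$ (where $|\sigma_{\max}|\ll|n_{\max}|^2$) the deterministic estimates fail, so the gain will come from the probabilistic input of Lemma \ref{Lemma:prob1} together with the matrix-norm Lemma \ref{Lemma:matrixnorm}. First I would fix the dyadic frequencies $N,N_1,\dots,N_4$ and pass to the dual formulation: by duality, $\|\mathcal{N}_{-1}|_{|n|\sim N}(u_{N_1},\dots,u_{N_4})\|_{\frac12+\delta,-\frac12+\delta}$ equals the supremum over unit-norm $v\in X^{-\frac12-\delta,\frac12-\delta}$ of a multilinear expression $\Sigma$ obtained from \eqref{Eqn:NLform} by summing against $\widehat v$ over $\zeta(n)\cap A_{-1}$, with the weights $\langle n\rangle^{\frac12+\delta}\langle\sigma\rangle^{-\frac12+\delta}$ and $\langle n_j\rangle^{\mp(\frac12\pm\delta)}\langle\sigma_j\rangle^{-(\frac12-\delta)}$ distributed appropriately. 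The restriction to $A_{-1}$ forces $|\sigma|,|\sigma_1|,\dots,|\sigma_4|\ll |n_{\max}|^2$, which by the algebraic identity $\sigma-\sigma_1-\cdots-\sigma_4 = -(n^3-n_1^3-\cdots-n_4^3)$ restricts the spatial frequencies to a set where $|n^3-n_1^3-\cdots-n_4^3|\ll N^2$; this is precisely the resonant set responsible for the $C^4$-failure, and one cannot close here by Cauchy–Schwarz in the $\tau$ variables alone.

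The second step is to insert the decomposition $u_{N_j}=(S(t)u_{0,\omega})_{N_j} + (u_{N_j}-(S(t)u_{0,\omega})_{N_j})$ into each of the four slots (the "type (I) / type (II)" split). For the type (II) pieces one places them in $X^{\frac12+\delta,\frac12-\delta}_T$ and uses a deterministic multilinear bound (the extra spatial regularity $\delta$ compensates); this contributes the $\|u_{N_j}-(S(t)u_{0,\omega})_{N_j}\|_{\frac12+\delta,\frac12-\delta,T}$ terms to $f_{1,j}$. For the type (I) pieces $(S(t)u_{0,\omega})_{N_j}$, whose Fourier coefficients are $\frac{g_{n_j}(\omega)}{|n_j|}\delta(\tau_j-n_j^3)$, one reduces (after carrying out the $\tau_j$-integrations against the $\delta$-measures) to bounding, with high probability, a Gaussian multilinear form in the $g_{n_j}$. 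Here the crucial mechanism is: fix (say) two of the four frequencies as the "inner" summation variables and view the remaining sum as the action of a random matrix; apply Lemma \ref{Lemma:matrixnorm} to control its operator norm, and then invoke the hypercontractivity estimate of Lemma \ref{Lemma:prob1} (the Ornstein–Uhlenbeck / Wiener chaos bound) to upgrade an $L^2_\omega$ bound to an exponentially-likely pointwise-in-$\omega$ bound, producing the set $\Omega_{N,N_1,\dots,N_4,T}$ with the claimed measure estimate $P(\Omega_{N,N_1,\dots,N_4,T}^c)<(NN_1\cdots N_4)^{-\kappa}e^{-\tilde c/T^{\beta}}$. The dyadic decay factor $(NN_1\cdots N_4)^{-\alpha}$ is harvested from the spare powers of frequency left over after the resonance identity is used to bound $|n_1|$ and the $\sigma$-weights — this is exactly the point flagged in Remark \ref{Rem:deltazero} where one controls $|n|^{\frac12+\delta}$ by $|n|\le N^0$ and retains a surplus. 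The small power $T^{-\beta}$ arises from the $\chi_{[0,T]}$ cutoffs via \eqref{Eqn:cutoff} and Lemma \ref{Lemma:gainpowerofT}; the $\min(f_{0,j},f_{1,j})$ in the conclusion simply records that one may instead place any single slot purely in $X^{\frac12+\delta,\frac12-\delta}_T$ via a deterministic bound, which is occasionally needed.

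The main obstacle — and where essentially all the work lies — is the probabilistic multilinear estimate in the fully resonant sub-region of $A_{-1}$ where $|n|\sim|n_1|\sim N$ with all $|\sigma_k|$ small: here there is no dispersive smoothing to exploit, and one must extract the entire $\alpha$-gain from the randomness. The difficulty is twofold: (a) identifying the correct way to group the frequency variables so that the leftover sum genuinely has the structure of a bounded random matrix (several groupings must be considered depending on which $n_j$ are comparable), and (b) verifying that the counting estimates for the number of lattice points $(n_1,\dots,n_4)$ with $n=n_1+\cdots+n_4$ and $|n^3-n_1^3-\cdots-n_4^3|\ll N^2$ are strong enough, after combining with the matrix-norm bound, to beat the loss of $N^{1/2}$ coming from the derivative in the nonlinearity and the $N^{\frac12+\delta}$ from raising to $H^{\frac12+\delta}$. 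I expect the bookkeeping to split into a handful of cases according to the relative sizes of $N,N_1,N_2,N_3,N_4$ and which dispersive weight is largest within $A_{-1}$; in each case the scheme above applies with the roles of the frequencies permuted, and summing the resulting dyadic bounds (using the $\alpha$ surplus) over all $N,N_1,\dots,N_4$ and taking the union of the exceptional sets yields Proposition \ref{Prop:NL-neg1-nlpart}, hence Proposition \ref{Prop:nonlin}.
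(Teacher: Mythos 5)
Your overall scheme—dyadic localization, a type (I)/type (II) split of each $u_{N_j}$, reduction of the type-(I) Gaussian sum to a random matrix controlled by Lemma \ref{Lemma:matrixnorm}, and an $L^2_\omega$-to-exponential-tail upgrade to build the exceptional set—is the right skeleton and matches the paper. However, several of the concrete steps you describe are either misattributed or missing in a way that would prevent the argument from closing. First, the factor $T^{-\beta}$ does not come from the $\chi_{[0,T]}$ cutoffs via \eqref{Eqn:cutoff} and Lemma \ref{Lemma:gainpowerofT}; those are used one level up, in the passage from the dyadically-local estimate to Proposition \ref{Prop:NL-neg1-nlpart}. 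In the local estimate itself the $T^{-\beta}$ comes from the pointwise Gaussian bound $|g_n(\omega)|\lesssim T^{-\beta/2}\langle n\rangle^\varepsilon$ on $\widetilde\Omega_T$ (Lemma \ref{Lemma:prob1}) and from the tail parameter used to define $\Omega_{N,N_1,\dots,N_4,T}$. Second, you call Lemma \ref{Lemma:prob1} ``the hypercontractivity estimate,'' but that lemma is just a large-deviation bound on $\sup_n\langle n\rangle^{-\varepsilon}|g_n|$. The Ornstein–Uhlenbeck hypercontractivity is Lemma \ref{Lemma:prob2}, and the passage from $L^p$ moment bounds to exponential tails is Lemma \ref{Lemma:prob3}; both are needed for the off-diagonal ($I_2$) part of the matrix-norm estimate, while the diagonal ($I_1$) part is handled with Lemma \ref{Lemma:prob1} alone. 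Your sketch merges these into a single mechanism, which obscures that the exceptional set $\Omega_{N,\dots,N_4,T}$ is built precisely from the $I_2$ tail events.

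The more serious structural omission is how the type-(II) factors, which live in $X^{s,b}_T$ with $b<\frac12$, are inserted into the matrix-norm reduction. A duality pairing by itself does not obviously produce a bona fide random matrix once smooth factors are present; what the paper actually does is use the Klainerman–Selberg representation (\eqref{Eqn:NL-neg1-24}--\eqref{Eqn:NL-neg1-25}) to write each type-(II) factor as an integral over $\lambda_j$ of terms of the form $a_{\lambda_j}(n_j)\delta(\tau_j-n_j^3-\lambda_j)$, pull the $\lambda_j$ integrals out with Minkowski (using that $|\lambda_j|\lesssim (N^0)^2$ on $A_{-1}$, which also costs a harmless $(N^0)^{\delta}$), and only then take the $\lambda_j$-supremum so that the matrix entries $\sigma^{n_4,\mu}_{n,n_3}$ depend deterministically on the $a_{\lambda_j}$'s. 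This step is essential and not replaceable by the duality framing as written. You also do not exploit the structural observation \eqref{Eqn:NL-neg1-33} that on $A_{-1}$ the frequencies must lie in $\zeta_1(n)$ (no degenerate combinations), which is what makes the quadratic counting \eqref{Eqn:quad} non-degenerate, nor the decomposition into Case 1 ($n^0=-n^1$), Case 2.a (vacuous since dispersion forces $|\sigma_{\max}|\gtrsim (N^0)^2$), Case 2.b ($N^3\sim N^0$), and Case 2.c ($N^3\ll N^0$ with $N^2N^3N^4\gtrsim N^0N^1|n^0+n^1|$); the counting lemmata (Lemmas \ref{Lemma:1bi}, \ref{Lemma:2ci}, \ref{Lemma:prob-bound-3}, \ref{Lemma:prob-bound-5}) are case-specific and the $\alpha$-surplus is harvested differently in each. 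Without that case structure the lattice-point counting you flag in your item (b) does not reduce to a single clean estimate.
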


\noindent We proceed to prove Proposition \ref{Prop:NL-neg1-nlpart} using Proposition \ref{Prop:NL-neg1-local}.  Then we present the proof of Proposition \ref{Prop:NL-neg1-local}, followed by the proof of Proposition \ref{Prop:NL-1-nlpart}.

\begin{proof}[Proof of Proposition \ref{Prop:NL-neg1-nlpart}:]
Fix any dyadic $N_{j}$ for $j\in\{1,2,3,4\}$.  Observe that
\begin{align}
\min(f_{0,j},f_{1,j}) \leq
M_{j}^{-\varepsilon}+ \|u_{j}\|
_{\frac{1}{2}-\delta,\frac{1}{2}-\delta,T}
+ \|\widehat{u_{j}}-S(t)\Lambda_{j}(u_{0,\omega})\|
_{\frac{1}{2}+\delta,\frac{1}{2}-\delta,T}.
\label{Eqn:NL-neg1-3}
\end{align}
Indeed, suppose
$N_{j}\in [M_{j},K_{j}]=\text{supp}(\Lambda_{j})$, then we have
\begin{align}
f_{1,j}&=N_{j}^{-\varepsilon}
+ \|u_{N_j}\|_{\frac{1}{2}-\delta,\frac{1}{2}-\delta,T}
+ \|u_{N_j}-(S(t)u_{0,\omega})_{N_j}\|
_{\frac{1}{2}+\delta,\frac{1}{2}-\delta,T} \notag \\
&\leq M_{j}^{-\varepsilon}+ \|u_{j}\|
_{\frac{1}{2}-\delta,\frac{1}{2}-\delta,T}
+ \|u_{j}-S(t)\Lambda_{j}(u_{0,\omega})\|
_{\frac{1}{2}+\delta,\frac{1}{2}-\delta,T}.
\label{Eqn:NL-neg1-1}
\end{align}
On the other hand, if $N_{j}\not\in[M_{j},K_{j}]$, we have
\begin{align}
f_{0,j}&=\|u_{N_j}\|
_{\frac{1}{2}+\delta,\frac{1}{2}-\delta,T}
 \notag \\
&\leq \|u_{N_j}-(S(t)\Lambda_{j}(u_{0,\omega}))_{N_j}\|
_{\frac{1}{2}+\delta,\frac{1}{2}-\delta,T}
 \notag \\
&\leq
M_{j}^{-\varepsilon}+ \|u_{j}\|
_{\frac{1}{2}-\delta,\frac{1}{2}-\delta,T}
+ \|u_{j}-S(t)\Lambda_{j}(u_{0,\omega})\|
_{\frac{1}{2}+\delta,\frac{1}{2}-\delta,T}.
\label{Eqn:NL-neg1-2}
\end{align}
Combining \eqref{Eqn:NL-neg1-1} and \eqref{Eqn:NL-neg1-2} we conclude that for each $j=1,2,3,4$ and every dyadic $N_{j}$, the inequality \eqref{Eqn:NL-neg1-3} holds true.

We proceed to build a set $\Omega_{T}\subset\Omega$ (satisfying the necessary conditions) where the estimate \eqref{Eqn:NL-neg1-nlpart} is satisfied.
Consider a dyadic decomposition of the nonlinearity,
\begin{align}
\|\mathcal{N}_{-1}(u_{1},u_{2},&u_{3},u_{4})\|_
{\frac{1}{2}+\delta,-\frac{1}{2}+\delta}
&\leq
\sum_{N,N_{1},\ldots,N_{4}}
\|\mathcal{N}_{-1}|_{|n|\sim N}(u_{N_1},u_{N_2},&u_{N_3},u_{N_4})\|_
{\frac{1}{2}+\delta,-\frac{1}{2}+\delta}.
\label{Eqn:NL-neg1-5}
\end{align}
Now let
$\displaystyle\Omega_{T}:=
\widetilde{\Omega}_{T}\cap_{\text{dyadic }N,N_{1},\ldots,N_{4}}
\Omega_{N,N_{1},\ldots,N_{4},T}$.  Then $$P(\Omega_{T}^{c})\leq \sum_{N,N_{1},\ldots,N_{4}}
P(\Omega_{N,N_{1},\ldots,N_{4},T}^{c})
<\sum_{N,N_{1},\ldots,N_{4}}\frac{1}{(NN_{1}\cdots N_{4})^{\kappa}}e^{-\frac{\tilde{c}}{T^{\beta}}}
\leq e^{-\frac{c}{T^{\beta}}},$$
where $c=c(\tilde{c},\kappa)>0$.
Furthermore, if $\omega \in \Omega_{T}$, then for every combination of dyadic scales $N,N_{1},\ldots,N_{4}$, the conclusion \eqref{Eqn:NL-neg1-local} holds true.  With \eqref{Eqn:NL-neg1-3}, this gives
\begin{align}
\eqref{Eqn:NL-neg1-5}
&\lesssim
\sum_{N,N_{1},\ldots,N_{4}}\frac{T^{-\beta}}
{(NN_{1}\cdots N_{4})^{\alpha}}
\prod_{j=1}^{4}\min(f_{0,j}(N_{j}),f_{1,j}(N_{j}))
\notag \\
&\leq
\sum_{N,N_{1},\ldots,N_{4}}\frac{T^{-\beta}}
{(NN_{1}\cdots N_{4})^{\alpha}}
\notag \\
&\ \ \ \ \ \ \ \ \ \ \ \ \ \ \ \prod_{j=1}^{4}
\Big(M_{j}^{-\varepsilon} + \|u_{j}\|
_{\frac{1}{2}-\delta,\frac{1}{2}-\delta,T}
+ \|u_{j}-S(t)\Lambda_{j}(u_{0,\omega})\|
_{\frac{1}{2}+\delta,\frac{1}{2}-\delta,T}
\Big)
\notag \\
&\lesssim
T^{-\beta}\prod_{j=1}^{4}
\Big(M_{j}^{-\varepsilon}+ \|u_{j}\|
_{\frac{1}{2}-\delta,\frac{1}{2}-\delta,T}
+ \|u_{j}-S(t)\Lambda_{j}(u_{0,\omega})\|
_{\frac{1}{2}+\delta,\frac{1}{2}-\delta,T}
\Big).
\label{Eqn:NL-neg1-6b}
\end{align}

\end{proof}

\noindent Next we present with the proof of Proposition \ref{Prop:NL-neg1-local}, followed by the proof of Proposition \ref{Prop:NL-1-nlpart}.

\subsection{Probabilistic quadrilinear estimates}
\label{Sec:probquad}

We begin by presenting some probabilistic lemmata to be used in the proof of Proposition \ref{Prop:NL-neg1-local}.  In each lemma, we are considering the probability space $(\Omega,\mathcal{F},P)$ with $P = \rho\circ u_{0,\omega}$, where $\rho$ is the Wiener measure defined in \eqref{Eqn:wiener}, and the initial data (given by \eqref{Eqn:initialdata}) is viewed as a map $u_{0,\omega}:\Omega\rightarrow H^{1/2-}(\mathbb{T})$.

\begin{lemma}
Let $\varepsilon,\beta>0$ and $T\ll 1$.  Then there exists $\widetilde{\Omega}_{T}\subset \Omega$ with
$P(\widetilde{\Omega}_{T}^{c})<e^{-\frac{1}{T^{\beta}}}$, such that for $\omega\in\widetilde{\Omega}_{T}$, we have
\begin{align*}
|g_{n}(\omega)| \leq CT^{-\frac{\beta}{2}}\langle n\rangle^{\varepsilon}
\end{align*}
for all $n\in\mathbb{Z}$.
\label{Lemma:prob1}
\end{lemma}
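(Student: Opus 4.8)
\textbf{Proof proposal for Lemma \ref{Lemma:prob1}.}

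The plan is to exploit the Gaussian tail bound for each individual coefficient together with a union bound made summable by the extra polynomial weight $\langle n\rangle^{\varepsilon}$. First I would recall that $\{g_n(\omega)\}_{n\geq 1}$ is a sequence of independent standard complex Gaussians (with $g_{-n}=\overline{g_n}$), so a single real Gaussian tail estimate gives a constant $c_0>0$ such that $P(|g_n(\omega)|>\lambda)\leq e^{-c_0\lambda^2}$ for all $\lambda>0$ and all $n$. I would then set, for a constant $C>0$ to be chosen, the exceptional set
\begin{align*}
\widetilde{\Omega}_T^c := \bigcup_{n\in\mathbb{Z}\setminus\{0\}}\bigl\{\omega\in\Omega : |g_n(\omega)| > C T^{-\beta/2}\langle n\rangle^{\varepsilon}\bigr\},
\end{align*}
so that on $\widetilde{\Omega}_T$ the desired pointwise bound holds for every $n$ (the case $n=0$ is vacuous since $g_0$ does not appear, or can be included trivially).

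The key step is estimating $P(\widetilde{\Omega}_T^c)$. By the union bound and the Gaussian tail estimate,
\begin{align*}
P(\widetilde{\Omega}_T^c) \leq \sum_{n\in\mathbb{Z}\setminus\{0\}} P\bigl(|g_n| > C T^{-\beta/2}\langle n\rangle^{\varepsilon}\bigr) \leq 2\sum_{n=1}^{\infty} e^{-c_0 C^2 T^{-\beta}\langle n\rangle^{2\varepsilon}}.
\end{align*}
I would then split the exponent: since $\langle n\rangle^{2\varepsilon}\geq 1$ and (for $n$ large) $\langle n\rangle^{2\varepsilon}\gtrsim \varepsilon \log\langle n\rangle$ up to constants — more simply, $c_0 C^2 T^{-\beta}\langle n\rangle^{2\varepsilon}\geq \tfrac12 c_0 C^2 T^{-\beta} + \tfrac12 c_0 C^2 \langle n\rangle^{2\varepsilon}$ using $T\ll 1$ so that $T^{-\beta}\geq 1$ — I can bound
\begin{align*}
P(\widetilde{\Omega}_T^c) \leq 2 e^{-\frac{c_0 C^2}{2} T^{-\beta}}\sum_{n=1}^{\infty} e^{-\frac{c_0 C^2}{2}\langle n\rangle^{2\varepsilon}} = 2 K_\varepsilon\, e^{-\frac{c_0 C^2}{2} T^{-\beta}},
\end{align*}
where $K_\varepsilon<\infty$ because the series converges (the summand decays like $e^{-c\,n^{2\varepsilon}}$, which is summable for any $\varepsilon>0$). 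Finally, choosing $C=C(\varepsilon,c_0)$ large enough that $\tfrac{c_0 C^2}{2}\geq 2$ and absorbing the constant $2K_\varepsilon$ (again using $T\ll 1$, so $e^{-\frac{c_0C^2}{2}T^{-\beta}}$ beats $\tfrac{1}{2K_\varepsilon}e^{-T^{-\beta}}$), one obtains $P(\widetilde{\Omega}_T^c)<e^{-1/T^{\beta}}$, as claimed.

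This argument is essentially routine; the only mild subtlety — and the step I would be most careful about — is the bookkeeping that makes the polynomial-weighted series $\sum_n e^{-c\langle n\rangle^{2\varepsilon}}$ converge uniformly and lets the leading factor $e^{-\frac{c_0C^2}{2}T^{-\beta}}$ dominate the residual constant for all $T\ll 1$; this is where the hypotheses $\varepsilon>0$ and $T\ll 1$ are genuinely used. No PDE input is needed here, only the Gaussian structure of the randomized data.
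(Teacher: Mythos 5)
Your proof is correct and, in substance, the same as the paper's: the paper simply cites the large-deviation estimate $P\big(\sup_{n}\langle n\rangle^{-\varepsilon}|g_{n}(\omega)|>K\big)\leq e^{-cK^2}$ from \cite{O1} and takes $K\sim T^{-\beta/2}$, whereas you re-derive that estimate directly via the pointwise Gaussian tail bound, a union bound, and the convergence of the weighted series $\sum_n e^{-c\langle n\rangle^{2\varepsilon}}$. The bookkeeping using $T\ll 1$ and the choice of $C$ to absorb the residual constant and recover $e^{-1/T^\beta}$ is handled correctly.
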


\begin{proof}[Proof of Lemma \ref{Lemma:prob1}]
Recall from \cite{O1} that
$$ P(\sup_{n\in\mathbb{Z}\setminus\{0\}}\langle n\rangle^{-\varepsilon}|g_{n}(\omega)|>K)\leq e^{-cK^2}$$ for $K$ sufficiently large.  Lemma \ref{Lemma:prob1} follows by taking $K\sim T^{-\frac{\beta}{2}}$.
\end{proof}

\begin{lemma}[Thomann-Tzvetkov,\cite{TT}]
Let $d\geq 1$ and $c(n_{1},\ldots,n_{k})\in \mathbb{C}$.  Let $\{\gamma_{n}(\omega)\}_{1\leq n\leq d}$ be a sequence of $\mathbb{R}$-valued standard Gaussian random variables.  For $k\geq 1$, denote by
\newline $A(k,d)=\{(n_{1},\ldots,n_{k})\in\{1,\ldots,d\}^{k}:
n_{1}\leq \cdots \leq n_{k}\}$, and
\begin{align*}
S_{k}(\omega) = \sum_{A(k,d)}c(n_{1},\ldots,n_{k})\gamma_{n_{1}}(\omega)\cdots \gamma_{n_{k}}(\omega).
\end{align*}
Then, for each $p\geq 1$, we have
\begin{align*}
\|S_{k}\|_{L^{p}(\Omega)}\leq \sqrt{k+1}(p-1)^{\frac{k}{2}}\|S_{k}\|_{L^{2}(\Omega)}.
\end{align*}
\label{Lemma:prob2}
\end{lemma}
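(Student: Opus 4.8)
The plan is to deduce Lemma~\ref{Lemma:prob2} from Nelson's hypercontractivity estimate for the Ornstein--Uhlenbeck semigroup together with the Wiener chaos decomposition; the statement is essentially a bookkeeping consequence of that classical fact, and one could alternatively simply quote \cite{TT}. I would restrict attention to $p\geq 2$, which is the range relevant for the applications in the sequel. Let $\{\gamma_n\}_{1\le n\le d}$ be the underlying i.i.d.\ real standard Gaussians, let $\mathcal{H}_m$ denote the $m$-th homogeneous Wiener chaos generated by them, and let $\Pi_m$ be the orthogonal projection of $L^2(\Omega)$ onto $\mathcal{H}_m$. Expanding each monomial $\gamma_{n_1}\cdots\gamma_{n_k}$ in Hermite polynomials of the individual variables $\gamma_{n_i}$ shows that it is a finite linear combination of elements of $\mathcal{H}_0\oplus\mathcal{H}_1\oplus\cdots\oplus\mathcal{H}_k$; hence the same is true of $S_k$, and we may write $S_k=\sum_{m=0}^k S_k^{(m)}$ with $S_k^{(m)}:=\Pi_m S_k\in\mathcal{H}_m$.

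Next I would invoke the hypercontractivity bound: for $p\ge 2$ and any $F\in\mathcal{H}_m$,
\[
\|F\|_{L^p(\Omega)}\le (p-1)^{m/2}\|F\|_{L^2(\Omega)},
\]
which is the statement that the Ornstein--Uhlenbeck semigroup (acting as multiplication by $e^{-mt}$ on $\mathcal{H}_m$) maps $L^2$ into $L^p$ with operator norm $1$ once $e^{-2t}=1/(p-1)$. Applying this to each $S_k^{(m)}$ and using $p-1\ge 1$, so that $(p-1)^{m/2}\le (p-1)^{k/2}$ for $0\le m\le k$, gives
\[
\|S_k\|_{L^p(\Omega)}\le\sum_{m=0}^k\|S_k^{(m)}\|_{L^p(\Omega)}\le\sum_{m=0}^k (p-1)^{m/2}\|S_k^{(m)}\|_{L^2(\Omega)}\le (p-1)^{k/2}\sum_{m=0}^k\|S_k^{(m)}\|_{L^2(\Omega)}.
\]

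Finally I would close by applying Cauchy--Schwarz to the sum over the $k+1$ indices $m=0,\dots,k$ and then using the mutual $L^2$-orthogonality of the chaoses $\mathcal{H}_m$:
\[
\sum_{m=0}^k\|S_k^{(m)}\|_{L^2(\Omega)}\le\sqrt{k+1}\,\Big(\sum_{m=0}^k\|S_k^{(m)}\|_{L^2(\Omega)}^2\Big)^{1/2}=\sqrt{k+1}\,\|S_k\|_{L^2(\Omega)}.
\]
Combining the last two displays yields $\|S_k\|_{L^p(\Omega)}\le\sqrt{k+1}\,(p-1)^{k/2}\|S_k\|_{L^2(\Omega)}$, as claimed. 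There is no genuine obstacle here: the single nontrivial ingredient is Nelson's hypercontractivity estimate, which I would cite rather than reprove; the only points needing a line of care are the identification of $S_k$ as lying in the span of the first $k+1$ chaoses (via the Hermite expansion of products of Gaussians) and the orthogonality invoked in the last step.
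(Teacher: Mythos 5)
Your proof is correct, and it is in fact the standard argument that Thomann--Tzvetkov (and before them Tzvetkov in the Benjamin--Ono paper) use, which is why the paper simply cites \cite{TT} and remarks that the proof "relies on hypercontractivity of the Ornstein-Uhlenbeck semigroup" without reproducing it. Your chain of reasoning --- chaos decomposition $S_k=\sum_{m=0}^k \Pi_m S_k$, Nelson's hypercontractive bound $\|F\|_{L^p}\le (p-1)^{m/2}\|F\|_{L^2}$ on the $m$-th chaos, the monotonicity $(p-1)^{m/2}\le (p-1)^{k/2}$, Cauchy--Schwarz over the $k+1$ indices, and orthogonality of the chaoses --- is exactly how the factor $\sqrt{k+1}\,(p-1)^{k/2}$ arises. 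One small but worthwhile observation you make implicitly: the restriction to $p\ge 2$ is not cosmetic. As stated, with "$p\geq 1$", the inequality would fail for $p$ near $1$ (at $p=1$ the right-hand side is zero), so the correct reading of the lemma is $p\ge 2$, which is the only range the paper ever uses (see Lemma~\ref{Lemma:prob3}).
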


The proof of Lemma \ref{Lemma:prob2} can be found in \cite{TT}; it relies on hypercontractivity of the Ornstein-Uhlenbeck semigroup.

\begin{lemma}[Tzvetkov, \cite{Tz3}]
Let $F:H^{\frac{1}{2}-}(\mathbb{T})\rightarrow \mathbb{R}$
be measurable.  Assume there exists $\alpha>0, \tilde{N}>0, k\geq 1$ and $C>0$ such that for all $p\geq 2$,
\begin{align}
\|F\|_{L^{p}(d\rho)}\leq C\tilde{N}^{-\alpha}p^{\frac{k}{2}}.
\label{Eqn:L3-1}
\end{align}
Then there exists $\delta>0$, $c_{1}$ independent of $N$ and $\alpha$ such that
\begin{align*}
\int_{H^{\frac{1}{2}-}(\mathbb{T})}e^{\delta \tilde{N}^{\frac{2\alpha}{k}}|F(u)|^{\frac{2}{k}}}
d\rho(u) \leq c_{1}.
\end{align*}
As a consequence, for all $\lambda>0$,
\begin{align}
P(\omega\in\Omega:|F(u_{0,\omega})|>\lambda)
\leq c_{1}e^{-\delta\tilde{N}^{\frac{2\alpha}{k}}
\lambda^{\frac{2}{k}}}
\label{Eqn:L3-2}
\end{align}
\label{Lemma:prob3}
\end{lemma}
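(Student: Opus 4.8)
\textbf{Proof plan for Lemma \ref{Lemma:prob3}.}

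The plan is to derive the exponential integrability bound from the moment bound \eqref{Eqn:L3-1} by a standard Taylor-expansion argument, and then obtain the tail estimate \eqref{Eqn:L3-2} by Chebyshev's inequality. First I would expand the exponential in a power series and integrate term by term: for a parameter $\delta>0$ to be chosen,
\begin{align*}
\int_{H^{\frac{1}{2}-}} e^{\delta \tilde{N}^{\frac{2\alpha}{k}}|F(u)|^{\frac{2}{k}}}\, d\rho(u)
= \sum_{m=0}^{\infty} \frac{\delta^m \tilde{N}^{\frac{2\alpha m}{k}}}{m!}\int_{H^{\frac{1}{2}-}} |F(u)|^{\frac{2m}{k}}\, d\rho(u).
\end{align*}
The term with $m=0$ contributes $1$, so it suffices to control the tail sum over $m\geq 1$ (here one should split off the finitely many $m$ with $\frac{2m}{k}<2$, for which \eqref{Eqn:L3-1} does not directly apply but a crude bound via H\"older or monotonicity of $L^p$-norms on a probability space suffices, absorbing the resulting finite constant into $c_1$). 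For $m$ large enough that $p:=\frac{2m}{k}\geq 2$, apply \eqref{Eqn:L3-1} with this $p$:
\begin{align*}
\int_{H^{\frac{1}{2}-}}|F(u)|^{\frac{2m}{k}}\, d\rho(u)
= \|F\|_{L^{p}(d\rho)}^{p}
\leq \Big(C\tilde{N}^{-\alpha}\big(\tfrac{2m}{k}\big)^{\frac{k}{2}}\Big)^{\frac{2m}{k}}
= C^{\frac{2m}{k}}\,\tilde{N}^{-\frac{2\alpha m}{k}}\,\big(\tfrac{2m}{k}\big)^{m}.
\end{align*}

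The key cancellation is that the factor $\tilde{N}^{-\frac{2\alpha m}{k}}$ exactly cancels the $\tilde{N}^{\frac{2\alpha m}{k}}$ in the series, so the $m$-th term is bounded by $\frac{\delta^m C^{2m/k} (2m/k)^m}{m!}$. Using Stirling's bound $m!\gtrsim (m/e)^m$, this term is $\lesssim \big(\delta \cdot C^{2/k}\cdot \tfrac{2e}{k}\big)^m$, which is a geometric series; choosing $\delta = \delta(C,k)>0$ small enough (independent of $\tilde N$ and of $\alpha$, as required) makes the ratio strictly less than $1$, so the sum converges to a finite constant $c_1$ depending only on $C$ and $k$. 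This establishes the first claim. The main (and only real) subtlety is bookkeeping: being careful that $\delta$ and $c_1$ genuinely do not depend on $\tilde N$ or $\alpha$ — which is clear from the computation since $\tilde N$ and $\alpha$ enter only through the canceling powers — and handling the initial finitely many terms $m$ with $2m/k<2$ cleanly.

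For the consequence \eqref{Eqn:L3-2}, I would apply Markov's inequality to the nonnegative random variable $e^{\delta \tilde N^{2\alpha/k}|F(u_{0,\omega})|^{2/k}}$: for any $\lambda>0$,
\begin{align*}
P\big(|F(u_{0,\omega})|>\lambda\big)
= P\Big(e^{\delta \tilde N^{2\alpha/k}|F(u_{0,\omega})|^{2/k}} > e^{\delta \tilde N^{2\alpha/k}\lambda^{2/k}}\Big)
\leq e^{-\delta \tilde N^{2\alpha/k}\lambda^{2/k}}\,\mathbb{E}\big[e^{\delta \tilde N^{2\alpha/k}|F(u_{0,\omega})|^{2/k}}\big]
\leq c_1 e^{-\delta \tilde N^{2\alpha/k}\lambda^{2/k}},
\end{align*}
where the last inequality is the exponential integrability bound just proved (recall $P=\rho\circ u_{0,\omega}$, so expectations over $\Omega$ equal integrals over $H^{\frac12-}$ against $d\rho$). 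This is exactly \eqref{Eqn:L3-2}, completing the proof.
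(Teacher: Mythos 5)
Your proof is correct and follows the standard argument that Tzvetkov gives in the cited reference: Taylor-expand the exponential, integrate term by term, apply the moment bound \eqref{Eqn:L3-1} (and crude monotonicity of $L^p$ norms on a probability space for the finitely many small exponents), use Stirling to get a geometric series in $m$, and then deduce the tail bound by Markov's inequality. The paper itself does not reprove this lemma — it defers to \cite{Tz3} — but your argument is the one used there, and your bookkeeping of how $\tilde N$ and $\alpha$ cancel so that $\delta$ and $c_1$ depend only on $C$ and $k$ is exactly the point that matters.
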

The proof of Lemma \ref{Lemma:prob3} can be found in \cite{Tz3}.

We will also need the following basic observation from linear algebra.
\begin{lemma}
Let $A=\{a_{i,j}\}_{1\leq i,j \leq N}$ be a square ($N\times N$) matrix with complex entries.  Then
$$ \|A\|\leq \sup_{1\leq n \leq N}|a_{n,n}| + \Big(\sum_{n\neq n'}|a_{n,n'}|^{2}\Big)^{\frac{1}{2}}.$$
\label{Lemma:matrixnorm}
\end{lemma}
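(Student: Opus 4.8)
The plan is to prove the matrix norm bound by splitting $A$ into its diagonal and off-diagonal parts and estimating each separately, using the operator norm on $\ell^2$. Write $A = D + B$ where $D = \mathrm{diag}(a_{1,1},\ldots,a_{N,N})$ and $B$ has zero diagonal, i.e. $b_{n,n'} = a_{n,n'}$ for $n \neq n'$ and $b_{n,n} = 0$. By the triangle inequality for the operator norm, $\|A\| \leq \|D\| + \|B\|$, so it suffices to establish $\|D\| \leq \sup_{1\leq n\leq N}|a_{n,n}|$ and $\|B\| \leq \big(\sum_{n\neq n'}|a_{n,n'}|^2\big)^{1/2}$.

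The first bound is immediate: for a diagonal matrix, $\|Dv\|_2^2 = \sum_n |a_{n,n}|^2 |v_n|^2 \leq (\sup_n |a_{n,n}|)^2 \|v\|_2^2$, so $\|D\| = \sup_n |a_{n,n}|$. For the second bound, I would use the standard fact that the operator norm is dominated by the Hilbert--Schmidt (Frobenius) norm: for any matrix $B$ and any vector $v$, Cauchy--Schwarz gives $|(Bv)_n| = \big|\sum_{n'} b_{n,n'} v_{n'}\big| \leq \big(\sum_{n'} |b_{n,n'}|^2\big)^{1/2} \|v\|_2$, hence
\begin{align*}
\|Bv\|_2^2 = \sum_n |(Bv)_n|^2 \leq \Big(\sum_{n,n'} |b_{n,n'}|^2\Big)\|v\|_2^2 = \Big(\sum_{n\neq n'}|a_{n,n'}|^2\Big)\|v\|_2^2,
\end{align*}
where the last equality uses that $B$ has vanishing diagonal. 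Taking the supremum over unit vectors $v$ yields $\|B\| \leq \big(\sum_{n\neq n'}|a_{n,n'}|^2\big)^{1/2}$. Combining the two estimates via $\|A\| \leq \|D\| + \|B\|$ gives the claim.

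There is essentially no obstacle here; this is a routine decomposition argument, and the only point requiring any care is keeping the bookkeeping of the Hilbert--Schmidt bound straight (it is the inequality $\|B\|_{\mathrm{op}} \leq \|B\|_{\mathrm{HS}}$, valid for any complex matrix, not just symmetric or normal ones). One could also phrase the off-diagonal estimate via the Schur test, but the Hilbert--Schmidt bound is the shortest route. If one wanted an even more self-contained proof avoiding any named inequality, the same computation above with Cauchy--Schwarz applied entrywise \emph{is} the proof of $\|B\|_{\mathrm{op}} \leq \|B\|_{\mathrm{HS}}$, so nothing external is truly needed beyond the definition of the operator norm $\|A\| = \sup_{\|v\|_2 = 1}\|Av\|_2$.
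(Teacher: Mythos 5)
The paper omits the proof of this lemma, stating only that the required analysis is straightforward, so there is no proof in the paper to compare against. Your argument is correct: the diagonal/off-diagonal split, the exact computation of $\|D\|$, and the Hilbert--Schmidt bound on $B$ via entrywise Cauchy--Schwarz together give exactly the stated inequality, and this is the natural (indeed essentially canonical) way to prove it.
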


\noindent The proof of Lemma \ref{Lemma:matrixnorm} is omitted (the analysis required is straightforward).  We proceed to prove Proposition \ref{Prop:NL-neg1-local}.

\begin{proof}[Proof of Proposition \ref{Prop:NL-neg1-local}:]

For the remainder of this proof, all factors $u_{N_{j}}$ are dyadically localized, and we simplify notation by taking $u_{j}=u_{N_{j}}$.
Also, we typically drop the $\chi_{[0,T]}$ from in front of each factor $u_j$, but may reintroduce them as needed.  This proof is based on multiple decompositions of frequency space.  In some regions, we will use what will be referred to as a type (I) - type (II) decomposition, which leads to additional subcases.
More precisely, in each region of frequency space we impose one of the following two conditions: for each $j\in\{1,2,3,4\}$, either

\vspace{0.1in}

(i)  $u_{j}\in X^{\frac{1}{2}-\delta,\frac{1}{2}-\delta}_{T}$,

\vspace{0.1in}

\noindent or

\vspace{0.1in}

(ii) $\displaystyle u_{j} - \gamma_{j}
\sum_{|n_{j}|\sim N_{j}}\frac{g_{n_{j}}(\omega)}
{|n_{j}|}e^{in_{j}x+in_{j}^{3}t}
\in X^{\frac{1}{2}+\delta,\frac{1}{2}-\delta}_{T}$, for each $\gamma_{j} \in\{0,1\}$.  When $\gamma_{j}=1$, this is a
nonlinear smoothing hypothesis.

\vspace{0.1in}

\noindent The additional parameters $\gamma_{j}\in\{0,1\}$ are introduced in order to establish a single result for variable $\gamma_{j}$, which produces factors of $f_{0,j}$ with $\gamma_{j}=0$ and factors of $f_{1,j}$ with $\gamma_{j}=1$.  That is, by keeping each $\gamma_{j}$ variable, we will produce the right hand side of \eqref{Eqn:NL-neg1-local}.

Contributions to the left-hand side of \eqref{Eqn:NL-neg1-local} from a region where $u_{j}$ satisfies condition (i) produce a corresponding factor of $\|u_{j}\|_{\frac{1}{2}-\delta,\frac{1}{2}-\delta,T}$ on the right-hand side of the inequality.  For contributions from regions where $u_{j}$ satisfies condition (ii), we establish probabilistic bounds, by writing
$$ u_{j} = \underbrace{\gamma_{j}
\sum_{|n_{j}|\sim N_{j}}\frac{g_{n_{j}}(\omega)}
{|n_{j}|}e^{in_{j}x+in_{j}^{3}t}}
_{\text{type (I)}} + \underbrace{u_{j} - \gamma_{j}
\sum_{|n_{j}|\sim N_{j}}\frac{g_{n_{j}}(\omega)}
{|n_{j}|}e^{in_{j}x+in_{j}^{3}t}}
_{\text{type (II)}}.$$
We show that each type (I) contribution produces a factor of $\gamma_{j}N_{j}^{-\varepsilon}$ on the right-hand side of the inequality, for $\omega\in\Omega_{N,\ldots,N_{4},T}$.
The type (II) contribution will produce a factor of
$\|u_{j}- \gamma_{j}
\sum_{|n_{j}|\sim N_{j}}\frac{g_{n_{j}}(\omega)}
{|n_{j}|}e^{in_{j}x+in_{j}^{3}t}\|_{\frac{1}{2}+\delta,\frac{1}{2}-\delta,T}$
on the right-hand side.    Combining the contributions from (i) and (ii), each $u_{j}$ will produce a factor of
\begin{align}\gamma_{j}N_{j}^{-\varepsilon} +
\|u_{j}\|_{\frac{1}{2}-\delta,\frac{1}{2}-\delta,T}
+ \|u_{j}- \gamma_{j}
\sum_{|n_{j}|\sim N_{j}}\frac{g_{n_{j}}(\omega)}
{|n_{j}|}e^{in_{j}x+in_{j}^{3}t}\|_{\frac{1}{2}+\delta,\frac{1}{2}-\delta,T}.
\label{Eqn:NL-neg1-23}
\end{align}
Notice that \eqref{Eqn:NL-neg1-23}$\leq f_{0,j}$ for $\gamma_{j}=0$, and \eqref{Eqn:NL-neg1-23}$= f_{1,j}$ for $\gamma_{j}=1$.  By establishing these estimates for all combinations of $\gamma_{j}\in\{0,1\}$, $j=1,2,3,4$, we can always choose the smaller of the two bounds, and each $u_{j}$ contributes a factor of $\min(f_{0,j},f_{1,j})$ to the right-hand side of our inequality.

\medskip

Summarizing the previous paragraphs, we prove Proposition \ref{Prop:NL-neg1-local} by constructing $\Omega_{N,\ldots,N_{4},T}$ $\subset \Omega_{T}$ with $P(\Omega_{N,N_{1},\ldots,N_{4},T}^{c})<
\frac{1}{(NN_{1}\cdots N_{4})^{\kappa}}e^{-\frac{\tilde{c}}{T^{\beta}}}$
such that for all $\omega\in\widetilde{\Omega}_{T}\cap
\Omega_{N,N_{1},\ldots,N_{4},T}$ we can, throughout frequency space, either bound each $u_{j}$ deterministically, using condition (i), or probabilistically, using condition (ii) and Lemmas \ref{Lemma:prob1} - \ref{Lemma:prob3} (the type (I)-type (II) decomposition).

In the break down of cases that follows, as we estimate the left-hand side of \eqref{Eqn:NL-neg1-local} using the method just described, each factor $u_{j}$ may be declared to be of the following types
\begin{itemize}
\item type (I) (rough but random): $u_j= \sum_{|n_{j}|\sim N_{j}}\frac{g_{n_{j}}(\omega)}{|n_{j}|}e^{in_{j}x+in_{j}^{3}t}$,

\item type (II) (smooth and deterministic): $u_{j}\in X^{\frac{1}{2}+\delta,\frac{1}{2}-\delta}_{T}$.
\end{itemize}
In a given case, if $u_j$ is declared to be of type (I) or type (II), this means that we are choosing to use condition (ii) in this factor, and according to the decomposition above, we must consider each case of $u_j$ type (I) and $u_j$ type (II).  If we make no declaration about a particular factor $u_{j}$ in a given case, it means that we are imposing condition (i) in that factor: $u_{j}\in X^{\frac{1}{2}-\delta,\frac{1}{2}-\delta}_{T}$.

We will use superscripts $n^{k}$ ($N^{k}$) $k=0,1,\ldots,4$ to indicate frequencies (and corresponding dyadic blocks) which have been ordered from largest to smallest.  That is, $|n^{0}|\geq|n^{1}|\geq \cdots \geq |n^{4}|$ (and $N^{0}\geq N^{1}\geq \cdots \geq N^{4}$).  Let us remark that we order the frequency $n$ as $-n$ (that is, if $n$ is the frequency of largest magnitude, then $n^{0}=-n$).  Also, by symmetry of $\mathcal{N}(u_{1},u_{2},u_{3},u_{4})$ in $(u_{2},u_{3},u_{4})$, we can assume that $|n_{2}|\geq |n_{3}| \geq |n_{4}|$.  Let us begin with an overview of each case to be considered.

\vspace{0.1in}

\noindent $\bullet$ \textbf{CASE 1.}
$n^{0}=-n^{1}$.




\vspace{0.1in}

\noindent $\bullet$ \textbf{CASE 2.}
$n^{0}\neq -n^{1}$.

\vspace{0.1in}

\hspace{0.4in} $\bullet$ \textbf{CASE 2.a.}
$N^{3}\ll N^{0}$ and $N^{2}N^{3}N^{4}\ll
N^{0}N^{1}|n^{0}+n^{1}|$.

\hspace{0.4in} $\bullet$ \textbf{CASE 2.b.}
$N^{3}\sim N^{0}$.

\hspace{0.5in} We use a type (I) - type (II) decomposition for $k=1,2,3$.

\vspace{0.1in}

\hspace{0.8in} $\bullet$ \textbf{CASE 2.b.i.} At least two $u_{i}$ of type (I), $i=1,2,3$.

\hspace{0.9in}That is, $u_1$,$u_2$, $u_3$ of types (I)(I)(I), (I)(I)(II),(I)(II)(I) and (II)(I)(I).

\hspace{0.8in} $\bullet$ \textbf{CASE 2.b.ii.} One of $u_{i}$ of type (I), $i=1,2,3$, others type (II).

\hspace{0.9in} Types (I)(II)(II), (II)(I)(II) and (II)(II)(I).

\hspace{0.8in} $\bullet$ \textbf{CASE 2.b.iii.}
$u_{1}$,$u_{2}$,$u_{3}$ all type (II).

\vspace{0.1in}

\hspace{0.4in} $\bullet$ \textbf{CASE 2.c.}
$N^{3}\ll N^{0}$ and $N^{2}N^{3}N^{4}\gtrsim
N^{0}N^{1}|n^{0}+n^{1}|$.

\hspace{0.5in} We use a type (I) - type (II) decomposition for $k=1,2,3,4$.

\vspace{0.1in}

\hspace{0.8in} $\bullet$ \textbf{CASE 2.c.i.} $u_{1}$ type (I) and at least two of $u_{2},u_{3},u_{4}$ type (I).

\hspace{0.9in} That is, $u_1,u_{2},u_{3},u_{4}$ of types (I)(I)(I)(I), (I)(I)(I)(II),

\hspace{0.9in} (I)(I)(II)(I) and (I)(II)(I)(I).

\hspace{0.8in} $\bullet$ \textbf{CASE 2.c.ii.}
$u_{1}$ type (II) and $u_{2},u_{3},u_{4}$ type (I).

\hspace{0.8in} $\bullet$ \textbf{CASE 2.c.iii.}
Two of $u_{1},u_{2},u_{3},u_{4}$ type (I) and two type (II).

\hspace{0.9in} Types (I)(I)(II)(II), (I)(II)(I)(II), (I)(II)(II)(I), (II)(I)(II)(I),

\hspace{0.9in}(II)(II)(I)(I) and (II)(I)(I)(II).

\hspace{0.8in} $\bullet$ \textbf{CASE 2.c.iv.}
At least three of $u_{1},u_{2},u_{3},u_{4}$ type (II).

\hspace{0.9in} Types (II)(II)(II)(I), (II)(II)(I)(II), (II)(I)(II)(II),

\hspace{0.9in}
(I)(II)(II)(II), and (II)(II)(II)(II).

\vspace{0.1in}

Before we proceed with the analysis of each case, let us remark on an important property of our frequency space restrictions:
\begin{align}
\text{If }\
 (n,n_1,n_2,&n_3,n_4,\tau,\tau_1,\tau_2,\tau_3,\tau_4)\in A_{-1},\ \
 \notag \\ &\text{then }\
(n_{1},n_{2},n_{3},n_{4})\in \zeta_{1}(n).
\label{Eqn:NL-neg1-33}
\end{align}
For the estimate \eqref{Eqn:NL-neg1-local} we are restricted to the region $A_{-1}$ of frequency space (defined in \eqref{Eqn:NL-part}), and the condition $|n^{3}-n_{1}^{3}-\cdots -n_{4}^{3}|\ll |n_{\text{max}}|^{2}$ is satisfied.  To establish \eqref{Eqn:NL-neg1-33} we show that this condition necessitates   $(n_{1},n_{2},n_{3},n_{4})\in \zeta_{1}(n)$ (see \eqref{Eqn:NLstate-2a} for the definition of $\zeta_{1}(n)$).  In fact, we show the contrapositive; that $(n_{1},n_{2},n_{3},n_{4})\not\in \zeta_{1}(n)$ implies $|n^{3}-n_{1}^{3}-\cdots -n_{4}^{3}|\gtrsim |n_{\text{max}}|^{2}$.
Recall from \eqref{Eqn:NLstate-3} that in the domain of integration we have
$(n_{1},n_{2},n_{3},n_{4})\in \zeta(n)=\zeta_{1}(n)\cup \zeta_{2}(n)$, and $(n_{1},n_{2},n_{3},n_{4})\not\in \zeta_{1}(n)$ is therefore equivalent to $(n_{1},n_{2},n_{3},n_{4})\in \zeta_{2}(n)$.  We show that if $(n_{1},n_{2},n_{3},n_{4})\in \zeta_{2}(n)$, then $|n^{3}-n_{1}^{3}-\cdots -n_{4}^{3}|\gtrsim |n_{\text{max}}|^{2}$.  Suppose $(n_{1},n_{2},n_{3},n_{4}) \in \zeta_{2}(n)$, then there are six possibilities (up to permutations of $(n_{2},n_{3},n_{4})$):
\begin{enumerate}[(i)]
\item $n=n_{1}=n_{2}$
\item $n=n_{2}=n_{3}$
\item $n=n_{1}=-n_{2}$
\item $n_{1}=-n_{2}=-n_{3}$
\item $n=-n_{1}=n_{2}$
\item $n=n_{2}, n_{1}=-n_{3}$
\end{enumerate}
We proceed to show $|n^{3}-n_{1}^{3}-\cdots -n_{4}^{3}|\gtrsim |n_{\text{max}}|^{2}$ in each circumstance.  Suppose possibility (i) holds, and we have $n=n_{1}=n_{2}$. Then $n=n_{1}+\cdots+n_{4}$ gives $n_{2}+n_{3}+n_{4}=0$, and we find
\begin{align}
n^{3}-n_{1}^{3}-\cdots - n_{4}^{3}
&= -n_{2}^{3} -n_{3}^{3}-n_{4}^{3} \notag \\
&= -3n_{2}n_{3}n_{4}.
\label{Eqn:NLproof-1}
\end{align}
Recall that each $n_{i}\neq 0$ by the mean zero condition \eqref{Eqn:meanzero}.  If $|n_{3}|\sim |n_{4}|\sim|n_{\text{max}}|$, then by \eqref{Eqn:NLproof-1}, and the mean zero condition, we have $|n^{3}-n_{1}^{3}-\cdots -n_{4}^{3}|\gtrsim
|n_{\text{max}}|^{2}$, which is impossible in the region $A_{-1}$.  Therefore, we must have $|n|=|n_{1}|=|n_{2}|=|n_{\text{max}}|$.  Then $n_{2}+n_{3}+n_{4}=0$ gives (without loss of generality) that $|n_{3}|\sim|n_{2}|=|n_{\text{max}}|$, and again we arrive at $|n^{3}-n_{1}^{3}-\cdots -n_{4}^{3}|\gtrsim
|n_{\text{max}}|^{2}$.  We conclude that possibility (i) cannot occur in the region $A_{-1}$.  It is straightforward to verify that the same argument rules out (ii)-(v); only (vi) remains to be considered.  Suppose (vi) holds, and we have $n=n_{2}$, $n_{1}=-n_{3}$.  Then $n_{4}=n-n_{1}-n_{2}-n_{3}=0$, which is impossible by the mean zero condition \eqref{Eqn:meanzero}.  Therefore, in the region $A_{-1}$, we cannot have $(n_{1},n_{2},n_{3},n_{4})\in \zeta_{2}(n)$, and we conclude that \eqref{Eqn:NL-neg1-33} holds true.

\vspace{0.1in}

We now proceed with the analysis of each case listed above.

\vspace{0.1in}

\noindent $\bullet$ \textbf{CASE 1.}
$n^{0}=-n^{1}$.

\vspace{0.1in}

With $(n_{1},n_{2},n_{3},n_{4})\in \zeta_{1}(n)$ , we have $n\neq n_{i}$ for all $i\in\{1,2,3,4\}$, and $n_{1}\neq -n_{k}$ for all $k\in \{2,3,4\}$.
Therefore, if $n^{0}=-n^{1}$, we must have $n^{0}=n_{k}=-n_{j}=-n^{1}$ for some $k,j\in\{2,3,4\}$.  By the condition $|n_{2}|\geq |n_{3}| \geq |n_{4}|$ it follows that $n^{0}=n_{2}=-n_{3}=-n^{1}$.  With $n_2=-n_3$, we have $n=n_1+n_4$ and
$$ \max(|\sigma|,|\sigma_1|,|\sigma_2|,|\sigma_3|,|\sigma_4|)\gtrsim |nn_1n_4|.$$
In this case we establish:

\begin{align}
\|\mathcal{N}_{-1}|_{1.b.}&(u_{1},u_{2},u_{3},u_{4})
\|_{\frac{1}{2}+\delta,-\frac{1}{2}+\delta,T}  \notag \\
&\lesssim
\frac{1}{(NN_{1}\cdots N_{4})^{\alpha}}
\prod_{j=1}^{4}\|u_{j}\|_{\frac{1}{2}-\delta,\frac{1}{2}-\delta,T}.
\label{Eqn:NL-new-1}
\end{align}

We consider various subcases.  In each of the cases that follow, we will employ the same method.  In fact, this method will continue to appear throughout this section of the appendix.  In most places where the method is used, we will spell out the details, but for the CASE 1.b. we will establish one case (case 1.b.i.) in details only.

\vspace{0.1in}

\noindent $\bullet$ \textbf{CASE 1.b.i.} $|\sigma|\gtrsim |nn_1n_4|$.

\vspace{0.1in}

In this case we find
\begin{align*}
\frac{|n|^{\frac{1}{2}+\delta}|n_1|}
{|\sigma|^{\frac{1}{2}-\delta}|n_1|^{\frac{1}{2}-\delta}|n_2|^{5\delta}}
\lesssim \frac{1}{(NN_1N_2N_3N_4)^{\alpha}}.
\end{align*}
Using this estimate, \eqref{Eqn:NL-new-1} follows from
\begin{align}
\|f_1f_2u_3u_4\|_{L^{2}_{x,t\in[0,T]}} \leq \|f_{1}\|_{0,\frac{1}{2}-\delta,T}\|f_{2}\|_{\frac{1}{2}-6\delta,\frac{1}{2}-\delta,T} \|u_{3}\|_{\frac{1}{2}-\delta,\frac{1}{2}-\delta,T}\|u_{4}\|_{\frac{1}{2}-\delta,\frac{1}{2}-\delta,T}.
\label{Eqn:lasthope1}
\end{align}
We can establish \eqref{Eqn:lasthope1} using H\"{o}lder, \eqref{Eqn:X-onehalfminus}, \eqref{Eqn:X-Str-1} and \eqref{Eqn:X-Str-interp},
\begin{align}
\|f_1f_2u_3u_4\|_{L^{2}_{x,t\in[0,T]}} &\lesssim
\|f_1\|_{L^4_{x,t\in[0,T]}}\|f_2\|_{L^{12}_{x,t\in[0,T]}}
\|u_{3}\|_{L^{12}_{x,t\in[0,T]}}\|u_{4}\|_{L^{12}_{x,t\in[0,T]}} \notag \\
&\lesssim \|f_{1}\|_{0,\frac{1}{2}-\delta,T}\|f_{2}\|_{\frac{1}{2}-6\delta,\frac{1}{2}-\delta,T} \|u_{3}\|_{\frac{1}{2}-\delta,\frac{1}{2}-6\delta,T}\|u_{4}\|_{\frac{1}{2}-\delta,\frac{1}{2}-6\delta,T}
\notag \\ &\leq \|f_{1}\|_{0,\frac{1}{2}-\delta,T}\|f_{2}\|_{\frac{1}{2}-6\delta,\frac{1}{2}-\delta,T} \|u_{3}\|_{\frac{1}{2}-\delta,\frac{1}{2}-\delta,T}\|u_{4}\|_{\frac{1}{2}-\delta,\frac{1}{2}-\delta,T},
\label{Eqn:lasthope1}
\end{align}
for $\delta>0$ sufficiently small.

\vspace{0.1in}

\noindent $\bullet$ \textbf{CASE 1.b.ii.} $|\sigma_1|\gtrsim |nn_1n_4|$.

\vspace{0.1in}

In this case we find
\begin{align*}
\frac{|n|^{\frac{1}{2}+\delta}|n_1|}
{|\sigma_1|^{\frac{1}{2}-\delta}|n_1|^{\frac{1}{2}-\delta}|n_2|^{5\delta}}
\lesssim \frac{1}{(NN_0N_1N_2N_3N_4)^{\alpha}},
\end{align*}
and from here we establish \eqref{Eqn:NL-new-1} using H\"{o}lder, \eqref{Eqn:X-onehalfminus}, \eqref{Eqn:X-Str-1} and \eqref{Eqn:X-Str-interp}.

\vspace{0.1in}

\noindent $\bullet$ \textbf{CASE 1.b.iii.} $|\sigma_2|\gtrsim |nn_1n_4|$.

\vspace{0.1in}

In this case we find
\begin{align*}
\frac{|n|^{\frac{1}{2}+\delta}|n_1|}
{|\sigma_2|^{\frac{1}{2}-\delta}|n_2|^{\frac{1}{2}-\delta}|n_3|^{5\delta}}
\lesssim \frac{1}{(NN_0N_1N_2N_3N_4)^{\alpha}},
\end{align*}
and from here we establish \eqref{Eqn:NL-new-1} using duality, H\"{o}lder, \eqref{Eqn:X-onehalfminus}, \eqref{Eqn:X-Str-1} and \eqref{Eqn:X-Str-interp}.

\vspace{0.1in}

\noindent $\bullet$ \textbf{CASE 1.b.iv.} $|\sigma_3|\gtrsim |nn_1n_4|$.

\vspace{0.1in}

Here we proceed exactly as in Case 1.b.iii. above, swapping the roles of $n_2$ and $n_3$.

\vspace{0.1in}

\noindent $\bullet$ \textbf{CASE 1.b.v.} $|\sigma_4|\gtrsim |nn_1n_4|$.

\vspace{0.1in}

In this case we find
\begin{align*}
\frac{|n|^{\frac{1}{2}+\delta}|n_1|}
{|n_1|^{\frac{1}{2}-2\delta}|n_2|^{6\delta}|\sigma_4|^{\frac{1}{2}-\delta}}\
\lesssim \frac{1}{(NN_0N_1N_2N_3N_4)^{\alpha}},
\end{align*}
and from here we establish \eqref{Eqn:NL-new-1} using duality, H\"{o}lder, \eqref{Eqn:X-onehalfminus}, \eqref{Eqn:X-Str-1} and \eqref{Eqn:X-Str-interp}.  The analysis of Case 1 is complete.

\vspace{0.1in}

\noindent $\bullet$ \textbf{CASE 2.}  $n^{0}\neq -n^{1}$.

Before we proceed with each subcase, let us identify a useful restriction in this case: if $(n_1,n_2,n_3,n_4)\in \eta_1(n)$ with $n^0\neq -n^1$, then
\begin{align}
\text{In case 2, no two integers in the set} \ \{-n,n_1,n_2,n_3,n_4\}\
\text{sum to zero.}
\label{Eqn:NL-new-2}
\end{align}
Indeed, with $(n_1,n_2,n_3,n_4)\in \eta_1(n)$, we already have $n\neq n_k$ for all $k=1,2,3,4$ and $n_1\neq -n_k$ for all $k=2,3,4$.  The only pairs of integers that could sum to zero are within the set $\{n_2,n_3,n_4\}$.  Suppose, for example, that $n_2=-n_3$, then by the restriction $n^0\neq -n^1$ we must have $N_2,N_3\ll N^0$.  Then $n=n_1+n_4$ and we have
$$ |n^3-n_1^3-\cdots - n_4^{3}|=|n^3-n_1^3-n_4^3|=3|nn_1n_4|\gtrsim
(N^0)^{2},$$
in contradiction with restriction to the region $A_{-1}$ in this case.  The same argument applies if $n_2=-n_4$ or $n_{3}=-n_4$, and \eqref{Eqn:NL-new-2} follows.

\vspace{0.1in}

\noindent $\bullet$ \textbf{CASE 2.a.}  $N^{3}\ll N^{0}$ and $N^{2}N^{3}N^{4}\ll
N^{0}N^{1}|n^{0}+n^{1}|$.

\vspace{0.1in}

Recall that we have taken $n=-n^{k}$ for some $k\in\{0,1,\ldots,4\}$, so that $n^{0}+\cdots + n^{4}=0$ is satisfied.  Then
\begin{align*}
|n^{3}-n_{1}^{3}&-\cdots -n_{4}^{3}| =
|(n^1 + \cdots +n^4)^{3}-(n^{1})^{3}-\cdots -(n^{4})^{3}| \\
&=
3|-n^0 n^1 (n^2 + n^3 + n^4) + n^2(n^3 + n^4)(n^2 +n^3 +n^4) + n^3 n^4(n^3 + n^4)| \\
&=
3|(-n^0 n^1+ n^2(n^3 + n^4)+ n^3 n^4)(n^2 + n^3 + n^4) + n^2 n^3 n^4|
\\
&\gtrsim N^0 N^1 |n^0 + n^1|,
\end{align*}
since $N^{3}\ll N^{0}$, $N^{2}N^{3}N^{4}\ll
N^{0}N^{1}|n^{0}+n^{1}|$ and $n^0\neq -n^1$.  Then
\begin{align*}
\max(|\sigma|,|\sigma_{1}|,\ldots,|\sigma_{4}|)
\gtrsim |n^{3}-n_{1}^{3}&-\cdots -n_{4}^{3}|
\gtrsim N^0 N^1 |n^0 + n^1| \geq |n_{\text{max}}|^{2},
\end{align*}
and we cannot be in the region $A_{-1}$.  That is, there is to contribution to $\mathcal{N}_{-1}$ from this case, and we proceed to the next one.

\vspace{0.1in}

\noindent $\bullet$ \textbf{CASE 2.b.} $N^{3}\sim N^{0}$.

\vspace{0.1in}

With $N_2\geq N_3\geq N_4$, this implies, in particular, that
\begin{align}
N_{3}\sim N^{0}.
\label{Eqn:NL-neg1-6}
\end{align}
For the remainder of this case we will only use the restriction of \eqref{Eqn:NL-neg1-6} (and not the stronger condition $N^{3}\sim N^{0}$).  This way, in future subcases, once we establish \eqref{Eqn:NL-neg1-6}, we can revert to the analysis of this case.


\noindent $\bullet$ \textbf{CASE 2.b.i.}
At least two $u_{i}$ of type (I), $i=1,2,3$.

\vspace{0.1in}

Suppose $u_{1}, u_{2}$ are type (I).  We will comment on adapting these arguments to the other cases afterward.  We will use $\mathcal{N}_{-1}|_{2.b.i}$ to denote the contribution to the nonlinearity from this case.  We establish the estimate:
\begin{align}
\|\mathcal{N}_{-1}|_{2.b.i}&(u_{1},u_{2},u_{3},u_{4})\|_{\frac{1}{2}+\delta,-\frac{1}{2}+\delta,T}  \notag \\
&\lesssim \frac{T^{-\beta}}
{(NN_{1}\cdots N_{4})^{\alpha}}(N_{1}N_{2})^{-\varepsilon}
\|u_{3}\|_{\frac{1}{2}-\delta,\frac{1}{2}-\delta,T}
\|u_{4}\|_{\frac{1}{2}-\delta,\frac{1}{2}-\delta,T}.
\label{Eqn:pre-a-1-1}
\end{align}

\vspace{0.1in}

\noindent By changing variables and taking out a supremum, we find
\begin{align}
\|\mathcal{N}_{-1}|_{2.b.i}&(u_{1},u_{2},u_{3},u_{4})\|_{\frac{1}{2}+\delta,-\frac{1}{2}+\delta,T}
= \Big\|\frac{\langle n\rangle^{\frac{1}{2}+\delta}}
{\langle \sigma \rangle^{\frac{1}{2}-\delta}}
\widehat{\mathcal{N}}_{-1}|_{2.b.i}(u_{1},u_{2},u_{3},u_{4})
(n,\tau)\Big\|_{L^{2}_{\{|n|\sim N\},\tau}}
\notag \\
&\ \ \ \ \ \ = \Big\|\chi_{\{|\lambda|<(N^{0})^{2}\}}\frac{\langle n\rangle^{\frac{1}{2}+\delta}}
{\langle \lambda \rangle^{\frac{1}{2}-\delta}}
\widehat{\mathcal{N}}_{-1}|_{2.b.i}(u_{1},u_{2},u_{3},u_{4})
(n,\lambda + n^{3})\Big\|_{L^{2}_{\{|n|\sim N\},\lambda}}
\notag \\
&\ \ \ \ \ \ \leq (N^{0})^{\delta}\sup_{|\lambda|<(N^{0})^{2}}
\|\widehat{\mathcal{N}}_{-1}|_{2.b.i}(u_{1},u_{2},u_{3},u_{4})
(n,\lambda + n^{3})\|_{H^{\frac{1}{2}+\delta}_{|n|\sim N}}.
\label{Eqn:NL-neg1-26}
\end{align}

For the factors $u_{3},u_{4}\in X^{\frac{1}{2}-\delta,\frac{1}{2}-\delta}_{T}$, we will use the following standard representation for functions in $X^{s,b}$ (see, for example, Klainerman-Selberg \cite{KS}).  Given a function $v(x,t)$, we can write $v$ as
\begin{align}
v(x,t)=\int\langle\lambda\rangle^{-b}
\Big(\sum_{n}\langle n\rangle^{2s}\langle \lambda\rangle^{2b}|\hat{v}(n,n^{3}+\lambda)|^{2}
\Big)^{\frac{1}{2}}\Big\{e^{i\lambda t}\sum_{n}a_{\lambda}(n)e^{i(nx+n^{3}t)}\Big\}d\lambda
\label{Eqn:NL-neg1-24}
\end{align}
where $a_{\lambda}(n)=\frac{\hat{v}(n,n^{3}+\lambda)}
{(\sum_{n}\langle n\rangle^{2s}|\hat{v}(n,n^{3}+\lambda)|^{2})^{\frac{1}{2}}}$.
Notice that $\sum_{n}\langle n\rangle^{2s}|a_{\lambda}(n)|^{2}=1$.  For $v\in X^{s,b}$, with $b<\frac{1}{2}$, we have
\begin{align}
\int_{|\lambda|<K}\langle\lambda\rangle^{-b}
\Big(\sum_{n}\langle n\rangle^{2s}\langle \lambda\rangle^{2b}|\hat{v}(n,n^{3}+\lambda)|^{2}
\Big)^{\frac{1}{2}}d\lambda
\lesssim K^{\frac{1}{2}-b}\|v\|_{X^{s,b}},
\label{Eqn:NL-neg1-25}
\end{align}
by Cauchy-Schwarz.  In our context, for each $j=3,4$, we have $u_{j}=\chi_{[0,T]}u_j\in X^{\frac{1}{2}-\delta,\frac{1}{2}-\delta}$, and $|\tau_{j}-n_{j}^{3}|
<(N^{0})^{2}$.  Using \eqref{Eqn:NL-neg1-24} we can write
\begin{align*}
\hat{u}_{j}(n_{j},\tau_{j}) = \int_{|\lambda_{j}|<(N^{0})^{2}}\langle \lambda_{j}\rangle^{-\frac{1}{2}+\delta}
c_{j}(\lambda_{j})a_{\lambda_{j}}(n_{j})\delta(\tau_{j}-n_{j}^{3}-\lambda_{j})d\lambda_{j},
\end{align*}
with $\sum_{n}\langle n\rangle^{2s}|a_{\lambda}(n)|^{2}=1$ and $c_{j}(\lambda_{j})=\Big(\sum_{n}\langle n\rangle^{1-2\delta}\langle \lambda_{j}\rangle^{1-2\delta}
|\hat{u_j}(n,n^{3}+\lambda)|^{2}\Big)^{\frac{1}{2}}$.
Inserting this representation for $u_{3}$, $u_{4}$, we have
\begin{align}
\eqref{Eqn:NL-neg1-26}
&= (N^{0})^{\delta}\sup_{|\lambda|<(N^{0})^{2}}
\Big\|\langle n \rangle^{\frac{1}{2}+\delta}
\sum_{\{|n_{j}|\sim N_{j}\}\cap A_{-1}}
(in_{1})\prod_{i=1}^{2}\frac{g_{n_{i}}(\omega)
\delta(\tau_{i}-n_{i}^{3})}{|n_{i}|} \notag \\
&\iint_{|\lambda_{3}|,|\lambda_{4}|<(N^{0})^{2}}
\prod_{j=3}^{4}
\langle \lambda_{j}\rangle^{-\frac{1}{2}+\delta}
c_{j}(\lambda_{j})a_{\lambda_{j}}(n_{j})
\delta(\tau_{j}-n_{j}^{3}-\lambda_{j})d\lambda_{j}
\Big\|_{L^{2}_{|n|\in N}}.
\label{Eqn:NL-neg1-27}
\end{align}
By Minkowski in $\lambda_{3},\lambda_{4}$, we find
\begin{align}
&\eqref{Eqn:NL-neg1-27}
\leq (N^{0})^{\delta}
\iint_{|\lambda_{3}|,|\lambda_{4}|<(N^{0})^{2}}
\prod_{j=3}^{4}
\langle \lambda_{j}\rangle^{-\frac{1}{2}+\delta}
|c_{j}(\lambda_{j})|d\lambda_{j}
\notag \\
&\sup_{|\lambda|,|\lambda_{3}|,|\lambda_{4}|<(N^{0})^{2}}
\Bigg\|\langle n \rangle^{\frac{1}{2}+\delta}
\sum_{\{|n_{j}|\sim N_{j}\}\cap A_{-1}}
(in_{1})\frac{g_{n_{1}}(\omega)g_{n_{2}}(\omega)}
{|n_{1}||n_{2}|}
a_{\lambda_{3}}(n_{3})a_{\lambda_{4}}(n_{4})
\notag \\
&\ \ \ \ \ \ \ \ \ \ \ \ \ \ \ \ \ \ \ \ \ \iiint_{\tau_{1},\tau_{2},\tau_{3}}
\delta(\tau_{1}-n_{1}^{3})\delta(\tau_{2}-n_{2}^{3})
\delta(\tau_{3}-n_{3}^{3}-\lambda_{3})
\notag \\
&\ \ \ \ \ \ \ \ \ \ \ \ \ \ \ \ \ \ \ \ \ \delta(\lambda + n^{3}-\tau_{1} - \tau_{2} - \tau_{3} -n_{4}^{3}-\lambda_{4})d\tau_{1}
d\tau_{2}d\tau_{3}
\Bigg\|_{L^{2}_{|n|\in N}}.
\label{Eqn:NL-neg1-28}
\end{align}
For fixed $n,n_{1},n_{2},n_{3},\lambda,\lambda_{3},\lambda_{4}$, we find
\begin{align*}
\iiint_{\tau_{1},\tau_{2},\tau_{3}}&
\delta(\tau_{1}-n_{1}^{3})\delta(\tau_{2}-n_{2}^{3})
\delta(\tau_{3}-n_{3}^{3}-\lambda_{3})
\delta(\lambda + n^{3}-\tau_{1} - \tau_{2} - \tau_{3} -n_{4}^{3}-\lambda_{4})d\tau_{1}
d\tau_{2}d\tau_{3} \\
&=
\iint_{\tau_{2},\tau_{3}}
\delta(\tau_{2}-n_{2}^{3})
\delta(\tau_{3}-n_{3}^{3}-\lambda_{3})
\delta(\lambda + n^{3}-n_{1}^{3} - \tau_{2} - \tau_{3} -n_{4}^{3}-\lambda_{4})
d\tau_{2}d\tau_{3}
 \\
&=
\int_{\tau_{3}}
\delta(\tau_{3}-n_{3}^{3}-\lambda_{3})
\delta(\lambda + n^{3}-n_{1}^{3} - n_{2}^{3} - \tau_{3} -n_{4}^{3}-\lambda_{4})
d\tau_{3} \\
&= \left\{
\begin{array}{ll} 1, \ \ \text{if}\
\lambda-\lambda_{3}-\lambda_{4}+n^{3}-n_{1}^{3}
-\cdots - n_{4}^{3}=0,
\\
0,  \ \ \text{otherwise}.
\end{array} \right.
\end{align*}

Then we have
\begin{align}
\eqref{Eqn:NL-neg1-28}
\leq (N^{0})^{\delta}
&\iint_{|\lambda_{3}|,|\lambda_{4}|<(N^{0})^{2}}
\prod_{j=3}^{4}
\langle \lambda_{j}\rangle^{-\frac{1}{2}+\delta}
|c_{j}(\lambda_{j})|d\lambda_{j}
\notag \\
&\sup_{|\lambda|,|\lambda_{3}|,|\lambda_{4}|<(N^{0})^{2}}
\Bigg\|\langle n \rangle^{\frac{1}{2}+\delta}
\sum_{*(n,\lambda + \lambda_{3} + \lambda_{4})}
(in_{1})\frac{g_{n_{1}}(\omega)g_{n_{2}}(\omega)}
{|n_{1}||n_{2}|}
a_{\lambda_{3}}(n_{3})a_{\lambda_{4}}(n_{4})
\Bigg\|_{L^{2}_{|n|\in N}}
\notag \\
\leq (N^{0})^{3\delta}&\prod_{j=3}^{4}
\|u_{j}\|_{\frac{1}{2}-\delta,\frac{1}{2}-\delta,T}
\notag \\
&\sup_{|\mu|<3(N^{0})^{2}}
\Bigg\|\langle n \rangle^{\frac{1}{2}+\delta}
\sum_{*(n,\mu)}
(in_{1})\frac{g_{n_{1}}(\omega)g_{n_{2}}(\omega)}
{|n_{1}||n_{2}|}
a_{3}(n_{3})a_{4}(n_{4})
\Bigg\|_{L^{2}_{|n|\in N}},
\label{Eqn:NL-neg1-29a}
\end{align}
by \eqref{Eqn:NL-neg1-25}, where $\sum_{n_{i}}|n_{i}|^{1-2\delta}|a_{i}(n_{i})|^{2} = 1$, for $i=3,4$, and
\begin{align*}
*(n,\mu):=\Big\{(n_{1},n_{2},n_{3},n_{4})\in\mathbb{Z}^{4}\Big|
(&n,n_{1},n_{2},n_{3},n_{4})\in A_{-1},\ \text{each} \ |n_{i}|\sim N_{i}, \\ &\text{and}\
\mu = n^{3}-n_{1}^{3}-\cdots-n_{4}^{3}\Big\}.
\end{align*}  When we fix numbers other than $n,\mu$, for example $n_{1}$, we let
\begin{align*}
*(n,\mu,n_{1}):=\Big\{(n_{2},n_{3},n_{4})\in\mathbb{Z}^{3}\Big|
(&n,n_{1},n_{2},n_{3},n_{4})\in A_{-1},\ \text{each} \ |n_{i}|\sim N_{i}, \\ &\text{and}\
\mu = n^{3}-n_{1}^{3}-\cdots-n_{4}^{3}\Big\},
\end{align*}
and define $*(n,\mu,n_{2},n_{3})\subset \mathbb{Z}^2, *(n,\mu,n_{1},n_{2},n_{3})\subset \mathbb{Z}$,..., etc, similarly.

Notice that we have dropped the dependence on $\lambda_{3},\lambda_{4}$ in \eqref{Eqn:NL-neg1-29a}; this is justified a posteriori by using estimates which are independent of $\lambda_{3},\lambda_{4}$.
Now for each fixed $|n|\in N,|\mu|<3(N^{0})^{2}$, we write
\begin{align*}
\Big| \sum_{*(n,\mu)}(in_{1})\frac{g_{n_{1}}(\omega)g_{n_{2}}(\omega)}{\langle n_{1}\rangle\langle n_{2}\rangle}a_{n_{3}}a_{n_{4}}\Big|^{2} &=  \Big| \sum_{|n_{4}|\sim N_{4}}|n_{4}|^{\frac{1}{2}-\delta}a_{n_{4}}
\\
&\ \ \ \ \ \ \ \ \ \ \cdot\frac{1}{|n_{4}|^{\frac{1}{2}-\delta}} \Big(\sum_{*(n,\mu,n_{4})}(in_{1})\frac{g_{n_{1}}(\omega)g_{n_{2}}(\omega)}{\langle n_{1}\rangle\langle n_{2}\rangle}a_{n_{3}}\Big)\Big|^{2} \\
&\lesssim  \sum_{|n_{4}|\sim N_{4}} \frac{1}{|n_{4}|^{1-2\delta}}\Big|\sum_{*(n,\mu,n_{4})}(in_{1})\frac{g_{n_{1}}(\omega)g_{n_{2}}(\omega)}{\langle n_{1}\rangle\langle n_{2}\rangle}a_{n_{3}}\Big|^{2},
\end{align*}
by Cauchy-Schwarz in $n_{4}$.  For each fixed $|n_{4}|\sim N_{4},\mu<3(N^{0})^{2}$, we write
\begin{align}
\sum_{|n|\sim N}\Big|\sum_{*(n,\mu,n_{4})}(in_{1})\frac{g_{n_{1}}(\omega)g_{n_{2}}(\omega)}{\langle n_{1}\rangle\langle n_{2}\rangle}a_{n_{3}}\Big|^{2}  =  \sum_{|n|\sim N} |\sum_{|n_{3}|\sim N_{3}}\sigma_{n,n_{3}}^{n_{4},\mu}|n_{3}|^{\frac{1}{2}-\delta}a_{n_{3}}|^{2}
\label{Eqn:case1-matrix}
\end{align}
where $\sigma_{n,n_{3}}^{n_{4},\mu}$ is the $(n,n_{3})^{\text{rd}}$ entry of a matrix $\sigma^{n_{4},\mu}$ (for $n_{4},\mu$ fixed) with columns indexed by $|n_{3}|\sim N_{3}$, and rows indexed by $|n|\sim N$.  These entries are given by
\begin{align}
\sigma_{n,n_{3}}^{n_{4},\mu} = \sum_{(n_{1},n_{2}) \in *(n,n_{3},n_{4},\mu)}(in_{1})\frac{ g_{n_{1}}(\omega)g_{n_{2}}(\omega)}{\langle n_{1}\rangle\langle n_{2}\rangle|n_{3}|^{\frac{1}{2}-\delta}}.
\label{Eqn:NL-neg1-39}
\end{align}
Let us also recall the following property of matrix norms: $\|A^{*}A\|=\|AA^{*}\|$.
Using Cauchy Schwarz, the condition $\sum_{n_{3}}|n_{3}|^{1-2\delta}|a_{3}(n_{3})|^{2} = 1$, and applying Lemma \ref{Lemma:matrixnorm}, we find
\begin{align}
(\ref{Eqn:case1-matrix}) &\lesssim  \|(\sigma^{n_{4},\mu})^{*}\sigma^{n_{4},\mu}\| \notag \\
&=  \|\sigma^{n_{4},\mu}(\sigma^{n_{4},\mu})^{*}\| \notag \\
&\leq  \sup_{|n|\sim N} \sum_{|n_{3}|\sim N_{3}}|\sigma_{n,n_{3}}^{n_{4},\mu}|^{2}  +  \Big(\sum_{\substack{n\neq n'\\ |n|,|n'|\sim N }}\big|\sum_{|n_{3}|\sim N_{3}}\sigma_{n,n_{3}}^{n_{4},\mu}
\overline{\sigma_{n',n_{3}}^{n_{4},\mu}}\big|^{2}\Big)^{\frac{1}{2}}  \notag \\
&=:  I_{1}(n_{4},\mu)  +  I_{2}(n_{4},\mu) .
\label{Eqn:case1-reducedtoI}
\end{align}
To recap we now have
\begin{align}
\eqref{Eqn:NL-neg1-29a}
\lesssim
(N^{0})^{3\delta}&N^{\frac{1}{2}+\delta}
\prod_{j=3}^{4}
\|u_{j}\|_{\frac{1}{2}-\delta,\frac{1}{2}-\delta,T}
\notag \\
&\sup_{|\mu|<3(N^{0})^{2}}
\Big(\sum_{|n_{4}|\sim N_{4}}\frac{1}
{|n_{4}|^{1-2\delta}}(I_{1}(n_{4},\mu)  +  I_{2}(n_{4},\mu))
\Big)^{\frac{1}{2}},
\label{Eqn:NL-neg1-30}
\end{align}
and we estimate the contributions from $I_{1}(n_{4},\mu)$ and $I_{2}(n_{4},\mu)$ separately.

We remark that the sum in \eqref{Eqn:NL-neg1-39}
has at most two terms.  Indeed, for $n$, $n_{3}$, $n_{4}$, and $\mu$ fixed, if $(n_{1},n_{2}) \in *(n,n_{3},n_{4},\mu)$, then $n_{2}$ is determined by $n_{1}$ through the condition $n=n_{1}+\cdots + n_{4}$, and $n_{1}$ satisfies the equation $\mu=n^{3}-n_{1}^{3}-\cdots -n_{4}^{3}$.  Since $n_{1}\neq -n_{2}$ (recall \eqref{Eqn:NL-new-2}), this is a non-degenerate quadratic equation in $n_{1}$:
\begin{align}
\mu &= n^{3} -n_{1}^{3} -\cdots-n_{4}^{3} \notag
\\ &= n^{3} - n_{1}^{3}-(n-n_{1}-n_{3}-n_{4})^{3} -\cdots-n_{4}^{3} \notag \\ &=  -3(n-n_{3}-n_{4})n_{1}^{2} -3(n-n_{3}-n_{4})^{2}n_{1} + n^{3} \notag \\ &\ \ \ \ \ \ \ -(n-n_{3}-n_{4})^{3} -n_{3}^{3}-n_{4}^{3},
\label{Eqn:quad}
\end{align}
with $n-n_{3}-n_{4}=n_{1}+n_{2}\neq 0$, and this equation has at most two roots $n_{1}$.

Then to estimate $I_{1}(n_{4},\mu)$, for $n,n_{3},n_{4},\mu$ fixed, we bring the absolute value inside the sum of (at most) two terms in \eqref{Eqn:NL-neg1-39} and apply Lemma \ref{Lemma:prob1} to obtain, for $\omega\in\widetilde{\Omega}_{T}$:
\begin{align}
I_{1}(n_{4},\mu) &\leq  \sup_{|n|\sim N}\sum_{\substack{|n_{3}|\sim N_{3}\\(n_{1},n_{2}) \in *(n,n_{3},n_{4},\mu)}} \frac{|n_{1}|^{2}
|g_{n_{1}}(\omega)||g_{n_{2}}(\omega)|}{\langle n_{1}\rangle^{2}
\langle n_{2}\rangle^{2}\langle n_{3}\rangle^{1-2\delta}} \notag \\
&\lesssim  T^{-\beta}\sup_{|n|\sim N}\sum_{\substack{|n_{3}|\sim N_{3}\\(n_{1},n_{2}) \in *(n,n_{3},n_{4},\mu)}} \frac{|n_{1}|^{2}}{\langle n_{1}\rangle^{2-\epsilon}
\langle n_{2}\rangle^{2-\epsilon}\langle n_{3}\rangle^{1-2\delta}} \notag \\
&\lesssim  \frac{T^{-\beta}}{(N^{0})^{2-2\epsilon-2\delta-\gamma}}
\sum_{|n_{3}|\sim N_{3}}\frac{1}{\langle n_{3}\rangle^{1+\gamma}}
\lesssim   \frac{T^{-\beta}}{(N^{0})^{2-2\epsilon-2\delta-\gamma}},
\label{Eqn:NL-neg1-44}
\end{align}
where we have used $N_{3}\gtrsim \max(N,N_{1})$ and $N_{2}\geq N_{3}\geq N_{4}$ in the second last line.
Then we can estimate the contribution to \eqref{Eqn:NL-neg1-30} coming from $I_{1}(n_{4},\mu)$ by
\begin{align}
\frac{T^{-\frac{\beta}{2}}
N^{\frac{1}{2}+\delta}}{(N^0)^{1-\epsilon-5\delta-\frac{\gamma}{2}}} &\prod_{j=3}^{4}
\|u_{j}\|_{\frac{1}{2}-\delta,\frac{1}{2}-\delta,T}
\Big(\sum_{|n_{4}| \sim N_{4}}\frac{1}{\langle n_{4}\rangle^{1-2\delta}}\Big)^{\frac{1}{2}}
\notag \\
&\lesssim
\frac{T^{-\frac{\beta}{2}}
N^{\frac{1}{2}+\delta}}
{(N^0)^{1-\epsilon-6\delta-\gamma}} \prod_{j=3}^{4}
\|u_{j}\|_{\frac{1}{2}-\delta,\frac{1}{2}-\delta,T}\Big(\sum_{|n_{4}| \sim N_{4}}\frac{1}{\langle n_{4}\rangle^{1+\gamma}}\Big)^{\frac{1}{2}}
\notag \\
&\lesssim
\frac{T^{-\beta}}
{(NN_{1}\cdots N_{4})^{\alpha}}(N_{1}N_{2})^{-\varepsilon}
\|u_{3}\|_{\frac{1}{2}-\delta,\frac{1}{2}-\delta,T}
\|u_{3}\|_{\frac{1}{2}-\delta,\frac{1}{2}-\delta,T}.
\label{Eqn:NL-neg1-31}
\end{align}
To estimate $I_{2}(n_{4},\mu)$, note that
\begin{align}
I_{2}(n_{4},\mu) &=  \Bigg(
\sum_{\substack{n\neq n'\\ |n|,|n'|\sim N}}
\Big|\sum_{|n_{3}|\sim N_{3}}
\Big(\sum_{*(n,n_{3},n_{4},\mu)}\frac{(in_{1}) g_{n_{1}}(\omega)g_{n_{2}}(\omega)}{\langle n_{1}\rangle\langle n_{2}\rangle\langle n_{3}\rangle^{\frac{1}{2}-\delta}}\Big)
\notag \\
&\ \ \ \ \ \  \ \ \ \ \ \ \ \ \ \ \Big(\sum_{*(n',n_{3},n_{4},\mu)}\frac{(-in_{1}') \overline{g_{n_{1}'}}(\omega)\overline{g_{n_{2}'}}(\omega)}{\langle n_{1}'\rangle\langle n_{2}'\rangle \langle n_{3}'\rangle ^{\frac{1}{2}-\delta}}\Big)\Big|^{2}\Bigg)^{\frac{1}{2}}.
\label{Eqn:case1-I2}
\end{align}
For each fixed $n,n',n_{4},\mu$, let
\begin{align*}
F_{n,n',n_{4},\mu}(\omega) := \sum_{\substack{|n_{3}|\sim N_{3}\\n_{1},n_{2}\in*(n,\mu,n_{3},n_{4})\\n_{1}',n_{2}'\in*(n',\mu,n_{3},n_{4})}}
\frac{n_{1}n_{1}'g_{n_{1}}(\omega)
g_{n_{2}}(\omega)\overline{g_{n_{1}'}}(\omega)
\overline{g_{n_{2}'}}(\omega)}{\langle n_{1}\rangle\langle n_{2}\rangle\langle n_{1}'\rangle\langle n_{2}'\rangle\langle n_{3}\rangle^{1-2\delta}}.
\end{align*}
Notice that $F_{n,n',n_{4},\mu}(\omega):= F_{n,n',n_{4},\mu}(u_{0,\omega})$ is $\rho$-measurable (it is a polynomial function of the randomized Fourier coefficients).  By Lemma \ref{Lemma:prob2},
\begin{align*}
\|F_{n,n',n_{4},\mu}\|_{L^{p}(\Omega)} \leq \sqrt{5}(p-1)^{2}\|F_{n,n',n_{4},\mu}\|_{L^{2}(\Omega)},
\end{align*}
for each $2<p<\infty$.  Then by Lemma \ref{Lemma:prob3} (applied with
$\tilde{N}=(\|F_{n,n',n_{4},\mu}\|_{L^{2}(\Omega)})^{-1}$, $\alpha=1$ and $k=4$) it follows that
\begin{align*}
P(|F_{n,n',n_{4},\mu}(\omega)|\geq \lambda) \leq e^{-c\|F_{n,n',n_{4},\mu}\|^{\frac{1}{2}}_{L^{2}(\Omega)}\lambda^{\frac{1}{2}}}.
\end{align*}
Taking $\displaystyle \lambda = \|F_{n,n',n_{4},\mu}\|_{L^{2}(\Omega)}(N^{0})^{2\beta}T^{-2\beta}$, we have
\begin{align*}
P(|F_{n,n',n_{4},\mu}(\omega)|\geq \|F_{n,n',n_{4},\mu}\|_{L^{2}(\Omega)}(N^{0})^{2\beta}T^{-2\beta}) \leq e^{-c\frac{(N^0)^{\beta}}{T^{\beta}}}.
\end{align*}
Let
\begin{align*}
\Omega_{N,N_{1},\ldots,N_{4},T}
:= \bigcap_{\substack{|n|\sim N, |n'|\sim N
\\ |n_{4}|\sim N_{4}, |\mu|<3(N^{0})^{2}
}}
\Big\{|F_{n,n',n_{4},\mu}(\omega)| < \|F_{n,n',n_{4},\mu}\|_{L^{2}(\Omega)}(N^{0})^{2\beta}T^{-2\beta}\Big\}.
\end{align*}
Then
\begin{align*}
P((\Omega_{N,N_{1},\ldots,N_{4},T})^{c})
&\leq \sum_{\substack{|n|\sim N, |n'|\sim N
\\ |n_{4}|\sim N_{4}, |\mu|<3(N^{0})^{2}
}}
P(|F_{n,n',n_{4},\mu}(\omega)|\geq \|F_{n,n',n_{4},\mu}\|_
{L^{2}(\Omega)}(N^{0})^{2\beta}T^{-2\beta})
\\
&\leq \sum_{\substack{|n|\sim N, |n'|\sim N
\\ |n_{4}|\sim N_{4}, |\mu|<3(N^{0})^{2}
}}
e^{-c\frac{(N^0)^{\beta}}{T^{\beta}}}
\leq
N^{2}(N^{0})^{2}N_{4}
e^{-c\frac{(N^0)^{\beta}}{T^{\beta}}}
\\
&\leq
(N^{0})^{5}
e^{-c\frac{(N^0)^{\beta}}{T^{\beta}}}
\leq
(N^{0})^{-5\kappa}
e^{-\frac{\tilde{c}}{T^{\beta}}}
\leq
(NN_{1}\cdots N_{4})^{-\kappa}
e^{-\frac{\tilde{c}}{T^{\beta}}},
\end{align*}
for some
$\tilde{c}(\beta),\kappa(\beta)>0$.
Furthermore, if $\omega\in\Omega_{N,N_{1},\ldots,N_{4},T}$, then for each $|n_{4}|\sim N_{4}, |\mu| < 3(N^0)^{2}$, we have
\begin{align}
I_{2}(n_{4},\mu) &=  \Big(\sum_{
\substack{n\neq n'\\ |n|,|n'|\sim N}}
|F_{n,n',n_{4},\mu}(\omega)|^{2}\Big)^{\frac{1}{2}}  \notag \\
&<  \Big(\sum_{
\substack{n\neq n'\\ |n|,|n'|\sim N}}
\|F_{n,n',n_{4},\mu}\|^{2}_{L^{2}(\Omega)}(N^{0})^{4\beta}T^{-4\beta}
\Big)^{\frac{1}{2}}.
\label{Eqn:NL-neg1-36}
\end{align}
Next we compute
\begin{align}
\|F_{n,n',n_{4},\mu}\|^{2}_{L^{2}(\Omega)}
&=  \mathbb{E}\Bigg(
\Big|\sum_{\substack{|n_{3}|\sim N_{3}\\n_{1},n_{2}\in
*(n,\mu,n_{3},n_{4})\\n_{1}',n_{2}'\in
*(n',\mu,n_{3},n_{4})}}
\frac{-n_{1}n_{1}'g_{n_{1}}(\omega)g_{n_{2}}(\omega)
\overline{g_{n_{1}'}}(\omega)\overline{g_{n_{2}'}}(\omega)}{\langle n_{1}\rangle\langle n_{2}\rangle\langle n_{1}'\rangle\langle n_{2}'\rangle\langle n_{3}\rangle^{1-2\delta}}\Big|^{2}\Bigg)
\notag \\
&=
\sum_{\substack{|n_{3}|,|m_{3}|\sim N_{3}\\(n_{1},n_{2})\in
*(n,\mu,n_{3},n_{4}), \,(n_{1}',n_{2}')\in
*(n',\mu,n_{3},n_{4})\\
(m_{1},m_{2})\in
*(n,\mu,m_{3},n_{4}), \,(m_{1}',m_{2}')\in
*(n',\mu,m_{3},n_{4})
}}
\notag \\
&\ \ \ \frac{(-n_{1}n_{1}')(-m_{1}m_{1}')}{\langle n_{1}\rangle \langle n_{2}\rangle
\langle n_{1}'\rangle \langle n_{2}'\rangle
\langle n_{3}\rangle^{1-2\delta}\langle m_{1}\rangle \langle m_{2}\rangle
\langle m_{1}'\rangle \langle m_{2}'\rangle
\langle m_{3}\rangle^{1-2\delta}} \notag \\
&\ \ \ \mathbb{E}\Big(g_{n_{1}}(\omega)g_{n_{2}}(\omega)\overline{g_{n_{1}'}}(\omega)
\overline{g_{n_{2}'}}(\omega)
\overline{g_{m_{1}}}(\omega)
\overline{g_{m_{2}}}(\omega)
g_{m_{1}'}(\omega)g_{m_{2}'}(\omega)\Big).
\notag
\\
&\lesssim  \frac{1}{(N^0)^{6-4\delta}}
\sum_{\substack{|n_{3}|,|m_{3}|\sim N_{3}\\(n_{1},n_{2})\in
*(n,\mu,n_{3},n_{4}), \,(n_{1}',n_{2}')\in
*(n',\mu,n_{3},n_{4})\\
(m_{1},m_{2})\in
*(n,\mu,m_{3},n_{4}), \,(m_{1}',m_{2}')\in
*(n',\mu,m_{3},n_{4})
}}\notag \\
&\ \ \ \big|\mathbb{E}\Big(g_{n_{1}}(\omega)g_{n_{2}}(\omega)
\overline{g_{n_{1}'}}(\omega)\overline{g_{n_{2}'}}(\omega)
\overline{g_{m_{1}}}(\omega)\overline{g_{m_{2}}}(\omega)
g_{m_{1}'}(\omega)g_{m_{2}'}(\omega)\Big)
\big|.
\label{Eqn:NL-neg1-34}
\end{align}
Then combining \eqref{Eqn:NL-neg1-36} and \eqref{Eqn:NL-neg1-34} we have
\begin{align}
I_{2}(n_{4},\mu) &=  \Big(\sum_{
\substack{n\neq n'\\ |n|,|n'|\sim N}}|F_{n,n',n_{4},\mu}(\omega)|^{2}\Big)^{\frac{1}{2}}  \notag \\
&<  \frac{T^{-2\beta}}{(N^0)^{3-2\delta-2\beta}}\Bigg(
\sum_{\substack{n\neq n',\, |n|,|n'|\sim N, \,|n_{3}|,|m_{3}|\sim N_{3}\\(n_{1},n_{2})\in
*(n,\mu,n_{3},n_{4})\, (n_{1}',n_{2}')\in
*(n',\mu,n_{3},n_{4}) \\
(m_{1},n_{2})\in
*(n,\mu,m_{3},n_{4})\, (m_{1}',m_{2}')\in
*(n',\mu,m_{3},n_{4})
}}\notag \\
&\ \ \ \ \ \ \ \big|\mathbb{E}\Big(g_{n_{1}}(\omega)g_{n_{2}}(\omega)
\overline{g_{n_{1}'}}(\omega)\overline{g_{n_{2}'}}(\omega)
g_{m_{1}}(\omega)g_{m_{2}}(\omega)\overline{g_{m_{1}'}}(\omega)\overline{g_{m_{2}'}}(\omega)\Big)
\big|
\Bigg)^{\frac{1}{2}}
\notag \\
&<  \frac{T^{-2\beta}}{(N^0)^{\frac{3}{2}-2\delta-2\beta}},
\label{Eqn:NL-neg1-37}
\end{align}
by the following lemma.
\begin{lemma}
Let
\begin{align*}
S(n_{4},\mu):=\Bigg\{(n,n_{1},&n_{2},n_{3},n',n_{1}',
n_{2}',m_{1},m_{2},m_{3},m_{1}',
m_{2}')\bigg| \\ &n\neq n',\, |n|,|n'|\sim N, \,|n_{3}|,|m_{3}|\sim N_{3}\\&(n_{1},n_{2})\in
*(n,\mu,n_{3},n_{4}),\, (n_{1}',n_{2}')\in
*(n',\mu,n_{3},n_{4}), \\
&(m_{1},m_{2})\in
*(n,\mu,m_{3},n_{4}),\, (m_{1}',m_{2}')\in
*(n',\mu,m_{3},n_{4}),
 \\  \text{and} \  &\mathbb{E}\Big(g_{n_{1}}(\omega)g_{n_{2}}(\omega)
 \overline{g_{n_{1}'}}(\omega)\overline{g_{n_{2}'}}(\omega)
\overline{g_{m_{1}}}(\omega)\overline{g_{m_{2}}}(\omega)
g_{m_{1}'}(\omega)g_{m_{2}'}(\omega)\Big)
\neq 0\Bigg\}.
\end{align*}
Then $\#\{S(n_{4},\mu)\}<(N^0)^{3}$.
\label{Lemma:1bi}
\end{lemma}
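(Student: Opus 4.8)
\textbf{Proof proposal for Lemma \ref{Lemma:1bi}.}

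The plan is to count the set $S(n_4,\mu)$ by exploiting two sources of rigidity: the quadratic constraints coming from the resonance function, and the pairing constraint forced by the nonvanishing of the Gaussian expectation. First I would observe that, as explained in the paragraph containing \eqref{Eqn:quad}, once $n$, $n_3$, $n_4$, and $\mu$ are fixed, the pair $(n_1,n_2)$ is determined up to at most two choices (the two roots of the nondegenerate quadratic \eqref{Eqn:quad}); the same holds for $(n_1',n_2')$ given $n'$, $n_3$, $n_4$, $\mu$, for $(m_1,m_2)$ given $n$, $m_3$, $n_4$, $\mu$, and for $(m_1',m_2')$ given $n'$, $m_3$, $n_4$, $\mu$. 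Thus the whole $12$-tuple in $S(n_4,\mu)$ is determined, up to an $O(1)$ multiplicative factor ($2^4 = 16$), by the four free parameters $n$, $n'$, $n_3$, $m_3$, each ranging over a dyadic block of size $\lesssim N^0$. A naive count therefore gives $\lesssim (N^0)^4$, which is one power too many; the point of the lemma is that the expectation condition eliminates one degree of freedom.

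The key step is to analyze which index coincidences are forced by
\[
\mathbb{E}\Big(g_{n_{1}}g_{n_{2}}\overline{g_{n_{1}'}}\,\overline{g_{n_{2}'}}\,
\overline{g_{m_{1}}}\,\overline{g_{m_{2}}}\,g_{m_{1}'}g_{m_{2}'}\Big)\neq 0 .
\]
Since the $g_n$ are independent mean-zero complex Gaussians with $g_{-n}=\overline{g_n}$, a product of $g$'s and $\overline{g}$'s has nonzero expectation only if the unconjugated factors can be matched with the conjugated factors (possibly through the relation $g_{-n}=\overline{g_n}$), i.e. the multiset $\{n_1,n_2,m_1',m_2'\}$ must equal the multiset $\{n_1',n_2',m_1,m_2\}$ after allowing sign flips that are compatible with the pairing $g_{-k}=\overline{g_k}$. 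I would enumerate the (finitely many) pairing patterns. Generically a pairing identifies one of $\{n_1,n_2\}$ with one of $\{n_1',n_2'\}$ or $\{m_1,m_2\}$ (up to sign), and since $n_1+n_2 = n - n_3 - n_4$ while $n_1'+n_2' = n' - n_3 - n_4$ with $n\neq n'$, such an identification forces a nontrivial algebraic relation among $n$, $n'$, $n_3$, $m_3$, $n_4$, reducing the number of free dyadic parameters from four to three. One must also check the ``diagonal'' pairings (e.g. $n_1 = n_1'$, $n_2 = m_1$, etc.) and the degenerate cases where some indices collide within a single quadratic pair — but in those cases the relation $n_1 \neq -n_2$ (from \eqref{Eqn:NL-new-2}) together with $n \neq n'$ again collapses a parameter. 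Hence in every admissible pairing pattern at most three of $n$, $n'$, $n_3$, $m_3$ remain free, so $\#\{S(n_4,\mu)\} \lesssim (N^0)^3$, with the implicit constant absorbing the bounded number of quadratic-root choices and the bounded number of pairing patterns.

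The main obstacle I anticipate is the bookkeeping of the pairing patterns: one has to be careful that \emph{no} pattern leaves all four of $n$, $n'$, $n_3$, $m_3$ free, and in particular the ``parallel'' pattern where $(n_1,n_2)$ pairs with $(n_1',n_2')$ and $(m_1,m_2)$ pairs with $(m_1',m_2')$ needs attention — but there the pairing forces $\{n_1,n_2\} = \{n_1',n_2'\}$ (up to sign), hence $n_1+n_2 = \pm(n_1'+n_2')$, i.e. $n - n_3 - n_4 = \pm(n' - n_3 - n_4)$; the $+$ sign is excluded by $n\neq n'$ and the $-$ sign pins down $n'$ in terms of $n$, $n_3$, $n_4$, again removing a parameter. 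Collecting these cases yields the bound. I would present the argument as a short case check on the $O(1)$ pairing patterns after first reducing to the four-parameter count, rather than writing out all arithmetic.
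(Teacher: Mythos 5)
Your scheme — bound $S(n_4,\mu)$ by $(N^0)^4$ via the four parameters $n,n',n_3,m_3$, each of $(n_1,n_2)$, $(n_1',n_2')$, $(m_1,m_2)$, $(m_1',m_2')$ being determined up to at most two root choices of the nondegenerate quadratic \eqref{Eqn:quad} once its governing quadruple is fixed, and then shave one power by showing the Wick condition imposes a codimension-one relation — is the right one. The dominant contribution is $n_3=m_3$, for which the $m$-block and the $n$-block solve the same quadratic and a pairing is automatic, so three is indeed the sharp exponent. But two things need repair. First, after rewriting $\overline{g_k}=g_{-k}$ the relevant multiset is $K=\{n_1,n_2,-n_1',-n_2',-m_1,-m_2,m_1',m_2'\}$ and the Wick condition is that $K$ admits a perfect matching into zero-sum pairs. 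Inside $K$, a pair containing $n_1$ and $-n_1'$ forces $n_1=n_1'$, never $n_1=-n_1'$; there is no sign flip available between the $n$-block and the $n'$-block. So in your "parallel" pattern, matching $\{n_1,n_2\}$ against $\{-n_1',-n_2'\}$ forces $\{n_1,n_2\}=\{n_1',n_2'\}$ and hence $n=n'$, which excludes the pattern outright. The "$-$ sign" subcase you describe belongs to a different pattern, namely $\{n_1,n_2\}$ matching $\{m_1',m_2'\}$, and the relation it produces is $n+n'=n_3+m_3+2n_4$ — with $m_3$, not the $n_3$ you wrote.

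Second, the appeal to "generically" does not dispose of the mixed patterns, and these are where the work is. After excluding the four within-block pairs via \eqref{Eqn:NL-new-2} there remain on the order of sixty admissible matchings, and a number of them — e.g.\ $n_1=n_1',\,n_2=m_1,\,n_2'=m_1',\,m_2=m_2'$, or $n_1=m_1,\,n_2=-m_1',\,n_1'=-m_2,\,n_2'=m_2'$ — produce \emph{no} linear relation among $n,n',n_3,m_3$ from the sum identities $n_1+n_2=n-n_3-n_4$, etc., alone; you must also bring in the cubic identities built into $*(\cdot,\mu,\cdot,n_4)$. A mechanism worth isolating, because it closes several of these cases at once: whenever the pairing forces two frequencies $a,b\in\{n_1,n_2,n_1',n_2'\}$ to satisfy $a+b=m_3-n_3$ and $a^3+b^3=m_3^3-n_3^3$, this factors to $\{a,b\}=\{m_3,-n_3\}$, and then $a=-n_3$ or $b=-n_3$ contradicts \eqref{Eqn:NL-new-2}. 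In the remaining mixed cases the system $n_2-n_2'=n-n'$, $n_2^3-n_2'^3=n^3-(n')^3$ factors as $(n_2'+n)(n_2'-n')=0$; the root $n_2'=n'$ is ruled out by \eqref{Eqn:NL-new-2}, so $n_2'=-n$ and $n_2=-n'$ are pinned, after which one of the $*$-constraints becomes the required codimension-one relation. Until this enumeration is actually carried out (or a uniform argument replaces it), the proposal identifies the correct ingredients but does not establish the lemma. Finally, the strict $<(N^0)^3$ in the statement should be read as $\lesssim(N^0)^3$, the implicit constant absorbing the $O(1)$ root choices and pairing patterns, as you note.
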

The proof of Lemma \ref{Lemma:1bi} can be found in Section 5.2.5 of \cite{R-Th}.  Using \eqref{Eqn:NL-neg1-37} and Lemma \ref{Lemma:1bi}, if $\omega\in\Omega_{N,N_{1},\ldots,N_{4},T}$, we can estimate the contribution to \eqref{Eqn:NL-neg1-30} coming from $I_{2}(n_{4},\mu)$ by
\begin{align}
\frac{T^{-\beta}
N^{\frac{1}{2}+\delta}}{(N^0)^{\frac{3}{4}-5\delta-\beta}} &\prod_{j=3}^{4}
\|u_{j}\|_{\frac{1}{2}-\delta,\frac{1}{2}-\delta,T}
\Big(\sum_{|n_{4}| \sim N_{4}}\frac{1}{\langle n_{4}\rangle^{1-2\delta}}\Big)^{\frac{1}{2}}
\notag \\
&\lesssim
\frac{T^{-\beta}
N^{\frac{1}{2}+\delta}}{(N^0)^{\frac{3}{4}-6\delta-\beta-\gamma}} \prod_{j=3}^{4}
\|u_{j}\|_{\frac{1}{2}-\delta,\frac{1}{2}-\delta,T}
\Big(\sum_{|n_{4}| \sim N_{4}}\frac{1}{\langle n_{4}\rangle^{1+2\gamma}}\Big)^{\frac{1}{2}}
\notag \\
&\lesssim
\frac{T^{-\beta}}
{(NN_{1}\cdots N_{4})^{\alpha}}(N_{1}N_{2})^{-\varepsilon}
\|u_{3}\|_{\frac{1}{2}-\delta,\frac{1}{2}-\delta,T}
\|u_{4}\|_{\frac{1}{2}-\delta,\frac{1}{2}-\delta,T}.
\label{Eqn:NL-neg1-38}
\end{align}
Combining \eqref{Eqn:NL-neg1-26}, \eqref{Eqn:NL-neg1-27}, \eqref{Eqn:NL-neg1-28}, \eqref{Eqn:NL-neg1-29a}, \eqref{Eqn:NL-neg1-30}, \eqref{Eqn:NL-neg1-31} and \eqref{Eqn:NL-neg1-38},  if $\omega\in
\widetilde{\Omega}_{T}\cap\Omega_{N,N_{1},\ldots,N_{4},T}$, then the estimate \eqref{Eqn:pre-a-1-1} holds true.

It is straight forward to check that the crucial inequalities in lines \eqref{Eqn:NL-neg1-44} and \eqref{Eqn:NL-neg1-37} remain true (using \eqref{Eqn:NL-neg1-6}) under permutations of the roles of $(n_{1},n_{2},n_{3})$ in the preceding analysis.  In particular by exploiting the restriction \eqref{Eqn:NL-new-2}.  The analysis of case 2.b.1. is complete.

\vspace{0.1in}

 \noindent $\bullet$ \textbf{CASE 2.b.ii.}
One of $u_{i}$ of type (I), $i=1,2,3$, others type (II).

\vspace{0.1in}
We will begin by assuming $u_{1}$ is type (I),
and $u_{2}, u_{3}$ are type (II).  We will discuss modifications for other possibilities afterwards.  In this case we establish the estimate
\begin{align}
\|\mathcal{N}_{-1}|_{2.b.ii.}&(u_{1},u_{2},u_{3},u_{4})\|_{\frac{1}{2}+\delta,-\frac{1}{2}+\delta,T}  \notag \\
&\lesssim T^{-3\beta}\frac{1}{(NN_{1}\cdots N_{4})^{\alpha}N_{1}^{\varepsilon}}\|u_{2}\|_{\frac{1}{2}+\delta,\frac{1}{2}-\delta,T}
\|u_{3}\|_{\frac{1}{2}+\delta,\frac{1}{2}-\delta,T}
\|u_{4}\|_{\frac{1}{2}-\delta,\frac{1}{2}-\delta,T}.
\label{Eqn:pre-b-1}
\end{align}
With the condition \eqref{Eqn:NL-neg1-6} we have
\begin{align}
\frac{|n|^{\frac{1}{2}+\delta}|n_{1}|}
{|n_{1}|^{\frac{1}{2}-\gamma}|n_{2}|^{\frac{1}{2}+\delta-\gamma}
|n_{3}|^{\frac{1}{2}+\delta-\gamma}} &\lesssim  \frac{1}{(N^{0})^{\delta-3\gamma}}
\notag \\
&\lesssim \frac{1}{|n|^{\gamma}}
\frac{1}{(NN_{1}\cdots N_{4})^{\alpha}(N_{1})^{\epsilon}}.
\label{Eqn:NL-neg1-45}
\end{align}
Using \eqref{Eqn:NL-neg1-45}, \eqref{Eqn:pre-b-1} follows from
\begin{align}
\|\Big(\sum_{|n_{1}|\sim N_{1}}
&\frac{|g_{n_{1}}(\omega)|e^{in_{1}x+in_{1}^{3}t}}
{|n_{1}|^{\frac{1}{2}+\gamma}}
\Big)
f_{2}f_{3}u_{4}\|_{-\gamma,-\frac{1}{2}+\delta}
\notag \\ &\lesssim  T^{-\beta}
\|f_{2}\|_{\gamma,\frac{1}{2}-\delta}
\|f_{3}\|_{\gamma,\frac{1}{2}-\delta}
\|u_{4}\|_{\frac{1}{2}-\delta,\frac{1}{2}-\delta}
\label{Eqn:NL-neg1-46}
\end{align}
To establish \eqref{Eqn:NL-neg1-46}, notice that by Lemma \ref{Lemma:lin2} and Lemma \ref{Lemma:prob1}, if $\omega\in\widetilde{\Omega}_{T}$, then
\begin{align}
\Bigg\|\sum_{|n_{1}|\sim N_{1}}
\frac{|g_{n_{1}}(\omega)|e^{in_{1}x+in_{1}^{3}t}}
{|n_{1}|^{\frac{1}{2}+\gamma}}
\Bigg\|_{\frac{\gamma}{2},\frac{1}{2}-\delta,T}
\lesssim T^{-\beta}
\Big(\sum_{|n_{1}|\sim N_{1}}\frac{1}
{|n_{1}|^{1+\gamma-2\varepsilon}}\Big)^{\frac{1}{2}} \lesssim T^{-\beta},
\label{Eqn:NL-neg1-47}
\end{align}
by taking $\varepsilon=\varepsilon(\gamma)$ sufficiently small.  Then using duality, H\"{o}lder's inequality, \eqref{Eqn:X-Str-interp} and \eqref{Eqn:NL-neg1-47},
\begin{align*}
|\int\sum v\cdot \Big(\sum_{|n_{1}|\sim N_{1}}
&\frac{|g_{n_{1}}(\omega)|e^{in_{1}x+in_{1}^{3}t}}
{|n_{1}|^{\frac{1}{2}+\gamma}}
\Big)f_{2}f_{3}u_{4} dxdt|  \\ &\leq  \|v\|_{L^{5}_{x,t}}
\Bigg\|\sum_{|n_{1}|\sim N_{1}}
\frac{|g_{n_{1}}(\omega)|e^{in_{1}x+in_{1}^{3}t}}
{|n_{1}|^{\frac{1}{2}+\gamma}}
\Bigg\|_{L^{5}_{x,t}}
\prod_{j=2,3}\|f_{j}\|_{L^{5}_{x,t}}
\|u_{4}\|_{L^{5}_{x,t}}   \\
&\lesssim
\|v\|_{\frac{\gamma}{2},\frac{1}{2}-\delta}
\Bigg\|\sum_{|n_{1}|\sim N_{1}}
\frac{|g_{n_{1}}(\omega)|e^{in_{1}x+in_{1}^{3}t}}
{|n_{1}|^{\frac{1}{2}+\gamma}}
\Bigg\|_{\frac{\gamma}{2},\frac{1}{2}-\delta}
\prod_{j=2,3}
\|f_{j}\|_{\frac{\gamma}{2},\frac{1}{2}-\delta}
\|u_{4}\|_{\frac{\gamma}{2},\frac{1}{2}-\delta}
\\
&\lesssim
T^{-\beta}
\|v\|_{\gamma,\frac{1}{2}-\delta}
\|f_{2}\|_{\gamma,\frac{1}{2}-\delta}
\|f_{3}\|_{\gamma,\frac{1}{2}-\delta}
\|u_{4}\|_{\frac{1}{2}-\delta,\frac{1}{2}-\delta},
\end{align*}
and \eqref{Eqn:NL-neg1-46} holds for $\omega\in\widetilde{\Omega}_{T}$.

It is easy to verify that the crucial inequality, \eqref{Eqn:NL-neg1-45}, remains true (by \eqref{Eqn:NL-neg1-6}) if we permute the roles of $(n_{1},n_{2},n_{3})$ in the preceding analysis.
The analysis of Case 2.b.ii. is complete.

\vspace{0.1in}

 \noindent $\bullet$ \textbf{CASE 2.b.iii.}
$u_{1}$,$u_{2}$,$u_{3}$ all type (II).

\vspace{0.1in}

In this subcase we establish the deterministic estimate
\begin{align}
\|\mathcal{N}_{-1}|_{2.b.iii.}&(u_{1},u_{2},u_{3},u_{4})
\|_{\frac{1}{2}+\delta,-\frac{1}{2}+\delta,T}  \notag \\
&\lesssim
\frac{1}{(NN_{1}\cdots N_{4})^{\alpha}}
\prod_{j=1}^{3}\|u_{j}\|_{\frac{1}{2}+\delta,\frac{1}{2}-\delta,T}
\|u_{4}\|_{\frac{1}{2}-\delta,\frac{1}{2}-\delta,T}.
\label{Eqn:pre-c-1}
\end{align}
Using \eqref{Eqn:NL-neg1-6} we find
\begin{align*}
\frac{|n|^{\frac{1}{2}+\delta}|n_{1}|}
{|n_{1}|^{\frac{1}{2}+\delta-\gamma}
|n_{2}|^{\frac{1}{2}+\delta-\gamma}|n_{3}|^{\frac{1}{2}+\delta-\gamma}}\lesssim \lesssim \frac{1}{(N^0)^{2\delta-3\gamma}}
\lesssim \frac{1}{|n|^{\gamma}(NN_1\cdots N_4)^{\alpha}}.
\end{align*}
Then \eqref{Eqn:pre-c-1} follows from
\begin{align*}
\|f_{1}f_{2}f_{3}u_{4}\|_{-\gamma,-\frac{1}{2}+\delta} \lesssim  \prod_{j=1}^{3}\|f_{j}\|_{\gamma,\frac{1}{2}-\delta}\|u_{4}\|_{\frac{1}{2}-\delta,\frac{1}{2}-\delta}.
\end{align*}
By duality, the last estimate is equivalent to
\begin{align}
|\sum\int v \cdot f_{1}f_{2}f_{3}u_{4}dxdt| \lesssim  \|v\|_{\gamma,\frac{1}{2}-\delta}\prod_{j=1}^{3}
\|f_{j}\|_{\gamma,\frac{1}{2}-\delta}
\|u_{4}\|_{\frac{1}{2}-\delta,\frac{1}{2}-\delta}.
\label{Eqn:b21}
\end{align}
We obtain \eqref{Eqn:b21} with H\"{o}lder's inequality and \eqref{Eqn:X-Str-interp}
\begin{align*}
|\sum\int v \cdot f_{1}f_{2}f_{3}u_{4}dxdt| &\leq \|v\|_{L^{5}_{x,t}}\prod_{j=1}^{3}\|f_{j}\|_{L^{5}_{x,t}}
\|u_{4}\|_{L^{5}_{x,t}}   \\
&\lesssim  \|v\|_{\gamma,\frac{1}{2}-\delta}
\prod_{j=1}^{3}\|f_{j}\|_{\gamma,\frac{1}{2}-\delta}
\|u_{4}\|_{\frac{1}{2}-\delta,\frac{1}{2}-\delta}.
\end{align*}
This concludes the justification of \eqref{Eqn:pre-c-1}, and case 2.b. is complete.

\vspace{0.1in}

\noindent $\bullet$ \textbf{CASE 2.c.}
$N^{3}\ll N^{0}$ and $N^{2}N^{3}N^{4}\gtrsim
N^{0}N^{1}|n^{0}+n^{1}|$.

\vspace{0.1in}

Observe that the assumptions of this case provide the additional condition
\begin{align}
N^{3}N^{4}\gtrsim N^0,
\label{Eqn:NL-neg1-6d}
\end{align}
otherwise we would find $(N^0)^2 \lesssim N^{0}N^{1}|n^{0}+n^{1}| \lesssim N^{2}N^{3}N^{4}\leq N^{0}N^{3}N^4 \ll (N^0)^2$, a contradiction.  In this region, we also have
\begin{align}
N_{2}N_{3}N_{4} \gtrsim N^{2}N^{3}N^{4} \gtrsim N^{0}N^{1}|n^{0}+n^{1}| \sim (N^{0})^{2}|n^{0}+n^{1}|.
\label{Eqn:NL-neg1-6c}
\end{align}
Then we find, by \eqref{Eqn:NL-neg1-6c},
\begin{align}
\frac{|n|^{\frac{1}{2}+\delta}|n_{1}|}
{|n_{1}|^{\frac{1}{2}-2\delta}|n_{2}n_{3}n_{4}|^
{\frac{1}{2}-2\delta}} &\lesssim \frac{(N^0)^{7\delta}}{|n^{0}+n^{1}|^{\frac{1}{2}-2\delta}} \lesssim  \frac{1}{(N^0)^{\delta}},
\label{Eqn:b-1}
\end{align}
unless $|n^{0}+n^{1}|\ll (N^0)^{\frac{16\delta}{1-4\delta}}$.  If \eqref{Eqn:b-1} holds, we can proceed with (a modification of) the method used in case 2.b.iii to establish
\begin{align*}
\|\mathcal{N}_{-1}|_{\text{2.c.iii.a.1.}}(u_{1},u_{2},u_{3},u_{4})
\|_{\frac{1}{2}+\delta,-\frac{1}{2}+\delta,T}   \lesssim
\frac{1}{(NN_{1}\cdots N_{4})^{\alpha}}
\prod_{j=1}^{4}\|u_{j}\|
_{\frac{1}{2}-\delta,\frac{1}{2}-\delta,T}.
\end{align*}
We therefore assume that
\begin{align}
|n^0 + n^{1}|\ll (N^0)^{\frac{16\delta}{1-4\delta}},
\label{Eqn:b-3}
\end{align}
for the remainder of case 2.c.

\vspace{0.1in}

\noindent $\bullet$ \textbf{CASE 2.c.i.}
$u_{1}$ type (I) and two of $u_{2},u_{3},u_{4}$ type (I).

Let us assume that
$u_{1},u_{2}$ and $u_{3}$ are all of type (I).  We will discuss the other possibilities afterwards.
In this case we establish
\begin{align}
\|\mathcal{N}_{-1}|_{\text{case 2.c}}(u_{1},u_{2}&,u_{3},u_{4})\|
_{\frac{1}{2}+\delta,-\frac{1}{2}+\delta}
 \notag \\
 &\lesssim   T^{-3\beta}\frac{1}{(NN_{1}\cdots N_{4})^{\alpha}}\frac{1}
 {(N_{1}N_{2}N_{3})^{\varepsilon}}
\|u_{4}\|_{\frac{1}{2}-\delta,\frac{1}{2}-\delta,T}.
\label{Eqn:NL-neg1-1cii}
\end{align}

Using the representation \eqref{Eqn:NL-neg1-24} for $u_{4}$, we apply the Minkowski inequality in $\lambda_{4}$ to find
\begin{align}
\|\mathcal{N}_{-1}|_{\text{case 2.c}}(&u_{1},u_{2},u_{3},u_{4})\|
_{\frac{1}{2}+\delta,-\frac{1}{2}+\delta}  \notag \\ &\lesssim  \|u_{4}\|_{\frac{1}{2}+\delta,\frac{1}{2}-\delta}(N^0)^{\delta}
\notag \\
&\cdot\sup_{\lambda_{4},\mu \ll (N^0)^{2}}\Big|
\sum_{|n|\sim N}|n|^{1+2\delta}\Big| \sum_{*(n,\mu+\lambda_{4})\cap\text{case 2.c.}}(in_{1})\frac{g_{n_{1}}(\omega)g_{n_{2}}(\omega)g_{n_{3}}(\omega)}{\langle n_{1}\rangle\langle n_{2}\rangle\langle n_{3}\rangle}a_{\lambda_{4}}(n_{4})\Big|^{2}\Big|^{\frac{1}{2}} \notag \\
&\lesssim  \|u_{4}\|_{\frac{1}{2}+\delta,\frac{1}{2}-\delta}
(N^0)^{\delta}N^{\frac{1}{2}+\delta}
\notag \\
&\ \ \ \ \ \cdot\sup_{\mu \ll (N^0)^{2}}\Big|
\sum_{|n|\sim N}\Big| \sum_{*(n,\mu)\cap\text{case 2.c.}}(in_{1})\frac{g_{n_{1}}(\omega)g_{n_{2}}(\omega)g_{n_{3}}(\omega)}{\langle n_{1}\rangle\langle n_{2}\rangle\langle n_{3}\rangle}a_{n_{4}}\Big|^{2}\Big|^{\frac{1}{2}}
  \label{Eqn:caseb-reducedbyMIN},
\end{align}
where $\sum_{n_{4}}|n_{4}|^{1-2\delta}|a_{n_{4}}|^{2} = 1$.  We have dropped the dependence on $\lambda_{4}$ in the previous expression; this is justified a posteriori by obtaining estimates which are uniform in $\lambda_{4}$.  For each fixed $\mu$, we consider
\begin{align}
\sum_{|n|\sim N}\Big|\sum_{*(n,\mu)\cap\text{case 2.c.}}(in_{1})&\frac{g_{n_{1}}(\omega)g_{n_{2}}(\omega)g_{n_{3}}(\omega)}{\langle n_{1}\rangle\langle n_{2}\rangle}a_{n_{4}}\Big|^{2}
 \notag \\ &=  \sum_{|n|\sim N} |\sum_{n_{4}}\sigma_{n,n_{4}}^{\mu}|n_{4}|^{\frac{1}{2}-\delta}a_{n_{4}}|^{2}
\label{Eqn:caseb-matrix}
\end{align}
where $\sigma_{n,n_{4}}^{\mu}$ is the $(n,n_{4})$ entry of a matrix $\sigma_{\mu}$ (for $\mu$ fixed) with columns indexed by $|n_{4}|\sim N_{4}$, and rows indexed by $|n|\sim N$.  That is, the entries of this matrix are given by
\begin{align*}
\sigma_{n,n_{4}}^{\mu} =  \sum_{*(n,n_{4},\mu)\cap\text{case 2.c.}}\frac{in_{1} g_{n_{1}}(\omega)g_{n_{2}}(\omega)g_{n_{3}}(\omega)}{\langle n_{1}\rangle\langle n_{2}\rangle\langle n_{3}\rangle|n_{4}|^{\frac{1}{2}-\delta}}
\end{align*}
Then by Lemma \ref{Lemma:matrixnorm}
\begin{align}
\eqref{Eqn:caseb-matrix} &\lesssim  \|(\sigma^{\mu}_{n,n_{4}})^{*}\sigma^{\mu}_{n,n_{4}}\|
=  \|\sigma^{\mu}_{n,n_{4}}(\sigma^{\mu}_{n,n_{4}})^{*}\| \notag \\
&\leq  \sup_{|n|\sim N} \sum_{n_{4}}|\sigma_{n,n_{4}}^{\mu}|^{2}  +  \Big(\sum_{n\neq n'}\big|\sum_{n_{4}}\sigma_{n,n_{4}}^{\mu}
\overline{\sigma_{n',n_{4}}^{\mu}}\big|^{2}\Big)^{\frac{1}{2}}  \notag \\
&=  I_{1}(\mu)  +  I_{2}(\mu).
\label{Eqn:caseb-reducedtoI}
\end{align}
To estimate $\displaystyle I_{1}(\mu) =  \sup_{|n|\sim N}\sum_{n_{4}}|\sigma_{n,n_{4}}^{\mu}|^{2}$,
we consider
$F_{n,n_{4},\mu}(\omega) := \sigma_{n,n_{4}}^{\mu}(\omega)$, then by Lemma \ref{Lemma:prob2},
\begin{align*}
\|F_{n,n_{4},\mu}\|_{L^{p}(\Omega)} \leq p^{\frac{3}{2}}\|F_{n,n_{4},\mu}\|_{L^{2}(\Omega)},
\end{align*}
for each $2<p<\infty$.  Applying Lemma \ref{Lemma:prob3} it follows that
\begin{align*}
P(|F_{n,n_{4},\mu}(\omega)|\geq \lambda) \leq e^{-c\|F_{n,n_{4},\mu}\|^{\frac{2}{3}}_{L^{2}(\Omega)}\lambda^{\frac{2}{3}}}.
\end{align*}
Taking $\displaystyle \lambda = \|F_{n,n_{4},\mu}\|_{L^{2}(\Omega)}(N^{0})^{\frac{3\beta}{2}}T^{-\frac{3\beta}{2}}$, we have
\begin{align*}
P(|F_{n,n_{4},\mu}(\omega)|\geq \|F_{n,n_{4},\mu}\|_{L^{2}(\Omega)}(N^0)^{\frac{3\beta}{2}}
T^{-\frac{3\beta}{2}}) \leq e^{-c\frac{(N^0)^{\beta}}{T^{\beta}}}.
\end{align*}
Then letting $\Omega_{n,N_{1},N_{2},N_{3},n_{4},\mu,T}:=
\{|F_{n,n_{4},\mu}(\omega)|\geq \|F_{n,n_{4},\mu}\|_{L^{2}(\Omega)}(N^0)^{\frac{3\beta}{2}}
T^{-\frac{3\beta}{2}}\}$ and $$\Omega_{N,N_{1},N_{2},N_{3},N_{4},T}:=\bigcap_{|n|\sim N,|n_{4}|\sim N_{4}, |\mu|<(N^0)^{2}}\Omega_{n,N_{1},N_{2},N_{3},n_{4},\mu,T},$$ we have
\begin{align*}
P(\Omega_{N,N_{1},N_{2},N_{3},N_{4},T}^{c})
\leq \sum_{|n|\sim N,|n_{4}|\sim N_{4}, |\mu|<(N^0)^{2}} e^{-c\frac{(N^0)^{\beta}}{T^{\beta}}} &\lesssim (N^0)^{4} e^{-c\frac{(N^0)^{\beta}}{T^{\beta}}} \lesssim  (N^0)^{0-}e^{-\frac{c'}{T^{\beta}}}.
\end{align*}
Then for each $|\mu|\ll (N^0)^{2}$, if
$\omega\in\Omega_{N,N_{1},N_{2},N_{3},N_{4},T}$,
\begin{align}
I_{1}(\mu) &\lesssim  \sup_{|n|\sim N}\sum_{|n_{4}|\sim N_4}|F_{n,n_{4},\mu}(\omega)|^{2} \lesssim  (N^0)^{3\beta}T^{-3\beta}\sup_{|n|\sim N}
\sum_{|n_{4}|\sim N_{4}} \|F_{n,n_{4},\mu}\|^{2}_{L^{2}(\Omega)}.
\label{Eqn:2ciiA}
\end{align}
We compute that
\begin{align}
\|F_{n,n_{4},\mu}\|^{2}_{L^{2}(\Omega)}
&= \mathbb{E}\Bigg(\Bigg|\sum_{(n_{1},n_{2},n_{3})\in
*(n,n_{4},\mu)\cap case 2(c)}
\frac{(in_{1})g_{n_{1}}(\omega)
g_{n_{2}}(\omega)g_{n_{3}}(\omega)}
{\langle n_{1}\rangle \langle n_{1}\rangle
\langle n_{1}\rangle |n_{4}|^{\frac{1}{2}-\delta}}\Bigg|^{2}\Bigg) \notag \\
&\lesssim \sum_{\substack{(n_{1},n_{2},n_{3})\in
*(n,n_{4},\mu)\cap \text{case 2.c.}\\
(m_{1},m_{2},m_{3})\in
*(n,n_{4},\mu)\cap \text{case 2.c.}}}
\frac{1}{(N_{2}N_{3})^{2}N_{4}^{1-2\delta}}
\notag \\ &\ \ \ \ \ \ \cdot|\mathbb{E}(g_{n_{1}}(\omega)
g_{n_{2}}(\omega)g_{n_{3}}(\omega)\overline{g_{m_{1}}}(\omega)
\overline{g_{m_{2}}}(\omega)\overline{g_{m_{3}}}(\omega))|
\label{Eqn:2ciiB}
\end{align}
To bound this sum we use the following lemma.

\begin{lemma}
Let
\begin{align*}
S(n,\mu):=\Bigg\{(n_{1},&n_{2},n_{3},n_{4},m_{1},
m_{2},m_{3})\bigg| \,|n_{4}|\sim N_{4}, \\ &(n_{1},n_{2},n_{3})\in *(n,n_{4},\mu), \
(m_{1},m_{2},m_{3})\in *(n,n_{4},\mu),
 \\
 &(n,n_{1},n_{2},n_{3},n_{4}),
(n,m_{1},m_{2},m_{3},n_{4}) \ \text{ordered such that} \ n^0\neq -n^1,
 \\
 &\mathbb{E}(g_{n_{1}}(\omega)g_{n_{2}}(\omega)g_{n_{3}}(\omega)
\overline{g_{m_{1}}}(\omega)\overline{g_{m_{2}}}
(\omega)\overline{g_{m_{3}}}(\omega))
\neq 0\Bigg\}.
\end{align*}
Then $\#\{S(n,\mu)\}<\min(N_{1}N_{2},N_{1}N_{3},N_{2}N_{3})$.
\label{Lemma:2ci}
\end{lemma}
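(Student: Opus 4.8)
The statement to prove is Lemma \ref{Lemma:2ci}, a counting bound: $\#\{S(n,\mu)\}<\min(N_{1}N_{2},N_{1}N_{3},N_{2}N_{3})$, where $S(n,\mu)$ collects septuples $(n_1,n_2,n_3,n_4,m_1,m_2,m_3)$ satisfying the convolution/dispersion constraints defining $*(n,n_4,\mu)$ (twice, once for the $n$'s and once for the $m$'s, both ordered so that $n^0\neq -n^1$) together with the nonvanishing of the Gaussian expectation $\mathbb{E}(g_{n_1}g_{n_2}g_{n_3}\overline{g_{m_1}}\,\overline{g_{m_2}}\,\overline{g_{m_3}})$.

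The plan is to exploit two separate sources of rigidity. First, membership in $*(n,n_4,\mu)$ means that $n_1,n_2,n_3$ satisfy $n=n_1+n_2+n_3+n_4$ and $\mu=n^3-n_1^3-n_2^3-n_3^3-n_4^3$; with $n,n_4,\mu$ fixed this is two equations in three unknowns, so generically a one-parameter family, but the key is that once \emph{one} of $n_1,n_2,n_3$ is chosen the convolution relation fixes the sum of the remaining two, and then the (nondegenerate, since no two of the relevant frequencies sum to zero in this region by \eqref{Eqn:NL-new-2}) cubic/quadratic relation pins down the remaining two up to finitely many (at most two) choices — exactly as in the computation \eqref{Eqn:quad}. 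So a triple in $*(n,n_4,\mu)$ has at most $O(N^1)$ possibilities: choose the largest of $n_1,n_2,n_3$ freely (at most $\min(N_1,N_2,N_3)\cdot$const options, but more usefully at most the number of available values of whichever index we pick), and the other two are then determined up to a bounded factor. Thus $\#*(n,n_4,\mu)\lesssim N^1$ and likewise for the $m$-triple, giving a naive bound of $O((N^1)^2)$ for fixed $(n,\mu,n_4)$; summing over $|n_4|\sim N_4$ this would be too lossy, so the expectation constraint must be used to cut it down.

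Second, the nonvanishing of $\mathbb{E}(g_{n_1}g_{n_2}g_{n_3}\overline{g_{m_1}}\,\overline{g_{m_2}}\,\overline{g_{m_3}})$ forces, by Wick/Isserlis and the independence structure of the $\{g_n\}$ (with $g_{-n}=\overline{g_n}$), a pairing between the three ``positive'' indices $\{n_1,n_2,n_3\}$ and the three ``conjugated'' indices $\{m_1,m_2,m_3\}$: up to sign, $\{n_1,n_2,n_3\}$ and $\{m_1,m_2,m_3\}$ must coincide as multisets (each $n_i$ equals some $\pm m_{\sigma(i)}$, with the conjugation bookkeeping matching). I would enumerate the (finitely many) admissible pairings. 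In each, the $m$-triple is determined by the $n$-triple up to a bounded number of sign/permutation choices. This collapses the count: $\#\{S(n,\mu)\}\lesssim \sum_{|n_4|\sim N_4} \#(*(n,n_4,\mu)) \lesssim \sum_{|n_4|\sim N_4} O(N^1)$. Here I must be careful: $N^1$ is the second-largest frequency among $\{n,n_1,n_2,n_3,n_4\}$, and in Case 2.c we are in the regime $N^3\ll N^0$, $N^3N^4\gtrsim N^0$, so $N_4$ and the smaller frequencies are genuinely constrained; I would use the relation $\mu=n^3-\cdots$ (which, after fixing $n$ and two of the summands, is quadratic in the third) to argue that once the pairing fixes the $n$-triple up to a constant and $n_4$ is fixed, $\mu$ is determined — so actually for fixed $(n,\mu)$ the value of $n_4$ is \emph{not} free once the triple is (almost) fixed; one parameter (say the largest of $n_1,n_2,n_3$) remains. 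Counting that one free parameter over its range (bounded by $N^1$, which in this subcase is $\lesssim N^0 \sim N$) gives the claimed bound, and comparing with $\min(N_1N_2,N_1N_3,N_2N_3)$ using $N^3\ll N^0$ and \eqref{Eqn:NL-neg1-6c}–\eqref{Eqn:NL-neg1-6d} yields strict inequality after absorbing constants into the implicit powers of $N^0$.

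The main obstacle I anticipate is the bookkeeping in combining the two constraint systems correctly: the Gaussian pairing is a statement about the \emph{values} $\pm n_i$ versus $\pm m_j$, while the dispersion/convolution relations constrain the \emph{signed} frequencies, and one must track how a pairing like $n_1=-m_2$ interacts with $n=n_1+n_2+n_3+n_4$ and $\mu=n^3-n_1^3-\cdots$. One has to rule out degenerate pairings (e.g. those forcing two of the $n_i$ to sum to zero, which \eqref{Eqn:NL-new-2} excludes in this region) and then, for each surviving pairing, carry out the quadratic-root count of \eqref{Eqn:quad} honestly, verifying that the leading coefficient $n-n_3-n_4=n_1+n_2\neq 0$ (again by \eqref{Eqn:NL-new-2}) so the equation is genuinely quadratic with $\le 2$ roots. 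Since the full combinatorial case analysis is routine but tedious, and the statement is marked as proved in \cite{R-Th}, I would present the mechanism (one free parameter after imposing convolution + dispersion + Gaussian pairing, free parameter of size $\lesssim N^1 \ll \min(N_1N_2,N_1N_3,N_2N_3)$) and refer to Section 5.2.5 of \cite{R-Th} for the exhaustive verification.
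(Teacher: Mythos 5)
Your identification of the two main ingredients is right: the Wick/Isserlis pairing for the Gaussian expectation, combined with the nondegeneracy furnished by \eqref{Eqn:NL-new-2} (no two of $\{-n,n_1,n_2,n_3,n_4\}$ sum to zero), forces the pairing to be $n_i\leftrightarrow -(-m_{\sigma(i)})$ for some permutation $\sigma$, so that $\{m_1,m_2,m_3\}$ is a permutation of $\{n_1,n_2,n_3\}$. That correctly collapses the $m$-triple, and you correctly identify \eqref{Eqn:quad} as the nondegenerate quadratic that pins down two frequencies from the remaining two, with nonvanishing leading coefficient again thanks to \eqref{Eqn:NL-new-2}.

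The difficulty is in the final counting step, which you do not carry out correctly. You assert that ``once the pairing fixes the $n$-triple up to a constant and $n_4$ is fixed, $\mu$ is determined \dots one parameter (say the largest of $n_1,n_2,n_3$) remains,'' concluding $\#S(n,\mu)\lesssim N^0$. But the pairing does \emph{not} constrain the $n$-triple at all: it only determines the $m$'s from the $n$'s and adds nothing to the system $n_1+n_2+n_3+n_4=n$, $\mu=n^3-n_1^3-\cdots-n_4^3$. After eliminating the $m$'s you are left with two equations in the four unknowns $(n_1,n_2,n_3,n_4)$, hence \emph{two} free parameters, not one. The clean and correct count is: fix any two of $\{n_1,n_2,n_3\}$, say $n_i$ and $n_j$; then $n_k+n_4$ and $n_k^3+n_4^3$ are determined, and since $n_k+n_4\neq 0$ by \eqref{Eqn:NL-new-2} the resulting quadratic has at most two roots, so $\#S(n,\mu)\lesssim N_iN_j$. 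Taking the minimum over the three pairs $\{i,j\}\subset\{1,2,3\}$ gives $\#S(n,\mu)\lesssim\min(N_1N_2,N_1N_3,N_2N_3)$, which is the claim. Your one-parameter bound $\lesssim N^0$ is not only unjustified but also not true in general across dyadic configurations consistent with $*(n,n_4,\mu)$, so the argument cannot be salvaged by absorbing constants.

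A secondary point: you invoke Case 2.c constraints such as $N^3\ll N^0$ and \eqref{Eqn:NL-neg1-6c}--\eqref{Eqn:NL-neg1-6d} to ``compare with $\min(N_1N_2,N_1N_3,N_2N_3)$.'' The lemma as stated does not assume Case 2.c, only membership in $*(n,n_4,\mu)$ and the ordering condition $n^0\neq -n^1$, so a proof of the lemma should not lean on those subcase restrictions. The two-free-parameter count above needs nothing beyond the convolution identity, the dispersion identity, and \eqref{Eqn:NL-new-2}, and delivers the stated bound directly.
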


The proof of Lemma \ref{Lemma:2ci} can be found in Section 5.2.5 of \cite{R-Th}.  By combining \eqref{Eqn:caseb-matrix}-\eqref{Eqn:2ciiB} and Lemma \ref{Lemma:2ci}, the contribution to  \eqref{Eqn:caseb-reducedbyMIN} from $I_{1}(\mu)$ is bounded by
\begin{align*}
\|u_{4}\|_{\frac{1}{2}-\delta,\frac{1}{2}-\delta}
&(N^0)^{\delta}N^{\frac{1}{2}+\delta} \sup_{|\mu|<(N^0)^{2}}I_{1}(\mu)
\lesssim \frac{T^{-\frac{3\beta}{2}}(N^0)^{\delta+\frac{\beta}{2}}
N^{\frac{1}{2}+\delta}}
{(N_{2}N_{3})^{\frac{1}{2}}N_{4}^{\frac{1}{2}-\delta}}\|u_{4}\|_{\frac{1}{2}-\delta,\frac{1}{2}-\delta}
\\
&\lesssim \frac{T^{-\frac{3\beta}{2}}(N^0)^{2\delta+\frac{\beta}{2}}
N^{\frac{1}{2}+\delta}}
{N^0}\|u_{4}\|_{\frac{1}{2}-\delta,\frac{1}{2}-\delta}
\\
&\lesssim \frac{T^{-\frac{3\beta}{2}}}{(NN_{1}\cdots N_{4})^{\alpha}}
(N_{1}N_{2}N_{3})^{-\varepsilon}\|u_{4}\|_{\frac{1}{2}-\delta,\frac{1}{2}-\delta}.
\end{align*}

It remains to control the contribution to \eqref{Eqn:caseb-reducedbyMIN} from $I_{2}(\mu)$.
Consider
\begin{align}
I_{2}(\mu) = \Big( \sum_{n\neq n'}\Big|\sum_{|n_{4}|\sim N_{4}}\sigma_{n,n_{4}}^{\mu}
\overline{\sigma_{n',n_{4}}^{\mu}}\Big|^{2}\Big)^{\frac{1}{2}}
= \Big( \sum_{n\neq n'}|G_{n,n',\mu}(\omega)|^{2}\Big)^{\frac{1}{2}},
\label{Eqn:caseb-I2}
\end{align}
where, for each fixed $n,n',\mu$, we have taken
\begin{align*}
G_{n,n',\mu}(\omega) := \sum_{\substack{|n_{4}|\sim N_{4}\\(n_{1},n_{2},n_{3})\in*(n,n_{4},\mu)\cap
\text{case 2.c.}\\(n_{1}',n_{2}',n_{3}')\in
*(n',n_{4},\mu)\cap
\text{case 2.c.}}}
\frac{-n_{1}n_{1}'g_{n_{1}}(\omega)
g_{n_{2}}(\omega)g_{n_{3}}(\omega)\overline{g_{n_{1}'}}(\omega)
\overline{g_{n_{2}'}}(\omega)\overline{g_{n_{3}'}}(\omega)}{\langle n_{1}\rangle\langle n_{2}\rangle\langle n_{3}\rangle\langle n_{1}'\rangle\langle n_{2}'\rangle\langle n_{3}'\rangle\langle n_{4}\rangle^{1-2\delta}}.
\end{align*}
By Lemma \ref{Lemma:prob2} we have
\begin{align*}
\|G_{n,n',\mu}\|_{L^{p}(\Omega)} \leq p^{3}\|G_{n,n',\mu}\|_{L^{2}(\Omega)},
\end{align*}
for each $2<p<\infty$.  With Lemma \ref{Lemma:prob3} it follows that
\begin{align*}
P(|G_{n,n',\mu}(\omega)|\geq \lambda) \leq e^{-c\|G_{n,n',\mu}\|^{-\frac{1}{3}}_{L^{2}(\Omega)}\lambda^{\frac{1}{3}}}.
\end{align*}
Taking $\displaystyle \lambda = \|G_{n,n',\mu}\|_{L^{2}(\Omega)}(N^0)^{3\beta}T^{-3\beta}$, we have
\begin{align*}
P(|G_{n,n',\mu}(\omega)|\geq \|G_{n,n',\mu}\|_{L^{2}(\Omega)}(N^0)^{3\beta}T^{-3\beta}) \leq e^{-c\frac{(N^0)^{\beta}}{T^{\beta}}}.
\end{align*}
Then letting $\Omega_{n,n',N_{1},N_{2},N_{3},N_{4},\mu,T}:=
\{|G_{n,n',\mu}(\omega)|\geq \|G_{n,n',\mu}\|_{L^{2}(\Omega)}(N^0)^{3\beta}
T^{-3\beta}\}$ and $$\Omega_{N,N_{1},N_{2},N_{3},N_{4},T}
:=\bigcap_{|n|,|n'|\sim N, |\mu|<(N^0)^{2}}\Omega_{n,n',N_{1},N_{2},N_{3},N_{4},\mu,T},$$ we have
\begin{align*}
P(\Omega_{N,N_{1},N_{2},N_{3},N_{4},T}^{c})
\leq \sum_{|n|,|n'|\sim N, |\mu|<(N^0)^{2}} e^{-c\frac{(N^0)^{\beta}}{T^{\beta}}} &\lesssim \
(N^0)^{4}e^{-c\frac{(N^0)^{\beta}}{T^{\beta}}} \\
&\lesssim  (N^0)^{0-}e^{-\frac{c'}{T^{\beta}}},
\end{align*}
for some $c'>0$.  Then for each $|\mu|\ll (N^0)^{2}$, if $\omega\in\Omega_{N,N_{1},N_{2},N_{3},N_{4},T}$,
\begin{align}
I_{2}(\mu) \lesssim  &\Big(\sum_{n\neq n'}|G_{n,n',\mu}(\omega)|^{2}\Big)^{\frac{1}{2}}  \notag \leq
T^{-3\beta}(N^0)^{3\beta}\Big(\sum_{n\neq n'}\|G_{n,n',\mu}\|_{L^{2}(\Omega)}^{2}\Big)^{\frac{1}{2}}
\end{align}

We compute that
\begin{align}
&\sum_{n\neq n'}\|G_{n,n',\mu}\|_{L^{2}(\Omega)}^{2}
\notag \\
 &= \sum_{n\neq n'} \mathbb{E}\Bigg(\Bigg|
\sum_{\substack{|n_{4}|\sim N_{4}\\(n_{1},n_{2},n_{3})\in*(n,n_{4},\mu)\cap
\text{case 2.c.}\\(n_{1}',n_{2}',n_{3}')\in
*(n',n_{4},\mu)\cap
\text{case 2.c.}}}
\frac{-n_{1}n_{1}'g_{n_{1}}(\omega)
g_{n_{2}}(\omega)g_{n_{3}}(\omega)\overline{g_{n_{1}'}}(\omega)
\overline{g_{n_{2}'}}(\omega)\overline{g_{n_{3}'}}(\omega)}{\langle n_{1}\rangle\langle n_{2}\rangle\langle n_{3}\rangle\langle n_{1}'\rangle\langle n_{2}'\rangle\langle n_{3}'\rangle\langle n_{4}\rangle^{1-2\delta}}\Bigg|^{2}\Bigg)
 \notag \\
&\lesssim \sum_{\substack{|n|,|n'|\sim N, |n_{4}|,|m_{4}|\sim N_{4}\\
(n_{1},n_{2},n_{3})\in *(n,n_{4},\mu)
\\(n_{1}',n_{2}',n_{3}')\in *(n',n_{4},\mu)
\\(m_{1},m_{2},m_{3})\in *(n,m_{4},\mu)
\\(m_{1}',m_{2}',m_{3}')\in *(n',m_{4},\mu)}}
\frac{1}{(N_{2}N_{3})^{4}N_{4}^{2-4\delta}}
 \mathbb{E}(g_{n_{1}}(\omega)
g_{n_{2}}(\omega)g_{n_{3}}(\omega)\overline{g_{n_{1}'}}(\omega)
\overline{g_{n_{2}'}}(\omega)\overline{g_{n_{3}'}}(\omega)
\notag  \\ &\quad \quad \quad \quad
\quad \quad
\quad \quad \quad \quad \cdot \overline{g_{m_{1}}}
(\omega)\overline{g_{m_{2}}}(\omega)
\overline{g_{m_{3}}}(\omega)g_{m_{1}'}(\omega)
g_{m_{2}'}(\omega)g_{m_{3}'}(\omega))
\label{Eqn:bounds-3}
\end{align}
To control this sum we establish the following lemma.

\begin{lemma}
Let
\begin{align*}
S(\mu):=
\Bigg\{(&n,n',n_{1},n_{2},n_{3},n_{1}',n_{2}',n_{3}',
m_{1},m_{2},m_{3},m_{1}',m_{2}',m_{3}')\Bigg| \\ &|n|,|n'|\sim N, |n_{4}|,|m_{4}|\sim N_{4}, (n_{1},n_{2},n_{3})\in *(n,n_{4},\mu), \\ &(n_{1}',n_{2}',n_{3}')\in *(n',n_{4},\mu),
(m_{1},m_{2},m_{3})\in *(n,m_{4},\mu) \\ & (m_{1}',m_{2}',m_{3}')\in *(n',m_{4},\mu), \text{with}\ n^0\neq -n^1 \ \text{in all quintuples,}\\
& |n^0+n^1|\ll (N^0)^{\frac{16\delta}{1-4\delta}}, \ \text{and}  \\
&\mathbb{E}(g_{n_{1}}(\omega)
g_{n_{2}}(\omega)g_{n_{3}}(\omega)\overline{g_{n_{1}'}}(\omega)
\overline{g_{n_{2}'}}(\omega)\overline{g_{n_{3}'}}(\omega)
\notag  \\ &\quad \quad \quad \quad
\quad \quad \quad \quad
 \cdot \overline{g_{m_{1}}}(\omega)\overline{g_{m_{2}}}(\omega)
\overline{g_{m_{3}}}(\omega)g_{m_{1}'}(\omega)
g_{m_{2}'}(\omega)g_{m_{3}'}(\omega))
\neq 0\Bigg\}.
\end{align*}
Then $\#\{S(\mu)\}\lesssim (N^0)^{3+\frac{32\delta}{1-4\delta}}$.
\label{Lemma:prob-bound-3}
\end{lemma}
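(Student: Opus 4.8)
The plan is to prove Lemma \ref{Lemma:prob-bound-3} as a purely combinatorial counting statement, with the Gaussian structure providing the essential savings. The first step is to invoke Wick's (Isserlis') formula: since $\{g_n\}_{n>0}$ are independent standard complex Gaussians and $g_{-n}=\overline{g_n}$, the expectation of the twelve-fold product is a finite sum over perfect matchings of the twelve slots carrying the frequencies $n_1,n_2,n_3,-n_1',-n_2',-n_3',-m_1,-m_2,-m_3,m_1',m_2',m_3'$, and a matching contributes a nonzero term only when each matched pair of slot-frequencies sums to zero. Hence membership in $S(\mu)$ forces the existence of at least one such admissible matching $P$; since the number of matchings of twelve objects is $11!!$, a constant independent of $N$, it suffices to fix $P$ and bound the number $\#S_P(\mu)$ of tuples realizing $P$, for then $\#S(\mu)\leq 11!!\cdot\max_P\#S_P(\mu)$.

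Next, for a fixed pattern $P$ I would use the matching to cut down the number of independent frequencies: a matched pair $\{x,y\}$ satisfies $y=-x$, so the twelve frequencies $n_i,n_i',m_i,m_i'$ are determined, with signs, by six integer representatives $a_1,\ldots,a_6$, and the tuple is then described by these together with $n,n'$ — eight integer parameters ranging over $\sim N^0$ values. (The dyadic localizations $|n_i|,|n_i'|,|m_i|,|m_i'|\sim N_i$ make most patterns empty unless the $N_i$ exhibit the coincidences forced by $P$, and in the surviving ones one records which representative lies in which block.) Now impose the remaining algebraic constraints. The two sum identities $n-n_1-n_2-n_3=n'-n_1'-n_2'-n_3'\,(=n_4)$ and $n-m_1-m_2-m_3=n'-m_1'-m_2'-m_3'\,(=m_4)$ eliminate $n_4,m_4$ and pin down $n'$ and one representative; and each of the four resonance identities $\mu=n^3-n_1^3-n_2^3-n_3^3-n_4^3$ and its three analogues, once all but one frequency in its quintuple has been fixed, becomes — exactly as in the computation \eqref{Eqn:quad} — a \emph{non-degenerate} quadratic in the last one, hence has at most two roots. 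The non-degeneracy is where the region $A_{-1}$ matters: there we are inside $\zeta_1(n)$, so by \eqref{Eqn:NL-new-2} no partial sum of the frequencies in any quintuple vanishes, which keeps the leading coefficient of the relevant quadratic nonzero. A bookkeeping over the $O(1)$ combinatorial possibilities remaining after $P$ is fixed then shows that at most five of the eight integer parameters stay free over $\sim N^0$ values, the other three being determined up to $O(1)$ choices by the sum and resonance identities.

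Finally I would feed in the near-cancellation hypothesis of case 2.c, namely $|n^0+n^1|\ll (N^0)^{16\delta/(1-4\delta)}$, which holds for all four quintuples (they all entered through case 2.c); after the matching is imposed these reduce to two independent constraints, and each confines the sum of the two dominant frequencies in its quintuple — equivalently the sum of the remaining ones — to an interval of length $\lesssim (N^0)^{16\delta/(1-4\delta)}$, shrinking the range of two of the five free parameters from $\sim N^0$ to $\lesssim (N^0)^{16\delta/(1-4\delta)}$. Multiplying, $\#S_P(\mu)\lesssim (N^0)^{3}\cdot\big((N^0)^{16\delta/(1-4\delta)}\big)^{2}=(N^0)^{3+32\delta/(1-4\delta)}$, and summing over the $O(1)$ patterns gives the lemma. (This is the same mechanism underlying Lemma \ref{Lemma:1bi} and Lemma \ref{Lemma:2ci}, with one extra near-resonant factor per dominant quintuple.)

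The step I expect to be the real obstacle is the middle one: the pattern-by-pattern accounting that the matching, the two sum identities, and the four cubic resonance identities together leave only five free parameters. One must check that in every admissible matching these constraints are sufficiently independent — in particular that the quadratics coming from the resonance relations never degenerate — which forces a careful look at patterns producing repeated frequencies, where \eqref{Eqn:NL-new-2} (excluding vanishing partial sums) and the requirement $n\neq n'$ built into $S(\mu)$ are precisely what rule out the degenerate configurations. A smaller technical point is to confirm that, once the matching is imposed, the two surviving near-cancellation boxes land on parameters that were still free rather than on frequencies already pinned down.
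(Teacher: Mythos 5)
The paper itself does not contain a proof of this lemma; it states that the proof can be found in Section 5.2.5 of the author's Ph.D.\ thesis \cite{R-Th}. So a line-by-line comparison with ``the paper's own proof'' is not possible from the text provided. What I can say is that your outline describes the mechanism one would expect, consistent with the analogous Lemmas \ref{Lemma:1bi} and \ref{Lemma:2ci}: Wick's formula forces a perfect pairing of the twelve Gaussian slots by opposite frequencies; there are only $11!!=O(1)$ pairings, so one works pattern by pattern; a fixed pairing leaves six representatives plus $n,n'$; the convolution and cubic-resonance identities (non-degenerate because $n^0\neq -n^1$ and because \eqref{Eqn:NL-new-2} excludes vanishing partial sums) remove variables up to $O(1)$ ambiguity; and the near-resonance bound $|n^0+n^1|\ll (N^0)^{16\delta/(1-4\delta)}$ shrinks the range of two surviving parameters. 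The exponent $3+\frac{32\delta}{1-4\delta}$ you arrive at is exactly the claimed bound.

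The genuine gap, which you flagged yourself, is that the claim ``at most five of the eight integer parameters stay free over $\sim N^0$ values, the other three being determined up to $O(1)$ choices'' is asserted, not proved, and it is essentially the entire content of the lemma. The count really is pattern-dependent. For the pairing $n_i\leftrightarrow m_i$, $n_i'\leftrightarrow m_i'$ the four resonance identities collapse to two and the four convolution identities to one, so one genuinely ends up with five parameters and the near-resonance factor is unavoidable; for the pairing $n_i\leftrightarrow n_i'$ the convolution constraints force $n=n'$, which $S(\mu)$ forbids, so the pattern is empty; for cross-pairings between primed and unprimed families or pairings that force repeated frequencies within a single quintuple, the degeneracy structure is different again, and one must check both that the relevant quadratic in \eqref{Eqn:quad} stays non-degenerate and that the two near-resonance boxes hit parameters that were not already pinned. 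Until that case analysis is carried out, what you have is a correct proof \emph{sketch}, not a proof; the hard work of the lemma is precisely this bookkeeping, which is why the author relegated it to the thesis rather than including it in the paper.
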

The proof of Lemma \ref{Lemma:prob-bound-3} can be found in Section 5.2.5 of \cite{R-Th}.  Using \eqref{Eqn:NL-neg1-6} we have
\begin{align}
\frac{1}{(N_{2}N_{3})^{4}N_{4}^{2-4\delta}}
= \frac{N_4^{2+4\delta}}{(N_2N_3N_4)^4}
\lesssim  \frac{N_4^{2+4\delta}}{(N^0)^{8}}.
\lesssim \frac{1}{(N^0)^{6-4\delta}}
\label{Eqn:bounds-new-2}
\end{align}
By combining \eqref{Eqn:bounds-new-2} with Lemma \ref{Lemma:prob-bound-3} we have
\begin{align}
\eqref{Eqn:bounds-3}\lesssim  \frac{1}{(N^0)^{3-4\delta-\frac{32\delta}{1-4\delta}}}
\label{Eqn:bounds-new-3}
\end{align}

Then from \eqref{Eqn:bounds-new-3} we can estimate the contribution to \eqref{Eqn:caseb-reducedbyMIN} coming from $I_{2}(\mu)$ by
\begin{align*}
\|u_{4}\|_{\frac{1}{2}-\delta,\frac{1}{2}-\delta}
&(N^0)^{\delta}N^{\frac{1}{2}+\delta} \sup_{|\mu|<(N^0)^{2}}I_{2}(\mu)
\lesssim
\|u_{4}\|_{\frac{1}{2}-\delta,\frac{1}{2}-\delta} \frac{T^{-\frac{3\beta}{2}}(N^0)^{\delta+\frac{3\beta}{2}}
N^{\frac{1}{2}+\delta}}{(N^0)^{\frac{3}{4}-\delta-\frac{8\delta}{1-4\delta}}}
\\&\lesssim \frac{T^{-\frac{3\beta}{2}}}{(NN_{1}\cdots N_{4})^{\alpha}}
(N_{1}N_{2}N_{3})^{-\varepsilon}\|u_{4}\|_{\frac{1}{2}-\delta,\frac{1}{2}-\delta},
\end{align*}
for $\delta,\beta,\alpha>0$ sufficiently small.
It is clear that the previous analysis applies upon permutation of the variables $n_{2},n_{3}$ and $n_{4}$, as we did not use the ordering $N_{2}\geq N_{3}\geq N_{4}$ in this case (see Remark 5.4 in \cite{R-Th}).  The analysis of case 2.c.i. is complete.

\vspace{0.1in}

\noindent $\bullet$ \textbf{CASE 2.c.ii.} $u_{1}$ type (II), and $u_{2},u_{3},u_{4}$ type (I).

\vspace{0.1in}

In this case we proceed precisely as in case 2.c.ii, swapping the roles of $n_{1}$ and $n_{4}$.  The analysis requires modification in the lines \eqref{Eqn:2ciiB} and \eqref{Eqn:bounds-3}, where we need to include the factor
$\displaystyle \frac{N_{1}^{1-2\delta}}
{(N_{2}N_{3}N_{4})^{2}}$ instead of $\displaystyle\frac{1}
{(N_{2}N_{3})^{2}N_{4}^{1-2\delta}}$.  In order to estimate \eqref{Eqn:2ciiB}, by $N_{2}\geq N_{3}\geq N_{4}$ and \eqref{Eqn:NL-neg1-6c}, we find
\begin{align*}
\frac{1}{N_{2}^{2}N_{3}N_{4}}\leq \frac{1}{(N_{2}N_{3}N_{4})^{\frac{4}{3}}} \lesssim \frac{1}{(N^0)^{\frac{8}{3}}},
\end{align*}
and we have
\begin{align*}
\frac{(N^{0})^{2\delta}N^{1+2\delta}N_{1}^{1-2\delta}}
{N_{2}^{2}N_{3}N_{4}} \lesssim  \frac{1}{(N^0)^{\frac{2}{3}-2\delta}}.
\end{align*}
By combining this inequality with Lemma \ref{Lemma:prob-bound-3} (with $n_{1}$ and $n_{4}$ swapped) we can estimate the contribution from $I_{1}(\mu)$ as we did in case 2.c.i.

In the modification of \eqref{Eqn:bounds-3}, we consider
\begin{align*}
\frac{(N_{1})^{2-4\delta}}{(N_{2}N_{3}N_{4})^{4}}\lesssim \frac{(N_{1})^{2-4\delta}}{(N^0)^{8}}
 \lesssim \frac{1}{(N^0)^{6+4\delta}},
\end{align*}
which is precisely the conclusion we reached in case 2.c.i. These are the only modifications required to estimate the contribution from $I_{2}(\mu)$, and the analysis of case 2.c.ii. is complete.

\vspace{0.1in}

\noindent $\bullet$ \textbf{CASE 2.c.iii:} Two type (I), two type (II).

\vspace{0.1in}

We will consider further subcases.

\vspace{0.1in}

\hspace{0.4in} $\bullet$ \textbf{CASE 2.c.iii.a:} $u_{1}$ type (II).

\vspace{0.1in}

\hspace{0.8in} $\bullet$ \textbf{CASE 2.c.iii.a.1:} $u_{4}$ type (I).

\hspace{0.8in} $\bullet$ \textbf{CASE 2.c.iii.a.2:} $u_{4}$ type (II).

\vspace{0.1in}

\hspace{0.4in} $\bullet$ \textbf{CASE 2.c.iii.b:} $u_{1}$ type (I).

\vspace{0.1in}

\noindent We proceed with the analysis of each subcase.

\vspace{0.1in}

\noindent $\bullet$ \textbf{CASE 2.c.iii.a:} $u_{1}$ type (II).

\vspace{0.1in}

\noindent $\bullet$ \textbf{CASE 2.c.iii.a.1:} $u_{4}$ type (I).

Let us assume that $u_{1}, u_{2}$ are of type (II), and $u_{3}, u_{4}$ are of type (I).  It is easily verified (a posteriori) that the analysis of this subcase is symmetric with respect to the functions $u_{2}$ and $u_{3}$, and
therefore, the preceding assumption holds without loss of generality.

In this case we exploit one more condition which restricts the size of $N_{4}$.  Specifically, we notice that if $N_{4}\geq (N^0)^{\frac{2}{3}+5\delta}$, then we have, using $N_2\geq N_3 \geq N_4$,
\begin{align}
\frac{|n|^{\frac{1}{2}+\delta}|n_{1}|}
{|n_{1}|^{\frac{1}{2}-2\delta}|n_{2}n_{3}n_{4}|^
{\frac{1}{2}-2\delta}} &\lesssim \frac{(N^0)^{1+3\delta}}
{(N^0)^{3(\frac{1}{2}-2\delta)(\frac{2}{3}+5\delta)}} \lesssim  \frac{1}{(N^0)^{\frac{\delta}{2}- 30\delta^{2}}}.
\label{Eqn:case-c-2}
\end{align}
Once again, if \eqref{Eqn:case-c-2} holds, we can proceed with (a modification of) the method used in case 2.b.iii to establish
\begin{align*}
\|\mathcal{N}_{-1}|_{\text{case 2.c.}}(u_{1},u_{2},u_{3},u_{4})
\|_{\frac{1}{2}+\delta,-\frac{1}{2}+\delta,T}   \lesssim
\frac{1}{(NN_{1}\cdots N_{4})^{\alpha}}
\prod_{j=1}^{4}\|u_{j}\|
_{\frac{1}{2}-\delta,\frac{1}{2}-\delta,T}.
\end{align*}
We therefore assume for the remainder of this case that
\begin{align}
N_{4}\ll (N^0)^{\frac{2}{3}+5\delta}.
\label{Eqn:case-c-3}
\end{align}
We consider
\begin{align}
\frac{N^{\frac{1}{2}+\delta}N_{1}}
{(N_{1}N_{2})^{\frac{1}{2}+\frac{11\delta}{12}}
(N_{3}N_{4})^
{\frac{1}{2}-\frac{\delta}{12}}}
&\leq
\frac{N^{\frac{1}{2}+\delta}N_{1}
(N_{3}N_{4})^{\delta}}
{(N_{1}N_{2}N_{3}N_{4})^
{\frac{1}{2}+\frac{11\delta}{12}}}
\leq \frac{(N^0)^{1+\frac{13\delta}{12}}N_{4}^{\delta}}
{(N_{2}N_{3}N_{4})^
{\frac{1}{2}+\frac{11\delta}{12}}} \notag \\
&\leq \frac{(N^0)^{1+\frac{13\delta}{12}}(N^0)^
{(\frac{2}{3}+5\delta)\delta}}
{(N^0)^
{1+\frac{11\delta}{6}}}
\leq
\frac{1}{(N^0)^
{\frac{\delta}{12}-5\delta^{2}}}
\label{Eqn:case-c-4}
\end{align}
By using \eqref{Eqn:case-c-4} and (a modification of) the methods of case 2.b.iii. we establish
\begin{align}
\|\mathcal{N}_{-1}|_{\text{2.c.}}&(u_{1},u_{2},u_{3},u_{4})\|_{\frac{1}{2}+\delta,-\frac{1}{2}+\delta,T}  \notag \\
&\lesssim \frac{T^{-\beta}}
{(NN_{1}\cdots N_{4})^{\alpha}}(N_{3}N_{4})^{-\varepsilon}
\|u_{1}\|_{\frac{1}{2}+\delta,\frac{1}{2}-\delta,T}
\|u_{2}\|_{\frac{1}{2}+\delta,\frac{1}{2}-\delta,T},
\label{Eqn:pre-a-1}
\end{align}
and the analysis of case 2.c.iii.a.1. is complete.

\vspace{0.1in}

\noindent $\bullet$ \textbf{CASE 2.c.iii.a.2:} $u_{4}$ type (II).

In this case we can obtain a stronger restriction on $N_{4}$.  More precisely, we have
\begin{align}
\frac{|n|^{\frac{1}{2}+\delta}|n_{1}|}{|n_{1}|^{\frac{1}{2}+\delta-\gamma}
|n_{2}|^{\frac{1}{2}-\gamma}|n_{3}|^{\frac{1}{2}-\gamma}
|n_{4}|^{\frac{1}{2}+\delta-\gamma}}
=\frac{|n|^{\frac{1}{2}+\delta}|n_{1}|^{\frac{1}{2}-\delta+\gamma}}
{|n_{2}n_{3}n_{4}|^{\frac{1}{2}-\gamma}
|n_{4}|^{\delta}}
\lesssim \frac{(N^{0})^{3\gamma}}{N_{4}^{\delta}} \lesssim \frac{1}{(N^0)^{\gamma}},
\label{Eqn:2B-1}
\end{align}
unless $N_{4}^{\delta}\ll (N^0)^{4\gamma}$.  If \eqref{Eqn:2B-1} holds, we can proceed with a straightforward modification of the method in case 1.b.ii.  Therefore, by taking $\gamma=\gamma(\delta)>0$ sufficiently small, we may assume that
\begin{align}
N_{4}\ll (N^0)^{\delta},
\label{Eqn:2B-2}
\end{align}
for the remainder of this case.

\vspace{0.1in}

Given the defining condition of case 2.c.iii., we must have $u_{1}, u_{4}$ of type (II), and $u_{2}, u_{3}$ of type (I).  The analysis of this case closely follows the method of case 2.b.i.  Indeed, the analysis is identical until the line \eqref{Eqn:NL-neg1-30}, where, due to the assumption
$u_{1},u_{4} \in X^{\frac{1}{2}+\delta,\frac{1}{2}-\delta}_{T}$
 (instead of $X^{\frac{1}{2}-\delta,\frac{1}{2}-\delta}_{T}$), we obtain
\begin{align}
\|\mathcal{N}_{-1}|_{\text{case 2.c.iii}}&(u_{1},u_{2},u_{3},
u_{4})\|_{\frac{1}{2}+\delta,-\frac{1}{2}+\delta,T}   \notag \\
\lesssim (N^{0})^{\delta}&\prod_{j=3}^{4}
\|u_{j}\|_{\frac{1}{2}-\delta,\frac{1}{2}-\delta,T}
\notag \\
&\sup_{|\mu|<3(N^{0})^{2}}
\Bigg\|\langle n \rangle^{\frac{1}{2}+\delta}
\sum_{*(n,\mu)\cap \text{case 2.c.}}
(in_{1})a_{1}(n_{1})a_{4}(n_{4})
\frac{g_{n_{2}}(\omega)g_{n_{3}}(\omega)}
{|n_{2}||n_{3}|}
\Bigg\|_{L^{2}_{|n|\in N}},
\label{Eqn:NL-neg1-29}
\end{align}
where $\sum_{n_{i}}|n_{i}|^{1+2\delta}|a_{n_{i}}|^{2} = 1$, for $i=1,4$.

From here we continue to proceed as in case 1.b.i., but more precisely $n_{3}$ will play the role played by $n_{1}$ in case 1.b.i.  The analysis proceeds in this way, with only minor modifications (replacing powers of $\frac{1}{2}-\delta$ with $\frac{1}{2}+\delta$ in some places), until we estimate $I_{1}(n_{4},\mu)$ as in line \eqref{Eqn:NL-neg1-44}, and apply Lemma \ref{Lemma:prob1} to obtain, for $\omega\in\widetilde{\Omega}_{T}$:
\begin{align}
I_{1}(n_{4},\mu) &\leq  \sup_{|n|\sim N}\sum_{\substack{|n_{1}|\sim N_{1}\\(n_{2},n_{3}) \in *(n,n_{1},n_{4},\mu)\cap \text{case 2.c.}}} \frac{|n_{1}|^{2}
|g_{n_{2}}(\omega)||g_{n_{3}}(\omega)|}
{ |n_{1}|^{1+2\delta}|n_{3}|^{2}|n_{4}|^{2}} \notag \\
&\leq   T^{-\beta}\sup_{|n|\sim N}\sum_{(n_{1},n_{2},n_{3}) \in *(n,n_{4},\mu)\cap \text{case 2.c.}} \frac{|n_{1}|^{1-2\delta}}
{|n_{2}|^{2-\varepsilon}|n_{3}|^{2-\varepsilon}}.
\label{Eqn:case-c-5}
\end{align}



We will need the following lemma.
\begin{lemma}
Let
\begin{align*}
S(n,n_{4},\mu)&:=\{(n_{1},n_{2},n_{3}):(n_{1},n_{2},n_{3})\in
*(n,n_{4},\mu) \ \text{and} \ \eqref{Eqn:b-3}\  holds\}.
\end{align*}
Then we have $|S(n,n_{4},\mu)|<
(N^0)^{\frac{16\delta}{1-4\delta}}$.
\label{Lemma:prob-bound-5}
\end{lemma}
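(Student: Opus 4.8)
The plan is to fix $n$, $n_4$, and $\mu$ and to bound the number of admissible triples $(n_1,n_2,n_3)$ directly. Membership in $*(n,n_4,\mu)$ forces the convolution relation $n_1+n_2+n_3=n-n_4$ and the modulation relation $n_1^3+n_2^3+n_3^3=n^3-n_4^3-\mu$, so the first elementary symmetric function $e_1=n-n_4$ of $(n_1,n_2,n_3)$ is fixed and, via the Newton identity $n_1^3+n_2^3+n_3^3=e_1^3-3e_1e_2+3e_3$, the third elementary symmetric function $e_3=n_1n_2n_3$ is an affine function of the second, $e_2=n_1n_2+n_1n_3+n_2n_3$. Hence for each value of $e_2$ the triple $(n_1,n_2,n_3)$ is an ordered list of roots of a fixed monic cubic, so at most $6$ triples arise; it therefore suffices to confine $e_2$ to a set of cardinality $\lesssim(N^0)^{16\delta/(1-4\delta)}$. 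Equivalently, since the five integers $b\in\{-n,n_1,n_2,n_3,n_4\}$ sum to zero, Newton gives $\sum_b b^3=3e_3(b)$, so $-3e_3(b)=n^3-\sum n_i^3=\mu$ is fixed, and splitting off the two largest entries yields the identity
\begin{align*}
e_3(b)=(n^0+n^1)\big[-n^0n^1+e_2(n^2,n^3,n^4)\big]+e_3(n^2,n^3,n^4),\qquad n^2+n^3+n^4=-(n^0+n^1),
\end{align*}
which is the form I would actually exploit.

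Next I would read off the frequency geometry from the standing CASE 2.c hypotheses together with \eqref{Eqn:b-3}. Since $16\delta/(1-4\delta)<1$ for $\delta$ small, ``$\sim N^0$'' and ``$\ll N^0$'' are genuinely separated; from $|n^0|=N^0$ and $|n^0+n^1|\ll(N^0)^{16\delta/(1-4\delta)}$ one gets $|n^1|\sim N^0$, and from $n^2+n^3+n^4=-(n^0+n^1)$ and $N^3\ll N^0$ one gets $|n^2|,|n^3|,|n^4|\ll N^0$. Thus exactly two of the five frequencies are of size $\sim N^0$, namely $n^0,n^1$, they nearly cancel, and the other three are $\ll N^0$ and nearly sum to zero. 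Moreover $n_4$, being no larger than $n_2$ or $n_3$, cannot be one of the two large frequencies (else $n_2,n_3,n_4$ would all be $\sim N^0$, contradicting $N^3\ll N^0$). Finally, by \eqref{Eqn:b-3} the integer $n^0+n^1$ ranges over a set of cardinality $\lesssim(N^0)^{16\delta/(1-4\delta)}$.

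The heart of the argument is then that fixing $n^0+n^1$ (one of $\lesssim(N^0)^{16\delta/(1-4\delta)}$ values) pins down the five frequencies, hence $e_2$, up to $O(1)$ choices, and I would split on whether $-n$ is large or small. If $-n$ is small, the two large frequencies are two members of $\{n_1,n_2,n_3\}$ and the three small ones are $-n$, $n_4$, and the remaining member $m$ of $\{n_1,n_2,n_3\}$, with $m=(n-n_4)-(n^0+n^1)$ determined; the displayed identity then expresses $n^0n^1$ in terms of known quantities (dividing by $n^0+n^1\neq 0$, which is the defining condition of CASE 2), and $n^0,n^1$ are the two roots of $x^2-(n^0+n^1)x+n^0n^1$. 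If $-n$ is large, the other large frequency $n_b\in\{n_1,n_2,n_3\}$ satisfies $n^0+n^1=-n+n_b$, so $n_b$ is determined, and the remaining two members $a,b$ of $\{n_1,n_2,n_3\}$ are small with $a+b=(n-n_4)-n_b$ known and, by the displayed identity, with $ab$ determined as well — the linear equation for $ab$ has coefficient $n^0+n^1+n_4$, which equals minus the sum of two of $n^2,n^3,n^4$ (since $n_4$ is itself one of the three small frequencies) and hence is nonzero by \eqref{Eqn:NL-new-2}. In either case $(n_1,n_2,n_3)$, and so $e_2$, takes $O(1)$ values per choice of $n^0+n^1$; combined with the reduction in the first paragraph this gives $|S(n,n_4,\mu)|\le 6\cdot\#\{e_2\}\lesssim(N^0)^{16\delta/(1-4\delta)}$, the implicit constant being absorbed for $N^0$ large.

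The main obstacle is the combinatorial bookkeeping in the last step: tracking which of the five integers occupies which slot of the magnitude ordering, and checking that every ``sum and cube-sum determine the product'' step is non-degenerate. That non-degeneracy is exactly where the CASE 2 hypothesis $n^0\neq-n^1$ and the structural fact \eqref{Eqn:NL-new-2} (no two of $\{-n,n_1,n_2,n_3,n_4\}$ sum to zero) enter, since one recovers $ab$ from $a+b$ and $a^3+b^3$ only when $a+b\neq0$. A secondary point is verifying that the dyadic-scale constraints $|n^k|\sim N^k$ are compatible with the ``two large, three small'' picture, which is precisely what $N^3\ll N^0$ together with \eqref{Eqn:NL-neg1-6d} guarantee; once these are in hand, Lemma \ref{Lemma:prob-bound-5} follows.
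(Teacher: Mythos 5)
Your argument is correct, and since the paper only cites \cite{R-Th} for this proof there is no in-text version to compare against. The structural content you extract is the right one: under CASE~2.c together with \eqref{Eqn:b-3}, exactly two of $\{-n,n_1,n_2,n_3,n_4\}$ have magnitude $\sim N^0$ and their sum $s=n^0+n^1$ ranges over $\lesssim(N^0)^{16\delta/(1-4\delta)}$ values, while the remaining three are $\ll N^0$; and because the five integers sum to zero, the power sum $\sum b^3$ equals three times the third elementary symmetric function, which is therefore pinned down by $\mu$. Given $n$, $n_4$, $\mu$ and $s$, the ``sum and cube-sum determine the product'' step recovers the full configuration up to $O(1)$ ambiguity, and your identification of the two non-degeneracy inputs — $s\neq 0$ from CASE~2, and no cancelling pair among the five frequencies from \eqref{Eqn:NL-new-2}, which makes the coefficient $n^0+n^1+n_4=-(a+b)$ nonzero — is precisely what keeps the two quadratics you solve genuinely quadratic. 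The case split on whether $-n$ is one of the two large entries is the right way to organize the bookkeeping, and your check that $n_4$ cannot itself be large (since $N_2\geq N_3\geq N_4$ together with $N^3\ll N^0$ would otherwise force three large frequencies) closes the remaining configuration question.

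Two minor remarks. First, the bound you actually obtain is $|S(n,n_4,\mu)|\lesssim(N^0)^{16\delta/(1-4\delta)}$; the stated strict inequality should be read in that spirit (or justified via the $\ll$ already built into \eqref{Eqn:b-3}), and this is all that is used downstream in \eqref{Eqn:2B-3} and \eqref{Eqn:2B-10}, so nothing is lost. Second, your opening reduction to counting values of $e_2(n_1,n_2,n_3)$ is superseded by the direct count of triples in your final paragraph; it does no harm, but it is not needed once you count configurations per value of $s$.
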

The proof of Lemma \ref{Lemma:prob-bound-5} can be found in Section 5.2.5 of \cite{R-Th}.  Combining \eqref{Eqn:case-c-5} and Lemma \ref{Lemma:prob-bound-5} we have
\begin{align}
I_{1}(n_{4},\mu)
&\leq   T^{-\beta}
\frac{(N^0)^{\frac{16\delta}{1-4\delta}}
N_{1}^{1-2\delta}}
{ N_{2}^{2-\varepsilon}N_{3}^{2-\varepsilon}} \leq
T^{-\beta}
\frac{(N^0)^{\frac{2\delta}{1-4\delta}}
N_{1}^{1-2\delta}N_{4}^{2-\varepsilon}}
{ N_{2}^{2-\varepsilon}N_{3}^{2-\varepsilon}N_{4}^{2-\varepsilon}}
\notag \\
&\leq
T^{-\beta}
\frac{N_{1}^{1-2\delta}N_{4}^{2-\varepsilon}}
{(N^0)^{4-2\varepsilon-\frac{16\delta}{1-4\delta}}}
\lesssim T^{-\beta}
\frac{N_{1}^{1-2\delta}(N^0)^{2\delta-\varepsilon\delta}}
{(N^0)^{4-2\varepsilon-\frac{16\delta}{1-4\delta}}}
\lesssim \frac{T^{-\beta}}
{(N^0)^{3-(2+\delta)\varepsilon-\frac{16\delta}{1-4\delta}}}.
\label{Eqn:2B-3}
\end{align}
Notice that we have applied \eqref{Eqn:NL-neg1-6c} and \eqref{Eqn:2B-2} in the previous lines.  From here the estimates on (the contribution from) $I_{1}(n_{4},\mu)$ proceed as in case 2.b.i.

In the analysis of the contribution from $I_{2}(n_{4},\mu)$, we have to modify our analysis once again.  In particular, in the line of inequalities in \eqref{Eqn:NL-neg1-34}, we need to obtain the same prefactor of $(N^{0})^{-(6-)}$.  This is done quite easily by following the approach used in \eqref{Eqn:2B-3} above.  We find
\begin{align*}
\frac{n_{1}n_{1}'m_{1}m_{1}'}{\langle n_{1}\rangle^{1+2\delta} \langle n_{2}\rangle
\langle n_{1}'\rangle^{1+2\delta} \langle n_{2}'\rangle
\langle n_{3}\rangle \langle m_{1}\rangle \langle m_{2}\rangle
\langle m_{1}'\rangle \langle m_{2}'\rangle
\langle m_{3}\rangle} &\lesssim \frac{N_{1}^{2-4\delta}N_{4}^{4}}
{(N_2N_3N_4)^{4}} \\
&\lesssim
\frac{N_{1}^{2-4\delta}(N^0)^{4\delta}}
{(N^0)^{8}} \lesssim  \frac{1}{(N^0)^{6}},
\end{align*}
and from here the analysis proceeds as in case 2.b.i.  This completes the analysis of case 2.c.iii.a.

\vspace{0.1in}

\noindent $\bullet$ \textbf{CASE 2.c.iii.b:} $u_1$ type (I).

\vspace{0.1in}

As in case 2.c.iii.a., we pivot on the type of the $u_4$ factor, producing two more subcases.

\vspace{0.1in}

\noindent $\bullet$ \textbf{CASE 2.c.iii.b.1:} $u_4$ type (I).

\vspace{0.1in}

In this case we have $u_1$, $u_4$ type (I), and $u_2$, $u_3$ type (II).  We find
\begin{align}
\frac{|n|^{\frac{1}{2}+\delta}|n_1|}{|n_{1}|^{\frac{1}{2}-\gamma}
|n_{2}n_3|^{\frac{1}{2}+\delta-\gamma}|n_{4}|^{\frac{1}{2}-\gamma}}
&\lesssim \frac{|n|^{\frac{1}{2}+\delta}|n_1|^{\frac{1}{2}+\gamma}|n_4|^{\delta}}
{|n_{2}n_{3}n_{4}|^{\frac{1}{2}+\delta-\gamma}} \notag \\
&\lesssim \frac{|n|^{\frac{1}{2}+\delta}|n_1|^{\frac{1}{2}+\gamma}|n_4|^{\delta}}
{(N^0)^{1+2\delta-2\gamma}}
\lesssim  \frac{{N_4}^{\delta}}
{(N^0)^{\delta-2\gamma}}
\lesssim  \frac{1}
{(N^0)^{\frac{\delta}{3}-3\gamma-5\delta^2}}.
\label{Eqn:2B-4}
\end{align}
In the preceding inequalities, we have applied both \eqref{Eqn:b-3} and \eqref{Eqn:case-c-3}.  Having established \eqref{Eqn:2B-4}, we can proceed with a straight-forward modification the method used in case 2.b.ii.

\vspace{0.1in}

\noindent $\bullet$ \textbf{CASE 2.c.iii.b.2:} $u_4$ type (II).

\vspace{0.1in}

In this case, there are two possibilities.  We have either that $u_{1}, u_{2}, u_{3}, u_{4}$ are types (I)(I)(II)(II), respectively, or that they are types (I)(II)(I)(II).  Let us consider the case (I)(I)(II)(II), and briefly describe the adaptation to (I)(II)(I)(II) throughout.

Once again, the analysis of this case closely follows the method of case 2.b.i.  Indeed, the analysis is identical until we estimate $I_{1}(n_{4},\mu)$ as in line \eqref{Eqn:NL-neg1-44}, and apply Lemma \ref{Lemma:prob1} to obtain, for $\omega\in\widetilde{\Omega}_{T}$:
\begin{align}
I_{1}(n_{4},\mu) &\leq  T^{-\beta}\sup_{|n|\sim N}\sum_{\substack{|n_{1}|\sim N_{1}\\(n_{2},n_{3}) \in *(n,n_{1},n_{4},\mu)\cap \text{case 2.c.}}} \frac{|n_{1}|^{2}
|g_{n_{1}}(\omega)||g_{n_{2}}(\omega)|}
{ |n_{1}|^{2}|n_{2}|^{2}|n_{3}|^{1+2\delta}} \notag \\
&\leq   T^{-\beta-\varepsilon}\sup_{|n|\sim N}\sum_{(n_{1},n_{2},n_{3}) \in *(n,n_{4},\mu)\cap \text{case 2.c.}} \frac{|n_{1}|^{\varepsilon}}
{|n_{2}|^{2-\varepsilon}|n_{3}|^{1+2\delta}}\notag
\\
&\lesssim \frac{|N_1|^{\varepsilon}(N^0)^{\frac{16\delta}{1-4\delta}}}
{|N_{2}|^{2-\varepsilon}|N_{3}|^{1+2\delta}}
\lesssim \frac{(N^0)^{\varepsilon + \frac{16\delta}{1-4\delta}}N_{4}^{1+2\delta}}
{|N_{2}|^{1-\varepsilon+2\delta}(N^0)^{2+4\delta}}
\lesssim
\frac{1}
{(N^0)^{\frac{4}{3}-\frac{7\delta}{3}-10\delta^2-\varepsilon
-\frac{16\delta}{1-4\delta}}}.
\label{Eqn:2c-6}
\end{align}
In the previous lines, we have applied Lemma \ref{Lemma:prob-bound-5}, \eqref{Eqn:NL-neg1-6c} and \eqref{Eqn:2B-2}.
The inequality \eqref{Eqn:2c-6} is enough to estimate the contribution from $I(n_{4},\mu)$ as in case 1.b.i.  Indeed, it is easily verified that, from the inequality \eqref{Eqn:2c-6}, we only require a negative power of $N^0$ with magnitude greater than $1+$.  Note that, by taking $\varepsilon,\delta>0$ sufficiently small, this is exactly what we have accomplished.  Let us pause to remark that the analysis above is easily accomplished with types (I)(II)(I)(II) as well.

Before we estimate the contribution from $I_{2}(n_{4},\mu)$, let us first observe that, in the case of types (I)(I)(II)(II), we can obtain a stronger restriction on the size of $N_2$.  More precisely, we have
\begin{align}
\frac{|n|^{\frac{1}{2}+\delta}|n_1|}{|n_{1}|^{\frac{1}{2}-\gamma}
|n_{2}|^{\frac{1}{2}-\gamma}|n_{3}n_4|^{\frac{1}{2}+\delta-\gamma}}
&\lesssim \frac{|n|^{\frac{1}{2}+\delta}|n_1|^{\frac{1}{2}+\gamma}|n_2|^{\delta}}
{|n_{2}n_{3}n_{4}|^{\frac{1}{2}+\delta-\gamma}} \notag \\
&\lesssim \frac{|n|^{\frac{1}{2}+\delta}|n_1|^{\frac{1}{2}+\gamma}|n_2|^{\delta}}
{(N^0)^{1+2\delta-2\gamma}}
\lesssim  \frac{{N_2}^{\delta}}
{(N^0)^{\delta-3\gamma}}
\lesssim  \frac{1}
{(N^0)^{\frac{\delta}{5}-3\gamma}},
\label{Eqn:2B-7}
\end{align}
unless $N_2\gtrsim (N^0)^{\frac{4}{5}}$.  If \eqref{Eqn:2B-7} holds, we can proceed with a modification of the analysis in case 1.b.ii.  We will therefore assume, for the remainder of this case, that
\begin{align}
N_2\gtrsim (N^0)^{\frac{4}{5}}.
\label{Eqn:2B-8}
\end{align}

Turning to the contribution from $I_{2}(n_{4},\mu)$, we proceed with the analysis of case 2.b.i. until \eqref{Eqn:NL-neg1-34}, where we find, using \eqref{Eqn:2B-8},
\begin{align}
\|F_{n,n',n_{4},\mu}\|^{2}_{L^{2}(\Omega)}
&=  \mathbb{E}\Bigg(
\Big|\sum_{\substack{|n_{3}|\sim N_{3}\\n_{1},n_{2}\in
*(n,\mu,n_{3},n_{4})\\n_{1}',n_{2}'\in
*(n',\mu,n_{3},n_{4})}}
\frac{-n_{1}n_{1}'g_{n_{1}}(\omega)g_{n_{2}}(\omega)
\overline{g_{n_{1}'}}(\omega)\overline{g_{n_{2}'}}(\omega)}{\langle n_{1}\rangle\langle n_{2}\rangle\langle n_{1}'\rangle\langle n_{2}'\rangle\langle n_{3}\rangle^{1+2\delta}}\Big|^{2}\Bigg)
\notag \\
&=
\sum_{\substack{|n_{3}|,|m_{3}|\sim N_{3}\\(n_{1},n_{2})\in
*(n,\mu,n_{3},n_{4}), \,(n_{1}',n_{2}')\in
*(n',\mu,n_{3},n_{4})\\
(m_{1},m_{2})\in
*(n,\mu,m_{3},n_{4}), \,(m_{1}',m_{2}')\in
*(n',\mu,m_{3},n_{4})
}}
\notag \\
&\ \ \ \frac{(-n_{1}n_{1}')(-m_{1}m_{1}')}{\langle n_{1}\rangle \langle n_{2}\rangle
\langle n_{1}'\rangle \langle n_{2}'\rangle
\langle n_{3}\rangle^{1+2\delta}\langle m_{1}\rangle \langle m_{2}\rangle
\langle m_{1}'\rangle \langle m_{2}'\rangle
\langle m_{3}\rangle^{1+2\delta}} \notag \\
&\ \ \ \mathbb{E}\Big(g_{n_{1}}(\omega)g_{n_{2}}(\omega)
\overline{g_{n_{1}'}}(\omega)\overline{g_{n_{2}'}}(\omega)
\overline{g_{m_{1}}}(\omega)\overline{g_{m_{2}}}(\omega)
g_{m_{1}'}(\omega)g_{m_{2}'}(\omega)\Big).
\notag
\\
&\lesssim  \frac{1}{(N^0)^{\frac{16}{5}}N_3^{2+4\delta}}
\sum_{\substack{|n_{3}|,|m_{3}|\sim N_{3}\\(n_{1},n_{2})\in
*(n,\mu,n_{3},n_{4}), \,(n_{1}',n_{2}')\in
*(n',\mu,n_{3},n_{4})\\
(m_{1},m_{2})\in
*(n,\mu,m_{3},n_{4}), \,(m_{1}',m_{2}')\in
*(n',\mu,m_{3},n_{4})
}}\notag \\
&\ \ \ \big|\mathbb{E}\Big(g_{n_{1}}(\omega)g_{n_{2}}(\omega)g_{n_{1}'}(\omega)g_{n_{2}'}(\omega)
\overline{g_{m_{1}}}(\omega)\overline{g_{m_{2}}}(\omega)\overline{g_{m_{1}'}}(\omega)\overline{g_{m_{2}'}}(\omega)\Big)
\big|.
\label{Eqn:2B-9}
\end{align}
Then combining \eqref{Eqn:NL-neg1-36} and \eqref{Eqn:2B-9}, we have
\begin{align}
I_{2}(n_{4},\mu) &=  \Big(\sum_{
\substack{n\neq n'\\ |n|,|n'|\sim N}}|F_{n,n',n_{4},\mu}(\omega)|^{2}\Big)^{\frac{1}{2}}  \notag \\
&<  \frac{T^{-2\beta}}{(N^0)^{\frac{8}{5}}N_3^{1+2\delta}}\Bigg(
\sum_{\substack{n\neq n',\, |n|,|n'|\sim N, \,|n_{3}|,|m_{3}|\sim N_{3}\\(n_{1},n_{2})\in
*(n,\mu,n_{3},n_{4})\, (n_{1}',n_{2}')\in
*(n',\mu,n_{3},n_{4}) \\
(m_{1},n_{2})\in
*(n,\mu,m_{3},n_{4})\, (m_{1}',m_{2}')\in
*(n',\mu,m_{3},n_{4})
}}\notag \\
&\ \ \ \ \ \ \ \big|\mathbb{E}\Big(g_{n_{1}}(\omega)g_{n_{2}}(\omega)g_{n_{1}'}(\omega)g_{n_{2}'}(\omega)
\overline{g_{m_{1}}}(\omega)\overline{g_{m_{2}}}(\omega)\overline{g_{m_{1}'}}(\omega)\overline{g_{m_{2}'}}(\omega)\Big)
\big|
\Bigg)^{\frac{1}{2}}
\notag \\
&\lesssim  \frac{T^{-2\beta}}{(N^0)^{\frac{8}{5}}}\sup_{|n_{3}|,|m_{3}|\sim N_3}\Bigg(
\sum_{\substack{(n,n_{1},n_{2})\in
*(\mu,n_{3},n_{4})\, (n',n_{1}',n_{2}')\in
*(\mu,n_{3},n_{4}) \\
(n,m_{1},m_{2})\in
*(\mu,m_{3},n_{4})\, (n',m_{1}',m_{2}')\in
*(\mu,m_{3},n_{4})}}
\Bigg)^{\frac{1}{2}}
\notag \\
&\lesssim  \frac{T^{-2\beta}}{(N^0)^{\frac{8}{5}-\frac{32\delta}{1-4\delta}}}
\lesssim
\frac{T^{-2\beta}}{(N^0)^{\frac{3}{2}}},
\label{Eqn:2B-10}
\end{align}
by taking $\delta>0$ sufficiently small.  Let us remark that, to obtain \eqref{Eqn:2B-10} above, we have applied Lemma \ref{Lemma:prob-bound-5}, \eqref{Eqn:NL-neg1-6c} and \eqref{Eqn:2B-8}.  With \eqref{Eqn:2B-10}, we have established an estimate superior to \eqref{Eqn:NL-neg1-37}, and the remaining analysis of this case follows case 2.b.i.

With the combination of types (I)(II)(I)(II), we can follow the same scheme to estimate the contribution from $I_{2}(n_{4},\mu)$, but the roles of $n_2$ and $n_3$ are swapped (including \eqref{Eqn:2B-8}, which in this case restricts the size of $N_3$).

This completes the analysis of case 2.c.iii.

\vspace{0.1in}

\noindent $\bullet$ \textbf{CASE 2.c.iv:} At least 3 of $u_{1},u_{2},u_{3},u_{4}$ of type (II).

\vspace{0.1in}

We consider subcases.  In each subcase, we follow the method of case 2.b.ii.

\vspace{0.1in}

\noindent $\bullet$ \textbf{CASE 2.c.iv.1:} $u_{1},u_{2},u_{3},u_{4}$ of types (I)(II)(II)(II), respectively.

\vspace{0.1in}

In this case, we find
\begin{align}
\frac{|n|^{\frac{1}{2}+\delta}|n_1|}{|n_{1}|^{\frac{1}{2}-\gamma}
|n_2n_{3}n_4|^{\frac{1}{2}+\delta-\gamma}}
\lesssim
\frac{1}{(N^0)^{\delta-3\gamma}}.
\label{Eqn:2B-11a}
\end{align}
Using \eqref{Eqn:2B-11a}, we may proceed as in case 2.b.ii.

\vspace{0.1in}

\noindent $\bullet$ \textbf{CASE 2.c.iv.2:} $u_{1},u_{2},u_{3},u_{4}$ of types (II)(I)(II)(II), (II)(II)(I)(II) or (II)(II)(II)(I), respectively.

\vspace{0.1in}

Suppose $u_{1},u_{2},u_{3},u_{4}$ are of types (II)(I)(II)(II).
In this case, we find
\begin{align}
\frac{|n|^{\frac{1}{2}+\delta}|n_1|}{|n_{1}|^{\frac{1}{2}+\delta-\gamma}
|n_2|^{\frac{1}{2}-\gamma}|n_{3}n_4|^{\frac{1}{2}+\delta-\gamma}}
&\lesssim
\frac{|n|^{\frac{1}{2}+\delta}|n_1|^{\frac{1}{2}-\delta+\gamma}
|n_2|^{\delta}}
{|n_2n_{3}n_4|^{\frac{1}{2}+\delta-\gamma}}
\notag \\
\lesssim
\frac{|n_2|^{\delta}}{(N^0)^{2\delta-3\gamma}}
\lesssim
\frac{1}{(N^0)^{\delta-3\gamma}}.
\label{Eqn:2B-12}
\end{align}
Again, using \eqref{Eqn:2B-12}, we may proceed as in case 2.b.ii.
It is trivial to verify that this approach applies with types (II)(II)(I)(II) and (II)(II)(II)(I) as well, and this case is complete.

\vspace{0.1in}

\noindent $\bullet$ \textbf{CASE 2.c.iv.3:} $u_{1},u_{2},u_{3},u_{4}$ all type (II).

\vspace{0.1in}

We consider
\begin{align}
\frac{|n|^{\frac{1}{2}+\delta}|n_1|}{|n_{1}|^{\frac{1}{2}+\delta-\gamma}
|n_2n_{3}n_4|^{\frac{1}{2}+\delta-\gamma}}
\lesssim
\frac{1}{(N^0)^{2\delta-3\gamma}},
\label{Eqn:2B-13}
\end{align}
and once again revert to the analysis of case 2.b.ii.

This completes the analysis of case 2.c.iv., our final case, and the proof of Proposition \ref{Prop:NL-neg1-local} is complete.

\vspace{0.1in}
\end{proof}


\subsection{Probabilistic heptilinear estimates}
\label{Sec:probhept}

In this subsection we prove Proposition \ref{Prop:NL-1-nlpart}.  This proof will be somewhat probabilistic in nature; it will incorporate the randomized data $u_{0,\omega}$ and make use of Lemma \ref{Lemma:prob1}.  This probabilistic analysis will be simpler, however, than the analysis used in the proof of Proposition \ref{Prop:NL-neg1-local}.  In particular we will not need Lemmas \ref{Lemma:prob2}-\ref{Lemma:prob3} (hypercontractivity of the Ornstein-Uhlenbeck semigroup).

The proof of Proposition \ref{Prop:NL-1-nlpart} will rely on a certain (deterministic) cancelation in one region of frequency space.  This cancelation is one of the more delicate points of this paper; we proceed to discuss its details before we begin the proof of Proposition \ref{Prop:NL-1-nlpart}.

As mentioned in Remark \ref{Rem:cancel}, it is in fact necessary that we explain these details before starting the proof of Proposition \ref{Prop:NL-1-nlpart}.  Indeed, in the statement of Proposition \ref{Prop:NL-1-nlpart} and during the proof of Theorem \ref{Thm:LWP}, our use of the notation $\mathcal{N}(u_1,u_2,u_3,u_4)$ and  $\mathcal{N}_{1}(\mathcal{D}(u_5,u_6,u_7,u_8),u_2,u_3,u_4)$ with different input functions is somewhat misleading.  When we write
$\mathcal{N}(u_1,u_2,u_3,u_4)$ and  $\mathcal{N}_{1}(\mathcal{D}(u_5,u_6,u_7,u_8),u_2,u_3,u_4)$ in this paper, if any of the input functions differ, we do not mean for these expressions to be interpreted literally.  Instead, these expressions are defined through a decomposition in frequency space, with a certain cancelation enforced in a problematic region.  There are two important questions that arise: 1. What is the nature of this cancelation?  2. Why are we allowed to enforce this cancelation?

Before we answer question 1 in detail, let us forecast the answer to question 2.  The crucial point here is that this cancelation only applies when the functions $u_2,\ldots,u_8$ placed into the nonlinearity $\mathcal{N}_{1}(\mathcal{D}(u_5,u_6,u_7,u_8),u_2,u_3,u_4)$ are all the same ($u=u_2=\cdots =u_8$).  Luckily, during the proof of Theorem \ref{Thm:LWP}, when we establish \eqref{Eqn:LWP-1a}-\eqref{Eqn:LWP-17},  we will only need to consider the nonlinearity with different input functions through the addition and subtraction such terms, in order to produce factors on the right-hand side of the nonlinear estimates with the difference of two solutions (e.g. $u^N-u^M$) inserted (see \eqref{Eqn:LWP-5} and \eqref{Eqn:LWP-6a}).  Because we are adding and subtracting these factors, we can define $\mathcal{N}(u_1,u_2,u_3,u_4)$ and  $\mathcal{N}_{1}(\mathcal{D}(u_5,u_6,u_7,u_8),u_2,u_3,u_4)$, with different input functions, to be any multilinear expressions that are suitable to our needs.  We will only modify the definitions (from literal interpretation) in a single region of frequency space (a subset of $A_{1}$), to ensure that the cancelation which holds when $u=u_2=\cdots =u_8$ is preserved for different input functions.


We proceed to identify the cancelation, and to properly define $\mathcal{N}(u_1,u_2,u_3,u_4)$ and  $\mathcal{N}_{1}(\mathcal{D}(u_5,u_6,u_7,u_8),u_2,u_3,u_4)$ with different input functions.  Following these definitions, we will present the proof of Proposition \ref{Prop:NL-1-nlpart}.  First suppose all factors are the same, and consider
\begin{align}
\mathcal{N}_{1}(\mathcal{D}&(u,u,u,u),
u,u,u)^{\wedge}(n,\tau) \notag \\ &=
\sum_{\substack{(n_1,n_2,n_3,n_4)\in \zeta(n)\\(n_5,n_6,n_7,n_8)\in \zeta(n_1)}}\int_{\tau=\tau_{2}+\cdots +\tau_{8}}\chi_{A_{1}}
\frac{-n_{1}n_{5}}{\sigma_{1}} \prod_{j=2}^{8}\hat{u}(n_{j},\tau_{j}).
\label{Eqn:NL-1-20}
\end{align}
We will induce cancelation in the contribution to \eqref{Eqn:NL-1-20} from when $n_5=n$ and the remaining frequencies satisfy certain smallness conditions.  Consider
\begin{align}
A_{1,c}=\bigg\{&(n,n_2,n_3,n_4,n_5,n_6,n_7,n_8,\tau,\tau_2,\tau_3,\tau_4,
\tau_5,\tau_6,\tau_7,\tau_8)\in (\mathbb{Z}\setminus \{0\})^8\times \mathbb{R}^8 : \notag \\
&\ \ \ \  n=n_5,\ \ \tau=\tau_2+\cdots +\tau_8, \ \  n_{2}+n_3+n_4+n_6+n_7+n_8=0, \notag \\
&\ \ \ \  n_{2}+n_3+n_4\neq 0,\ \ |\sigma|<|n|^{\sqrt{2\delta}},\  |\sigma_{k}|<|n|^{\sqrt{2\delta}}, \ |n_{k}|<|n|^{\sqrt{2\delta}} \notag \\ &\quad \quad \quad \ \text{for}\ k=2,3,4,6,7,8\bigg\}.
\label{Eqn:cancel}
\end{align}
Notice that, if $(n,n_2,\ldots,n_8,\tau, \tau_2,\ldots$
$\tau_8)\in A_{1,c}$, then
$(n_1,n_2,n_3,n_4)\in\zeta(n)$, $(n,n_6,n_7,n_8)\in\zeta(n_1)$ and $(n,n_1,n_2,n_3,n_4,\tau_1,\tau_2,\tau_3,\tau_4)\in A_{1}$.
Indeed, the restrictions $|n_k|<|n|^{\sqrt{2\delta}}$
for $k=2,3,4$ and $n_{2}+n_3+n_4\neq 0$ guarantee that $n\neq n_k$ for $k=1,2,3,4$ and $n_1\neq -n_k$ for $k=2,3,4$, thus $(n_1,n_2,n_3,n_4)\in\zeta(n)$.
Similarly $|n_k|<|n|^{\sqrt{2\delta}}$
for $k=6,7,8$ and $n_{6}+n_7+n_8\neq 0$ guarantees $n_1 \neq n,n_k$ and $n\neq -n_k$ for $k=6,7,8$, and thus $(n,n_6,n_7,n_8)\in\zeta(n_1)$.  Lastly using the restrictions $|\sigma|,|\sigma_{k}|,|n_k|<|n|^{\sqrt{2\delta}}$ for $k=2,3,4,6,7,8$ and $n=n_5$, we can easily show that $|\sigma_1|\gtrsim |n_{\text{max}}|^2$, and therefore $(n,n_1,n_2,$ $n_3,n_4,\tau_1,\tau_2,\tau_3,\tau_4)\in A_{1}$.

Because of this, we can consider the following contribution to \eqref{Eqn:NL-1-20}
\begin{align}
\mathcal{N}_{1}(\mathcal{D}(u,&
u,u,u),u,u,u)^{\wedge}(n,\tau)|_{A_{1,c}}
\notag \\
&:= -n\sum_{n_2+n_3+n_4+n_6+n_7+n_8=0}
\int_{\tau=\tau_2+\tau_3+\tau_4+\tau_5+\tau_6+\tau_7 +\tau_{8}}
\chi_{A_{1,c}}\prod_{j=2}^{8}\hat{u}(n_{j},\tau_{j})
\notag \\
&\quad \quad \quad \quad \quad \quad
\quad
\cdot\Big(\frac{n-n_{2}-n_{3}-n_{4}}{\tau-\tau_{2}-\tau_{3}-\tau_{4}
-(n-n_{2}-n_{3}-n_{4})^{3}}\Big)
\notag \\
&=
-n\sum_{n_2+n_3+n_4+n_6+n_7+n_8=0}
\int_{\tau=\tau_2+\tau_3+\tau_4+\tau_5+\tau_6+\tau_7 +\tau_{8}}
\chi_{A_{1,c}}\prod_{j=2}^{8}\hat{u}(n_{j},\tau_{j})
\notag \\
& \quad \quad \quad \quad\quad \quad \quad
\cdot\Big(\frac{n}{\tau-\tau_{2}-\tau_{3}-\tau_{4}-(n-n_{2}-n_{3}-n_{4})^{3}}
\notag \\
&  \quad \quad \quad \quad \quad \quad \quad \quad \quad -\frac{n_{2}+n_{3}+n_{4}}{\tau-\tau_{2}-\tau_{3}-\tau_{4}
-(n-n_{2}-n_{3}-n_{4})^{3}}\Big)
\notag \\
&=:
K_{1}(n,\tau) + K_{2}(n,\tau),
\label{Eqn:NLdef3}
\end{align}
where we have defined $K_{1}(n,\tau)$, $K_{2}(n,\tau)$ by expanding the parentheses in the second last line.  We will only need cancelation to control $K_{1}(n,\tau)$ ($K_{2}(n,\tau)$ will be estimated directly).  Let us now describe this cancelation.  We swap the variable names
$(n_{2},n_{3},n_{4},\tau_{2},\tau_{3},\tau_{4})$
with $(n_{6},n_{7},n_{8},\tau_{6},\tau_{7},\tau_{8})$ and use the invariance of $A_{1,c}$ under this modification to obtain
\begin{align}
K_{1}(n,\tau) &=
-\frac{n^2}{2}\sum_{n_2+n_3+n_4+n_6+n_7+n_8=0}
\int_{\tau=\tau_2+\tau_3+\tau_4+\tau_5+\tau_6+\tau_7 +\tau_{8}}\chi_{A_{1,c}}\prod_{j=2}^{8}
\hat{u}(n_{j},\tau_{j})
\notag
\\
&\quad \quad \quad
\cdot\Big(\frac{1}{\tau-\tau_{6}-\tau_{7}-\tau_{8}
-(n-n_{6}-n_{7}-n_{8})^{3}}
\notag \\
&\ \ \ \ \ \ \
\ \ \ \ \ \ \
\ \ \ \ \ \ \
+ \frac{1}{\tau-\tau_{2}-\tau_{3}-\tau_{4}
-(n-n_{2}-n_{3}-n_{4})^{3}}\Big).
\label{Eqn:2A-18}
\end{align}
Using $n_{2}+n_{3}+n_{4}+n_{6}+n_{7}+n_{8}=0$, we find
\begin{align}
&\frac{1}{\tau-\tau_{6}-\tau_{7}-\tau_{8}
-(n-n_{6}-n_{7}-n_{8})^{3}}
 \notag \\
& \ \
+ \frac{1}{\tau-\tau_{2}-\tau_{3}-\tau_{4}
-(n-n_{2}-n_{3}-n_{4})^{3}}
 \notag \\
&= \frac{1}{3n^2(n_{2}+n_{3}+n_{4})-(3n-(n_2+n_3+n_4))(n_{2}+n_{3}+n_{4})^2
+ \tau-n^{3}-\tau_{6}-\tau_{7}-\tau_{8}}
\notag
\\
& \ \
-\frac{1}{3n^2(n_{2}+n_{3}+n_{4})+(3n+(n_2+n_3+n_4))(n_{2}+n_{3}+n_{4})^2
- \sigma+\tau_{2}+\tau_{3}+\tau_{4}}
\notag
\\
&\ \ \ =\frac{-6n(n-n_{1})^{2}+\tau-\tau_5-2\sigma}
{(3nn_{1}(n-n_{1})
+ \sigma-\tau_{6}-\tau_{7}-\tau_{8})
(3nn_{1}(n-n_{1})
- \sigma+\tau_{2}+\tau_{3}+\tau_{4})},
\label{Eqn:NL-1-50}
\end{align}
where we have used $\tau=\tau_2+\cdots+\tau_8$ to obtain the last line.  This gives
\begin{align}
K_{1}&(n,\tau)= -\frac{n^2}{2}\sum_{n_2+n_3+n_4+n_6+n_7+n_8=0}
\int_{\tau=\tau_2+\tau_3+\tau_4+\tau_5+\tau_6+\tau_7 +\tau_{8}}
\chi_{A_{1,c}}
\prod_{j=2}^{8}\hat{u}(n_{j},\tau_{j}) \notag \\
&\cdot\frac{-6n(n-n_{1})^{2}+\tau-\tau_5-2\sigma}
{(3nn_{1}(n-n_{1})
+ \sigma-\tau_{6}-\tau_{7}-\tau_{8})
(3nn_{1}(n-n_{1})
- \sigma+\tau_{2}+\tau_{3}+\tau_{4})}.
\label{Eqn:NLdef}
\end{align}

Now, as anticipated, we will \textit{define}  $\mathcal{N}(u_1,u_2,u_3,u_4)$ and $\mathcal{N}_{1}(\mathcal{D}(u_{5},
u_{6},u_{7},u_{8}),u_{2}
,u_{3},u_{4})$ with (potentially non-equivalent) input functions by extending
the definition of $K_{1}(n,\tau)$ according to \eqref{Eqn:NLdef}.
That is, $\mathcal{N}(u_1,u_2,u_3,u_4)$ is defined piecewise through a decomposition in frequency space.  The region of integration $A$ is divided into $A_{-1}$,$A_{0}$,$A_{1}$,$A_{2}$,$A_{3}$,$A_{4}$.  In the regions $A_{k}$ for $k=-1,0,2,3,4$, we interpret $\mathcal{N}_{k}(u_1,u_2,u_3,u_4)$ directly (without modifying the definition for non-equivalent inputs).  In the region $A_{1}$, we will exploit cancelation after the second iteration.  It is here where we must emphasize that our use, during the proof of Theorem \ref{Thm:LWP}, of the notation $\mathcal{N}(u_1,u_2,u_3,u_4)$ (and more specifically of the notation $\mathcal{N}_{1}(u_1,u_2,u_3,u_4)$) with potentially non-equivalent inputs, is misleading.  Indeed, the definition of $\mathcal{N}_1(u_1,u_2,u_3,u_4)$ depends upon inserting an equation satisfied by $u_1$ (the second iteration), and our definition of $\mathcal{N}_1(u_1,u_2,u_3,u_4)$ will vary with this equation.  However, the algorithm for determining this definition is straightforward, and we describe it here.  During the proof of Theorem \ref{Thm:LWP}, the factor $u_1$ will satisfy an equation of the form \eqref{Eqn:LWP-2b} or one of its variants.  The important point is that the equation satisfied by $u_1$ will always be decomposed into contributions of type (I) (linear part, rough but random) and type (II) (nonlinear part, smooth and deterministic).  The contributions from the type (I) part of $u_1$ are always interpreted directly.  For the contributions from the type (II) factor, we either (i) bound this factor using the higher temporal regularity $b=\frac{1}{2}+\delta$, via the estimate \eqref{Eqn:NL-k-1}, in which case the nonlinearity is interpreted directly, or (ii) we expand the type (II) contribution into a heptilinear expression.  In case (ii), in the complement of $A_{1,c}$, we interpret the nonlinearity directly, as in \eqref{Eqn:NL-1-20}.  For the contribution from the region $A_{1,c}$, we will force the cancelation \eqref{Eqn:NL-1-50}.  That is, for each $n>0$ and $\tau\in\mathbb{R}$, with $|\tau-n^3|<|n|^{\sqrt{2\delta}}$, we define
\begin{align}
\mathcal{N}_{1}(\mathcal{D}(u_5,
u_6,u_7,u_8),u_2,u_3,u_4)^{\wedge}(n,\tau)|_{A_{1,c}}
:= K_{1}(n,\tau) + K_{2}(n,\tau)
\label{Eqn:NLdef2b}
\end{align}
where $K_{2}(n,\tau)$ is given as in \eqref{Eqn:NLdef3} (but with potentially non-equivalent factors $u_{j}$), and
\begin{align}
&K_{1}(n,\tau):= -\frac{n^2}{2}\sum_{n_2+n_3+n_4+n_6+n_7+n_8=0}
\int_{\tau=\tau_2+\tau_3+\tau_4+\tau_5+\tau_6+\tau_7 +\tau_{8}}
\chi_{A_{1,c}}
\prod_{j=2}^{8}\hat{u}_{j}(n_{j},\tau_{j}) \notag \\
&\ \ \ \ \ \ \ \cdot\frac{-6n(n-n_{1})^{2}+\tau-\tau_5-2\sigma}
{(3nn_{1}(n-n_{1})
+ \sigma-\tau_{6}-\tau_{7}-\tau_{8})
(3nn_{1}(n-n_{1})
- \sigma+\tau_{2}+\tau_{3}+\tau_{4})}.
\label{Eqn:NLdef2}
\end{align}
This way we can still take advantage of the cancelation of \eqref{Eqn:NL-1-50} with nonequal factors.


Having defined the nonlinearity in the statement of Proposition \ref{Prop:NL-1-nlpart}, we proceed with the proof.

\begin{proof}[Proof of Proposition \ref{Prop:NL-1-nlpart}:]

We split into cases depending on the relative sizes of the spatial frequencies $n,n_2,\ldots,n_{8}$.  Here is a list of the cases we will consider.

\vspace{0.1in}

\noindent $\bullet$ \textbf{CASE 1.}
$|\sigma|\gtrsim |n|^{\sqrt{2\delta}}$, $|n_{k}|\gtrsim |n|^{\sqrt{2\delta}}$ or $|\sigma_k|\gtrsim |n|^{\sqrt{2\delta}}$ for some $k\in\{2,3,4,6,7,8\}$.

\noindent $\bullet$ \textbf{CASE 2.}
$|\sigma|\ll |n|^{\sqrt{2\delta}}$, $|n_{k}|\ll |n|^{\sqrt{2\delta}}$ and $|\sigma_k|\ll |n|^{\sqrt{2\delta}}$ for each $k\in\{2,3,4,6,7,8\}$.

\vspace{0.1in}

\hspace{0.4in} $\bullet$ \textbf{CASE 2.a.}
$u_{5}$ type (II).

\hspace{0.4in} $\bullet$ \textbf{CASE 2.b.}
$u_{5}$ type (I).

\vspace{0.1in}

\hspace{0.8in} $\bullet$ \textbf{CASE 2.b.i:} $n\neq n_{5}$.

\hspace{0.8in} $\bullet$ \textbf{CASE 2.b.ii:} $n= n_{5}$.

\vspace{0.1in}

We proceed with the analysis of each case.

\vspace{0.1in}

\noindent $\bullet$ \textbf{CASE 1.}
$|\sigma|\gtrsim |n|^{\sqrt{2\delta}}$, $|n_{k}|\gtrsim |n|^{\sqrt{2\delta}}$ or $|\sigma_k|\gtrsim |n|^{\sqrt{2\delta}}$ for some $k\in\{2,3,4,6,7,8\}$.

\noindent In this subcase we show that
\begin{align}
\|\mathcal{N}_{1}(\mathcal{D}(u_{5},u_{6},
u_{7},u_{8}),u_{2},u_{3},u_{4})|_{\text{Case 1}}
\|_{\frac{1}{2}+\delta,-\frac{1}{2}+\delta}
\lesssim T^{\theta}\prod_{j=2}^{8}
\|u_{j}\|_{\frac{1}{2}-\delta,
\frac{1}{2}-\delta,T}.
\label{Eqn:2A-4}
\end{align}
First suppose $|n_{k}|\gtrsim |n|^{\sqrt{2\delta}}$ for some $k\in\{2,3,4,6,7,8\}$.  We estimate
\begin{align}
\frac{|n|^{\frac{1}{2}+\delta}|n_{1}||n_{5}|}
{|\sigma_{1}||n_{5}|^{\frac{1}{2}-\delta}
|n_{k}|^{\sqrt{2\delta}}}  \lesssim  1.
\label{Eqn:2A-5}
\end{align}
Using \eqref{Eqn:2A-5} and \eqref{Eqn:NL-1-2}, \eqref{Eqn:2A-4} follows from
\begin{align}
\Big\|\prod_{j=2}^{8}f_{j}\Big\|_{L^{2}_{x,t}}
\lesssim T^{\theta}\|f_{5}\|_{0,\frac{1}{2}-\delta}
\|f_{k}\|_{\frac{1}{2}-\delta-\sqrt{2\delta},\frac{1}{2}-\delta}
\prod_{\substack{j=2\\j\neq 5,k}}^{8}
\|f_{j}\|_{\frac{1}{2}-\delta,\frac{1}{2}-\delta}.
\label{Eqn:2A-6}
\end{align}
Then \eqref{Eqn:2A-6} is obtained with H\"{o}lder,
\eqref{Eqn:X-onehalfminus}, \eqref{Eqn:X-Str-1} and Lemma \ref{Lemma:gainpowerofT},
\begin{align*}
\Big\|\prod_{j=2}^{8}f_{j}\Big\|_{L^{2}_{x,t}}
&\lesssim
\|f_{5}\|_{L^{4}_{x,t}} \prod_{\substack{j=2\\j\neq 5}}^{8} \|f_{j}\|_{L^{24}_{x,t}}  \notag \\
&\lesssim  \|f_{5}\|_{0,\frac{1}{3}}
\prod_{\substack{j=2\\j\neq 5}}^{8} \|f_{j}\|_{\frac{1}{2}-\delta-\sqrt{2\delta},\frac{1}{2}
-\delta-\sqrt{2\delta}}
\notag \\
&\lesssim  T^{\theta} \|f_{5}\|_{0,\frac{1}{2}-\delta} \|f_{k}\|_{\frac{1}{2}-\delta-\sqrt{2\delta},
\frac{1}{2}-\delta} \prod_{\substack{j=2\\j\neq 5,k}}^{8} \|f_{j}\|_{\frac{1}{2}-\delta,\frac{1}{2}-\delta}.
\end{align*}

Next suppose $|\sigma_k|\gtrsim |n|^{\sqrt{2\delta}}$ for some $k\in\{2,3,4,6,7,8\}$.  We estimate
\begin{align}
\frac{|n|^{\frac{1}{2}+\delta}|n_{1}||n_{5}|}
{|\sigma_{1}||n_{5}|^{\frac{1}{2}-\delta}
|\sigma_k|^{\sqrt{2\delta}}}  \lesssim  1.
\label{Eqn:2A-7}
\end{align}
Using \eqref{Eqn:2A-7} and \eqref{Eqn:NL-1-2}, \eqref{Eqn:2A-4} follows from
\begin{align}
\Big\|\prod_{j=2}^{8}f_{j}\Big\|_{L^{2}_{x,t}}
\lesssim T^{\theta}\|f_{5}\|_{0,\frac{1}{2}-\delta}
\|f_{k}\|_{\frac{1}{2}-\delta,\frac{1}{2}-\delta-\sqrt{2\delta}}
\prod_{\substack{j=2\\j\neq 5,k}}^{8}
\|f_{j}\|_{\frac{1}{2}-\delta,\frac{1}{2}-\delta}.
\label{Eqn:2A-8}
\end{align}
Then \eqref{Eqn:2A-8} is obtained with H\"{o}lder,
\eqref{Eqn:X-onehalfminus}, \eqref{Eqn:X-Str-1} and Lemma \ref{Lemma:gainpowerofT},
\begin{align*}
\Big\|\prod_{j=2}^{8}f_{j}\Big\|_{L^{2}_{x,t}}
&\lesssim
\|f_{5}\|_{L^{4}_{x,t}} \prod_{\substack{j=2\\j\neq 5}}^{8} \|f_{j}\|_{L^{24}_{x,t}}  \notag \\
&\lesssim  \|f_{5}\|_{0,\frac{1}{3}}
\prod_{\substack{j=2\\j\neq 5}}^{8} \|f_{j}\|_{\frac{1}{2}-\delta-\sqrt{2\delta},\frac{1}{2}-\delta-\sqrt{2\delta}}
\notag \\
&\lesssim  T^{\theta} \|f_{5}\|_{0,\frac{1}{2}-\delta} \|f_{k}\|_{\frac{1}{2}-\delta,
\frac{1}{2}-\delta-\sqrt{2\delta}} \prod_{\substack{j=2\\j\neq 5,k}}^{8} \|f_{j}\|_{\frac{1}{2}-\delta,\frac{1}{2}-\delta}.
\end{align*}
If $|\sigma|\gtrsim |n|^{\sqrt{2\delta}}$, the justification of \eqref{Eqn:2A-8} follows the same method.

\vspace{0.1in}

\noindent $\bullet$ \textbf{CASE 2.}
$|\sigma|\ll |n|^{\sqrt{2\delta}}$, $|n_{k}|\ll |n|^{\sqrt{2\delta}}$ and $|\sigma_k|\ll |n|^{\sqrt{2\delta}}$ for each $k\in\{2,3,4,6,7,8\}$.

\vspace{0.1in}

Notice that the region $A_{1,c}$ must be treated as a subset of this case.

\noindent $\bullet$ \textbf{CASE 2.a.}
$u_{5}$ type (II).

In this subcase we establish
\begin{align}
\|\mathcal{N}_{1}(\mathcal{D}(u_{5},u_{6},
u_{7},u_{8}),&u_{2},u_{3},u_{4})|_{\text{Case 2.a.}}
\|_{\frac{1}{2}+\delta,-\frac{1}{2}+\delta} \notag \\
&\lesssim T^{\theta}\|u_{5}\|_{\frac{1}{2}+\delta,
\frac{1}{2}-\delta,T}\prod_{
\substack{j=2\\j\neq 5}}^{8}
\|u_{j}\|_{\frac{1}{2}-\delta,
\frac{1}{2}-\delta,T}.
\label{Eqn:2A-0}
\end{align}
We begin by estimating
\begin{align}
\frac{|n|^{\frac{1}{2}+\delta}|n_{1}||n_{5}|}
{|\sigma_{1}||n_{5}|^{\frac{1}{2}+\delta}}  \lesssim  1.
\label{Eqn:2A-1}
\end{align}
Using \eqref{Eqn:2A-1}, \eqref{Eqn:2A-0} follows from
\begin{align}
\Big\|\prod_{j=2}^{8}f_{j}\Big\|_{L^{2}_{x,t}}
\lesssim T^{\theta}\|f_{5}\|_{0,\frac{1}{2}-\delta}
\prod_{\substack{j=2\\j\neq 5}}^{8}
\|f_{j}\|_{\frac{1}{2}-\delta,\frac{1}{2}-\delta}.
\label{Eqn:2A-3}
\end{align}
Then \eqref{Eqn:2A-3} is obtained with H\"{o}lder,
\eqref{Eqn:X-onehalfminus}, \eqref{Eqn:X-Str-1} and Lemma \ref{Lemma:gainpowerofT},
\begin{align*}
\Big\|\prod_{j=2}^{8}f_{j}\Big\|_{L^{2}_{x,t}}
&\lesssim
\|f_{5}\|_{L^{4}_{x,t}} \prod_{\substack{j=2\\j\neq 5}}^{8} \|f_{j}\|_{L^{24}_{x,t}}  \notag \\
&\lesssim  \|f_{5}\|_{0,\frac{1}{3}} \prod_{\substack{j=2\\j\neq 5}}^{8} \|f_{j}\|_{\frac{1}{2}-\delta,\frac{1}{2}-\delta}
\notag \\
&\lesssim  T^{\theta} \|f_{5}\|_{0,\frac{1}{2}-\delta} \prod_{\substack{j=2\\j\neq 5}}^{8} \|f_{j}\|_{\frac{1}{2}-\delta,\frac{1}{2}-\delta}.
\end{align*}

\vspace{0.1in}

\noindent $\bullet$ \textbf{CASE 2.b.}
$u_{5}$ type (I).

\vspace{0.1in}

In this case we show there exists $\beta>0$ and $\Omega_{T}\subset \Omega$, with $P(\Omega_{T}^{c})<e^{-\frac{1}{T^{\beta}}}$, such that if $\omega\in\Omega_{T}$, then we have
\begin{align}
\|\mathcal{N}_{1}(\mathcal{D}(S(t)\Lambda_5 u_{0,\omega},&u_{6},
u_{7},u_{8}),u_{2},u_{3},u_{4})|_{\text{Case 2.b.}}
\|_{\frac{1}{2}+\delta,-\frac{1}{2}+\delta,T} \notag \\
&\lesssim T^{\theta}M_{5}^{-\varepsilon}\prod_{\substack{j=2\\j\neq 5}}^{8}
\|u_{j}\|_{\frac{1}{2}-\delta,
\frac{1}{2}-\delta,T}.
\label{Eqn:2A-11}
\end{align}
Recall that $\Lambda_5$ is the Fourier multiplier corresponding to the characteristic function of the interval $[M_5,K_5]$ in frequency space, for some dyadic integers $M_5, K_5>0$.  We will establish \eqref{Eqn:2A-11} using a dyadic decomposition in all factors.  That is, we assume that $|n|\sim N, |n_{i}|\sim N_{i}$, and as in the proof of \eqref{Eqn:NL-neg1}, we will order the frequencies (and corresponding dyadic shells) from largest to smallest using superscripts.
To simplify notation, for the remainder of this case, we will let $u_5=S(t)\Lambda_5 u_{0,\omega}$, and we will drop the explicit notation
$\mathcal{N}_{1}(\mathcal{D}(u_{5},u_{6},
u_{7},u_{8}),u_{2},u_{3},u_{4})|_{\text{Case 2.b.}}$, but maintain the restrictions of this case in our analysis.

Starting on the left-hand side of \eqref{Eqn:2A-11}, we use the restriction $|\tau-n^{3}|<(N^0)^{\sqrt{2\delta}}$ to consider
\begin{align}
\|\mathcal{N}_{1}&(\mathcal{D}(u_{5},u_{6},
u_{7},u_{8}),u_{2},u_{3},u_{4})
\|_{\frac{1}{2}+\delta,-\frac{1}{2}+\delta,T}
\notag
\\
&=  \Big(\sum_{n}\int_{|\tau-n^{3}|<(N^0)^{\sqrt{2\delta}}}
\frac{\langle n\rangle^{1+2\delta}}{\langle \tau-n^{3}\rangle^{1-2\delta}}
\notag
\\
&\ \ \ \ \ \ \ \ \ \ \ \
\cdot|\mathcal{N}_{1}(\mathcal{D}(u_{5},u_{6},
u_{7},u_{8}),u_{2},u_{3},u_{4})(n,\tau)|
^{2}\Big)^{\frac{1}{2}}
\notag
\\
&= \Big(\int_{|\lambda|<(N^0)^{\sqrt{2\delta}}}
\langle \lambda\rangle^{-1+2\delta}\sum_{n}
\langle n\rangle^{1+2\delta}
\notag
\\
&\ \ \ \ \ \ \ \ \ \ \ \
\cdot|\mathcal{N}_{1}(\mathcal{D}(u_{5},u_{6},
u_{7},u_{8}),u_{2},u_{3},u_{4})(n,\lambda + n^{3})|^{2}\Big)^{\frac{1}{2}}
\notag
\\
&\leq
\Big(\int_{|\lambda|<(N^0)^{\sqrt{2\delta}}}
\langle \lambda\rangle^{1-2\delta}\Big)^{\frac{1}{2}}
\notag
\\
& \ \ \ \ \ \ \ \ \ \ \ \
\cdot\sup_{|\lambda|<(N^0)^{\sqrt{2\delta}}}
\|(\mathcal{N}_{1}(\mathcal{D}(u_{5},u_{6},
u_{7},u_{8}),u_{2},u_{3},u_{4}))(n,\lambda + n^{3})\|
_{H^{\frac{1}{2}+\delta}_{x}}
\notag\\
&\leq
(N^0)^{\sqrt{2}\delta^{3/2}}
\notag
\\
& \ \ \ \ \ \ \ \ \ \ \ \
\cdot
\sup_{|\lambda|<(N^0)^{\sqrt{2\delta}}}
\|(\mathcal{N}_{1}(\mathcal{D}(u_{5},u_{6},
u_{7},u_{8}),u_{2},u_{3},u_{4}))(n,\lambda + n^{3})\|
_{H^{\frac{1}{2}+\delta}_{x}}.
\label{Eqn:NL-1-4}
\end{align}
To exploit the restriction $|\tau_{j}-n_{j}^{3}|<(N^0)^{\sqrt{2\delta}}$ for each $j=2,3,4,6,7,8$, we will use the representation \eqref{Eqn:NL-neg1-24}.
That is, for each $j=2,3,4,6,7,8$, we consider
\begin{align*}
\hat{u}_{j}(n_{j},\tau_{j}) = \int_{|\lambda_{j}|<(N^0)^{\sqrt{2\delta}}}\langle \lambda_{j}\rangle^{-\frac{1}{2}+\delta}
c_{j}(\lambda_{j})a_{\lambda_{j}}(n_{j})\delta(\tau_{j}-n_{j}^{3}-\lambda_{j})d\lambda_{j},
\end{align*}
with $\sum_{n}\langle n\rangle^{1-2\delta}|a_{\lambda_j}(n)|^{2}=1$ and $c_{j}(\lambda_{j})=\Big(\sum_{n}\langle n\rangle^{1-2\delta}\langle \lambda_{j}\rangle^{1-2\delta}
|\hat{v}(n,n^{3}+\lambda)|^{2}\Big)^{\frac{1}{2}}$.
Also, by \eqref{Eqn:NL-neg1-25}, we have
\begin{align}
\int_{|\lambda_{j}|<(N^0)^{\sqrt{2\delta}}}
\langle\lambda_{j}\rangle^{-\frac{1}{2}+\delta}
c_{j}(\lambda_{j})
d\lambda_{j}
\lesssim (N^0)^{\sqrt{2}\delta^{3/2}}\|u_{j}\|
_{\frac{1}{2}-\delta,\frac{1}{2}-\delta,T}.
\label{Eqn:2A-13}
\end{align}

\vspace{0.1in}

\noindent $\bullet$ \textbf{CASE 2.b.i:} $n\neq n_{5}$.

\vspace{0.1in}

\noindent For fixed $n\in\mathbb{Z}$, $\tau\in\mathbb{R}$, we consider
\begin{align}
|\mathcal{N}_{1}&(\mathcal{D}(u_{5},
u_{6},u_{7},u_{8}),u_{2}
,u_{3},u_{4})(n,\tau)|
\notag
\\
&= \Big|\sum_{\substack{(n_1,n_2,n_3,n_4)\in \zeta(n)\\(n_5,n_6,n_7,n_8)\in \zeta(n_1)\\ n_5\neq n}}\int_{\tau=\tau_{2}+\cdots +\tau_{8}}
\frac{-n_{1}n_{5}g_{n_{5}}(\omega)}{|\sigma_{1}||n_{5}|}\delta(\tau_{5}-n_{5}^{3}) \prod_{\substack{j=2\\ j\neq 5}}^{8}\hat{u}_{j}(n_{j},\tau_{j})d\tau_{j}\Big|
\notag
\\
&= \Big|\sum_{\substack{(n_1,n_2,n_3,n_4)\in \zeta(n)\\(n_5,n_6,n_7,n_8)\in \zeta(n_1)\\ n_5\neq n}}\int_{\tau=\tau_{2}+\cdots +\tau_{8}}
\frac{-n_{1}n_{5}g_{n_{5}}(\omega)}{|\sigma_{1}||n_{5}|}\delta(\tau_{5}-n_{5}^{3}) \notag \\
&\ \ \ \ \ \ \ \ \ \ \ \ \ \
\ \ \ \ \ \ \ \ \ \ \ \prod_{\substack{j=2\\ j\neq 5}}^{8}
\int_{|\lambda_{j}|<(N^0)^{\sqrt{2\delta}}}\langle \lambda_{j}\rangle^{-\frac{1}{2}+\delta}
c_{j}(\lambda_{j})a_{\lambda_{j}}(n_{j})
\delta(\tau_{j}-n_{j}^{3}-\lambda_{j})d\lambda_{j}\Big|
\notag
\\
&= \Big|\sum_{\substack{(n_1,n_2,n_3,n_4)\in \zeta(n)\\(n_5,n_6,n_7,n_8)\in \zeta(n_1)\\ n_5\neq n}}
\int_{\substack{|\lambda_{j}|<(N^0)^{\sqrt{2\delta}}
\\ j=2,3,4,6,7,8}}
\frac{-n_{1}n_{5}
g_{n_{5}}(\omega)}{|\sigma_{1}||n_{5}|}
\delta(\tau-n_{5}^{3}-
\sum_{\substack{j=2\\ j\neq 5}}^{8}
(n_{j}^{3}+\lambda_{j}))
\notag \\
&\ \ \ \ \ \ \ \ \ \ \ \ \ \
\ \ \ \ \ \ \ \ \ \ \
\prod_{\substack{j=2\\ j\neq 5}}^{8}
\langle \lambda_{j}\rangle^{-\frac{1}{2}+\delta}
c_{j}(\lambda_{j})a_{\lambda_{j}}(n_{j})
d\lambda_{j}\Big|,
\label{Eqn:NL-1-3}
\end{align}
by Fubini, and integration in $\tau_{2},\ldots,\tau_{8}$, since
\begin{align*}
\int_{\tau=\tau_{2}+\cdots+\tau_{8}}
\delta(\tau_{5}-n_{5}^{3})
\prod_{\substack{j=2 \\ j\neq 5}}^{8}
\delta(\tau_{j}-n_{j}^{3}-\lambda_{j})d\tau_{j}
= \delta(\tau-n_{5}^{3}-
\sum_{\substack{j=2\\ j\neq 5}}^{8}
(n_{j}^{3}+\lambda_{j})).
\end{align*}
Then, for each fixed $\lambda\in\mathbb{R}$, we use \eqref{Eqn:NL-1-3} and apply the Minkowski inequality to bring the integral(s) in $\lambda_{j}$ outside of the $l^{2}_{n}$ norm.
\begin{align}
\|&(\mathcal{N}_{1}(\mathcal{D}(u_{5},u_{6},
u_{7},u_{8}),u_{2},u_{3},u_{4}))(n,\lambda + n^{3})\|
_{H^{\frac{1}{2}-\delta}_{x}}
 \notag \\
&=
\Big\||n|^{\frac{1}{2}+\delta}\sum_{\substack{(n_1,n_2,n_3,n_4)\in \zeta(n)\\(n_5,n_6,n_7,n_8)\in \zeta(n_1)\\ n_5\neq n}}
\int_{\substack{|\lambda_{j}|<(N^0)^{\sqrt{2\delta}}
\\ j=2,3,4,6,7,8}}
\frac{-n_{1}n_{5}
g_{n_{5}}(\omega)}{|\sigma_{1}||n_{5}|}
\delta(\lambda + n^{3}-n_{5}^{3}-
\sum_{\substack{j=2\\ j\neq 5}}^{8}
(n_{j}^{3}+\lambda_{j}))
\notag \\
&\ \ \ \ \ \ \ \ \ \ \ \ \ \
\ \ \ \ \ \ \ \ \ \ \ \ \ \ \ \
\ \ \ \ \ \ \ \ \ \ \
\cdot\prod_{\substack{j=2\\ j\neq 5}}^{8}
\langle \lambda_{j}\rangle^{-\frac{1}{2}+\delta}
c_{j}(\lambda_{j})a_{\lambda_{j}}(n_{j})
d\lambda_{j}\Big\|_{l^{2}_{n}}
\notag \\
&\lesssim
\int_{\substack{|\lambda_{j}|<(N^0)^{\sqrt{2\delta}}
\\ j=2,3,4,6,7,8}}\prod_{\substack{j=2\\ j\neq 5}}^{8}
\langle \lambda_{j}\rangle^{-\frac{1}{2}+\delta}
c_{j}(\lambda_{j})d\lambda_{j}
\notag \\
&\ \
\sup_{\substack{|\lambda_{j}|<(N^0)^{\sqrt{2\delta}}
\\ j=2,3,4,6,7,8}}\Big\||n|^{\frac{1}{2}+\delta}
\sum_{\substack{(n_1,n_2,n_3,n_4)\in \zeta(n)\\(n_5,n_6,n_7,n_8)\in \zeta(n_1)\\ n_5\neq n}}
\frac{-n_{1}n_{5}
g_{n_{5}}(\omega)}{|\sigma_{1}||n_{5}|}
\delta(\lambda + n^{3}-n_{5}^{3}-
\sum_{\substack{j=2\\ j\neq 5}}^{8}
(n_{j}^{3}+\lambda_{j}))
\notag \\
&\ \ \ \ \ \ \ \ \ \ \ \ \ \
\ \ \ \ \ \ \ \ \ \ \ \ \ \ \ \
\ \ \ \ \ \ \ \ \ \ \ \ \ \ \
\cdot\prod_{\substack{j=2\\ j\neq 5}}^{8}
a_{\lambda_{j}}(n_{j})
\Big\|_{l^{2}_{n}}
\notag \\
&\lesssim
(N^0)^{6\sqrt{2}\delta^{3/2}}\prod_{\substack{j=2\\ j\neq 5}}^{8}\|u_{j}\|_{\frac{1}{2}-\delta,
\frac{1}{2}-\delta,T}
\notag \\
&\ \
\sup_{\substack{|\lambda_{j}|<(N^0)^{\sqrt{2\delta}}
\\ j=2,3,4,6,7,8}}\Big\||n|^{\frac{1}{2}+\delta}
\sum_{\substack{(n_1,n_2,n_3,n_4)\in \zeta(n)\\(n_5,n_6,n_7,n_8)\in \zeta(n_1)\\ n_5\neq n}}
\frac{-n_{1}n_{5}
g_{n_{5}}(\omega)}{|\sigma_{1}||n_{5}|}
\delta(\lambda + n^{3}-n_{5}^{3}-
\sum_{\substack{j=2\\ j\neq 5}}^{8}
(n_{j}^{3}+\lambda_{j}))
\notag \\
&\ \ \ \ \ \ \ \ \ \ \ \ \ \
\ \ \ \ \ \ \ \ \ \ \ \ \ \ \ \
\ \ \ \ \ \ \ \ \ \ \ \ \ \ \
\cdot\prod_{\substack{j=2\\ j\neq 5}}^{8}
a_{\lambda_{j}}(n_{j})
\Big\|_{l^{2}_{n}}.
\label{Eqn:2A-14}
\end{align}
Note that we have applied \eqref{Eqn:2A-13} to obtain the last line.  
Now letting $\mu=\sum_{j=2,j\neq 5}^{8}\lambda_{j}-\lambda$, we find
\begin{align}
\|\mathcal{N}_{1}(\mathcal{D}&(u_{5},u_{6},
u_{7},u_{8}),u_{2},u_{3},u_{4})
\|_{\frac{1}{2}+\delta,-\frac{1}{2}+\delta,T}
\notag \\
&\lesssim
T^{\delta-}(N^0)^{8\sqrt{2}\delta^{3/2}}\prod_{\substack{j=2\\ j\neq 5}}^{8}\|u_{j}\|_{\frac{1}{2}-\delta,
\frac{1}{2}-\delta,T}
\notag \\
&\ \ \ \ \ \ \ \ \ \ \ \ \ \ \ \ \ \
\cdot \sup_{\mu<C(N^0)^{\sqrt{2\delta}}}
\Big\||n|^{\frac{1}{2}+\delta}
\sum_{\substack{*(n,\mu)\\ n\neq n_5}}
\frac{-n_{1}n_{5}
g_{n_{5}}(\omega)}{|\sigma_{1}||n_{5}|}
\prod_{\substack{j=2\\ j\neq 5}}^{8}
a_{n_{j}}
\Big\|_{l^{2}_{n}}
\label{Eqn:2A-15}
\end{align}
where $a_{n_{j}}:=a_{\lambda_{j}}(n_{j})$ (we have removed the dependance on $\lambda_{j}$ because our estimates will hold uniformly with respect to these parameters), and
\begin{align*}
*(n,\mu) = \Big\{(n_{2},n_{3},n_{4},&n_{5},n_{6},n_{7},n_{8})\in \mathbb{Z}^{7}:
(n_1,n_2,n_3,n_4)\in\zeta(n), \\
&\ \ \ \ \ \ (n_5,n_6,n_7,n_8)\in\zeta(n_1) \ \ \text{and}\ \
n^{3}-\sum_{j=2}^{8} n_{j}^{3} = \mu\Big\}.
\end{align*}
Let us highlight a crucial property of the set $*(n,\mu)$. Suppose the six variables $n_{2},n_{3},n_{4},n_{6},n_{7},n_{8}$ are fixed, and $(n_{2},\ldots,n_{8})\in *(n,\mu)$.  Then the relation $n=n_{2} + \cdots + n_{8}$ determines $n_{5}$ as a function of $n$, and $n$ satisfies
$n^{3}-\sum_{j=2}^{8} n_{j}^{3} = \mu$, which is a (nondegenerate, since $n\neq n_{5}$) quadratic equation in $n$ with at most 2 roots.  That is, in this subcase (with $n\neq n_5$), we can sum over the set of integers $\displaystyle \{(n,n_{2},n_{3},n_{4},n_{5},n_{6},n_{7},n_{8})\in \mathbb{Z}^{8}:
(n_{2},\ldots,n_{8})\in *(n,\mu)\}$ by summing over the six variables $n_{2},n_{3},n_{4},n_{6},n_{7},n_{8}$.  We will use this observation in the estimates that follow.

By bringing the absolute value inside and applying Lemma \ref{Lemma:prob1}, there exists $\Omega_{T}$ satisfying $P(\Omega_{T}^{c})<e^{-\frac{1}{T^{\beta}}}$ such that for each $\omega\in\Omega_{T}$ we have
\begin{align}
\Big\||n|^{\frac{1}{2}+\delta}
\sum_{\substack{*(n,\mu)\\n_5\neq n}}
&\frac{-n_{1}n_{5}
g_{n_{5}}(\omega)}{|\sigma_{1}||n_{5}|}
\prod_{\substack{j=2\\ j\neq 5}}^{8}
a_{n_{j}}
\Big\|_{l^{2}_{n}}
\\
&\lesssim  \frac{T^{-\beta/2}}{(N^0)^{\frac{1}{2}-\delta-\beta}}\sup_{|\mu|<C(N^{0})^{2}}
\Bigg(\sum_{|n|\sim N}
\Big|\sum_{\substack{*(n,\mu)\\n_5\neq n}}\prod_{\substack{j=2\\ j\neq 5}}^{8} |a_{n_{j}}|\Big|^{2} \Bigg)^{\frac{1}{2}},
\label{Eqn:2A-16}
\end{align}
where we have used the condition $|\sigma_{1}|\gtrsim (N^0)^{2}$ of the region $A_1$.  With repeated applications of Cauchy-Schwarz, we find
\begin{align}
\eqref{Eqn:2A-16} &=  \frac{T^{-\beta/2}}{(N^0)^{\frac{1}{2}-\delta-\beta}}\sup_{|\mu|<C(N^{0})^{2}}
\Bigg(\sum_{|n|\sim N}
\Big|\sum_{|n_{2}|\sim N_{2}}|a_{n_{2}}|
\sum_{\{(n_3,\ldots,n_8):(n_2,\ldots,n_8)\in *(n,\mu), n_5\neq n\}}
\prod_{\substack{j=3\\ j\neq 5}}^{8} |a_{n_{j}}|\Big|^{2} \Bigg)^{\frac{1}{2}}
\notag \\
&\leq  \frac{T^{-\beta/2}}{(N^0)^{\frac{1}{2}-\delta-\beta}}\sup_{|\mu|<C(N^{0})^{2}}
\Bigg(\sum_{|n|\sim N,|n_{2}|\sim N_{2}}
\frac{1}{|n_{2}|^{1-2\delta}}\Big|
\sum_{\{(n_3,\ldots,n_8):(n_2,\ldots,n_8)\in *(n,\mu), n_5\neq n\}}\prod_{\substack{j=3\\ j\neq 5}}^{7} |a_{n_{j}}|\Big|^{2} \Bigg)^{\frac{1}{2}} \notag \\
&\leq  \frac{T^{-\beta/2}}{(N^0)^{\frac{1}{2}-\delta-\beta}}\sup_{|\mu|<C(N^{0})^{2}}
\Bigg(\sum_{\substack{|n|\sim N,|n_{k}|\sim N_{k},\\ 2\leq k \leq 7, k\neq 5 \\ \{(n_5,n_{8}):(n_{2}
,\ldots,n_{8})\in *(n,\mu), n_5\neq n\}}} \prod_{\substack{j=2\\ j\neq 5}}^{8}\frac{1}{|n_{j}|^{1-2\delta}} \Bigg)^{\frac{1}{2}}  \notag \\
&=  \frac{T^{-\beta/2}}{(N^0)^{\frac{1}{2}-7\delta-\beta-3\gamma}}\sup_{|\mu|<C(N^{0})^{2}}
\Bigg(\sum_{\substack{|n_{k}|\sim N_{k}, 2\leq k \leq 8, k\neq 5 \\ \{(n,n_{5}):(n_{2}
,\ldots,n_{8})\in *(n,\mu),n\neq n_5\}}} \prod_{\substack{j=2\\ j\neq 5}}^{8}\frac{1}{|n_{j}|^{1+\gamma}} \Bigg)^{\frac{1}{2}} \notag \\
&\lesssim
\frac{T^{-\beta/2}}{(N^0)^{\frac{1}{2}-7\delta-\beta-3\gamma}}\sup_{|\mu|<C(N^{0})^{2}}
\Bigg(\sum_{|n_{k}|\sim N_{k}, k=2,3,4,6,7,8
} \prod_{\substack{j=2\\ j\neq 5}}^{8}\frac{1}{|n_{j}|^{1+\gamma}} \Bigg)^{\frac{1}{2}}
\notag \\
&\leq
\frac{T^{-\beta/2}}{(N^0)^{\frac{1}{2}-7\delta-\beta-3\gamma}}.
\label{Eqn:2A-17}
\end{align}
Notice that, in the 3rd line above, we have used the condition $n_5\neq -n_8$.  Indeed, as discussed above, for fixed $n,$ $n_2, n_3, n_4, n_6, n_7$ and $\mu$, $n_5$ is determined by $n_8$, which satisfies a non-degenerate (if $n_5\neq -n_8$) quadratic equation with at most two roots.  Also notice that we have used the same argument (with $n\neq n_5$) to avoid summation with respect to $n$ in the second last line.  The condition $n_5\neq -n_8$ holds because $n=n_{2}+\cdots + n_8$ and $|n_{k}|\ll |n|^{\sqrt{2\delta}}$ for $k=2,3,4,6,7,8$ gives $|n|\sim |n_5|$, and thus $n_5\neq -n_k$ for all $k=2,3,4,6,7,8$.

Combining \eqref{Eqn:2A-16}-\eqref{Eqn:2A-17}, the inequality \eqref{Eqn:2A-11} follows, and case 2.b.i. is complete.

\vspace{0.1in}

\noindent $\bullet$ \textbf{CASE 2.b.ii:} $n= n_{5}$.

Notice that, with the restrictions of this case, we are considering contributions to $\mathcal{N}_{1}(\mathcal{D}(u_5,u_6,u_7,u_8),u_2,u_3,u_4)$ from frequencies $(n,n_2,\ldots,n_8,\tau,\tau_2,\ldots,\tau_8)\in A_{1,c}$.  Therefore the definition of our nonlinearity in this case is given by \eqref{Eqn:NLdef2b}-\eqref{Eqn:NLdef2}.  Notice that, using $|\sigma|, |\sigma_{k}|, |n_k| \ll |n|^{\sqrt{2\delta}}$, we have
\begin{align}
&\frac{|n||-6n(n-n_{1})^{2}+\tau-\tau_5-2\sigma|}
{|3nn_{1}(n-n_{1})
+ \sigma-\tau_{6}-\tau_{7}-\tau_{8}|
|3nn_{1}(n-n_{1})
- \sigma+\tau_{2}+\tau_{3}+\tau_{4}|} \notag \\ & \quad\quad\lesssim \frac{1}{(N^0)^{2-\gamma}}
\label{Eqn:Ac1}
\end{align}
and
\begin{align}
\frac{|n_{2}+n_{3}+n_{4}|}{|\sigma_1|} \lesssim \frac{1}{(N^0)^{2-\gamma}}.
\label{Eqn:Ac2}
\end{align}
Substituting that $u_5$ is type (I), with \eqref{Eqn:NLdef2b}-\eqref{Eqn:NLdef2} this gives
\begin{align*}
|\mathcal{N}_{1}(\mathcal{D}(&u_{5},
u_{6},u_{7},u_{8}),u_{2}
,u_{3},u_{4})(n,\tau)|_{\text{Case 2.b.ii.}}|
\notag
\\
&\lesssim  \frac{1}{(N^0)^{2-2\gamma}}|g_{n}(\omega)|
\notag \\ &
\ \ \ \ \ \cdot\Bigg|\sum_{\substack{n_{2}+n_{3}+n_{4}+n_{6}+n_{7}+n_{8}=0 \\ |n_{j}|<(N^0)^{\alpha}}}
\int_{\substack{\tau-n^{3}=\tau_{2}
+\tau_{3}+\tau_{4}+\tau_{6}+\tau_{7}+\tau_{8}
\\ |\sigma_{j}|<(N^0)^{\alpha}}}
\prod_{\substack{j=2\\ j\neq 5}}^{8}
\hat{u}_{j}(n_{j},\tau_{j}) \Bigg|.
\end{align*}
Following the approach used in the previous case (see \eqref{Eqn:NL-1-3}-\eqref{Eqn:2A-15}), we find
\begin{align}
\|\mathcal{N}_{1}(\mathcal{D}&(u_{5},u_{6},
u_{7},u_{8}),u_{2},u_{3},u_{4})
\|_{\frac{1}{2}+\delta,-\frac{1}{2}+\delta}
\notag \\
&\lesssim
\frac{1}{(N^0)^{2-2\gamma-6\sqrt{2}\delta^{3/2}}}\prod_{\substack{j=2\\ j\neq 5}}^{8}\|u_{j}\|_{\frac{1}{2}-\delta,
\frac{1}{2}-\delta,T}
\notag \\
&\ \ \sup_{|\mu|<C(N^0)^{\sqrt{2\delta}}}\Big\||n|^{\frac{1}{2}+\delta}
|g_{n}(\omega)|
\sum_{\substack{*(n,\mu)\cap \{n=n_5\}\\  |n_{j}|<(N^0)^{\alpha}}}
\prod_{\substack{j=2\\ j\neq 5}}^{8}
a_{n_{j}}
\Big\|_{l^{2}_{n}}
\label{Eqn:2A-15}
\end{align}
where $a_{n_{j}}:=a_{\lambda_{j}}(n_{j})$ (we have removed the dependance on $\lambda_{j}$ because our estimates will hold uniformly with respect to these parameters).  Then by Lemma \ref{Lemma:prob1} and repeated applications of Cauchy-Schwarz, we have for $\omega\in \tilde{\Omega}_T$,
\begin{align}
\|&\mathcal{N}_{1}(\mathcal{D}(u_{5},u_{6},
u_{7},u_{8}),u_{2},u_{3},u_{4})
\|_{\frac{1}{2}+\delta,-\frac{1}{2}+\delta}
\notag \\
&\lesssim
\frac{T^{-\frac{\beta}{2}}}{(N^0)^{\frac{3}{2}-2\gamma-6\sqrt{2}\delta^{3/2}-2\beta}}\prod_{\substack{j=2\\ j\neq 5}}^{8}\|u_{j}\|_{\frac{1}{2}-\delta,
\frac{1}{2}-\delta,T}
\sup_{|\mu|<C(N^0)^{\sqrt{2\delta}}}\Big\|
\sum_{\substack{*(n,\mu)\cap \{n=n_5\}\\  |n_{j}|<(N^0)^{\alpha}}}
\prod_{\substack{j=2\\ j\neq 5}}^{8}
a_{n_{j}}
\Big\|_{l^{2}_{n}}
\notag \\
&\lesssim
\frac{T^{-\frac{\beta}{2}}}{(N^0)^{1-2\gamma-6\sqrt{2}\delta^{3/2}-2\beta}}\prod_{\substack{j=2\\ j\neq 5}}^{8}\|u_{j}\|_{\frac{1}{2}-\delta,
\frac{1}{2}-\delta,T}
\sup_{|n|\sim N,|\mu|<C(N^0)^{\sqrt{2\delta}}}\Big|
\sum_{\substack{*(n,\mu)\cap \{n=n_5\}\\  |n_{j}|<(N^0)^{\alpha}}}
\prod_{\substack{j=2\\ j\neq 5}}^{8}
a_{n_{j}}
\Big|
\notag \\
&\lesssim
\frac{T^{-\frac{\beta}{2}}}{(N^0)^{1-2\gamma-6\sqrt{2}\delta^{3/2}-2\beta}}\prod_{\substack{j=2\\ j\neq 5}}^{8}\|u_{j}\|_{\frac{1}{2}-\delta,
\frac{1}{2}-\delta,T}
\sup_{|n|\sim N}\Big|
\sum_{\substack{n_2+n_3+n_4+n_6+n_7+n_8=0\\  |n_{j}|<(N^0)^{\alpha}}}
\prod_{\substack{j=2\\ j\neq 5}}^{8}
|a_{n_{j}}|
\Big|
\notag \\
&\lesssim
\frac{T^{-\frac{\beta}{2}}}{(N^0)^{1-2\gamma-6\sqrt{2}\delta^{3/2}-2\beta}}\prod_{\substack{j=2\\ j\neq 5}}^{8}\|u_{j}\|_{\frac{1}{2}-\delta,
\frac{1}{2}-\delta,T}
\Big(\sum_{|n_k|\sim N_k,\ k=2,3,4,6,7,8}
\prod_{\substack{j=2\\ j\neq 5}}^{8}
\frac{1}{\langle n_{j}\rangle^{1-2\delta}}
\Big)^{\frac{1}{2}}
\notag \\
&\lesssim
\frac{T^{-\frac{\beta}{2}}}{(N^0)^{1-3\gamma-6\sqrt{2}\delta^{3/2}-\delta-2\beta}}
\prod_{\substack{j=2\\ j\neq 5}}^{8}\|u_{j}\|_{\frac{1}{2}-\delta,
\frac{1}{2}-\delta,T}
\Big(\sum_{|n_k|\sim N_k,\ k=2,3,4,6,7,8}
\prod_{\substack{j=2\\ j\neq 5}}^{8}
\frac{1}{\langle n_{j}\rangle^{1+2\gamma}}
\Big)^{\frac{1}{2}}
\notag \\
&\lesssim
\frac{T^{-\frac{\beta}{2}}}{(N^0)^{1-3\gamma-6\sqrt{2}\delta^{3/2}-\delta-2\beta}}
\prod_{\substack{j=2\\ j\neq 5}}^{8}\|u_{j}\|_{\frac{1}{2}-\delta,
\frac{1}{2}-\delta,T},
\label{Eqn:NL-1-7}
\end{align}
and \eqref{Eqn:2A-11} follows by dyadic summation.  This completes the analysis of case 2.b.ii.
We have therefore established \eqref{Eqn:NL-1-nlpart}, and \eqref{Eqn:NL-1b-nlpart} is justified with the exact same arguments.  The proof of Proposition \ref{Prop:NL-1-nlpart} is complete.
\end{proof}

\subsection{Deterministic nonlinear estimates}
\label{Sec:NLproof2}

In this subsection we present the proof of Proposition \ref{Prop:nonlin2}.  That is, we establish the deterministic estimates \eqref{Eqn:NL-0-a}-\eqref{Eqn:NL-2b}.  In this section we require the following calculus inequality:

\begin{lemma}
Let $0<\delta_{1}\leq\delta_{2}$ satisfy $\delta_{1}+\delta_{2}>1$, and let $a\in\mathbb{R}$, then
\[ \int_{-\infty}^{\infty}\frac{d\theta}{\langle\theta\rangle^{\delta_{1}}\langle a-\theta\rangle^{\delta_{2}}}
 \lesssim  \frac{1}{\langle a\rangle^{\alpha}},\]
where $\alpha=\delta_{1}-(1-\delta_{2})_{+}$.  Recall that $(\lambda)_{+}:= \lambda$ if $\lambda>0$, $= \varepsilon>0$ if $\lambda=0$, and $=0$ if $\lambda<0$.
\label{Lemma:integralineq}
\end{lemma}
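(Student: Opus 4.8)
\textbf{Proof plan for Lemma \ref{Lemma:integralineq}.}

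The plan is to reduce the integral to three regions according to the relative sizes of $\langle\theta\rangle$, $\langle a-\theta\rangle$, and $\langle a\rangle$, then estimate each region by pulling out the larger weight and integrating the remaining one-weight integral. First I would dispose of the trivial range $\langle a\rangle \lesssim 1$: there the claimed bound $\langle a\rangle^{-\alpha}$ is comparable to $1$, and since $\delta_1+\delta_2>1$ the integral $\int \langle\theta\rangle^{-\delta_1}\langle a-\theta\rangle^{-\delta_2}\,d\theta$ is finite and bounded by an absolute constant (by Cauchy--Schwarz or by the one-dimensional Hardy--Littlewood--Sobolev/convolution estimate for $\langle\cdot\rangle^{-\delta_1}*\langle\cdot\rangle^{-\delta_2}$). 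So I may assume $\langle a\rangle \gg 1$.

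Next, for $\langle a\rangle\gg 1$, split $\mathbb{R}=R_1\cup R_2\cup R_3$ where $R_1=\{|\theta|\le \tfrac{1}{2}|a|\}$, $R_2=\{|a-\theta|\le \tfrac{1}{2}|a|\}$, and $R_3$ is the complement (where both $|\theta|\gtrsim |a|$ and $|a-\theta|\gtrsim |a|$). On $R_1$ we have $\langle a-\theta\rangle \sim \langle a\rangle$, so the contribution is at most $\langle a\rangle^{-\delta_2}\int_{|\theta|\le |a|/2}\langle\theta\rangle^{-\delta_1}\,d\theta$; this last integral is $\lesssim 1$ if $\delta_1>1$, $\lesssim \log\langle a\rangle$ if $\delta_1=1$, and $\lesssim \langle a\rangle^{1-\delta_1}$ if $\delta_1<1$. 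In all three cases the bound is $\lesssim \langle a\rangle^{(1-\delta_1)_+}$ (absorbing the logarithm into an arbitrarily small power, which is exactly the role of the $\varepsilon$ in the definition of $(\cdot)_+$), so the $R_1$ contribution is $\lesssim \langle a\rangle^{-\delta_2+(1-\delta_1)_+}$. Since $\delta_1\le\delta_2$ one checks $-\delta_2+(1-\delta_1)_+ \le -\delta_1+(1-\delta_2)_+ = -\alpha$, giving the desired bound. The region $R_2$ is handled symmetrically, pulling out $\langle\theta\rangle^{-\delta_1}\sim\langle a\rangle^{-\delta_1}$ and integrating $\langle a-\theta\rangle^{-\delta_2}$ over $|a-\theta|\le|a|/2$, which yields $\lesssim \langle a\rangle^{-\delta_1+(1-\delta_2)_+}=\langle a\rangle^{-\alpha}$ directly. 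On $R_3$ both weights are $\gtrsim\langle a\rangle$; using $\langle\theta\rangle^{-\delta_1}\langle a-\theta\rangle^{-\delta_2} \le \langle a\rangle^{-(\delta_1+\delta_2-1-\eta)}\langle\theta\rangle^{-(1+\eta)/2}\langle a-\theta\rangle^{-(1+\eta)/2}$ for small $\eta>0$ chosen so that $\delta_1+\delta_2-1-\eta\ge \alpha$ (possible since $\delta_1+\delta_2>1$ and $\alpha<\delta_1+\delta_2-1$ when $\delta_2<1$, with the case $\delta_2\ge 1$ trivial because then $\alpha=\delta_1$ and we can take $\eta$ so that $\delta_1+\delta_2-1-\eta\ge\delta_1$), the remaining integral converges and contributes $\lesssim \langle a\rangle^{-\alpha}$.

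The only genuinely delicate point is bookkeeping the borderline case $\delta_1=1$ (and, when $\delta_2<1$, the endpoint $1-\delta_2=0$), where a logarithm appears instead of a clean power; this is precisely why the statement builds the $(\,\cdot\,)_+$ convention with an $\varepsilon$ loss into $\alpha$, and one simply notes $\log\langle a\rangle \lesssim_\varepsilon \langle a\rangle^\varepsilon$ to absorb it. Apart from that, every step is an elementary one-variable integral, so I would present the three-region decomposition, do the $R_1$ estimate in detail, remark that $R_2$ is symmetric, and dispatch $R_3$ with the convergent tail bound. I expect the write-up to be short; the main obstacle, such as it is, is stating the case analysis cleanly enough that the inequality $-\delta_2+(1-\delta_1)_+\le -\alpha$ (used in region $R_1$) is transparent from $\delta_1\le\delta_2$.
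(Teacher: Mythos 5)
The paper does not prove this lemma; it simply cites Ginibre--Tsutsumi--Velo \cite{GTV}. Your three-region decomposition is the standard argument, and the estimates on $R_1$ and $R_2$ (freeze the far weight at $\langle a\rangle$, integrate the near weight over a ball of radius $|a|/2$, then use $(\cdot)_+$ to absorb the $\delta_1=1$ logarithm) are correct, including the bookkeeping $-\delta_2+(1-\delta_1)_+\le -\alpha$ from $\delta_1\le\delta_2$.

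The $R_3$ step as written does not close. You want $\eta>0$ with $\delta_1+\delta_2-1-\eta\ge\alpha$, and you justify this by asserting $\alpha<\delta_1+\delta_2-1$ when $\delta_2<1$; but in that regime $(1-\delta_2)_+=1-\delta_2$, so $\alpha=\delta_1+\delta_2-1$ \emph{exactly}, and the only admissible choice would be $\eta\le 0$, at which point $\int\langle\theta\rangle^{-(1+\eta)/2}\langle a-\theta\rangle^{-(1+\eta)/2}\,d\theta$ no longer converges. (There is also a smaller point: the pointwise inequality you use on $R_3$ is not a direct consequence of the lower bounds $\langle\theta\rangle,\langle a-\theta\rangle\gtrsim\langle a\rangle$ alone when $\delta_1<(1+\eta)/2$; it does happen to hold because on $R_3$ one also has $\langle\theta\rangle\sim\langle a-\theta\rangle$, but that needs saying.) The cleanest repair is to observe exactly this: on $R_3$ both weights exceed $|a|/2$ and, by the triangle inequality $\langle\theta\rangle\lesssim\langle a\rangle+\langle a-\theta\rangle\lesssim\langle a-\theta\rangle$ and its symmetric counterpart, $\langle\theta\rangle\sim\langle a-\theta\rangle$, so the integrand is $\lesssim\langle\theta\rangle^{-\delta_1-\delta_2}$ and
\[
\int_{R_3}\langle\theta\rangle^{-\delta_1-\delta_2}\,d\theta
\lesssim \int_{|\theta|\ge|a|/2}\langle\theta\rangle^{-\delta_1-\delta_2}\,d\theta
\lesssim \langle a\rangle^{1-\delta_1-\delta_2}\le\langle a\rangle^{-\alpha},
\]
the last step using $\alpha\le\delta_1+\delta_2-1$ in all cases of the $(\cdot)_+$ convention. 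Alternatively, keep your prefactor device but note that the remaining integral, being restricted to $|\theta|\gtrsim|a|$, contributes an extra $\langle a\rangle^{-\eta}$ rather than $O(1)$, which cancels the $\eta$ you lost; either fix is a one-liner and leaves the rest of the proof intact.
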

The proof of Lemma \ref{Lemma:integralineq} can be found in \cite{GTV}.

\begin{proof}[Proof of Proposition \ref{Prop:nonlin2}:]

\vspace{0.1in}

We establish Proposition \ref{Prop:nonlin2} with $\delta_0=0$.  It is straight-forward to adapt the proof to $0<\delta_0<\delta$ (see Remark \ref{Rem:deltazero}). The ordering of inequalities \eqref{Eqn:NL-0-a} and \eqref{Eqn:NL-0-b} in the statement of Proposition \ref{Prop:nonlin2} is a little misleading; we will establish \eqref{Eqn:NL-0-b} first, then use it in the proof of \eqref{Eqn:NL-0-a}.  Our choice to order these inequalities as written was based on the instinct of discussing (the estimates needed for) existence before continuity.
We proceed with the proof of \eqref{Eqn:NL-0-b}.
In fact, we establish
\begin{align}
\|\mathcal{N}_{0}&(u_{1},u_{2},u_{3},u_{4})\|_
{Y^{\frac{1}{2}+\delta,-1}_{T}}
\lesssim
\prod_{j=1}^{4} \|u_{j}\|_{\frac{1}{2}-\delta,\frac{1}{2}-\frac{3\delta}{2},T}.
\label{Eqn:NL-0-e1}
\end{align}
Then \eqref{Eqn:NL-0-b} follows easily from \eqref{Eqn:NL-0-e1} and Lemma \ref{Lemma:gainpowerofT} with $\theta=2\delta-$.

Using $ \|\cdot\|_{Y^{\frac{1}{2}+\delta,-1}_{T}}\leq \|\chi_{[0,T]}(t)\cdot\|
_{Y^{\frac{1}{2}+\delta,-1}}$ (from the definition of the norm) and
$$\chi_{[0,T]}(t)\mathcal{N}_{0}(u_{1},u_{2},u_{3},u_{4})= \mathcal{N}_{0}(\chi_{[0,T]}(t)u_{1},\chi_{[0,T]}(t)u_{2},
\chi_{[0,T]}(t)u_{3},\chi_{[0,T]}(t)u_{4}),$$
it suffices to establish
\begin{align}
\|\mathcal{N}_{0}&(u_{1},u_{2},u_{3},u_{4})\|_
{Y^{\frac{1}{2}+\delta,-1}}
\lesssim T^{\theta}
\prod_{j=1}^{4} \|u_{j}\|_{\frac{1}{2}-\delta,\frac{1}{2}-\frac{3\delta}{2}},
\label{Eqn:NL-0-eq2}
\end{align}
where each $u_{j}$ satisfies $u_{j}=\chi_{[0,T]}(t)u_{j}$.  We proceed to establish \eqref{Eqn:NL-0-eq2} as written, introducing factors of $\chi_{[0,T]}(t)$ (in front of the $u_{j}$) when needed.

Let
\begin{align*}
f_{j}(n_{j},\tau_{j}):=\langle n_{j}
\rangle^{\frac{1}{2}-\delta}
\langle \tau_{j}-n_{j}^{3}
\rangle^{\frac{1}{2}-\frac{3\delta}{2}}
\widehat{u_{j}}(n_{j},\tau_{j})
\end{align*}
for each $j=1,2,3,4$.  To prove \eqref{Eqn:NL-0-b} it is sufficient to establish
\begin{align}
\Big\|\frac{\langle n\rangle^{\frac{1}{2}+\delta}}{\langle \sigma \rangle}
\sum_{\substack{n_{1},n_{2},n_{3}\\
n=n_{1}+\cdots+n_{4}}}
\int_{\substack{\tau_{1},\tau_{2},\tau_{3}\\
\tau=\tau_{1}+\cdots+\tau_{4}}}
&\chi_{A_{0}}\cdot|n_{1}|\prod_{j=1}^{4}\frac{|f_{j}(n_{j},\tau_{j})|}
{\langle n_{j}\rangle^{\frac{1}{2}-\delta}
\langle \sigma_{j}\rangle^{\frac{1}{2}-\frac{3\delta}{2}}}
\Big\|_{l^{2}_{n}L^{1}_{\tau}} \notag \\
&\lesssim
\prod_{j=1}^{4}\|f_{j}\|_{L^{2}_{n,\tau}}.
\label{Eqn:NL-2mod}
\end{align}

Using the condition $|\sigma|\gtrsim |n_{\text{max}}|^{2}$, we have
\begin{align}
\frac{|n|^{\frac{1}{2}+\delta}|n_{1}|}
{\langle\sigma\rangle^{1-6\delta-\gamma}
|n_{1}|^{\frac{1}{2}-\delta}}
\lesssim
\frac{1}{|n|^{1-17\delta-5\gamma}
\prod_{j=2}^{4}\langle n_{j}\rangle^{\delta+\gamma}}.
\label{Eqn:NL-0-3}
\end{align}
Applying \eqref{Eqn:NL-0-3}, and subsequently removing all restrictions in frequency space (which is allowed because we have brought absolute values inside), we have
\begin{align}
&\text{LHS of \eqref{Eqn:NL-2mod}} \notag \\
&\ \ \ \ \ \ \ \ \lesssim \Big\| \frac{1}{\langle\sigma \rangle^{6\delta+\gamma} |n|^{1-17\delta-5\gamma}}
\sum_{\substack{n_{1},n_{2},n_{3}\\
n=n_{1}+\cdots+n_{4}}}
\int_{\substack{\tau_{1},\tau_{2},\tau_{3}\\
\tau=\tau_{1}+\cdots+\tau_{4}}}\frac{|f_{1}(n_{1},\tau_{1})|}
{\langle \sigma_{1}\rangle^{\frac{1}{2}-\frac{3\delta}{2}}}
\prod_{j=2}^{4} \frac{|f_{j}(n_{j},\tau_{j})|}
{\langle n_{j}\rangle^{\frac{1}{2}+\gamma}\langle \sigma_{j}\rangle^{\frac{1}{2}-\frac{3\delta}{2}}}\Big\|_{l^{2}_{n}L^{1}_{\tau}}
\notag \\ &\ \ \ \ \ \ \ \ \leq \Big\|\frac{1}{\langle\sigma \rangle^{6\delta+\gamma}|n|^{1-17\delta-5\gamma}}
\Big(\sum_{
\substack{ n_{1},n_{2},n_{3}
\\ n=n_{1}+\cdots+n_{4}}}\int
_{
\substack{\tau_{1},\tau_{2},\tau_{3} \\
\tau=\tau_{1}+\cdots+\tau_{4}
 }}
 \prod_{j=1}^{4}|f(n_{j},\tau_{j})|^{2}
 \Big)^{\frac{1}{2}}
\notag \\
&\ \ \ \ \ \ \ \ \ \ \ \ \ \ \ \ \ \
\Big(\sum_{
\substack{ n_{1},n_{2},n_{3}
\\ n=n_{1}+\cdots+n_{4} }}\int
_{
\substack{\tau_{1},\tau_{2},\tau_{3} \\
\tau=\tau_{1}+\cdots+\tau_{4}
 }}
\frac{1}{\langle \sigma_{1}\rangle^{1-3\delta}}
 \prod_{j=2}^{4}\frac{1}{\langle n_{j}\rangle^{1+2\gamma}
\langle \sigma_{j}\rangle^{1-3\delta}}
 \Big)^{\frac{1}{2}}
\Big\|_{l^{2}_{n}L^{1}_{\tau}}
\label{Eqn:NL-0-1}
\end{align}
by Cauchy-Schwarz in
$n_{1},n_{2},n_{3},\tau_{1},\tau_{2},\tau_{3}$, for fixed $n,\tau$.  Next we fix $\tau,n,n_{1},n_{2},n_{3},$ and  repeatedly apply Lemma \ref{Lemma:integralineq} to obtain
\begin{align}
\int_{\tau_{1},\tau_{2},\tau_{3}}&
\frac{d\tau_{1}d\tau_{2}d\tau_{3}}{\langle \tau_{1}-n_{1}^{3}\rangle^{1-3\delta}
\langle \tau_{2}-n_{2}^{3}\rangle^{1-3\delta}
\langle \tau_{3}-n_{3}^{3}\rangle^{1-3\delta}
\langle \tau-\tau_{1}-\tau_{2}-\tau_{3}
-n_{4}^{3}\rangle^{1-3\delta}}
\notag \\
&\lesssim
\int_{\tau_{1},\tau_{2}}
\frac{d\tau_{1}d\tau_{2}}{\langle \tau_{1}-n_{1}^{3}\rangle^{1-3\delta}
\langle \tau_{2}-n_{2}^{3}\rangle^{1-3\delta}
\langle \tau-\tau_{1}-\tau_{2}-n_{3}^{3}-n_{4}^{3}\rangle^{1-6\delta}}
\notag \\
&\lesssim
\int_{\tau_{1}}
\frac{d\tau_{1}}{\langle \tau_{1}-n_{1}^{3}\rangle^{1-3\delta}
\langle \tau-\tau_{1}-n_{2}^{3}-n_{3}^{3}-n_{4}^{3}
\rangle^{1-9\delta}}
\notag \\
&\lesssim
\frac{1}
{\langle \tau-n_{1}^{3}-n_{2}^{3}-n_{3}^{3}-n_{4}^{3}
\rangle^{1-12\delta}}.
\label{Eqn:NL-0-2}
\end{align}
Using \eqref{Eqn:NL-0-2}, we have
\begin{align}
\eqref{Eqn:NL-0-1} &\lesssim
\Big\|\frac{1}{\langle\sigma \rangle^{6\delta+\gamma}|n|^{1-17\delta-5\gamma}} \Big(\sum_{
\substack{ n_{1},n_{2},n_{3}
\\ n=n_{1}+\cdots+n_{4}}}\int
_{
\substack{\tau_{1},\tau_{2},\tau_{3} \\
\tau=\tau_{1}+\cdots+\tau_{4}
 }}
 \prod_{j=1}^{4}|f(n_{j},\tau_{j})|^{2}
 \Big)^{\frac{1}{2}}
\notag \\
&\ \ \ \ \ \ \ \ \
\Big(\sum_{
\substack{ n_{1},n_{2},n_{3}
\\ n=n_{1}+\cdots+n_{4}}}
 \frac{1}{\langle \tau-n_{1}^{3}-\cdots -n_{4}^{3}
\rangle^{1-12\delta}}\prod_{j=2}^{4}\frac{1}{\langle n_{j}\rangle^{1+2\gamma}}
 \Big)^{\frac{1}{2}}
\Big\|_{l^{2}_{n}L^{1}_{\tau}}
\notag \\
&\lesssim
\Big\|
\Big(\sum_{
\substack{ n_{1},n_{2},n_{3}
\\ n=n_{1}+\cdots+n_{4}}}\int
_{
\substack{\tau_{1},\tau_{2},\tau_{3} \\
\tau=\tau_{1}+\cdots+\tau_{4}
 }}
 \prod_{j=1}^{4}|f(n_{j},\tau_{j})|^{2}
 \Big)^{\frac{1}{2}}\Big\|_{L^{2}_{n,\tau}}
\notag \\
&\ \
\sup_{n\neq 0}\frac{1}{|n|^{1-17\delta-5\gamma}}
\notag \\
&\
\cdot\Big(\int_{\tau}
\sum_{\substack{ n_{1},n_{2},n_{3}
\\ n=n_{1}+\cdots+n_{4}}}
 \frac{1}{
 \langle\sigma\rangle^{12\delta+2\gamma} \langle\tau-n_{1}^{3}-\cdots
 -n_{4}^{3}\rangle^{1-12\delta}}
 \prod_{j=2}^{4}\frac{1}{\langle n_{j}\rangle^{1+2\gamma}}
 \Big)^{\frac{1}{2}}.
\label{Eqn:NL-0-4}
\end{align}
In the last line we applied Cauchy-Schwarz in $\tau$, and took out the supremum in $n$ afterward.  Applying Fubini we compute that
\begin{align}
\Big\|
\Big(\sum_{
\substack{ n_{1},n_{2},n_{3}
\\ n=n_{1}+\cdots+n_{4}}}\int
_{
\substack{\tau_{1},\tau_{2},\tau_{3} \\
\tau=\tau_{1}+\cdots+\tau_{4}
 }}
 \prod_{j=1}^{4}|f(n_{j},\tau_{j})|^{2}
 \Big)^{\frac{1}{2}}\Big\|_{L^{2}_{n,\tau}}
= \prod_{j=1}^{4}\|f_{j}\|_{L^{2}_{n,\tau}}.
\label{Eqn:NL-0-5}
\end{align}
It remains to estimate the second factor in
\eqref{Eqn:NL-0-4}.
We change the order of integration and summation inside the supremum, and integrate in $\tau$, for fixed $n, n_{1},n_{2},n_{3}$. Since we have
\begin{align}
\int_{\tau}
 \frac{1}{\langle \tau-n^{3}\rangle^{12\delta+2\gamma} \langle\tau-n_{1}^{3}-\cdots
 -n_{4}^{3}\rangle^{1-12\delta}}
 \lesssim \frac{1}{\langle n^{3}-n_{1}^{3}-\cdots -n_{4}^{3}\rangle^{2\gamma}} \leq 1
\label{Eqn:NL-0-6}
\end{align}
by Lemma \ref{Lemma:integralineq}, this gives
\begin{align}
\sup_{n\neq 0}\frac{1}{|n|^{1-17\delta-5\gamma}}\Big(\int_{\tau}&
\sum_{\substack{ n_{1},n_{2},n_{3}
\\ n=n_{1}+\cdots+n_{4}}}
 \frac{1}{
 \langle\sigma\rangle^{12\delta+2\gamma} \langle\tau-n_{1}^{3}-\cdots
 -n_{4}^{3}\rangle^{1-12\delta}}
 \prod_{j=2}^{4}\frac{1}{\langle n_{j}\rangle^{1+2\gamma}}
 \Big)^{\frac{1}{2}}
\notag \\
&\lesssim
\sup_{n\neq 0}\frac{1}{|n|^{1-17\delta-5\gamma}}\Big(
\sum_{\substack{ n_{1},n_{2},n_{3}
\\ n=n_{1}+\cdots+n_{4}}}
 \prod_{j=2}^{4}\frac{1}{\langle n_{j}\rangle^{1+2\gamma}}
 \Big)^{\frac{1}{2}}
\notag \\
&\leq 1.
\label{Eqn:NL-0-9}
\end{align}
Combining \eqref{Eqn:NL-0-1}, \eqref{Eqn:NL-0-4}, \eqref{Eqn:NL-0-5}
and \eqref{Eqn:NL-0-9}, we obtain the estimate \eqref{Eqn:NL-2mod}.  The proof of \eqref{Eqn:NL-0-b} is complete.

Next we establish \eqref{Eqn:NL-0-a}.  
We will prove \eqref{Eqn:NL-0-a} using linear estimates, \eqref{Eqn:NL-0-b}, and the following estimate on the nonlinearity,
\begin{align}
\|\mathcal{N}_{0}(u_{1},u_{2},u_{3},u_{4})\|_
{\frac{1}{2}+\delta,-\frac{1}{2}-\delta,T}
\lesssim
\prod_{j=1}^{4} \|u_{j}\|_{\frac{1}{2}-\delta,\frac{1}{2}-
\frac{3\delta}{2},T}.
\label{Eqn:NL-0-a2}
\end{align}
That is, \eqref{Eqn:NL-0-a} follows from Lemma \ref{Lemma:lin2}, \eqref{Eqn:NL-0-a2}, \eqref{Eqn:NL-0-b} and Lemma \ref{Lemma:gainpowerofT}:
\begin{align*}
\|\mathcal{D}_{0}(u_{1},u_{2},u_{3},u_{4})\|_
{\frac{1}{2}+\delta,\frac{1}{2}-\delta,T}
&\lesssim \|\mathcal{N}_{0}(u_{1},u_{2},u_{3},u_{4})\|_
{\frac{1}{2}+\delta,-\frac{1}{2}-\delta,T}
+ \|\mathcal{N}_{0}(u_{1},u_{2},u_{3},u_{4})\|_
{Y^{\frac{1}{2}+\delta,-1-\delta}_{T}}
\notag \\
&\lesssim \prod_{j=1}^{4}
\|u_{j}\|_{\frac{1}{2}-\delta,
\frac{1}{2}-\frac{3\delta}{2},T}
+ \|\mathcal{N}_{0}(u_{1},u_{2},u_{3},u_{4})\|_
{Y^{\frac{1}{2}+\delta,-1}_{T}}
\notag \\
&\lesssim T^{2\delta-}\prod_{j=1}^{4}
\|u_{j}\|_{\frac{1}{2}-\delta,
\frac{1}{2}-\delta,T}.
\end{align*}
We proceed to justify
\eqref{Eqn:NL-0-a2}, which is equivalent to:
\begin{align*}
\|\chi_{[0,T]}(t)\mathcal{N}_{0}&(u_{1},u_{2},u_{3},u_{4})
\|_{\frac{1}{2}+\delta,-\frac{1}{2}-\delta}
\lesssim
\prod_{j=1}^{4} \|\chi_{[0,T]}(t)u_{j}\|_{\frac{1}{2}-\delta,\frac{1}{2}-\frac{3\delta}{2}}.
\end{align*}
Then since $\chi_{[0,T]}(t)\mathcal{N}_{0}(u_{1},u_{2},u_{3},u_{4})= \mathcal{N}_{0}(\chi_{[0,T]}(t)u_{1},\chi_{[0,T]}(t)u_{2},
\chi_{[0,T]}(t)u_{3},\chi_{[0,T]}(t)u_{4})$,
it suffices to establish
\begin{align}
\|\mathcal{N}_{0}&(u_{1},u_{2},u_{3},u_{4})\|_
{\frac{1}{2}+\delta,-\frac{1}{2}-\delta}
\lesssim T^{\theta}
\prod_{j=1}^{4} \|u_{j}\|_{\frac{1}{2}-\delta,\frac{1}{2}-\frac{3\delta}{2}},
\label{Eqn:NL-0-eq1}
\end{align}
where each $u_{j}$ satisfies $u_{j}=\chi_{[0,T]}(t)u_{j}$.  We proceed to establish \eqref{Eqn:NL-0-eq1} as written, introducing factors of $\chi_{[0,T]}(t)$ (in front of the $u_{j}$) when needed.

Using the condition $|\sigma|\gtrsim |n_{\text{max}}|^{2}$, we have
\begin{align}
\frac{\langle n\rangle^{\frac{1}{2}+\delta}|n_{1}|}
{\langle \sigma\rangle^{\frac{1}{2}+\delta}
\langle n_{1}\rangle^{\frac{1}{2}-\delta}} \lesssim 1.
\label{Eqn:D0bound}
\end{align}
Define the function $w_{1}:=
\big(\langle n_{1}\rangle^{\frac{1}{2}-\delta}
\widehat{u_{1}}(n_{1},\tau_{1})\big)^{\vee}$.  By using \eqref{Eqn:D0bound}, subsequently removing all restrictions in frequency space, and applying Plancherel, we find
\begin{align}
\|\mathcal{N}_{0}(u_{1},u_{2},u_{3},u_{4})\|
_{\frac{1}{2}+\delta,-\frac{1}{2}-\delta}
&= \Big\| \frac{\langle n\rangle^{\frac{1}{2}+\delta}}{\langle \sigma \rangle^{\frac{1}{2}+\delta}}
\sum_{\zeta(n)}
\int_{\tau=\tau_{1}+\cdots +\tau_{4}}
\chi_{A_{0}}(in_{1})\prod_{j=1}^{4}\widehat{u_{j}}(n_{j},\tau_{j})
\Big\|_{L^{2}_{n,\tau}}
\notag \\
&\lesssim
\Big\|
\sum_{\zeta(n)}
\int_{\tau=\tau_{1}+\cdots +\tau_{4}}
\chi_{A_{0}}\widehat{w_{1}}(n_{1},\tau_{1})
\prod_{j=2}^{4}\widehat{u_{j}}(n_{j},\tau_{j})
\Big\|_{L^{2}_{n,\tau}}
\notag \\
&\leq
\Big\|
\sum_{n=n_{1}+\cdots +n_{4}}
\int_{\tau=\tau_{1}+\cdots +\tau_{4}}
\widehat{w_{1}}(n_{1},\tau_{1})
\prod_{j=2}^{4}\widehat{u_{j}}(n_{j},\tau_{j})
\Big\|_{L^{2}_{n,\tau}}
\notag \\
&=
\|w_{1}u_{2}u_{3}u_{4}\|
_{L^{2}_{x,t}}.
\label{Eqn:NL-0-11}
\end{align}
To prove \eqref{Eqn:NL-0-a2} it now suffices to show that
\begin{align}
\|w_{1}u_{2}u_{3}u_{4}\|
_{L^{2}_{x,t}}
\lesssim
\|w_{1}\|_{0,\frac{1}{2}-\delta}
\prod_{j=2}^{4}
\|u_{j}\|_{\frac{1}{2}-\delta,\frac{1}{2}-\delta},
\label{Eqn:NL-0-10}
\end{align}
where $w_{1}=\chi_{[0,T]}(t)w_{1}$ and
$u_{j}=\chi_{[0,T]}(t)u_{j}$ for each $j$.
\noindent The inequality \eqref{Eqn:NL-0-10} is obtained with H\"{o}lder's inequality,
\eqref{Eqn:X-Str-1}, \eqref{Eqn:X-onehalfminus} and Lemma \ref{Lemma:gainpowerofT}:
\begin{align*}
\|w_{1}u_{2}u_{3}u_{4}\|_{L^{2}_{x,t}} &\leq \|w_{1}\|_{L^{4}_{x,t}}\prod_{j=2}^{4}
\|u_{j}\|_{L^{12}_{x,t}} \\
&\leq  \|w_{1}\|_{0,\frac{1}{3}}\prod_{j=2}^{4}
\|u_{j}\|_{\frac{1}{2}-\delta,\frac{1}{2}-\delta},
\end{align*}
with $\delta<\frac{1}{12}$.  The proof of \eqref{Eqn:NL-0-a2} (and thus of \eqref{Eqn:NL-0-a}) is complete.

\vspace{0.1in}

Next we justify \eqref{Eqn:NL-k-1} and \eqref{Eqn:NL-k-2}.  Using the condition $|\sigma_1|\gtrsim |n_{\text{max}}|^2$, we have
\begin{align}
\frac{|n|^{\frac{1}{2}+\delta}|n_1|}{|n_1|^{\frac{1}{2}-\delta}|\sigma_1|^{\frac{1}{2}+\delta}}\lesssim 1.
\label{Eqn:lasthope2}
\end{align}
Using \eqref{Eqn:lasthope2} and duality, \eqref{Eqn:NL-k-1} follows from
\begin{align}
\int v\cdot f_1 u_2 u_3 u_4 dxdt \lesssim \|v\|_{0,\frac{1}{2}-\delta,T}\|f_1\|_{L^2_{x,t\in[0,T]}}
\prod_{j=2}^{4}
\|u_j\|_{\frac{1}{2}-\delta,\frac{1}{2}-\delta,T}.
\label{Eqn:lasthope2}
\end{align}
Then \eqref{Eqn:lasthope2} is established using H\"{o}lder, \eqref{Eqn:X-onehalfminus}, \eqref{Eqn:X-Str-1} and \eqref{Eqn:X-Str-interp} (as in various cases above).

For \eqref{Eqn:NL-k-2}, we use the condition $|\sigma_2|\gtrsim |n_{\text{max}}|^2$ and find
\begin{align}
\frac{|n|^{\frac{1}{2}}|n_1|}{|n_1|^{\frac{1}{2}-\delta}|\sigma_2|^{\frac{1}{2}+\delta}}\lesssim \frac{1}{|n|^{\delta}}.
\label{Eqn:lasthope3}
\end{align}
Using \eqref{Eqn:lasthope3} and duality, \eqref{Eqn:NL-k-2} follows from
\begin{align}
\int v\cdot f_1 u_2 u_3 u_4 dxdt \lesssim \|v\|_{\delta,\frac{1}{2}-\delta,T}\|f_1\|_{0,\frac{1}{2}-\delta,T}
\|u_2\|_{L^2_{x,t\in[0,T]}}\prod_{j=3}^{4}
\|u_j\|_{\frac{1}{2}-\delta,\frac{1}{2}-\delta,T}.
\label{Eqn:lasthope2}
\end{align}
Then \eqref{Eqn:lasthope2} is established using H\"{o}lder, \eqref{Eqn:X-onehalfminus}, \eqref{Eqn:X-Str-1} and \eqref{Eqn:X-Str-interp} (as in various cases above).

We turn to the justification of \eqref{Eqn:NL-2}.  We consider
\begin{align}
\|\mathcal{N}_{2}(u_{1},\mathcal{D}&(u_{5},u_{6},u_{7},u_{8}),
u_{3},u_{4})\|_{\frac{1}{2}+\delta,-\frac{1}{2}+\delta} \notag \\ &=\Big\|\frac{|n|^{\frac{1}{2}+\delta}}
{\langle \sigma\rangle^{\frac{1}{2}-\delta}}\sum_{n=n_{2}+\cdots +n_{8}}\int_{\tau=\tau_{2}+\cdots +\tau_{4}}\chi_{A_{2}}
\frac{-n_{1}n_{5}}{\sigma_{2}} \prod_{j=1,j\neq 2}^{8}\hat{u}_{j}(n_{j},\tau_{j})d\tau_{j}
\Big\|_{L^{2}_{n,\tau}}.
\label{Eqn:NL-1-2}
\end{align}
Using $|\sigma_{2}|\gtrsim |n_{\text{max}}|^{2}$ we have
\begin{align}
\frac{|n|^{\frac{1}{2}+\delta}|n_{1}||n_{5}|}
{|\sigma_{2}||n_{1}|^{\frac{1}{2}-\delta}|n_{5}|^{2\delta}}  \lesssim  1.
\label{Eqn:NL-1-11}
\end{align}
Using \eqref{Eqn:NL-1-2} and \eqref{Eqn:NL-1-11}, \eqref{Eqn:NL-2} follows from
\begin{align}
\Big\|\prod_{j=2}^{8}f_{j}\Big\|_{L^{2}_{x,t}}
\lesssim T^{2\delta-}\|f_{1}\|_{0,\frac{1}{2}-\delta}
\|f_{5}\|_{\frac{1}{2}-3\delta,\frac{1}{2}-\delta}
\prod_{\substack{j=3\\j\neq 5,k}}^{8}
\|f_{j}\|_{\frac{1}{2}-\delta,\frac{1}{2}-\delta}.
\label{Eqn:NL-1-10}
\end{align}
Then \eqref{Eqn:NL-1-10} is obtained with H\"{o}lder,
\eqref{Eqn:X-onehalfminus}, \eqref{Eqn:X-Str-1} and Lemma \ref{Lemma:gainpowerofT},
\begin{align*}
\Big\|\prod_{j=2}^{8}f_{j}\Big\|_{L^{2}_{x,t}}
&\lesssim
\|f_{1}\|_{L^{4}_{x,t}} \prod_{\substack{j=3\\j\neq 5}}^{8} \|f_{j}\|_{L^{24}_{x,t}}  \notag \\
&\lesssim  \|f_{1}\|_{0,\frac{1}{3}} \prod_{\substack{j=3\\j\neq 5}}^{8} \|f_{j}\|_{\frac{1}{2}-3\delta,\frac{1}{2}-3\delta}
\notag \\
&\lesssim  T^{\theta} \|f_{1}\|_{0,\frac{1}{2}-\delta}
\|f_{5}\|_{\frac{1}{2}-3\delta,\frac{1}{2}-\delta}
 \prod_{\substack{j=3\\j\neq 5}}^{8} \|f_{j}\|_{\frac{1}{2}-\delta,\frac{1}{2}-\delta}.
\end{align*}
We have established \eqref{Eqn:NL-2}, and \eqref{Eqn:NL-2b} can be obtained with the same argument.  The proof of Proposition \ref{Prop:nonlin2} is complete.
\end{proof}

\end{document}